\documentclass[a4paper,oneside,english,reqno,12pt]{amsart}
\usepackage[utf8]{inputenc}
\usepackage[T1]{fontenc}
\usepackage{graphicx}
\DeclareFontFamily{OMX}{mlmex}{}
\DeclareFontShape{OMX}{mlmex}{m}{n}{%
   <->mlmex10%
   }{}%
\usepackage{mlmodern}

\usepackage[hscale=0.666, vscale=0.72]{geometry}
\usepackage{babel}
\usepackage[numbers,sort&compress]{natbib}

\usepackage[np,autolanguage]{numprint}
\usepackage{hyperref}
\hypersetup{%
  colorlinks=true,%
  citecolor=[RGB]{120,29,126},%
  pdfauthor={Jean-François Burnol},%
  pdfsubject={Block-count constrained harmonic sums},%
  pdfstartview=FitH,%
  pdfpagemode=UseNone,%
}

\usepackage{amsmath,amsthm,amssymb}
\usepackage{mathtools}

\DeclarePairedDelimiterX\Iffint[2]{\lbrack\!\lbrack}{\rbrack\!\rbrack}{#1\dots#2}
\DeclarePairedDelimiterX\Ioo[2]{\lparen}{\rparen}{#1,#2}
\DeclarePairedDelimiterX\Iof[2]{\lparen}{\rbrack}{#1,#2}
\DeclarePairedDelimiterX\Ifo[2]{\lbrack}{\rparen}{#1,#2}
\DeclarePairedDelimiterX\Iff[2]{\lbrack}{\rbrack}{#1,#2}

\DeclarePairedDelimiterX\scalp[2]{\langle}{\rangle}{#1\mid#2}

\newcommand\emptyword{\epsilon}
\newcommand\ve{\varepsilon}

\newcommand\NN{\mathbb{N}}
\newcommand\ZZ{\mathbb{Z}}
\newcommand\QQ{\mathbb{Q}}

\newcommand\PP{\mathbb{P}}
\newcommand\Un{\mathbf{1}}
\newcommand\CC{\mathbb{C}}
\newcommand\EE{\mathbb{E}}

\newcommand\e{\mathsf{e}}
\newcommand\dt{\mathrm{d}t}
\newcommand\du{\mathrm{d}u}
\newcommand\dx{\mathrm{d}x}

\newcommand\drM{\mathrm{dM}}
\newcommand\drH{\mathrm{dH}}

\DeclareMathOperator\Span{span}

\DeclareMathOperator\Leb{Leb}
\DeclareMathOperator\Law{Law}
\DeclareMathOperator\Cyl{Cyl}
\DeclareMathOperator\Unif{Unif}

\DeclareMathOperator\bu{\mathbf{u}}
\DeclareMathOperator\bxi{\boldsymbol{\xi}}

\DeclareMathOperator\cA{\mathcal{A}}
\DeclareMathOperator\cB{\mathcal{B}}
\DeclareMathOperator\cC{\mathcal{C}}

\DeclareMathOperator\cF{\mathcal{F}}
\DeclareMathOperator\cG{\mathcal{G}}
\DeclareMathOperator\cI{\mathcal{I}}
\DeclareMathOperator\cH{\mathcal{H}}
\DeclareMathOperator\cL{\mathcal{L}}
\DeclareMathOperator\cR{\mathcal{R}}
\DeclareMathOperator\cK{\mathcal{K}}
\DeclareMathOperator\cW{\mathcal{W}}

\DeclareMathOperator\wD{\widehat{D}}
\DeclareMathOperator\wE{\widehat{E}}
\DeclareMathOperator\wH{\widehat{H}}
\DeclareMathOperator\wM{\widehat{M}}
\DeclareMathOperator\wK{\widehat{K}}
\DeclareMathOperator\wX{\widehat{X}}

\newcommand\brD{\mathbf{D}}
\newcommand\brE{\mathbf{E}}
\newcommand\brK{\mathbf{K}}
\newcommand\brV{\mathbf{V}}
\newcommand\brY{\mathbf{Y}}

\newcommand\wmu{\widehat{\mu}}
\newcommand\weta{\widehat{\eta}}

\DeclareMathOperator\sv{\mathsf{v}}

\DeclareMathOperator\sA{\mathsf{A}}
\DeclareMathOperator\sC{\mathsf{C}}
\DeclareMathOperator\sD{\mathsf{D}}
\DeclareMathOperator\sE{\mathsf{E}}
\DeclareMathOperator\sF{\mathsf{F}}
\DeclareMathOperator\sH{\mathsf{H}}
\DeclareMathOperator\sI{\mathsf{I}}
\DeclareMathOperator\sK{\mathsf{K}}
\DeclareMathOperator\sP{\mathsf{P}}
\DeclareMathOperator\sQ{\mathsf{Q}}
\DeclareMathOperator\sR{\mathsf{R}}
\DeclareMathOperator\sT{\mathsf{T}}

\newcommand\dmu{\mathrm{d}\mu}
\newcommand\deta{\mathrm{d}\eta}
\newcommand\dnu{\mathrm{d}\nu}

\newcommand\rd{\mathrm{d}}

\newcommand\rM{\mathrm{M}}
\newcommand\rH{\mathrm{H}}

\newcommand\fL{\mathfrak{L}}
\newcommand\fR{\mathfrak{R}}

\theoremstyle{plain}
\newtheorem{theo}{Theorem}
\newtheorem{prop}{Proposition}

\theoremstyle{definition}

\newtheorem{rema}{Remark}

\allowdisplaybreaks

\title[Spectral expansions of Irwin sums]{%
  Block-count constrained harmonic sums: spectral expansion and block-directed Euler--Maclaurin}

\author[J.-F. Burnol]{Jean-François Burnol}

\date{15 September 2026.}

\subjclass[2020]{Primary 11A63, 47A70, 60C05; Secondary 05A15, 11Y60, 41A58, 46B15, 68R15}
\keywords{Irwin sums, block counting, stochastic radix expansions, combinatorics on words, return operators, spectral expansions, Riesz bases, Euler--Maclaurin formula}

\usepackage{setspace}
\newcommand\burnolgitlab{\href{https://gitlab.com/burnolmath/irwin}{Irwin gitlab site}}

\begin{document}
\addtocontents{toc}{\protect\hypersetup{hidelinks}}

\begin{abstract}
  We determine exactly how harmonic sums, taken over integers with a given
  number of occurrences of a fixed block of digits, depend on that number of
  occurrences. Language factorizations, combined with stochastic radix
  expansions, relate these harmonic sums to the iteration of an operator whose
  eigenvectors are the Lebesgue measure and distributional derivatives of
  singular measures. The dual picture is given by a Riesz basis of
  eigenpolynomials in a suitable Hardy space. The resulting modal expansion
  admits a natural interpretation as a block-directed Euler--Maclaurin formula
  interpolating between Euler--Maclaurin and Taylor expansions.
\end{abstract}

\maketitle

\onehalfspacing

\section{Introduction}

Let $b$ be an integer, greater than one, and referred to below as the base or
radix. Let $w$ be a non-empty word, of length $p=|w|$, with letters from the
alphabet $\Sigma_b=\{0, \dots, b-1\}$ of the base-$b$ digits. We denote by
$k_w(n)$, when $n$ is a non-negative integer, the number of occurrences of $w$
in the minimal radix-$b$ representation of $n$ (terminological details are
reviewed in the next section; in particular occurrences are counted with
overlaps).  

The paper is devoted to the quantities
\begin{equation}
  \label{eq:1}
  I(b,w,k) = \sum_{n>0, \; k_w(n)=k} \frac1n\;,
\end{equation}
which we call \emph{block-count constrained} harmonic sums or
\emph{block-Irwin sums}. This notion originates, for $w$ a single digit, in
\cite{irwin} (and \cite{kempner} for $k=0$). Irwin actually also considered
imposing simultaneously the counts of distinct digits, and this can be
extended to counting occurrences of multiple distinct given blocks.  As the
size of this paper indicates, though, there is already much to say when
considering only a single block $w$.

Kempner merely observes that the series $I(10,9,0)$ converges, and bounds it
by $90$; Irwin contributes a tighter estimate $22.4<I(10,9,0)<23.3$.  The
convergence (for $b=10$) is indeed extremely slow, and Baillie
\cite{baillie1979} developed an algorithm allowing one to obtain many decimal
digits of each $I(10,d,0)$.  Later, Schmelzer and Baillie
\cite{schmelzerbaillie} contributed a numerical algorithm to evaluate
$I(b,w,0)$ also when $w$ is a multi-digit block (see also
\cite{baillie2008}). The $k>0$ case was not included for multi-digit
blocks.  The author provided in \cite{burnolone42} a representation of $I(b,w,k)$
for $p=2$ and $k\leq1$ allowing the numerical computation via geometrically
convergent series, extending the earlier work \cite{burnolirwin} which covered
$p=1$ for every $k\geq0$.

We extend this here to all cases of $w$ and $k$.  But the principal
result, stated next in a simplified form, goes beyond the extension of
\cite{burnolirwin} and \cite{burnolone42} to all cases. It provides
\emph{another} manner of computing $I(b,w,k)$, elucidating at the structural
level the dependence of $I(b,w,k)$ on $k\geq1$. The case $k=0$ is in a class
of its own; we will handle it fully for $w$ a single digit, and only present
an abridged summary for the multi-digit case. This was done in the hope of keeping
this paper size from becoming excessive.

\begin{theo}[Modal expansion of block-count constrained harmonic
  sums]\label{thm:intromain}
  For any non-empty word $w$ of length $p=|w|$ in radix $b$, there exist
  eigenvalues $\lambda_j(b,w)$, $j\geq1$, satisfying
  \begin{equation}
    1 = \lambda_1(b,w)>\lambda_2(b,w)>\dots>\lambda_j(b,w)>\dots>0,
    \qquad \lambda_j(b,w)\to_{j\to\infty} 0,
  \end{equation}
  and coefficients $\alpha_j(b,w)$, $j\geq2$, such that, for every $k\geq 1$,
  \begin{equation}
    \label{eq:main}
    I(b,w,k) = b^p\log(b) + \sum_{j=2}^\infty
    \alpha_j(b,w)\lambda_j(b,w)^k\,,
  \end{equation}
  with an absolutely convergent series.  The eigenvalues are rational numbers
  determined by $b$ and the lengths of $w$ and its borders.  If $w$ has no
  borders, in particular if $p=1$,
  $\lambda_j(b,w)=(b^{pj}-b^{pj-j+1}+1)^{-1}$.
\end{theo}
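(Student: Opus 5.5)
We plan to follow the route announced in the abstract: a language factorization, then a measure-theoretic reformulation, then a spectral analysis of a return operator.

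\emph{Step 1: factorization and measures.} We factor the language of base-$b$ words having exactly $k$ occurrences of $w$ (counted with overlaps) as a prefix language $\cA$ avoiding $w$ except for a terminal occurrence, followed by $k-1$ factors from a \emph{return language} $\cR$, followed by a suffix language avoiding further occurrences. Concretely, $\cR$ consists of the words $u$ such that $wu$ ends with $w$ and contains no intermediate occurrence. Borders of $w$ enter exactly here: a return may overlap the previous occurrence, and the allowed overlaps are governed by the borders.

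To each word we attach the measure $\mu$ on $\Iff01$ that places weight $b^{-|u|}$ on the corresponding $b$-adic cylinder, in the spirit of the stochastic radix expansions of \cite{burnolirwin,burnolone42}. We then write the harmonic sum as $\sum 1/n = \int \dmu(x)/x$ up to an explicit correction. Concatenating the return word $u$ then acts on measures as a linear \emph{return operator} $K$, defined by
\[
(K\mu)(f)=\sum_{u\in\cR} b^{-|u|}\int f\bigl(\text{(shift by }u)\,x\bigr)\,\dmu(x).
\]
This gives a representation $I(b,w,k)=\scalp{K^{k-1}\mu_0}{g}$ for $k\geq1$, with explicit $\mu_0$ and $g$; the natural choice for $g$ is essentially $1/x$ on the relevant interval.

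\emph{Step 2: spectral analysis of $K$.} We look for eigenvectors of $K$ among the Lebesgue measure and distributional derivatives $\nu^{(j-1)}$ of a self-similar singular measure $\nu$ attached to $\cR$. Testing $K$ against monomials $x^m$ gives a triangular action, because the cylinder maps are affine. The diagonal entries are generating-function values of the form $\sum_{u\in\cR} b^{-|u|}\cdot b^{-m|u|}$. The return language has rational generating function determined by the correlation polynomial of $w$, that is, by its border lengths. This yields the rational eigenvalues $\lambda_j$, and $\lambda_1=1$ because the total mass is conserved.

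For $w$ without borders, $\cR$ is the set of words $vw$ with $v$ avoiding $w$. The generating function of $w$-avoiding words is $1/(1-bz+z^p)$. Substituting $z=b^{-j}$ and multiplying by $z^p$ gives
\[
\lambda_j=b^{-pj}\big/\bigl(1-b^{1-j}+b^{-pj}\bigr)=(b^{pj}-b^{pj-j+1}+1)^{-1},
\]
which matches the claimed formula. Strict monotonicity and $\lambda_j\to0$ then follow from the explicit formula, with a separate elementary check in the bordered case.

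\emph{Step 3: expansion and convergence.} We expand $\mu_0=\sum_j c_j\,\nu^{(j-1)}$, with the first term Lebesgue, and dually use the eigenpolynomials $P_j$ of the adjoint. We set $\alpha_j=c_j\scalp{\nu^{(j-1)}}{g}/\lambda_j$. The constant term is identified by the invariant mass: pairing Lebesgue against $1/x$ on the first-digit range $\Iff{b^{-1}}{1}$ yields $\log b$, and the normalization of the return measure contributes the factor $b^p$, giving $b^p\log b$.

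\emph{Main obstacle.} The hard step is justifying the infinite expansion, that is, absolute convergence of $\sum_j\alpha_j\lambda_j^k$ for every $k\geq1$. The function $g=1/x$ is not a polynomial, so we need completeness and a Riesz-basis property of $\{P_j\}$ in a Hardy space on a disc around the relevant point, where $1/x$ is analytic. We would try to show that the adjoint $K^*$ is a compact composition-type operator on that Hardy space. It is diagonal-triangular on monomials, with eigenvalues $\lambda_j\sim b^{-(p-1)j}$ decaying geometrically. For this we would prove that the change-of-basis matrix to monomials is bounded with bounded inverse, via estimates on the moments of $\nu$. Once the basis property holds, the coefficients of $g$ are bounded by $\|g\|$ times a fixed constant, and the geometric decay of $\lambda_j$ gives absolute convergence even for $k=1$.
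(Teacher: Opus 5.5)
Your overall architecture (language factorization, a return operator that prefixes return words, eigenpolynomials of its transpose, a Riesz basis in a Hardy space) is the paper's, and your eigenvalue formula in the unbordered case is correct. The genuine gap is in the step you single out as the main obstacle: the function you propose to expand cannot be expanded in any usable space.

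\begin{itemize}
\item \emph{No admissible Hardy space contains your $g$.} Any usable disc must contain $\Iff01$, because the eigendistributions (Lebesgue measure for the first mode) and the measures $K^{k-1}\mu_0$ charge all of $\Iff01$. The paper works in $H^2(D(x_w,R))$ with $R>R_w=\max(x_w,1-x_w)$. So $x^{-1}$, with its pole at $0$, never lies in such a space. The truncated function $x^{-1}\Un_{\Ifo{b^{-1}}{1}}$ is not even analytic.
\item \emph{The order of composition is wrong for bordered $w$.} Since $K$ \emph{prefixes} return words, $\scalp{K^{k-1}\mu_0}{g}$ with $g$ the truncated $1/x$ sums over words $g_1\cdots g_{k-1}u$. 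For bordered $w$ the initial language $\cL$ differs from the return language $\cG$, so $\cG^{k-1}\cL\cR\neq\cL\cG^{k-1}\cR$.
\item \emph{The missing idea.} Transpose the initial block onto the test function. With $\eta_{k-1}=K_w^{k-1}\eta_0$ the tail measure, one has $I(b,w,k)=\int_0^1 f_{\cL}\,\deta_{k-1}$, where $f_{\cL}(z)=\sum_{g\in\cL,\,g_1>0}(n(g)+z)^{-1}$ has its poles at the negative integers $-n(g)$.
\item \emph{The inequality that must then be proved.} You need $n_0+x_w>R_w$ with $n_0=\min n(g)$. This needs care exactly for $w=0^p$, where $x_w=0$ and $R_w=1$; it holds because such $g$ have $n(g)\geq b^p$. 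Only then can you pick $R\in\Ioo{R_w}{n_0+x_w}$, expand $f_{\cL}$ in the basis, and integrate against $\eta_{k-1}$.
\end{itemize}

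The spectral part also needs repair.

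\begin{itemize}
\item \emph{One measure $\nu$ cannot give all the eigendistributions.} Testing $K(D^m\nu)=\lambda D^m\nu$ against $f$ forces $\lambda\nu=\sum_{g\in\cG}b^{-(m+1)|g|}P_g\nu$. So mode $j$ needs its own exponentially tilted measure $\nu_j$: the law of $x(G_1G_2\cdots)$ with $\PP(G=g)=b^{-j|g|}/\lambda_j$, so that $\nu_1=\Leb$. The eigendistribution is then $\nu_j^{(j-1)}$. For the same reason, your ``$\mu_0=\sum_j c_j\nu^{(j-1)}$, first term Lebesgue'' is inconsistent.
\item \emph{Compactness does not give a Riesz basis.} Compactness of the transpose with simple eigenvalues is not enough. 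The paper uses concentration: under $\nu_n$ the shortest return word $g_w$ dominates, so $\nu_n$ concentrates at $x_w=x(g_wg_wg_w\cdots)$. The moments $\EE\bigl(|X_n-x_w|^q\bigr)$ are then exponentially small in $n$. Hence the matrix pairing $R^{n-1}\sigma_n$ with the orthonormal basis $(z-x_w)^m/R^m$ is unit lower triangular with summable off-diagonal entries, and Proposition~\ref{prop:rieszhilbert} applies.
\item \emph{Absolute convergence at $k=1$.} No power of $\lambda_j$ survives in $\alpha_j\lambda_j^k$ when $k=1$. Convergence must come from Cauchy--Schwarz: the Riesz coefficients of $f_{\cL}$ and the values $\eta_0(\cB_{j-1})/R^{j-1}$ are both square-summable. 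It does not come from decay of the eigenvalues.
\item \emph{Minor: the eigenvalue asymptotics.} The eigenvalues behave like $b^{-\tau_w j}$ with $\tau_w=|g_w|$, not $b^{-(p-1)j}$.
\end{itemize}
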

See Theorem~\ref{thm:part1main} in \autoref{sec:part1main} for $p=1$, and
Theorem~\ref{thm:part2main} in \autoref{sec:part2main} for the general
case. The former uses exponent $k+1$, and the latter exponent $k-1$,
the shift is here absorbed into the
coefficients.

Allouche, Hu, and Morin \cite{allouchehumorin2024} proved $\lim_{k\to\infty}
I(b,w,k)=b^p \log(b)$, which, of course, is implied by
Equation~\eqref{eq:main}.  For $p=1$, the limit $b\log(b)$ was known since
Farhi \cite{farhi}.  The author gave proofs \cite{burnolirwin, burnolblocks} of
the limit theorem using ideas very different from those of \cite{farhi} and
\cite{allouchehumorin2024} (which themselves are completely distinct from one
another): the Irwin series $I(b,w,k)$ are obtained by integrating $1/x$ on
$\Ifo{b^{-1}}{1}$ against discrete measures $\mu_k$, depending on $b$ and $w$.
The limit theorem is derived from the weak convergence of $(\mu_k)$ to $b^p
\Leb$, where $\Leb$ is the Lebesgue measure on $\Ifo01$.

The operator with eigenvalues $(\lambda_j)_{j\geq1}$ was first discovered in
an attempt to reinterpret the limit theorem as a fixed point theorem.  But
measures are not enough: beyond the first mode given by $\Leb$, the
eigenvectors are found among compactly supported distributions, and are
derivatives of singular measures.  The transpose action has corresponding
monic eigenpolynomials $\cB_m$, $m\geq0$.
The proof of Theorem~\ref{thm:intromain} will take us on a journey involving
probability theory, spectral theory, language combinatorics and even analytic
function theory.

The paper is divided into three parts. The first one thoroughly handles the
$p=1$ case. It includes some developments which are not needed for the main
theorem, such as concrete quantitative bounds of numerical interest. But its
large size originates mainly in the desire to articulate a full, coherent, logical
development, so that, for example, we do not encounter exponentially tilted
probability measures out of the blue, or, to give another example, we do not
suddenly start working in a certain Hardy space of analytic functions.  Part~2
handles the combinatorially vastly more complex case of multi-digit blocks.
As is frequent in mathematics though, the greater generality has forced the
author into actually simpler and sharper proofs of the core technical analytic
estimates.

A concentration of measure phenomenon discovered in Part~1 is encountered
again here. It helps set up a sufficient analytic framework in which
Theorem~\ref{thm:intromain} follows from a Riesz basis property of the
(suitably normalized) eigenpolynomials $\cB_m$, $m\geq0$.  Finally, in Part~3,
we reuse the latter result to provide an interpolation, associated with every
pair $(b,w)$, between Euler--Maclaurin and Taylor expansions, and which we find
remarkable in itself.

The generalization of $I(b,w,k)$ to sums of inverse powers naturally arises
while investigating Theorem~\ref{thm:intromain}, but only Part~1 really
insists on this. The existence of a meromorphic continuation to the whole
complex plane is known from a theorem of Allouche, Mendès-France
and Peyrière on general automatic Dirichlet series \cite{allouchemendespeyriere2000}.
Such analytic continuation is not used in the present paper.

We include here the table of contents, so that the reader may get, already
at this stage, a more precise picture of the material encountered along the way.

\singlespacing

\tableofcontents

\onehalfspacing

\section{Notation and preliminaries}

Throughout the paper $b>1$ is a fixed integer.  $\NN$ is the set of positive
integers and $\NN_0 = \{0\} \cup \NN$.  The alphabet of digits in base $b$ is
$\Sigma_b=\{0,\dots, b-1\}$.

The \emph{word space} $\cW$ is the disjoint union $\sqcup_{j\in\NN_0}
\Sigma_b^j$. A \emph{word} (or \emph{string}, or \emph{block}) is an element
of the word space.  We may sometimes however use the term \emph{infinite word}
(or string) to refer to a sequence of digits. For $g\in \cW$, $|g|$ is the
index $j$ such that $g\in\Sigma_b^j$, and is called its \emph{length}.  The
single word of zero length is denoted $\emptyword$ and called the \emph{empty
  word}.  A \emph{language} is any subset of $\cW$. Concatenation of $w_1$ and
$w_2$ is written as the juxtaposition $w_1w_2$, and for two languages,
$\cL_1\cL_2$ is the set of such concatenations. Also $w^j=w\dots w$, $j$
times, $w^0=\emptyword$, and analogously for languages, in particular
$\cL^0=\{\emptyword\}$ (and by convention this holds also for
$\cL=\emptyset$).

If $g=rs$, we say that $r$ is a \emph{prefix} of $g$ and $s$ is a
\emph{suffix} of $g$. We also refer to $s$ as the \emph{complement} of $r$ (in $g$).
A \emph{border} of $w$ is a \emph{non-empty} word which is both a prefix and a
suffix of $w$, and is \emph{not equal} to $w$ itself (we are following here
\cite[\S1.9, Exercise 43]{alloucheshallitCUP2003}).  A \emph{complement},
without other qualification, always means the complement in $w$ of a border of
$w$. An \emph{overlap period} is an integer $j$ with $0<j<|w|$ such that the
first $|w|-j$ digits of $w$ form a border. Equivalently, shifting $w$ $j>0$
places to the right and writing it underneath the original $w$ creates an
overlap of length $|w|-j>0$. In still other terms, there is a word of length
$|w|+j\in (|w|, 2|w|)$ having $w$ both as a prefix and as a suffix.  The
Guibas--Odlyzko auto-correlation polynomial \cite{guibasodlyzko1981b} is $1 +
\sum_{j\text{ an overlap period}}T^j$. A word with no border is called
\emph{unbordered}.

For a given non-empty word $w$, $k_w:\cW\to\NN_{0}$ is the counting function
of occurrences of $w$ in other words: an \emph{occurrence} of $w=w_1\dots w_p$
in $g=g_1\dots g_q$ is an index $j\geq1$ such that $j+p-1\leq q$ and
$g_{j+h-1}=w_h$ for $1\leq h \leq p$.  So, occurrences are counted with
overlaps allowed. We set $\cW_k = \{g \in \cW, k_w(g)=k\}$. 

We let $n(g)$ be the integer represented by $g$ (most significant digit on the
left), in particular $n(\emptyword)=0$. Among all words $g$ with
$n(g)=n\in\NN_0$, there is only one with no leading zeros. It is called the
\emph{minimal representation} of $n$, for $n=0$ it is $\emptyword$ and for
$n>0$ the \emph{most significant digit of $n$} is the leftmost digit of the
minimal representation. The notation $\overline{g}$ may also be used as a
synonym for $n(g)$.

By slight
abuse of notation $k_w(n)$ is defined to be $k_w(g)$ with $g$ the minimal
representation of $n$. In particular $k_w(0)=0$, even
when $w=0$.  Note that we do not make any special definition
for cases like this where $w$ itself starts with a zero.
Celebrated examples include counting $1$'s in base $2$, and counting $11$'s
also in base $2$. The former is the \emph{binary sum-of-digits} function,
whose parity gives the Thue--Morse sequence.  And the parity sign ($1$ for
even, $-1$ for odd) of the number of occurrences of $11$ in the binary
representations of the non-negative integers gives the Rudin--Shapiro
sequence.  See \cite{alloucheshallit1988} and \cite[\S3.2--\S3.3, \S6.2,
\S17.2]{alloucheshallitCUP2003}.

Let us define for any word $g$ of length $i$, $g=d_1\dots d_i$ the real number
$x(g) = \frac{d_1}b+ \dots +\frac{d_i}{b^i}$; so $x(\emptyword)=0$, $x(g)=0$
if and only if $g=0^j$ for some $j$, and $0\leq x(g)\leq 1 - b^{-|g|}$, the
upper bound being reached when all digits of $g$ are equal to $b-1$.  The
\emph{cylinder} $\Cyl_g$ is defined as $\Ifo{x(g)}{x(g) + b^{-|g|}}$. It is
included in $\Ifo01$. The left end-point does not by itself determine $g$, due
to trailing zeros, but together with the length of the interval, it does.

From \cite{burnolblocks} (originally \cite{burnolirwin} for $p=1$),
the measure $\mu_k$, which is shorthand notation for
$\mu_{b,w,k}$, is the sum of the weighted Dirac masses
$b^{-|g|}\delta_{x(g)}$, taken over all words $g\in\cW_k$.  Due to trailing
zeros, several, even infinitely many, of these Dirac masses may be located at
the same point $x\in \Ifo01$.  The definition of $\mu_k$ is motivated by the
formula
\begin{equation}
  I(b,w,k) = \int_{\Ifo{b^{-1}}1} \frac{\dmu_k(x)}{x}\;.
\end{equation}
This formula is both trivial and the fundamental starting point of the entire
paper.

It is proved at the end of \cite{burnolblocks} that for any $b$-cylinder
$\Cyl_g$, there holds $\mu_k(\Cyl_g) = b^p b^{-|g|}$ as soon as $k > |g|$.
This implies the weak convergence of $\mu_k$ to $b^p\Leb$, with some quantitative
estimate.  This result will be completely superseded by the present paper.

Here, $\Leb$ is the Lebesgue measure on $\Ifo01$.  The same symbol is also the
Lebesgue measure on $\Iff01$, as it proves convenient at a certain stage to
move from $\Ifo01$ to $\Iff01$, in particular to use the language
of distribution theory.

When an expectation or an integral contains a leading $\Un_{X_k\geq b^{-1}}$,
it is understood that it is restricted to the event $X_k\geq b^{-1}$. Thus,
the rest of the formula can contain $X_k^{-1}$ without any special provision.

In some cases, quantities depending on multiple variables,
e.g.\@ $H_s(b,d,k)$ may be referred to simply as $H_s$ if the context is
clear.

The Pochhammer symbol $(z)_j$ is $\prod_{0\leq i < j}
(z+i)$. In particular $(z)_0=1$.

The notation $x(g)$ is extended to infinite words, i.e.\@ to sequences
$(d_i)_{i\geq1}$ of digits.  It then refers to $\sum_{i=1}^\infty
d_ib^{-i}\in\Iff01$.  Often in Part~2, we will use notation such as
$x(G_1G_2\dots)$ where $(G_j)_{j\geq1}$ is a sequence in $\cW$.  It means
$x((d_i)_{i\geq1})$ where the digits are listed in the order of the words
$G_j$, $j\geq1$.  If only finitely many of the $G_j$'s are non-empty (which
will never happen in the paper), there are only finitely many digits and $x$
has its previous meaning.

In Part~1, a digit $d$ in radix $b$ is fixed. And
throughout Part~2, a word $w$ of length $p\geq1$ is fixed.  Occasionally, the
hypothesis $p>1$ may be made.

Note that all displayed equations are numbered. This is deliberate.

\section{Summable triangular perturbations}

The proof of the main result requires a Hilbert space lemma on Riesz bases.
As this is very separate from the main course of the discussion, we locate it
here.

We actually start with a Banach space context of Schauder bases.  The lemma we
prove may be viewed as a very special case of Ringrose's theorem
\cite{ringrose1962} on the quasi-nilpotence of compact strictly triangular
operators. The result, as stated below, is probably a textbook exercise, but
we have not been able to locate its precise statement in the literature.  For
a Hilbert space we give an even more specialized lemma, which will serve
in the sequel.
\begin{prop}\label{prop:schauderbanach}
  Let $E$ be a Banach space and let $(e_n)_{n\ge0}$ be a normalized
  (i.e.\@ $\|e_n\|=1$ for every $n$)
  Schauder basis of $E$. Let $f_n = e_n + u_n$ for $n\geq0$, with
  $\sum_n \|u_n\| < \infty$.  Assume further 
  either that $u_n\in\Span\{e_m:m<n\}$ for every $n$, or
  that $u_n\in\overline{{\Span}\{e_m:m>n\}}$ for every
  $n$. Then there exists a bounded invertible linear map $S:E\to E$ such
  that $Se_n=f_n$ for every $n\geq0$.
\end{prop}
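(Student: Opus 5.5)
The plan is to take $S=I+K$, where $K$ is the operator sending $e_n$ to $u_n$. The point is that $K$ is compact, so by the Fredholm alternative it will be enough to show that $I+K$ is injective. Injectivity will come from the triangular structure.

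\emph{Step 1: construction and compactness.} Let $C$ be the basis constant of $(e_n)$, so that the partial-sum projections $P_N$ (onto $\Span\{e_m:m<N\}$) satisfy $\|P_N\|\le C$. The coordinate functionals $e_n^*$ then satisfy $\|e_n^*\|\le 2C$, because $e_n^*(x)e_n=P_{n+1}x-P_nx$ and $\|e_n\|=1$. We set
\begin{equation*}
  Kx=\sum_{n\ge0} e_n^*(x)\,u_n .
\end{equation*}
This series converges absolutely, with $\|K\|\le 2C\sum_n\|u_n\|$. Moreover $K$ is a norm limit of finite-rank operators, hence compact. Clearly $Ke_n=u_n$, so $S=I+K$ is bounded and satisfies $Se_n=f_n$. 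Since $K$ is compact, $S$ is Fredholm of index $0$, and invertibility of $S$ reduces to $\ker S=\{0\}$. Let $Q_N=I-P_N$. Suppose $x+Kx=0$.

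\emph{Step 2: the case $u_n\in\overline{\Span}\{e_m:m>n\}$.} Here $Ke_n\in\overline{\Span}\{e_m:m>n\}\subset\ker P_{n+1}$. Hence for $n\ge N$ we get $P_NKe_n=0$, so $P_NKQ_N=0$ and therefore $P_NK=P_NKP_N$. Applying $P_N$ to $x=-Kx$ gives
\begin{equation*}
  P_Nx=-(P_NKP_N)(P_Nx).
\end{equation*}
On the finite-dimensional space $\Span\{e_0,\dots,e_{N-1}\}$ the operator $P_NKP_N$ is strictly triangular in the basis $e_0,\dots,e_{N-1}$: the coefficient of $e_m$ in $P_NKe_n$ vanishes unless $m>n$. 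So it is nilpotent, and $I+P_NKP_N$ is injective there. Thus $P_Nx=0$ for every $N$, and $x=0$.

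\emph{Step 3: the case $u_n\in\Span\{e_m:m<n\}$.} This is the step I expect to be the real obstacle, since triangularity alone does not propagate from a finite section. We need the summability to handle the tail. Now $K$ maps $\Span\{e_m:m<N\}$ into itself, so $Q_NKP_N=0$ and $Q_NK=Q_NKQ_N$. From $x=-Kx$ we obtain $Q_Nx=-(Q_NKQ_N)(Q_Nx)$. We then estimate
\begin{equation*}
  \|Q_NKQ_N\|\le\|Q_N\|\sup_{\|y\|\le1}\Bigl\|\sum_{n\ge N}e_n^*(Q_Ny)u_n\Bigr\|\le (1+C)^2\,2C\sum_{n\ge N}\|u_n\|,
\end{equation*}
which is $<1$ for $N$ large. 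Hence $Q_Nx=0$ for such $N$, i.e.\@ $x\in\Span\{e_m:m<N\}$. On that finite-dimensional space $K$ is strictly triangular, hence nilpotent, so $I+K$ is injective there and $x=0$.

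In both cases $S$ is injective, and the Fredholm alternative gives that $S$ is bounded and invertible, as required.
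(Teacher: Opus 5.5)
Your proof is correct, but it takes a different route from the paper's. Both proofs use the same operator: your $I+K$ is the paper's $S=I+\sum_n R_n$ with $R_n=u_n\otimes e_n^*$.

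\textbf{The paper's route.} Triangularity gives $R_mR_n=0$ for $m\le n$ (resp.\ $m\ge n$). Hence the partial sums factor as products $(I+R_0)\cdots(I+R_N)$ of elementary invertible operators, with $(I+R_n)^{-1}=I-R_n$. The reversed products $(I-R_N)\cdots(I-R_0)$ are then shown to be Cauchy in operator norm, and their limit is $S^{-1}$.

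\textbf{Your route.} You use compactness of $K$ and the Fredholm alternative to reduce everything to $\ker(I+K)=\{0\}$. You then prove injectivity by finite-section arguments: compression to $P_NKP_N$ in the lower-triangular case, and smallness of the tail $Q_NKQ_N$ in the upper-triangular case.

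\textbf{What each buys.} The paper's argument has three advantages:
\begin{itemize}
\item it is elementary and needs no Riesz--Schauder theory;
\item it treats both cases symmetrically, the only change being the order of the products;
\item it is constructive, giving $S^{-1}$ as a norm-convergent infinite product with the explicit bound $\|S^{-1}\|\le\prod_n(1+\|R_n\|)$.
\end{itemize}
Your argument gives no control on $\|S^{-1}\|$. This is harmless here, since later uses in the paper only need $\|S^{-1}\|<\infty$. In exchange it is more conceptual. Replacing $1$ by any $\lambda\neq0$, the same steps show that $\lambda I+K$ is injective, so the compact operator $K$ has spectrum $\{0\}$, i.e.\ it is quasinilpotent. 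This is exactly the special case of Ringrose's theorem that the paper cites just before the statement. Your proof also shows where summability matters: in the lower-triangular case it is used only to get compactness, while in the upper-triangular case it also drives the injectivity.
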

\begin{proof}
  We handle first the lower-triangular case, i.e.\@
  $u_n\in\overline{{\Span}\{e_m:m>n\}}$ for every $n\geq0$.  Let
  $(e_n^*)_{n\geq0}$ be the coordinate functionals, and $R_n = u_n\otimes
  e_n^*$, i.e.\@ $R_n(x) = e_n^*(x)u_n$.  It is known that the $e_n^*$ are
  uniformly norm-bounded as linear forms, say $\|e_n^*\|\leq C$, $C<\infty$.
  So $\sum_{n\geq0} \|R_n\| <\infty$ by the hypothesis.  Consequently,
  denoting $I$ the identity operator on $E$,
  \begin{equation}
    S = I + \sum_{n=0}^\infty R_n
  \end{equation}
  is a bounded operator. It verifies $S(e_n) = f_n$ for every $n$.  We now
  prove that $S$ is invertible. For this, observe by the triangular
  hypothesis that $R_m(R_n(x))=0$ if $m\leq n$.  In particular, $(I+ R_0)(I +
  R_1) = I + R_0 + R_1$ and, inductively,
  \begin{equation}
    (I + R_0)\dots (I + R_N) = I +\sum_{n=0}^N R_n\,.
  \end{equation}
  Let $S_N$ be this operator, so $S$ is the limit in operator norm of $(S_N)$.
  Each factor $I+R_n$ is invertible, hence, $S_N$ is invertible and
  \begin{equation}
    S_N^{-1} = (I - R_N)\dots (I - R_0).
  \end{equation}
  Let $T_N = S_N^{-1}$. For $M>N$ we have
  \begin{align}
    T_M - T_N&= \Bigl((I - R_M)\dots (I - R_{N+1}) - I\Bigr)T_N\\
   \|T_M- T_N\|&\leq \Bigl((1 + \|R_M\|)\dots (1 + \|R_{N+1}\|) -
   1\Bigr)\|T_N\|\\
              &\leq \prod_{n\leq M}(1+\|R_n\|) - \prod_{n\leq N}(1+\|R_n\|).
  \end{align}
  So the Cauchy condition for operator-norm convergence is verified. Let
  $T=\lim T_N$.  As $T_NS_N = S_NT_N = I$ for every $N$ we obtain $TS=ST=I$.

  The proof for the upper-triangular case is almost identical, the sole
  differences being that it reverses the order of the products.
\end{proof}
With $E$ a Hilbert space and $(e_n)$ an orthonormal basis, the conclusion is
that $(f_n)$ is a Riesz basis of $E$.  We give an even more specialized
statement in that case (which is surely an exercise in textbooks).
\begin{prop}\label{prop:rieszhilbert}
  Let $E$ be a separable Hilbert space with scalar product $\scalp{\cdot}{\cdot}$
  (complex linear in the second vector).  Let $(e_n)_{n\ge0}$ be an
  orthonormal basis.  Let $f_n = e_n + u_n$ for $n\geq0$, such that the matrix
  $(\scalp{e_i}{u_j})_{i,j}$ is strictly upper or strictly lower triangular,
  and such that the sum of the moduli of its coefficients is finite.  Then
  $\sum_n\|f_n - e_n\|<\infty$, $(f_n)$ is a Riesz basis and the biorthogonal
  system $(f_n^*)$ is such that $\sum_n \|f_n^* - e_n\|<\infty$.
\end{prop}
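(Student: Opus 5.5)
The plan is to reduce to Proposition~\ref{prop:schauderbanach} and then control the biorthogonal system through the explicit inverse built there. Write $a_{ij}=\scalp{e_i}{u_j}$, so that $u_j=\sum_i a_{ij}e_i$ in the orthonormal basis. Then
\begin{equation}
\|u_j\|=\Bigl(\sum_i|a_{ij}|^2\Bigr)^{1/2}\leq\sum_i|a_{ij}|,
\end{equation}
and summing over $j$ gives $\sum_n\|f_n-e_n\|\leq\sum_{i,j}|a_{ij}|<\infty$. Strict triangularity of the matrix means exactly that $u_j\in\overline{\Span}\{e_i:i>j\}$ in the lower case, or $u_j\in\overline{\Span}\{e_i:i<j\}$ in the upper case. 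In the upper case Proposition~\ref{prop:schauderbanach} asks for the finite span $\Span\{e_m:m<n\}$. This is automatic, since a vector supported on finitely many coordinates $i<j$ already lies there. So Proposition~\ref{prop:schauderbanach} applies and yields a bounded invertible $S$ with $Se_n=f_n$. An image of an orthonormal basis under a bounded invertible operator is by definition a Riesz basis.

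For the biorthogonal system, set $f_n^*=(S^{-1})^*e_n$. Then
\begin{equation}
\scalp{f_n^*}{f_m}=\scalp{e_n}{S^{-1}Se_m}=\delta_{nm},
\end{equation}
so $(f_n^*)$ is the biorthogonal system. It remains to show $\sum_n\|(S^{-1})^*e_n-e_n\|<\infty$. Here $(S^{-1})^*-I=\bigl((S^{-1}-I)\bigr)^*$, so everything reduces to the matrix of $S^{-1}-I$, whose adjoint has matrix equal to the conjugate transpose.

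The key step, and the main obstacle, is to show that the matrix of $S^{-1}-I$ is again absolutely summable. The naive idea of expanding $S^{-1}=\sum_k(-A)^k$, with $A=S-I$ having matrix $(a_{ij})$, need not converge in norm. Instead I will use the product representation from the proof of Proposition~\ref{prop:schauderbanach}. In the orthonormal case $e_n^*=\scalp{e_n}{\cdot}$, so $R_n$ has matrix equal to the $n$-th column of $(a_{ij})$ placed in column $n$. Put $\|M\|_1=\sum_{i,j}|M_{ij}|$ for a matrix $M$. This quantity is submultiplicative, $\|MN\|_1\leq\|M\|_1\|N\|_1$, since $\sum_{i,j}\bigl|\sum_k M_{ik}N_{kj}\bigr|\leq\sum_{i,k,k',j}|M_{ik}||N_{k'j}|$. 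Let $r_n=\|R_n\|_1$, the $\ell^1$ norm of column $n$, so that $\sum_n r_n<\infty$. Then
\begin{equation}
T_N-I=(I-R_N)\cdots(I-R_0)-I
\end{equation}
expands as a sum of products of distinct $R_n$'s, which gives $\|T_N-I\|_1\leq\prod_{n\leq N}(1+r_n)-1$. The same expansion shows that the entries of $T_N-I$ converge entrywise with a uniform $\ell^1$ bound, indeed Cauchy in $\|\cdot\|_1$, as in the earlier estimate. Since $T_N\to S^{-1}$ in operator norm, the entrywise limit is the matrix of $S^{-1}-I$, and by Fatou $\|S^{-1}-I\|_1\leq\prod_n(1+r_n)-1<\infty$.

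Finally, the adjoint has the same $\ell^1$ norm. Hence
\begin{equation}
\sum_n\|f_n^*-e_n\|\leq\bigl\|(S^{-1}-I)^*\bigr\|_1<\infty,
\end{equation}
by the same column bound used at the start. The upper-triangular case is identical with the order of the products reversed.
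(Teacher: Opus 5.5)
Your proof is correct. Its first half (the column bound $\|u_j\|\le\sum_i|\scalp{e_i}{u_j}|$, the appeal to Proposition~\ref{prop:schauderbanach}, and $f_n^*=(S^*)^{-1}e_n$) coincides with the paper's. For the estimate on the biorthogonal system, however, you take a genuinely different route.

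The paper never looks at the matrix of $S^{-1}$. It writes $f_m^*-e_m=(S^*)^{-1}(e_m-S^*e_m)$ and observes that $\|e_m-S^*e_m\|=\bigl(\sum_n|\scalp{u_n}{e_m}|^2\bigr)^{1/2}$ is the $\ell^2$ norm of the $m$-th \emph{row} of $(\scalp{e_i}{u_j})$. This gives $\sum_m\|f_m^*-e_m\|\le\|S^{-1}\|\sum_{i,j}|\scalp{e_i}{u_j}|$. Only the boundedness of $S^{-1}$ is used, and the rows of the matrix control the dual system just as the columns control $(f_n)$.

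You instead prove that the matrix of $S^{-1}-I$ is itself absolutely summable. You do this through the submultiplicative entrywise norm $\|\cdot\|_1$ and the subset expansion of $(I-R_N)\cdots(I-R_0)$. This is longer and uses the triangular structure a second time, but it gives more. It shows that $(f_n^*)$ is again a summable strictly triangular perturbation of $(e_n)$, of the same type as $(f_n)$, with the explicit bound $\prod_n(1+r_n)-1$.

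One side remark of yours is inaccurate, though harmless. In the lower case $R_mR_n=0$ for $m\le n$, so $A^k=\sum_{n_1>\cdots>n_k}R_{n_1}\cdots R_{n_k}$ and $\|A^k\|\le(\sum_n\|R_n\|)^k/k!$. Hence the Neumann series $\sum_k(-A)^k$ does converge in norm; it is exactly your expanded product.
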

\begin{proof}
  We have $\|u_n\|^2 = \sum_i |\scalp{e_i}{u_n}|^2 \leq (\sum_i
  |\scalp{e_i}{u_n}|)^2$, so $\sum_n \|u_n\|< \infty$. From Proposition
  \ref{prop:schauderbanach} there exists a bounded invertible operator $S$
  such that $S(e_n)=f_n$.  As
  $\delta_{n,m}=\scalp{f_m^*}{f_n}=\scalp{f_m^*}{Se_n}=\scalp{S^*(f_m^*)}{e_n}$
  one has $f_m^*=(S^*)^{-1}e_m$ and $f_m^* - e_m = (S^*)^{-1}(e_m - S^* e_m)$.
  We compute
  \begin{equation}
    \|e_m- S^*e_m\|^2
    = \sum_n |\scalp{e_n}{e_m - S^*e_m}|^2 
     = \sum_n |\delta_{n,m} - \scalp{Se_n}{e_m}|^2.
  \end{equation}
Moreover $\scalp{Se_n}{e_m} = \delta_{n,m} + \scalp{u_n}{e_m}$. So
\begin{equation}
  \|e_m- S^*e_m\| =\sqrt{\sum_{n} |\scalp{u_n}{e_m}|^2}\leq \sum_{n} |\scalp{e_m}{u_n}|.
\end{equation}
This implies $\sum_m \|e_m - S^*e_m\| < \infty$, and then $\sum_m
\|f_m^* - e_m\| < \infty$ as $(S^*)^{-1}$ is bounded.
\end{proof}
In the concrete cases we will consider, both $(\|e_m - f_m\|)$ and $(\|e_m -
f_m^*\|)$ will be shown to be bounded by sequences geometrically decreasing
to zero.  Assuming this property for one does not imply it for the other, so
it will have to be established for both.

\part{Return operator and the modal expansion of Irwin sums}

Throughout this Part, the word $w$ is a digit $d$.  The measures $\mu_k$ have
been defined in Notation and Preliminaries.  Integrating $\frac{\Un_{b^{-1}\leq
    x<1}(x)}{x}$ against them computes the Irwin sums $I(b,d,k)$.  The case
$(b,d,k)=(2,1,0)$ is special as it is the only one for which the restriction
of $\mu_0$ to $\Ifo{b^{-1}}{1}$ is the zero measure (indeed
$\mu_0=2\delta_0$). We will keep including that case for a while, but will
soon exclude it, so as to avoid having to extend statements again and again
with special provisions.
 
\section{Random variables}

An elementary direct computation (\cite{burnolirwin}) shows that $b^{-1}\mu_k$
is a probability measure on $\Ifo01$.  A nicer proof would be to
exhibit $b^{-1}\mu_k$ as the law of some suitable $X_k$. There is a very
nice construction of such a random variable, which we explain now.

First, we define random variables taking values
in the word space $\cW = \sqcup_{j\geq0} \Sigma_b^j$. Here is for $k=0$.
Consider infinitely many i.i.d.\@ random variables $d_i$, $i\geq1$, with
values in $\Sigma_b$, which are uniformly distributed there.  Define $\wX_0$ with
values in $\cW$ as follows: if $d_1=d$, then $\wX_0=\emptyword$; else, if
$d_2=d$, then $\wX_0 = d_1$; else, if $d_3=d$, then $\wX_0=d_1d_2$, etc. In
case none of the $d_i$ equals $d$, which happens with probability zero, we let
$\wX_0$ take the value $\emptyword$.  The words in the image of $\wX_0$ are
exactly those of the \emph{language of no-occurrence} of the digit $d$.  The
probability that $\wX_0$ takes such a value $g$ is $b^{-|g|-1}$. Indeed, if
$g=e_1\dots e_i$ is such a word, $\PP(\wX_0 = g) = \PP(d_1=e_1)\dots
\PP(d_i=e_i)\PP(d_{i+1}=d)$.  This is $b^{-1}$ times the measure defined in
\cite{burnolirwin} on words, whose push-forward to $\Ifo01$ under the map
$x:\cW\to\Ifo01$ is $\mu_0$.  We let $\wmu_0$ be that
measure on $\cW$ which is thus $b$ times the ``law'' of $\wX_0$.

How to define $\wX_k$ for $k>0$ is now clear: it is the longest word
accumulating the random digits until the $(k+1)$st occurrence of $d$, and this
terminal $d$ is not kept in the word.  If the random trials never provide
$k+1$ occurrences of $d$, then $\wX_k$ takes as value $\emptyword$. Then the
push-forward under $\wX_k$ of the probability space measure is $b^{-1}\wmu_k$.

We choose such i.i.d. random variables $d_i$, $i\in\NN$, as the functions on
$(\Ifo 01,\Leb)$, which give the digits of the canonical radix $b$ expansion
of $x\in\Ifo01$.  So the $\wX_k$ are defined on $(\Ifo01, \Leb)$ itself.  We
then compose with $x:\cW\to \Ifo{0}{1}$ to create real-valued random variables
$X_k$.  It proves convenient to alter $X_k$ so that $X_k = x \circ \wX_k$ is
only true almost surely: for those $x$ whose base-$b$ expansion do not have at
least $k+1$ occurrences of $d$, we set $X_k(x)=x$, and not $X_k(x)=0$. By
construction $\Law(X_k)$ is $b^{-1}$ times the measure $\mu_k$ defined in
\cite{burnolirwin}.  So always $X_0\leq X_1\leq \dots X_j \leq \dots \leq x$,
and $\lim X_j(x) = x$ for every $x$.

At this point we get a completely elementary, yet powerful, representation
of the Irwin harmonic sum as an expectation value:
\begin{equation}
  \label{eq:EEd}
  I(b,d,k) = b \EE(\frac{\Un_{X_k\geq b^{-1}}}{X_k}).
\end{equation}
Let us examine the event $A_k = \{X_k\geq b^{-1}\}$. If $k>0$, the definition of $X_k$
implies that one can recover $d_1$ as its first $b$-digit, so this is the same as
the event $\{d_1>0\}$. For $k=0$, we must distinguish $d=0$ from $d>0$. In
the former case, $\{X_0\geq b^{-1}\}$ exactly means that $d_1$ is not zero.
In the latter case, we must remove from $A=\{d_1>0\}$ the event $\{d_1=d\}$.
In particular $A_0$ is empty if and only if $(b,d)=(2,1)$. 
So Equation~\eqref{eq:EEd} gives:
\begin{align}
  \label{eq:EEd2a}
  I(b,d,k) &=
    b \int_{b^{-1}}^1 \frac{\dx}{X_k(x)} \qquad (k>0 \text{ or } k=0, d=0),
\\
\label{eq:EEd2b}
  I(b,d,0) &=  b \int_{x\in \Ifo{b^{-1}}1, \lfloor bx\rfloor \neq d}
                 \frac{\dx}{X_0(x)}
    \qquad (d>0),
\end{align}
As $(X_k)_{k\geq0}$ is a pointwise non-decreasing sequence with limit $x$, the
multiplicative inverses, which are bounded above by $b$ on $\Ifo{b^{-1}}{1}$
for $k>0$ or for $k=0$ with $d=0$, give a non-increasing sequence. Hence the
$(I(b,d,k))$ sequence is non-increasing for $k\geq1$, and also for $k\geq0$ if
$d=0$, and converges to $b\log(b)$.  More precisely, $X_{i+1}(x)>X_i(x)$
almost surely if $d>0$, and if $d=0$, we have $X_{i+1}(x)>X_i(x)$ for all $x$
such that there is one non-zero digit between the $(i+1)$st and the $(i+2)$nd
occurrences of $d$.  For example, this is the case everywhere on the
half-open interval $\Ifo{x_0}{x_0+b^{-i-3}}$, choosing
$x_0=0.1\underline{0\dots0}_{i+1\text{ zeros}}1$.  We conclude that
$I(b,0,i+1)<I(b,0,i)$, for any $i\in \NN_0$.

This is Farhi's Theorem~from \cite{farhi}.  It is striking that we have done
only a little bit of playing around with radix $b$ expansions and counting
occurrences, and then we obtained Farhi's theorem solely on the basis of
dominated convergence, with no computations whatsoever. The second proof given
in \cite{burnolirwin} was based on cylinder masses and was also almost
computation-free, but the new construction based on the random variables $X_k$
will allow us to establish precise quantitative estimates. This is our next goal.

\section{Return times and tail variables}

Let us write $U(x)=x$. Let $\tau_k\in\NN\cup\{\infty\}$ be the index of the
$(k+1)$st occurrence of digit $d$ in the radix $b$ expansion of $x$, or
$\infty$ if no such occurrence is realized.  We write (with the convention
$b^{-\infty}=0$):
\begin{equation}
  \label{eq:UXZ}
  U = X_k + b^{-\tau_k}(d+Z_k),
\end{equation}
where $Z_k(x)\in \Ifo01$ is uniquely determined if $\tau_k<\infty$, and is set
to the value zero if not.  We observe that $Z_k$ is independent of the pair
$(X_k,\tau_k)$ and furthermore $Z_k\sim\Unif(\Ifo{0}{1})=\Leb$.  Also we have
$0\leq Z_k < 1$ everywhere on the probability space $(\Ifo01, \Leb)$. And
$\tau_k\geq k+1$ for every $k\in \NN_0$.

The variable $\tau_0$ obeys a geometric law $\PP(\tau_0=j) =(1 - b^{-1})^{j-1}
b^{-1}$ for $j\in \NN$ ($\PP(\tau_0=\infty)=0$).  It has expectation value
$b$, finite moments of all orders, and its moment-generating function
(which converges for real-valued $z$ if $z<\log\frac{b}{b-1}$) is:
\begin{equation}
  \EE(\e^{z\tau_0}) = \frac{1}{b\e^{-z} - b +1 }\;.
\end{equation}
Let us define, for $s>\log_b(b-1)$:
\begin{equation}\label{eq:lambdas}
  \lambda_s = \EE(b^{-(s-1)\tau_0}) = \frac1{b^{s} - b +1}\;,
\end{equation}
In particular, for $m\in\NN$:
\begin{equation}\label{eq:lambdam}
  \lambda_m = \EE(b^{-(m-1)\tau_0}) = \frac1{b^{m} - b +1}\;,
\end{equation}
so $1= \lambda_1>\lambda_2>\lambda_3>\dots \to 0^+$.

We set $\tau_{-1}=0$.  The successive increments $\tau_{j}-\tau_{j-1}$,
$j\in\NN_0$, between return times are mutually independent and have the same
geometric law as $\tau_0$.  Hence $\EE(b^{-m\tau_k})=\lambda_{m+1}^{k+1}$.

\section{\texorpdfstring{Comparing $U^{-1}$ with $X_k^{-1}$}{Comparing 1/U
    with 1/Xk}}

The case $(b,d,k)=(2,1,0)$ is from here on excluded from the main flow of
presentation, and will be considered, if at all, only in a few scattered additional
remarks.

Let us compare $U^{-1}$ with $X_k^{-1}$, with the aim of integrating on the
event $A_k=\{X_k\geq b^{-1}\}$. This event is $\Ifo{b^{-1}}1$ for $k\geq1$ and
$\Ifo{b^{-1}}{1}\setminus \Ifo{db^{-1}}{(d+1)b^{-1}}$ for $k=0$. We let
\begin{equation}
  G_1(b,d,k) = \EE(\Un_{A_k}U^{-1}) =
  \begin{cases}
    \log(b) - \log(1+\frac1d)& (k=0, d>0),\\
    \log(b)& (\text{else}).
  \end{cases}
\end{equation}

Assuming only $X_k>0$, we can rewrite Equation~\eqref{eq:UXZ}, on the
full-measure set $\tau_k<\infty$, as
\begin{equation}\label{eq:nk}
  U = (1+q_k)X_k\qquad q_k = \frac{d+Z_k}{b n_k}\qquad n_k= b^{\tau_k-1}X_k.
\end{equation}
As $X_k>0$, $n_k$ is a \emph{positive} integer. In the left-over event
$\tau_k=\infty$, we set $Z_k=0$, $n_k=\infty$, $q_k=0$, so Equation
\eqref{eq:nk} always holds for $X_k>0$.  And it implies $0\leq q_k<1$, i.e.\@
$X_k\leq U<2X_k$ as soon as $X_k>0$.  We note in passing that $X_k>0$ implies
$\tau_k\geq2$: indeed, $\tau_k\geq k+1$ so $\tau_k(x)=1$ requires $k=0$, and
moreover implies $X_0(x)= 0$.  So we always have $n_k\geq b X_k$.

On $\{n_k>1\}\subset\{X_k>0\}$, we have $q_k<\frac12$.  For $n_k=1$, we have
$q_k<(d+1)b^{-1}$. So $\rho_k=\sup_{A_k} q_k<1$ except possibly if $d=b-1$ and
moreover $n_k=1$ is possible.  Assuming $d=b-1$, $n_k=1$ on $A_k$ can happen
only if $\tau_k=2$ and $X_k=b^{-1}$. As $\tau_k\geq k+1$, $k\in\{0,1\}$. For
$k=0$, this forces the first digit of the expansion, which is $1$, to not be
$d$.  So $b\geq3$.  Conversely, let $x_0=b^{-1}+(b-1)b^{-2}$.  On
$\Ifo{x_0}{x_0+b^{-2}}$, we have, for $d=b-1>1$, $\tau_0=2$, $X_k = b^{-1}$,
$n_k=1$, and $q_k$ can be arbitrarily close to $1$.  For $k=1$, the conditions
$n_1=b^{\tau_1-1}X_1=1$ and $X_1\geq b^{-1}$ imply $\tau_1=2$ and
$X_1=b^{-1}$. This means that $d=1$, and as we assumed $d=b-1$, we are in the
case $(b,d,k)=(2,1,1)$, and on the interval
$\Ifo{0.11_2}{1}=\Ifo{\frac34}{1}$, with $X_1=\frac12$. So $q_1$ can be
arbitrarily close to $1$.  In conclusion, the quantity $\rho_k=\sup_{A_k}
q_k\leq 1$ is $1$ if and only if $(b,d,k)$ is either $(b,b-1,0)$ with
$b\geq3$, or is $(2,1,1)$.

We note for future use that, in the enumerated cases where
$\rho_k=\sup_{A_k}q_k=1$, we have identified the interval corresponding to
${n_k=1}$. On its complement in $A_k$, we have
$0\leq q_k<\frac12$.

Whether or not $\rho_k<1$, we always have $q_k<1$ and can expand pointwise:
\begin{equation}\label{eq:invU}
  \Un_{X_k\geq b^{-1}}U^{-1} =
  \Un_{X_k\geq b^{-1}}\sum_{j=0}^\infty (-1)^j X_k^{-1-j} b^{-j\tau_k}(d+Z_k)^j\;.
\end{equation}
Partial sums are alternately pointwise upper and lower bounds for the
left-hand side.  These bounds are strict if $q_k>0$, which is certain if
$d>0$, and true almost surely on the event of integration if $d=0$.
Further, as $0\leq q_k<1$ all partial sums are bounded above pointwise by
$X_k^{-1}$.  Thus, in all cases, even those for which $q_k$ is not uniformly
bounded away from $1$, we can integrate termwise, which gives
\begin{equation}\label{eq:EEinvU}
  G_1(b,d,k) = b^{-1}I(b,d,k) + \sum_{j=1}^\infty 
     (-1)^j \frac{(d+1)^{j+1} - d^{j+1}}{j+1}
     \EE\bigl(\Un_{X_k\geq b^{-1}}\frac{b^{-j\tau_k}}{X_k^{j+1}}\bigr).
\end{equation}
We know from the preceding discussion that the absolute values of the terms
are strictly decreasing toward zero. And, in particular
\begin{equation}\label{eq:approx1}
  0 < I(b,d,k) - bG_1(b,d,k)  <  b (d + \frac12)
 \EE\bigl(\Un_{X_k\geq b^{-1}}\frac{b^{-\tau_k}}{X_k^{2}}\bigr).
\end{equation}
A (crude)  upper bound for the right-hand side is
\begin{equation}
  b^3(d+\frac12)\EE(b^{-\tau_k}) = \frac{b^3(d+\frac12)}{(b^2-b+1)^{k+1}}\;.
\end{equation}
This gives, for the first time, a quantitative version of Farhi's theorem,
which is in line with the numerical explorations of Baillie
(\cite{baillie2008}) in this $|w|=1$ case (for $|w|>1$, \cite{baillie2008}
only reports results on $k=0$).  We note the appearance of the
quantity $\EE(b^{-\tau_k}) = \lambda_2^{k+1}$ in this estimate.  Our next goal
is to show that it gives the correct dependence on $k$ indeed, for the first
order asymptotic of $I(b,d,k)-b \log(b)$ as $k$ tends to infinity.

\section{Higher inverse powers}

Let us examine the quantities $\EE\bigl(\Un_{X_k\geq
  b^{-1}}\frac{b^{-j\tau_k}}{X_k^{j+1}}\bigr)$ arising in Equation
\eqref{eq:EEinvU}.  For this purpose, it is convenient to use $\wX_k$ which
has values in the word space, and is such that $X_k = x\circ \wX_k$ (almost
surely). Then the expectation can be computed as an integral over the word
space against the push-forward measure $(\wX_k)_*(\Leb)$.  The words
contributing to that integral are those words $g$ whose first digit is
non-zero, and which contain exactly $k$ occurrences of the digit $d$.  And, by
definition of $\wX_k$, for $g=\wX_k(t)$, $0\leq t <1$, such that
$\tau_k(t)<\infty$, one has $\tau_k(t)=|g|+1$.  And $X_k(t) = x(\wX_k(t)) =
b^{-|g|}n(g)$.  So $b^{-j\tau_k(t)}X_k(t)^{-j-1} =
b^{-j|g|-j}b^{j|g|+|g|}n(g)^{-j-1}$.  As the weight of such $g$ in
$(\wX_k)_*(\Leb)$ is $b^{-|g|-1}$, the total expectation is exactly
$b^{-j-1}\sum^{(k)} n^{-j-1}$, where the sum is over all positive integers
whose minimal $b$-representation contains exactly $k$ occurrences of the digit
$d$.

Let us thus define in general, for real $s$ such that the series converges:
\begin{equation}
  \label{eq:Hsd}
  H_s(b,d,k) = \sum_{n>0\text{ contains exactly $k$ occurrences of $d$}} \frac1{n^s}\;.
 \end{equation}
 If $d>0$, there are $\binom{\ell-1}k(b-2)(b-1)^{\ell -k-1}+
 \delta_{k\geq1}\binom{\ell-1}{k-1}(b-1)^{\ell -k}$ admissible integers with
 $\ell$ digits, so $H_s$ converges if and only if $\sum_{\ell}P_{k,b}(\ell)
 (b-1)^\ell b^{-s \ell}<\infty$, for some non-zero polynomial $P_{k,b}$, with
 the sole exception of $b=2$, $d=1$, $k=0$.  That is, the series converges for
 real $s$ if and only if $s > \log_b(b-1)$, except when $(b,d,k)=(2,1,0)$ in
 which case the series is empty.  For $d=0$, similar counts show that
 there is no special provision and the condition for convergence is always $s
 > \log_b(b-1)$.

 Our earlier considerations for $s=j+1$, $j\in \NN_0$, apply similarly
 for any real $s$ such that $s> \log_b(b-1)$, and give the general identification
 \begin{equation}
   \label{eq:EEHs}
   H_{s}(b,d,k) = b^{s}\EE\bigl(\Un_{X_k\geq b^{-1}}\frac{b^{-(s-1)\tau_k}}{X_k^{s}}\bigr).
 \end{equation}
 % In this paper we do not consider the matter of analytic continuation. We will
 % mostly be interested in integer exponents but consider for now the general
 % real $s$ for which there is convergence, for the study of the large-$k$
 % behavior.
 We will soon combine this with Equation~\eqref{eq:EEinvU}, but first make a
 digression on some other quantities.

\section{The quantities \texorpdfstring{$G_s$}{Gs}}

It seems logical to consider the replacement of
$X_k^{-s}$ by $U^{-s}$ in Equation~\eqref{eq:EEHs}.  Let us set
 \begin{equation}\label{eq:EEGs}
   G_s(b,d,k) = \EE\bigl(\Un_{X_k\geq b^{-1}}\frac{b^{-(s-1)\tau_k}}{U^{s}}\bigr).
 \end{equation}
What we did for $s=1$ works similarly, using the Newton binomial series in
the identity $U^{-s} = X_k^{-s} \bigl(1 + q_k)^{-s}$ on the event
 $A_k = \{X_k\geq b^{-1}\}$.  We get
\begin{equation}\label{eq:invUs}
\Un_{A_k}  U^{-s} 
  = \Un_{A_k} \sum_{j=0}^\infty (-1)^j \frac{(s)_j}{j!}b^{-j\tau_k}X_k^{-s-j} (d+Z_k)^j\;.
\end{equation}
Pointwise, the partial sums give alternately upper and lower bounds (which are
strict if $d>0$ and almost surely so if $d=0$) for the left-hand side.  In
view of Equation~\eqref{eq:EEHs} we multiply everything by $b^{-(s-1)\tau_k}$ and
then take expectations.  If interchange of integration and summation is legitimate, we obtain:
\begin{equation}\label{eq:GsseriesEE}
G_s(b,d,k)
  = \sum_{j=0}^\infty (-1)^j \frac{(s)_j}{j!}b^{-s-j}H_{s+j}(b,d,k)\EE\bigl((d+Z_k)^j\bigr)\;.
\end{equation}
This is actually legitimate and gives a geometrically convergent series,
except possibly in the cases encountered in the discussion, prior to Equation
\eqref{eq:invU}, of when $\rho_k=\sup_{A_k}q_k$ can be equal to $1$.
The interchange of integration and summation is not always legitimate in those
cases (the integrated terms may fail to tend to zero), but we identified the
interval corresponding to the contribution of the integer $n=1$ to
$H_{s+j}(b,d,k)$ as the sole problematic part of the integration domain.
Consequently, after cutting out this interval, we again obtain an absolutely
convergent series, now expressed in terms of the
$H_{s+j}(b,d,k)-1$, $j\in\NN$, rather than the $H_{s+j}$ themselves.
The left-hand side is then not $G_s(b,d,k)$ but a slight modification of it,
which, in Equation~\eqref{eq:Gsseries} of the next section, would amount to
removing the contribution from $n=1$.  We skip the details, as we will not use
this in the sequel.

The alternating nature of the integrand means that in all cases, we are
guaranteed to obtain, depending on parity of the number of terms, either upper
or lower bounds of $G_s(b,d,k)$.  In particular:
\begin{equation}\label{eq:Hsapprox1}
  0
  <  b^{-s}H_s(b,d,k) - G_s(b,d,k)
< s b^{-s-1}(d+\frac12)H_{s+1}(b,d,k).
\end{equation}
A (crude) upper bound for the right-hand side, using only $X_k\geq b^{-1}$, is: 
\begin{equation}\label{eq:ub}
  s (d+\frac12)b^{s+1}\EE(b^{-s\tau_k}) = \frac{s(d+\frac12)b^{s+1}}{(b^{s+1}-b+1)^{k+1}}\;.
\end{equation}
We can improve this slightly by not removing the $\Un_{A_k}$ after
replacing $X_k^{-s-1}$ by $b^{s+1}$.  So we examine in general
$\EE\bigl(\Un_{A_k}b^{-s\tau_k}\bigr)$.  We decompose along
sub-intervals $\Ifo{ab^{-1}}{(a+1)b^{-1}}$, with $1\leq a$, and $a\neq d$ if
$k=0$.  Suppose first $d=0$. We obtain:
\begin{equation}\label{eq:ed0}
  \EE\bigl(\Un_{A_k}b^{-s\tau_k}\bigr) = b^{-1-s}\frac{b-1}{(b^{s+1}-b+1)^{k+1}}\;.
\end{equation}
For $d>0$, we get, if $k\geq1$:
\begin{equation}\label{eq:ed>0k1+}
  \EE\bigl(\Un_{A_k}b^{-s\tau_k}\bigr) = b^{-1-s}
\frac{b^{s+1}-1}{(b^{s+1}-b+1)^{k+1}}\;,
\end{equation}
and, for $k=0$:
\begin{equation}\label{eq:ed>0k0}
  \EE\bigl(\Un_{X_0\geq b^{-1}}b^{-s\tau_0}\bigr) = b^{-1-s}\frac{b-2}{b^{s+1}-b+1}\;.
\end{equation}
The upper-bound from Equation~\eqref{eq:ub} can thus be improved, depending on
the case, by a factor equal to the ratio of either $b-1$, $b^{s+1}-1$, or $b-2$
to $b^{s+1}$.  In all cases, except excluded $(b,d,k)=(2,1,0)$, the dependence
on $k$ of $\EE\bigl(\Un_{A_k}b^{-s\tau_k}\bigr)$ is entirely in the
factor $(b^{s+1}-b+1)^{-k}$.  Comparing this with the main approximation,
which is $\EE\bigl(\Un_{A_k}b^{-(s-1)\tau_k} U^{-s}\bigr)= (1 +
\theta_k (b^s-1))\EE(\Un_{A_k}b^{-(s-1)\tau_k})$ with some
$0<\theta_k < 1$, we can already assert (for fixed $b$, $d$, and $s$):
\begin{equation}
  H_s(b,d,k) \sim_{k\to\infty} b^sG_s(b,d,k) = 
  b^s\EE\bigl(b^{-(s-1)\tau_k}\Un_{A_k}  U^{-s}\bigr),
\end{equation}
and we can bound explicitly, if desired, the relative error.

\section{Series representation of \texorpdfstring{$G_s$}{Gs}}

We can compute $G_s(b,d,k)$, defined in Equation~\eqref{eq:EEGs}, as a discrete
series.  For this, we reformulate Equation~\eqref{eq:nk} into the form, which
is valid
for almost all $t$ in $\{X_k\geq b^{-1}\}$:
\begin{equation}
  t = b^{-\tau_k(t)+1}n_k(t) + b^{-\tau_k(t)}(d+Z_k(t)).
\end{equation}
This says exactly that $t$ belongs to the cylinder set defined by the word
$gd$, where $g$ is the minimal representation of the positive integer
$n=n_k(t)$. The possible $n$'s are the positive integers having exactly $k$
occurrences of the digit $d$, a condition which we write as $k_d(n)=k$. From
the above, $\Leb$-almost every $t\in \{X_k\geq b^{-1}\}$ falls into one of those
cylinders, which are pairwise distinct.  On this cylinder $b^{-\tau_k+1}$ is
constant and we can parametrize $t$ as $b^{-\tau_k}(bn + d+ u)$ with $0\leq u
<1$, so the cylinder contributes to $G_s(b,d,k)$ the quantity
\begin{equation}
  b^{-(s-1)\tau_k}\int_{\Cyl_{gd}}t^{-s}\dt = \int_0^1 (bn+d+u)^{-s} \du.
\end{equation}
Thus, for $s\neq 1$:
\begin{equation}\label{eq:Gsseries}
  G_s(b,d,k)= \frac{1}{s-1}\sum_{n>0, k_d(n)=k} \Bigl((bn+d)^{-s+1}-(bn+d+1)^{-s+1}\Bigr),
\end{equation}
and
\begin{equation}\label{eq:G1series}
  G_1(b,d,k) = \sum_{n>0, k_d(n)=k} \log(1 + \frac1{bn+d}),
\end{equation}
where $G_1(b,d,k)$ is $\log(b)$ if $k\geq1$, or if $k=d=0$, and is $\log(b) -
\log(1 + d^{-1})$ if $k=0$ and $d>0$.  Equation~\eqref{eq:G1series} is
immediately equivalent with the case $|w|=1$ of \cite[Prop.\@ 1]{allouchehumorin2024}.
Each term can be expanded in inverse powers of $bn+d$ and it is then
legitimate to regroup like powers and interchange the summations. We do not pursue
the matter here, as it is not indispensable to our continued discussion.

\section{Some singular measures and an extended Farhi theorem}

We know from Equation~\eqref{eq:EEHs} that Equation~\eqref{eq:EEinvU} involves
the Dirichlet series $H_{j}(b,d,k)$, which generalize to $j\geq2$ the Irwin
harmonic sum:
\begin{equation}\label{eq:G1andtheHm}
  G_1(b,d,k) = b^{-1}I(b,d,k) + \sum_{j=1}^\infty 
     (-1)^j \frac{(d+1)^{j+1} - d^{j+1}}{j+1} b^{-j-1}H_{j+1}(b,d,k).
\end{equation}
Recall that $G_1(b,d,k)$ is $\log(b)$ if $k\geq1$ or if $k=d=0$ and is
$\log(\frac{bd}{d+1}) $ for $k=0$, $d>0$. A quick check confirms that
expanding the right-hand side of Equation~\eqref{eq:G1series} termwise in
inverse powers of $n$ and collecting (not caring about justification) the like
powers does reconstitute Equation~\eqref{eq:G1andtheHm}.

The representation of the $H_j$'s as expectation values (Equation
\eqref{eq:EEHs}) and the knowledge of the quantities $\EE(\Un_{A_k}
b^{-j\tau_k})$ already informs us that the $H_j$'s define distinct scales for the
large-$k$ limit.  But so far we have not eliminated the possibility that
the ratio of $H_{j+1}$ to $b^{j+1} \EE(\Un_{A_k} b^{-j\tau_k})$
oscillates indefinitely between $1$ and $b^{j+1}$. Only for $j=0$ do we
know that there is actually a limit, which is $b\log(b)/(b-1)$, which indeed belongs to 
$\Ioo{1}{b}$.

Let us look more closely at $H_2 = b^2 \EE(\Un_{X_k\geq
  b^{-1}}b^{-\tau_k}X_k^{-2})$.  Recall that
$\EE(b^{-\tau_k})=\lambda_2^{k+1}$.  The problem is to handle the large-$k$
limit of the quotient $H_2/\lambda_2^{k+1}$. Consider the absolutely continuous measure
$\PP_{2,k}$ whose density with respect to $\Leb$ is $\lambda_2^{-k-1}b^{-\tau_k}$, so that $\PP_{2,k}$ is also a probability measure.  Then
\begin{equation}
  H_2 = b^2 \lambda_2^{k+1} \EE_{\PP_{2,k}}(\Un_{A_k}X_k^{-2}).
\end{equation}
But now both the integration
measure and the random variable vary with $k$.

Let us consider the cylinders of constancy of $X_k$, which (for $X_k\geq
b^{-1}$) are the same cylinders $\Cyl_{gd}$, where $k_d(g)=k$, used in the
reduction of $G_s$ to a discrete series. We are computing
\begin{equation}
  H_2 = b^2 \lambda_2^{k+1} \sum_{g=a_1\dots a_L\in \cW_k, a_1>0} 
                           \frac{\PP_{2,k}(\Cyl_{gd})}{x(g)^2}\;.
\end{equation}
Let $g$ be $a_1\dots a_L$, $L\geq 1$, $a_1>0$. The probability assigned by
$\PP_{2,k}$ to $\Cyl_{gd}$ is $b^{-2L-2}(b^2-b+1)^{k+1}$.  There is one
multiplicative factor $b^2 -b+1$ for each occurrence of $d$ in $gd$.  It is as
if we considered that each of the $L+1$ first digits of the radix $b$
expansion had the probability $(b^2-b+1)/b^2$ to be $d$, and $1/b^2$ to be any
given $a\neq d$, and that they were independent random variables.

The solution is now obvious: we make $\Iff{0}{1}$ into a new probability space
(with no atom at $1$) via the push-forward from the infinite product
$\Sigma_b^\NN$, where each $\Sigma_b$ has probability weights
\begin{equation}
  q_2(a)=
  \begin{cases}
    (b^2-b+1)b^{-2},& a=d,\\
    b^{-2},& a\neq d.
  \end{cases}
\end{equation}
Writing
\begin{equation}
  \pi(a_1,a_2,\ldots)=\sum_{i=1}^\infty a_i b^{-i}\,,
\end{equation}
we define (shortening the more precise notation $\nu_{b,d,2}$ to $\nu_2$):
\begin{equation}
  \nu_2=\pi_*\bigl(q_2^{\otimes\NN}\bigr).
\end{equation}
This is an exponential change of measure, also called exponential tilting
(cf.\@ \cite[\S1b, Eq.~(1.2)]{asmussenGlynn2007}; more specifically, for
change of measure at stopping times, see also
\cite[Ch.~XIII, \S3]{asmussen2003}).

It is mutually singular with $\Leb$ because the almost sure frequencies of the
digits under its law differ from those realized by $\Leb$.  The key point now
is that $\PP_{2,k}(\Cyl_{gd})=\nu_2(\Cyl_{gd})$: the dependence on $k$ has
magically disappeared.  It is important that the stopping times $\tau_k$ are
the same functions as earlier and that it is still true under $\nu_2$ that
$\tau_k<\infty$ almost surely. In other words, the cylinders $\Cyl_{gd}$ are
again, apart from a left-over set of zero measure also for $\nu_2$, a
partition of the event $\{X_k\geq b^{-1}\}$ (which is $\Ifo{b^{-1}}{1}$ if
$k\geq1$ or $k=d=0$).  We keep the notation $X_k$ even when considering this
function as a random variable under the new probability measure $\nu_2$.  Of
course, this is slightly ambiguous, but decorating the notation with an extra
symbol is heavy, and switching to another letter such as $Y_k$ will make
generalization from $s=2$ to any $s$ problematic.

Let $\EE_2$ be the expectation for the probability measure $\nu_2$.  From the
cylinder decomposition we have:
\begin{equation}\label{eq:H2nu}
  H_2 = b^2 \lambda_2^{k+1} \EE_2(\Un_{A_k}X_k^{-2}).
\end{equation}
Dominated convergence delivers then immediately:
\begin{equation}
  H_2(b,d,k)\sim_{k\to\infty} b^2 
           \lambda_2^{k+1}
          \int_{\Ifo{b^{-1}}{1}}\frac{\dnu_2(x)}{x^2}\;.
\end{equation}
Here is the theorem for general real $s$ for which $H_s$ converges.
\begin{prop}\label{prop:farhis}
  Let $s>\log_b(b-1)$.
  Let $\nu_s$ (or more precisely $\nu_{b,d,s}$) be the probability measure
  on $\Ifo01$ such that the digits of the radix $b$ expansion are i.i.d., with
  probability $(b^s-b+1)/b^{s}$ to take the value $d$, and 
  $b^{-s}$ to take any other given digit value $a\neq d$.  Let
  \begin{equation}
    \label{eq:Fs}
    F_s(b,d) = \int_{\Ifo{b^{-1}}{1}}\frac{\dnu_{s}(x)}{x^s}\;,
  \end{equation}
  The Dirichlet series with constrained block counts
  $H_s(b,d,k)$ has the following asymptotic as $k\to\infty$:
  \begin{equation}
      H_s(b,d,k)\sim_{k\to\infty} b^s \lambda_s^{k+1}F_s(b,d),
  \end{equation}
  where $\lambda_s = \int_0^1 b^{-(s-1)\tau_0(x)}\dx = (b^s - b + 1)^{-1}$.
 \end{prop}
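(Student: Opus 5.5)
The plan is to repeat the $s=2$ argument for general real $s>\log_b(b-1)$. The starting point is Equation~\eqref{eq:EEHs}, which gives $H_s(b,d,k)=b^s\EE(\Un_{A_k}b^{-(s-1)\tau_k}X_k^{-s})$. We introduce the tilted probability $\PP_{s,k}$ with density $\lambda_s^{-k-1}b^{-(s-1)\tau_k}$ with respect to $\Leb$. It is a probability measure because the increments of the $\tau_j$ are i.i.d.\ with the law of $\tau_0$, so $\EE(b^{-(s-1)\tau_k})=\lambda_s^{k+1}$ by Equation~\eqref{eq:lambdas}; this requires $b^s-b+1>0$, i.e.\ $s>\log_b(b-1)$. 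With this notation, $H_s=b^s\lambda_s^{k+1}\EE_{\PP_{s,k}}(\Un_{A_k}X_k^{-s})$.

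Next we decompose $A_k$, up to a $\Leb$-null set, into the disjoint cylinders $\Cyl_{gd}$, where $g=a_1\dots a_L$, $a_1>0$, $k_d(g)=k$. On $\Cyl_{gd}$ we have $\tau_k=L+1$ and $X_k=x(g)$, so
\[
\PP_{s,k}(\Cyl_{gd})=\lambda_s^{-k-1}b^{-(s-1)(L+1)}b^{-L-1}=b^{-s(L+1)}(b^s-b+1)^{k+1}.
\]
The word $gd$ contains exactly $k+1$ occurrences of $d$ and $L-k$ other digits. Hence this equals $\nu_s(\Cyl_{gd})$, the product of $(b^s-b+1)b^{-s}$ over the occurrences of $d$ and $b^{-s}$ over the other digits. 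These weights sum to $1$ and are positive, so $\nu_s$ is a well-defined product measure pushed forward by $\pi$.

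Under $\nu_s$ the digit $d$ has positive probability, so $\tau_k<\infty$ $\nu_s$-a.s. Thus the same cylinders partition $A_k$ up to a $\nu_s$-null set, and $X_k$ is constant on each of them. Summing over cylinders gives the exact identity
\[
H_s(b,d,k)=b^s\lambda_s^{k+1}\EE_s(\Un_{A_k}X_k^{-s}),
\]
with $\EE_s$ the $\nu_s$-expectation. The point needing care is that $\nu_s$ has no atoms (each digit probability is $<1$, since $b\geq2$), so the representation $x=\pi(a_1,a_2,\dots)$ is $\nu_s$-a.s.\ the canonical radix expansion. The functions $\tau_k$, $X_k$, $A_k$ are therefore correctly read off the i.i.d.\ digits.

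Finally we let $k\to\infty$. Under $\nu_s$, $X_k\uparrow x$ pointwise, as in the $\Leb$ case, by the almost-sure infinitude of occurrences of $d$. For $k\geq1$ (or $k=d=0$), $A_k=\Ifo{b^{-1}}{1}$ and $X_k^{-s}\le b^{\max(s,0)}$ there; more precisely $X_k^{-s}\leq \max(b^s,1)$, using $b^{-1}\le X_k<1$ on $A_k$. Dominated convergence then gives $\EE_s(\Un_{A_k}X_k^{-s})\to\int_{\Ifo{b^{-1}}{1}}x^{-s}\,\dnu_s(x)=F_s(b,d)$.

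It remains to check that $F_s(b,d)>0$, so that the equivalence is meaningful. This holds because $\nu_s(\Ifo{b^{-1}}{1})=1-\nu_s(\{a_1=0\})>0$. The only mildly delicate step is the cylinder bookkeeping identifying $\PP_{s,k}$ with $\nu_s$ on the relevant $\sigma$-algebra, together with the a.s.\ finiteness of $\tau_k$ under the new measure. The case $k=0$ is irrelevant to a $k\to\infty$ asymptotic.
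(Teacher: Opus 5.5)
Your proposal is correct and follows the paper's own route. The paper's proof of this proposition just says to extend the $s=2$ computation: introduce the tilted density $\lambda_s^{-k-1}b^{-(s-1)\tau_k}$, check $\PP_{s,k}(\Cyl_{gd})=\nu_s(\Cyl_{gd})$ on the cylinders partitioning $A_k$ (using $\tau_k<\infty$ $\nu_s$-a.s.) to get $H_s=b^s\lambda_s^{k+1}\EE_s(\Un_{A_k}X_k^{-s})$, and then apply dominated convergence. Your additional checks fill in details the paper leaves implicit: that digits are canonical $\nu_s$-a.s., the bound $X_k^{-s}\le b^s$ on $\Ifo{b^{-1}}{1}$, and $F_s>0$.
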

\begin{proof}
  The proof is a direct extension of the one given for $s=2$.  First, one establishes:
\begin{equation}\label{eq:Hsnu}
  H_s = b^s \lambda_s^{k+1} \EE_s(\Un_{A_k}X_k^{-s}),
\end{equation}
then, the
  limit of $\lambda_s^{-k-1}H_s$ is obtained from dominated convergence.
\end{proof}
It is worthwhile to mention the following remarkable property underpinning
Equation~\eqref{eq:Hsnu}.  The masses which $\nu_s$ assigns to cylinders
$\Cyl_{gd}$ are, up to a common factor depending on $k_d(g)$ only, the $s$-th
powers of the cylinder widths:
  \begin{equation}\label{eq:cylnus}
    \forall g\in \cW, \quad \nu_s(\Cyl_{gd}) = \lambda_s^{-k_d(g)-1}\Leb(\Cyl_{gd})^{s}.
  \end{equation}
  Indeed, this mass is determined by the $L+1$ digits of $gd$ and depends only
  on the count of occurrences of $d$ in this word. Multiplying out the individual
  probabilities, we get $\nu_s(\Cyl_{gd})=b^{-s(L+1)}(b^s-b+1)^{k_d(g)+1}$.
  This is the stated formula.

As corollary to Equation~\eqref{eq:cylnus}, we mention the following notable fact:
\begin{equation}
  \EE_{s_1}(b^{-(s_2 - s_1)\tau_k}) = \lambda_{s_2}^{k+1}\lambda_{s_1}^{-k-1}.
\end{equation}
Indeed, the contribution to the left-hand side of a cylinder $\Cyl_{gd}$ with
$k_d(g)=k$ is $b^{-(s_2-s_1)(|g|+1)}\lambda_{s_1}^{-k-1}b^{-s_1(|g|+1)}$ and
the $s_1$ cancels out in the exponents. Note that the sign of $s_2 - s_1$ is
indifferent.

\section{\texorpdfstring{$F_s$ in terms of $H_{s+j}$, $j\geq0$}
         {Fs in terms of H(s+j), j>=0}}

We will assume that $k\geq1$ or that $k=d=0$, so
that $\{X_k\geq b^{-1}\}=\Ifo{b^{-1}}{1}$ and the left-hand side of Equation
\eqref{eq:G1andtheHm} is $\log(b)$.  We drop indicating explicitly $(b,d,k)$ to
shorten equation.  From the properties of Equation~\eqref{eq:G1andtheHm}, we have
\begin{equation}\label{eq:enc}
  -b^{-2}(d^2+d+\frac13)H_3<I(b,d,k) - b \log(b) - b^{-1}(d+\frac12)H_2< 0.
\end{equation}
Equation~\eqref{eq:Hsapprox1}, which is derived from Equation~\eqref{eq:invUs}
via integration against Lebesgue measure (after multiplication by
$b^{-(s-1)\tau_k}$ on both sides), gives in the case $s=2$:
\begin{equation}\label{eq:foo1}
  0 <  b^{-2}H_2- G_2 < 2 b^{-3}(d+\frac12)H_{3}.
\end{equation}
But this is not directly about the difference $b^{-2}H_2-
\lambda_2^{k+1}\EE_2\bigl(\Un_{A_k}U^{-2}\bigr)$.  What we really
want is to integrate Equation~\eqref{eq:invUs} with \emph{no} extra factor and
against $\nu_2$, not $\Leb$.  The key formula for us is indeed Equation
\eqref{eq:H2nu} and its general form~\eqref{eq:Hsnu}.

But first, we have to check what happens in general when we integrate
$\Un_{A_k}b^{-(s_1-s)\tau_k}X_k^{-s_1}$ against $\nu_s$, for two
allowed real numbers $s$ and $s_1$ (we will be applying this with $s_1=s+j$,
$j\in \NN_0$).  Let $g=a_1\dots a_L$, $a_1>0$, containing exactly $k$
occurrences of $d$. On $\Cyl_{gd}$ we have $X_k=n(g)b^{-L}$ and
$\nu_s(\Cyl_{gd})=\lambda_s^{-k-1}\Leb(\Cyl_{gd})^s=(b^s-b+1)^{k+1}b^{-(L+1)s}$. So,
\begin{equation}
  \EE_s(\Un_{\Cyl_{gd}}b^{-(s_1-s)\tau_k}X_k^{-s_1})=
\underbrace{b^{-(s_1-s)(L+1)}b^{s_1L}b^{-s(L+1)}}_{{}=b^{-s_1}}n(g)^{-s_1}\lambda_s^{-k-1},
\end{equation}
and a similar computation gives:
\begin{equation}
  \EE_{s_1}(\Un_{\Cyl_{gd}}X_k^{-s_1})=\lambda_{s_1}^{-k-1} b^{-s_1}n(g)^{-s_1}.
\end{equation}
Thus 
\begin{equation}
  \label{eq:ss1}
  \EE_s(\Un_{\Cyl_{gd}}b^{-(s_1-s)\tau_k}X_k^{-s_1}) = \lambda_s^{-k-1}\lambda_{s_1}^{k+1}
\EE_{s_1}(\Un_{\Cyl_{gd}}X_k^{-s_1}).
\end{equation}
Consequently:
\begin{equation}
  \label{eq:EEsHs1}
  \EE_s(\Un_{A_k}b^{-(s_1-s)\tau_k}X_k^{-s_1}) = \lambda_s^{-k-1}b^{-s_1}H_{s_1}.
\end{equation}
This is particularly pleasant: it means that the termwise integration against
$\nu_s$ of~\eqref{eq:invUs} gives terms expressed with some $(s,j)$-dependent
prefactor, the normalized Dirichlet values $b^{-s-j}H_{s+j}(b,d,k)$, moments
with respect to $\nu_s$ of $d+Z_k$, and a single common multiplicative factor
$\lambda_s^{-k-1}$ for all. This gives:
\begin{equation}\label{eq:FsHs}
  \lambda_s^{k+1}F_s = 
  \sum_{j=0}^\infty (-1)^j \frac{(s)_j}{j!} b^{-s-j}H_{s+j}\int_0^1(d+x)^j\dnu_s(x).
\end{equation}
We used of course that the tail $Z_k$ from Equations~\eqref{eq:UXZ} and
\eqref{eq:nk} is $\nu_s$-independent from $X_k$ and $\tau_k$, and that its law
on $(\Ifo{0}{1},\nu_s)$ is $\nu_s$.  To be precise, as partial sums are
alternately upper and lower bounds for the left-hand side, the series
converges if and only if its general term goes to zero.  Recall from the
discussion after Equation~\eqref{eq:invU}, that as $k\geq1$, or $k=d=0$, the
original pointwise series is absolutely and uniformly convergent on
$A_k=\{X_k\geq b^{-1}\}$, except possibly if $(b,d,k)=(2,1,1)$.  It can
actually be shown in the latter case that the integrated term does \emph{not}
tend to zero if $s\geq \log_2(3)$ (in particular for $s=2$). We don't need
this, so we skip it.  Also, the arguments from the discussion following
Equation~\eqref{eq:GsseriesEE} apply here identically and show tha we can
still obtain an absolutely convergent series in this bad case, but using
$H_{s+j}-1$ instead, $j\in\NN$, and with a slightly modified left-hand side.
As soon as $k\geq2$, there is no special case to consider, so we skip details.

There is an alternative to Equation~\eqref{eq:FsHs} which requires no special
provisions concerning convergence.  In place of $U = X_k(1+q_k)$ from Equation
\eqref{eq:nk}, we set (still assuming $X_k>0$), $Y_k = X_k + b^{-\tau_k}d$,
$n_k'= b n_k +d \geq b+d$, and $U = Y_k(1+q_k')$, $0\leq
q_k'=\frac{Z_k}{n_k'}<(b+d)^{-1}$. We obtain, details being left to the
reader, a geometrically convergent series ($k>0$ or $k=d=0$):
\begin{equation}\label{eq:FswhHs}
  \lambda_s^{k+1}F_s = 
  \sum_{j=0}^\infty (-1)^j \frac{(s)_j}{j!} \Bigl(\int_0^1x^j\dnu_s(x)\Bigr)
             b^{-s-j}\widetilde{H}_{s+j}\,,
\end{equation}
with
\begin{equation}
  \label{eq:whHs}
  \widetilde{H}_s(b,d,k) = \sum_{\substack{n>0\\k_d(n)=k}} \frac{1}{(n+\frac{d}{b})^s}\;.
\end{equation}
For $k=0$ with $d>0$, one needs to replace $F_s$ in the left-hand side of
Equation~\eqref{eq:FswhHs} by an integral excluding the interval
$\Ifo{db^{-1}}{(d+1)b^{-1}}$.  Let us define:
  \begin{equation}
    \label{eq:Fs*}
    F_s^*(b,d) = \int_{\Ifo{b^{-1}}{1}\setminus\Ifo{db^{-1}}{(d+1)b^{-1}}}\frac{\dnu_{s}(x)}{x^s}\;,
  \end{equation}
One can also modify the definition of
$\widetilde{H}_s(b,d,0)$ with an extra contribution given by $n=0$. This can
be done in a way covering all situations via the summation conditions $n\geq
0$, $k_d(bn+d)=k+1$ (note that $k_d(0)$ is defined as $k_d(\emptyword)=0$ and
cannot be $k+1$ if $bn+d=0$), and one checks that the series expressing $F_s$
in terms of these $\widetilde{\widetilde{H}}_s(b,d,k) = \sum_{n\geq0,
  k_d(bn+d)=k+1}(n+db^{-1})^{-s}$ always converges (but only conditionally if
$b\geq3$, $d=1$, $k=0$; and it would diverge for $s\geq\log_2(3)$ with
$(b,d,k)=(2,1,0)$ which was excluded earlier). We will make no
use of this alternative approach in the sequel (but will encounter again the
quantities $F_s^*(b,d)$).

\section{Effective estimates for Irwin sums for large \texorpdfstring{$k$}{k}}

The web site \burnolgitlab{}
provides software to compute $I(b,d,k)$ to very high precision, but the
algorithm is not appropriate for large $b$ or large $k$ (we will have
occasion to recall later the underlying formulas from \cite{burnolirwin}).
In this section we provide a quantitative refinement of Farhi's limit
theorem. The resulting bounds are enough for any given fixed-point precision,
for all sufficiently large $k$.

Let us temporarily restrict again to $k>0$ or $k=d=0$ and return to Equation
\eqref{eq:FsHs}, which is formal in the exceptional case $(b,d,k)=(2,1,1)$.
Using it for $s=1, 2, 3, \dots$, we get a triangular system relating the
$\lambda_m^{k+1}F_m(b,d)$, $m\in \NN$, with the $b^{-m}H_m(b,d,k)$:
\begin{equation}\label{eq:triang}
   \forall m\in \NN\quad
\lambda_m^{k+1}F_m  = 
  \sum_{j=0}^\infty (-1)^j\frac{(m)_j}{j!} \Bigl(\int_0^1(d+x)^j\dnu_m(x)\Bigr)
  b^{-m-j}H_{m+j}.
\end{equation}
The fact that partial sums give alternately upper and lower bounds for the
left-hand side applies even with $(2,1,1)$, so the following variant of
\eqref{eq:foo1}, directly relevant to the limit theorem Proposition
\ref{prop:farhis}, holds:
\begin{equation}\label{eq:H2enc}
    0
    <  b^{-2}H_2- \lambda_2^{k+1}F_2
    < 2 \bigl(d + \frac{d+\frac12}{b}\bigr)b^{-3}H_{3}
\end{equation}
We used the fact that the first moment of $\nu_2$ is $(b-1)^{-1}((b^2 - b+1)d+\sum_{a\neq
  d}a)b^{-2}$ which simplifies to $(d+\frac12)/b$.  We will have more
  to say about such moments later on.

Combining Equations~\eqref{eq:enc},~\eqref{eq:H2enc}, we get
\begin{align}
  I(b,d,k) &= b \log(b) + b(d+\frac12)\lambda_2^{k+1}F_2 + r_k,\\
\label{eq:epsilonk}
\frac{r_k}{b^{-3}H_3}&\in 
  \Ioo*{-b(d^2+d+\frac13)}{2b(d+\frac12)\bigl(d + \frac{d+\frac12}{b}\bigr)}.
\end{align}
and there only remains to bound
\begin{equation}
  b^{-3}H_3 = \lambda_3^{k+1}\int_{b^{-1}}^1 \frac{\dnu_3(x)}{X_k(x)^3}\;.
\end{equation}
On the interval $\Ifo{ab^{-1}}{(a+1)b^{-1}}$, we have $X_k\geq ab^{-1}$:
indeed, by the definition of $X_k$ this could fail only if $X_k=0$, which can
happen only for $k=0$ and $a=d>0$.  And:
\begin{equation}
  \nu_{b,d,3}\bigl(\Ifo{ab^{-1}}{(a+1)b^{-1}}\bigr) =
  \begin{cases}
    b^{-3}  & (a\neq d),
\\
    b^{-3} (b^3-b+1) &(a=d).
  \end{cases}
\end{equation}
So, let
\begin{equation}\label{eq:Theta}
  \Theta_{3}(b,d)
  =
  (b^3 - b +1)\Un_{d>0}d^{-3} + \sum_{\substack{1\leq a\leq b-1\\a\neq d}}a^{-3}
  \,.
\end{equation}
It follows, using again abridged notation for the indices, that
\begin{equation}\label{eq:H3bound}
  b^{-3} H_3 \leq \lambda_3^{k+1}\Theta_{3}(b,d).
\end{equation}
The inequality is strict except for $(b,d,k)=(2,1,1)$.
Combined with Equation~\eqref{eq:epsilonk}, this gives an explicit interval
where $r_k$ resides.  Its width is a multiple of $\lambda_3^{k+1}$, which
roughly means that we can assert the value of $I(b,d,k)$ with three more
$b$-digits of precision at each increase of $k$ by $1$.  For example, with
$b=16$, the formula is enough to give a float double precision estimate of
$I(16,d,k)$ already for $k\geq 5$.  But only if we know how to
evaluate numerically $F_2(16,d)$!  We will turn to this later.

For the cases $k=0$, $d>0$, which we have not handled yet, the difference is
that all integrals (before rescaling) are on $\Ifo{b^{-1}}1 \setminus
\Ifo{db^{-1}}{(d+1)b^{-1}}$ not on $\Ifo{b^{-1}}1$, in other terms we use the
quantities $F_s^*(b,d)$, not $F_s(b,d)$.  The comparison point for
$b^{-1}I(b,d,0)$ is now $\log(b) - \log(1+d^{-1})=F_1^*(b,d)$, the one for
$b^{-2}H_2(b,d,0)$ is $\lambda_2 F_2^*(b,d)$.  The estimate for $H_3$ is also
slightly modified.  We skip the details and give the result:
\begin{align}
  I(b,d,0) &= b \log(\frac{bd}{d+1}) 
    + b(d+\frac12)\lambda_2
      F_2^*(b,d) + r_0,\\
  \frac{r_0}{b^{-3}H_3}&\in \Ioo*{-b(d^2+d+\frac13)}
                             {2b(d+\frac12)\bigl(d + \frac{d+\frac12}{b}\bigr)},
\\
b^{-3}H_3(b,d,0) &< \lambda_3\sum_{1\leq a < b, a\neq d}a^{-3}\,.
\end{align}
As stated earlier, the ``empty case'' $(b,d,k)=(2,1,0)$ is excluded (so that we
can write strict inequalities).

\section{First glimpse of the modal series: asymptotics}
\label{sec:part1main}

In the previous section, Equation~\eqref{eq:triang} gave, for $k>0$ or
$k=d=0$, a triangular system relating (only formally in the exceptional case $b=2$,
$d=1$, $k=1$) $\lambda_m^{k+1}F_m(b,d)$ to $b^{-n}H_n(b,d,k)$, $n\geq m>0$. It
will allow us to decouple, in principle, $k$ from $(b,d)$ in the value of
$I(b,d,k)=H_1(b,d,k)$.  We assume $k\geq2$, since we are interested here in
the large-$k$ behavior.  But even in the special cases discussed earlier, we
still have estimates for the finite truncation remainders, so the big-$O$ in the
next paragraph could be made quantitive in the whole $k\geq1$ range ($k=0$
with $d>0$ requires the usual alteration of $F_m$ into $F_m^*$).

\begin{prop}\label{prop:asymp}
  There exist uniquely determined $c_j(b,d)\in\QQ(b)[d]$, $j\geq1$, such that
  there is an asymptotic expansion as $k\to\infty$:
  \begin{equation}\label{eq:asymp}
    b^{-1}I(b,d,k) \sim_{k\to\infty} \sum_{j=1}^\infty c_j(b,d)F_j(b,d)\lambda_j^{k+1}.
  \end{equation}
\end{prop}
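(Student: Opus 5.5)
We invert the triangular system~\eqref{eq:triang}. Write $h_n=b^{-n}H_n(b,d,k)$ and $\phi_m=\lambda_m^{k+1}F_m(b,d)$. Then~\eqref{eq:triang} reads
\begin{equation}
  \phi_m=\sum_{j\ge0}a_{m,j}\,h_{m+j},\qquad
  a_{m,j}=(-1)^j\frac{(m)_j}{j!}\int_0^1(d+x)^j\dnu_m(x).
\end{equation}
We have $a_{m,0}=1$. The moments of $\nu_m$ are rational functions of $b$ with polynomial dependence on $d$: the digits are i.i.d.\ with explicit weights, so $\int x^j\dnu_m$ satisfies a linear recursion obtained from $x=(a_1+x')/b$. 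Hence $a_{m,j}\in\QQ(b)[d]$. Formally inverting this unipotent upper-triangular matrix gives coefficients $\beta_{1,j}\in\QQ(b)[d]$ with
\begin{equation}
  h_1=\sum_{j\ge1}\beta_{1,j}\,\phi_j,\qquad \beta_{1,1}=1,
\end{equation}
and we set $c_j=\beta_{1,j}$. The asymptotic statement to be proved is then: for each $N$,
\begin{equation}
  h_1=\sum_{j=1}^{N}c_j\phi_j+O(\lambda_{N+1}^{k}).
\end{equation}
Since the $\lambda_j$ strictly decrease and $F_j>0$, each term dominates the next. Uniqueness then follows immediately: if two such expansions existed, subtract them, divide by the first $\lambda_j^{k+1}$ where they differ, and let $k\to\infty$.

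For existence we work with finite truncations only, which avoids any infinite-matrix convergence issue. Fix $N$. For each $m\le N$, truncate~\eqref{eq:triang} after the terms with $m+j\le N$. The remainder is bounded by the first omitted term, by the alternating property of partial sums (this holds even in the case $(2,1,1)$, and anyway $k\ge2$). So
\begin{equation}
  \phi_m=\sum_{n=m}^{N}a_{m,n-m}h_n+O(h_{N+1}).
\end{equation}
Solving this finite unipotent triangular $N\times N$ system by back-substitution expresses $h_1$ as $\sum_{j\le N}c_j\phi_j$ plus a fixed linear combination of the $O(h_{N+1})$ errors. Here it matters that the coefficients of the truncated inverse are exactly the $\beta_{1,j}$, $j\le N$, independent of $N$, which is clear from triangularity.

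It remains to show $h_{N+1}=O(\lambda_{N+1}^{k})$. This follows from~\eqref{eq:Hsnu}: $h_s=\lambda_s^{k+1}\EE_s(\Un_{A_k}X_k^{-s})\le\lambda_s^{k+1}b^s$, since $X_k\ge b^{-1}$ on $A_k$. The same bound shows $\phi_j=O(\lambda_j^{k})$, so the expansion is a genuine asymptotic scale.

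The main obstacle is verifying that the $c_j$ lie in $\QQ(b)[d]$, that is, the moment computation for $\nu_m$. Let $M_j=\int x^j\dnu_m$ and $e_a=\nu_m(\text{first digit}=a)$. Using $x=(a_1+x')/b$ with $a_1$ independent of $x'$ gives
\begin{equation}
  M_j\,(1-b^{-j})=b^{-j}\sum_{i<j}\binom{j}{i}M_i\sum_a e_a\,a^{j-i}.
\end{equation}
The power sums $\sum_a a^r$ are polynomials in $b$. The digit-$d$ weight $(b^m-b+1)b^{-m}$ enters only through terms $d^r$, so every coefficient is polynomial in $d$ over $\QQ(b)$, and by induction $M_j\in\QQ(b)[d]$ for each fixed $m$. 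The triangular inversion uses only ring operations with unit diagonal, so the $c_j$ stay in $\QQ(b)[d]$.
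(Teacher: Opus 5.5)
Your proposal is correct and is essentially the paper's proof. You truncate the triangular system \eqref{eq:triang} at order $N$, invert the finite unipotent upper-triangular matrix (whose first row does not depend on $N$), bound the error by $O(H_{N+1})=O(\lambda_{N+1}^{k+1})$ via \eqref{eq:Hsnu}, and get $c_j\in\QQ(b)[d]$ from the moment recurrence of $\nu_m$, which is exactly \eqref{eq:Mnmrec}. Your explicit uniqueness argument using $F_j>0$ spells out a step the paper leaves implicit and later invokes as uniqueness of the Poincaré expansion.
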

\begin{proof}
Let $J>1$ and define:
\begin{equation}
  \begin{gathered}
    \sF_J =
    \begin{pmatrix}
      \lambda_1^{k+1}F_1\\\vdots\\\lambda_J^{k+1}F_J
    \end{pmatrix}, \quad \sH_J =
    \begin{pmatrix}
      b^{-1}H_1\\\vdots\\b^{-J}H_J
    \end{pmatrix}, \quad \sQ_J = (q_{ij})_{\substack{1\leq i \leq J\\1\leq
        j\leq J}}, 
   \\ q_{ij} = \delta_{j\geq
      i}(-1)^{j-i}\binom{j-1}{i-1}\EE_i\Bigl((d+U)^{j-i}\Bigr)
  \end{gathered}
  % \end{split}
\end{equation}
Writing the first $J$ identities as $\lambda_j^{k+1}F_j = (\sQ_J\sH_J)_j +
(\sR_J)_j$, we know that $(\sR_J)_j = O(H_{J+1})$ for every $j\in \{1, \dots,
J\}$.  Acting now with the inverse matrix we get, using its first row in particular
\begin{equation}
  (\sQ_J^{-1}\sF_J)_1 = b^{-1} I(b,d,k) + O(H_{J+1}).
\end{equation}
The implicit constant in $O(H_{J+1})$ may depend on $J$ (the pair
$(b,d)$ is fixed throughout that proof).  Of course, $\sQ_J^{-1}$
is the same as the $J\times J$ upper-left block of any $\sQ_{J'}^{-1}$ for
$J'>J$.  Let us write the first row of the inverse matrix in extension:
\begin{equation}
  \label{eq:cjbd}
  (\sQ_J^{-1})_1 = (c_j(b,d))_{1\leq j\leq J}\,.
\end{equation}
The matrix elements $q_{ij}$ are equal to $1$ on the diagonal, vanish below
the diagonal, and may be considered to be elements of $\QQ(b)[d]$.  Indeed,
this is the case of the moments $M_{i;k}$, $k\geq0$, of $\nu_i$ for every
$i\geq1$.  This is a consequence of the recurrence which they verify (which is
to be found in Equation~\eqref{eq:Mnmrec} in a later section and can be solved
in $\QQ(b)[d]$).  We know that $H_{J+1} =
O_{k\to\infty}(\lambda_{J+1}^{k+1})$, and $\lambda_1>\lambda_2>\dots$. This is
sufficient to provide the stated result.
\end{proof}
In the sequel we explain how to compute the coefficients $c_j(b,d)\in\QQ(b)[d]$,
and how to evaluate numerically
the $F_j(b,d)$ to arbitrary precision. Numerical comparison of the values
obtained from the series in Proposition~\ref{prop:asymp} with those computed
using the \burnolgitlab{} software, suggested that the
series was not only asymptotic but in fact convergent, \emph{for every
$k\geq1$ and also for $k=d=0$}. This constitutes the core result of the
present paper, which we will extend to multi-digit words in the second Part:
\begin{theo}[The modal expansion of Irwin sums]%
\label{thm:part1main}
For every $k\geq1$, and also for $k=d=0$, $b^{-1}I(b,d,k)$ is exactly
represented by the absolutely convergent series
  \begin{equation}\label{eq:exact}
    b^{-1}I(b,d,k) = \sum_{j=1}^\infty c_j(b,d)F_j(b,d)\lambda_j^{k+1}\,.
  \end{equation}
Further,
for $k=0$ and $d>0$, there holds, with the same $c_j(b,d)$:
  \begin{equation}\label{eq:exact*}
    b^{-1}I(b,d,0) = \sum_{j=1}^\infty c_j(b,d)F_j^*(b,d)\lambda_j.
  \end{equation}
again with an absolutely convergent series.
\end{theo}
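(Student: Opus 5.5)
The plan is to recast Theorem~\ref{thm:part1main} as a spectral statement about a \emph{return operator}, then to identify coefficients with Proposition~\ref{prop:asymp}.

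\textbf{The return operator and its eigenvectors.} By the construction of $\wX_{k+1}$ from $\wX_k$ (append $d$, then an independent geometric block of non-$d$ digits), there is a linear operator $K$ with $\mu_{k+1}=K\mu_k$. Explicitly, $K$ is built from the contractions $x\mapsto b^{-\ell}(x+\text{digits})$, weighted by the law of $\tau_0$.

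I will verify directly that $K\,\Leb=\Leb$. I will also verify that the compactly supported distributions
\begin{equation*}
  D_j=\frac{\rd^{\,j-1}}{\dx^{\,j-1}}\nu_j\qquad (j\geq2)
\end{equation*}
satisfy $KD_j=\lambda_jD_j$. The mechanism is the scaling identity~\eqref{eq:cylnus}: differentiating $j-1$ times converts the $b^{-j\ell}$ cylinder weights of $\nu_j$ into $b^{-\ell}$ weights times the factor $b^{-(j-1)\ell}$, whose expectation over $\tau_0$ gives $\lambda_j$.

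Dually, the transpose $K^{\mathsf T}$ maps polynomials of degree $\le m$ to themselves and is triangular on monomials with diagonal $\lambda_{m+1}$. Since the $\lambda$'s are distinct, it has monic eigenpolynomials $\cB_m$ of degree $m$, with $\scalp{D_j}{\cB_m}=0$ for $j\neq m+1$.

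\textbf{Reducing $I(b,d,k)$ to analytic test functions.} The test function $\Un_{\Ifo{b^{-1}}1}/x$ is not analytic near $0$, so I remove the first digit. On $\{d_1=a\}$ with $a\ge1$, write $X_k=(a+X'_{k'})/b$, where $k'=k$ if $a\neq d$ and $k'=k-1$ if $a=d$, and $X'$ is the shifted process. This gives
\begin{equation*}
  I(b,d,k)=\sum_{a\ne d}\scalp{\mu_k}{f_a}+\scalp{\mu_{k-1}}{f_d},
\end{equation*}
where the $f_a(y)=1/(a+y)$ are analytic on a disc $\Delta$ centred at $\tfrac12$ of radius $R$, for any $1/2<R<3/2$.

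Work in the Hardy space $H^2(\Delta)$, with orthonormal basis $e_m=((y-\frac12)/R)^m$. Put $f_n=\cB_n/R^n$. Then $f_n-e_n$ lies in the span of the lower $e_m$, so the matrix is strictly triangular.

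If this perturbation is summable, Proposition~\ref{prop:rieszhilbert} makes $(f_n)$ a Riesz basis, and each test function expands as $f_a=\sum_m\beta_{a,m}f_m$. The coefficients are $\beta_{a,m}=\scalp{f_m^*}{f_a}$, which decay like $(R/(a+\frac12))^m$ because $\sum_m\|f_m^*-e_m\|<\infty$.

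Pairing with $\mu_k=K^k\mu_0$ and using $\scalp{\mu_k}{\cB_m}=\lambda_{m+1}^k\scalp{\mu_0}{\cB_m}$ gives a series in $\lambda_{m+1}^{k}$. Absolute convergence comes from three bounds:
\begin{itemize}
  \item $|\scalp{\mu_0}{\cB_m}|\le b\sup_{\Iff01}|\cB_m|\lesssim R^m$;
  \item $\beta_{a,m}\lesssim (R/(a+\frac12))^m$;
  \item $\lambda_{m+1}^k\approx b^{-mk}$.
\end{itemize}
For $k\ge1$ the product is summable. The cases $k=0$ (where $d=0$, or $d>0$ with $F^*$) are handled the same way: the omitted interval simply drops the $a=d$ term.

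\textbf{Identifying the coefficients.} The resulting series has the shape $\sum_j\gamma_j\lambda_j^{k+1}$ with $\gamma_j$ independent of $k$. Proposition~\ref{prop:asymp} gives an asymptotic expansion in the strictly decreasing scales $\lambda_j^{k+1}$, and such coefficients are unique. Hence $\gamma_j=c_j(b,d)F_j(b,d)$ (respectively $c_jF_j^*$), which yields~\eqref{eq:exact} and~\eqref{eq:exact*}.

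\textbf{Main obstacle.} The hard step is the summability hypothesis of Proposition~\ref{prop:rieszhilbert}. I must show that the coefficients of $\cB_m$ in the basis $e_n$ (after normalization by $R^m$) are summable over all pairs, geometrically in both indices.

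My first attempt is to solve $K^{\mathsf T}\cB_m=\lambda_{m+1}\cB_m$ by recursion. The coefficient of $(y-\frac12)^{m-i}$ equals a sum of binomial moments of the digit law divided by $\lambda_{m+1}-\lambda_{m-i+1}$. Since $\lambda_{m-i+1}\ge b^{i}\lambda_{m+1}$ roughly, these denominators are harmless. I would then bound the coefficients inductively by $C\rho^i\binom{m}{i}$ with $\rho$ small enough that $\sum\binom mi\rho^iR^{-i}$ stays controlled. This would be a concentration-type estimate, and the centring at $\frac12$ should make the first moments cancel.

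If the direct bound fails, I would instead exploit the concentration of $\nu_j$ near $d/(b-1)$ for large $j$, and centre $\Delta$ accordingly.
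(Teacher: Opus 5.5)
Your architecture (return operator, eigendistributions $\nu_j^{(j-1)}$, eigenpolynomials $\cB_m$, Riesz basis in a Hardy space, pairing with $\mu_k=K^k\mu_0$, identification of coefficients through Proposition~\ref{prop:asymp}) is the paper's. However, the step you flag as the main obstacle fails in the form you propose.

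\textbf{Centring at $\tfrac12$ does not work.} In a disc centred at $\tfrac12$, the summability hypothesis of Proposition~\ref{prop:rieszhilbert} is false unless $x_d=d/(b-1)$ happens to equal $\tfrac12$. The measures $\nu_j$, $j\geq2$, concentrate at $x_d$; centring at $\tfrac12$ only cancels the first moment of $\nu_1=\Leb$. Correspondingly, $\cB_m$ is relatively exponentially close to $(x-x_d)^m$ (Proposition~\ref{prop:wBmRiesz}). Re-expanding $R^{-m}(y-x_d)^m$ in the basis $((y-\tfrac12)/R)^i$ gives off-diagonal coefficients $\binom mi\bigl((\tfrac12-x_d)/R\bigr)^{m-i}$, whose absolute sum is $(1+|x_d-\tfrac12|/R)^m-1$. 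In fact $\|R^{-m}\cB_m\|_{H^2(D(\frac12,R))}$ is unbounded: evaluate at a point of the disc near the boundary point farthest from $x_d$. So $(R^{-m}\cB_m)$ cannot be a Riesz basis there.

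Your inductive bound $C\rho^i\binom mi$ has the same defect, since $\sum_i\binom mi(\rho/R)^i=(1+\rho/R)^m$ is not summable in $m$ for any $\rho>0$. For $d=0$ no renormalization helps either. There $\sigma_{m+1}(f)\approx f^{(m)}(0)/m!$ and $\cB_m(z)\approx z^m$. Hence the modal series of $1/(z+r)\in H^2(D(\tfrac12,R))$, with $r$ slightly above $R-\tfrac12$, diverges at real points of the disc near $\tfrac12+R$: $(\cB_m)$ is not even a Schauder basis of that space.

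\textbf{The fallback leaves $d=0$ open.} Centring at $x_d$ with $R>R_d=\max(x_d,1-x_d)$ is exactly the paper's Hardy space. With your reduction it settles $d>0$: choose $R_d<R<1+x_d$, so every pole $-a$, $a\geq1$, lies outside the disc. For $d=0$, however, it forces $R>1$. Then $f_1(y)=1/(1+y)$, which occurs in $I(b,0,k)=\sum_{a=1}^{b-1}U_k(a)$ with $U_k(n)=\int\dmu_k(x)/(n+x)$, has its pole inside $D(0,R)$ and cannot be expanded. Nothing in your proposal handles this, and your assertion that $k=0$ is ``handled the same way'' inherits the problem.

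The missing ingredient is level raising:
\begin{itemize}
\item For $k\geq1$, write $U_k(1)=U_{k-1}(b)+\sum_{a=1}^{b-1}U_k(b+a)$ and expand each term. Then recombine mode by mode using $A_j(1)=\lambda_j^{-1}A_j(b)+\sum_{a=1}^{b-1}A_j(b+a)$, where $A_j(n)=\int_0^1(n+x)^{-j}\,\dnu_j(x)$.
\item For $k=d=0$, use $U_0(1)=1+\sum_{a=1}^{b-1}U_0(b+a)$. Represent the constant $1$ modally by evaluating the expansion of $1/(b+x)$ at $0$, with $\cB_{j-1}(0)=(-1)^{j-1}c_j(b,0)$ deduced from $\sK_0=b^{-1}\sP_0\sD_0^{-1}$.
\end{itemize}

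\textbf{Coefficient identification at $k=0$.} Uniqueness of the asymptotic expansion in $k$ is not available at a single value of $k$. You must first extract $\mu_0(\cB_{j-1})=(-1)^{j-1}b\,c_j(b,d)\lambda_j$ from the $k\geq1$ identification, dividing by $F_j>0$. Then reuse it in the $k=0$ series.
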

We use ``modal'' in an, as yet, rather vague sense of ``eigenmodes''. Indeed,
Equation~\eqref{eq:exact} has a strong operator-theoretic flavor. It is
reminiscent of an eigenvalue expansion: it suggests the existence of a
(compact) operator $\cK$ (depending on $(b,d)$, as does everything else in
this paragraph) acting on a suitable Banach space, with a Schauder basis of
eigenvectors and associated eigenvalues $\lambda_m$, $m\geq1$, together
with a vector $v$ and a bounded linear functional $L$, such that
$I(b,d,k)=L(\cK^{k+1}v)$.  Further, the numerical explorations indicate that
the modal series are absolutely convergent.  This would follow from the
suitably normalized eigenvectors of $\cK$ forming an unconditional Schauder
basis.  The most convenient framework for this would be to have a Riesz basis
in a Hilbert space, so that the absolute convergence property of the series in
Equation~\eqref{eq:exact} follows from it being the scalar product between two
elements in $\ell^2$.  On the other hand, it will emerge quickly that it is
not the singular measures $\nu_m$, $m>1$, which can play the role of
eigenvectors: the Banach space of Borel measures on $\Ifo01$ only provides the
first eigenmode, which is $\Leb$.  To be more precise, $\Leb$ will be seen as
left eigenvector, and the other left eigenvectors will be the
$D^{m-1}(\nu_m)$, where $D$ denotes distributional differentiation, and the
right $\lambda_m$-eigenvector will be a monic polynomial $\cB_{m-1}$ (short
for $\cB_{b,d,m-1}$), of degree $m-1$.

But to motivate the actual constructions, and the distributional derivatives
$D^{m-1}(\nu_m)$ in particular, we have to recall some aspects of
\cite{burnolirwin} and \cite{burnollargebirwin}.  Indeed, the original route
which led the author to Proposition~\ref{prop:asymp}, and the architecture for
its numerical exploration, which suggested the validity of Theorem
\ref{thm:part1main}, already contain all the ingredients needed to suggest some
realizations of the operator $\cK$.  The eventual choice of a vector space,
which will turn out to be a Hilbert space (depending on both $b$ and $d$), for
which Equations~\eqref{eq:exact} and~\eqref{eq:exact*} can be proved through a
Riesz basis of eigenvectors will emerge from a closer look at the behavior of
the singular measures $\nu_m$ for large $m$ and its implications for the
(already known at this stage) $\cB_{m-1}$'s.

\section{Integration lemma, moment recurrences, and Stieltjes functions}

We recall here some facts from \cite{burnolirwin} and
\cite[\S2]{burnollargebirwin}.  The moments of the measure $\mu_k$ are denoted
\begin{equation}
  u_{k;m}=\int_{\Ifo{0}{1}} x^m\,\dmu_k(x),
  \qquad m\geq 0.
\end{equation}
Every measure $\mu_k$  has mass $b$, i.e.\@ $u_{k;0}=b$.

The integration lemma from \cite[\S2, Eqs.\@ (6)--(7)]{burnollargebirwin}
states that for every bounded (or more generally, integrable with respect to 
the measure in the left-hand side below)
function $g$ on $\Ifo{0}{1}$ one
has, for $k=0$,
\begin{equation}
  \int_{\Ifo{0}{1}} g(x)\dmu_0(x) = g(0)
  + \frac{1}{b}\sum_{\substack{0\leq a<b\\ a\neq d}} 
    \int_{\Ifo{0}{1}} g\left(\frac{a+x}{b}\right)\dmu_0(x),
  \label{eq:intlemk0}
\end{equation}
whereas, for $k\geq 1$,
\begin{equation}
  \begin{split}
    \int_{\Ifo{0}{1}} g(x)\dmu_k(x)
    ={}&
    \frac{1}{b}
    \sum_{\substack{0\leq a<b\\ a\neq d}}
    \int_{\Ifo{0}{1}} g\left(\frac{a+x}{b}\right)\dmu_k(x)
    \\
    &+
    \frac{1}{b}
    \int_{\Ifo{0}{1}} g\left(\frac{d+x}{b}\right)\dmu_{k-1}(x).
  \end{split}
  \label{eq:intlemkpos}
\end{equation}
In the earlier paper \cite[Lem.\@ 1]{burnolirwin}, this was written in terms of the
function $f$ on $\Ifo0b$ which is related to the $g$ used here via
$g(x)=f(bx)$.

Taking $g(x)=x^m$ gives triangular recurrences for the moments. Write
\begin{equation}
  \gamma_j = \sum_{\substack{0\leq a<b\\ a\neq d}} a^j.
  \label{eq:gamma}
\end{equation}
For $m\geq 1$, the $k=0$ recurrence is
\begin{equation}
  \bigl(b^{m+1}-b+1\bigr)u_{0;m}
  =
  \sum_{j=1}^{m} \binom{m}{j}\gamma_j u_{0;m-j},
  \label{eq:umrec0}
\end{equation}
which is Equation~(4) of \cite[\S5]{burnolirwin}. For $k\geq 1$ one has
\begin{equation}
  \begin{split}
    \bigl(b^{m+1}-b+1\bigr)u_{k;m}
    ={}&
    \sum_{j=1}^{m}
    \binom{m}{j}\gamma_j u_{k;m-j}
    \\
    &+
    \sum_{j=0}^{m}
    \binom{m}{j}d^j u_{k-1;m-j}.
  \end{split}
  \label{eq:umreck}
\end{equation}
This is \cite[Prop.\@ 6, Eq.\@ (5)]{burnolirwin}.

Following \cite[\S2, Eq.\@ (3)]{burnollargebirwin}, define the Stieltjes
functions $U_k$ as follows:
\begin{equation}\label{eq:Ukz}
  U_k(z)
  =
  \int_{\Ifo{0}{1}} \frac{\dmu_k(x)}{z+x},
  \qquad
  z\in\mathbb{C}\setminus[-1,0].
\end{equation}
It should be kept in mind that the notation
$U_k$ is shorthand for $U_{b,d,k}$, as is the case with the measures
$\mu_k$. We will use $U_k$ only with positive integer arguments.
For such $n\in\NN$, Proposition~4 of
\cite{burnolirwin} identifies $U_k(n)$ with the reciprocal sum over the
positive integers whose base-$b$ expansion starts with $n$ and whose remaining
digits contain exactly $k$ occurrences of $d$. In particular, decomposing the terms contributing to $I(b,d,k)$ according to their leading digit, we obtain:
\begin{equation}\label{eq:IU}
  I(b,d,k)
  =
  \sum_{\substack{1\leq a<b\\ a\neq d}} U_k(a)
  +
  \mathbf{1}_{\{d>0,\ k\geq 1\}}U_{k-1}(d).
\end{equation}
This is also what Equations~\eqref{eq:intlemk0} and
\eqref{eq:intlemkpos} say with $g(x) = \Un_{x\geq b^{-1}}x^{-1}$.
For $n>1$, each $U_k(n)$ can be computed as a geometrically convergent
series, once the moments have been
numerically evaluated from their triangular recurrence:
\begin{equation}
  U_k(n)
  =
  \sum_{m=0}^{\infty} \frac{(-1)^m u_{k;m}}{n^{m+1}}\;.
  \label{eq:Umoments}
\end{equation}
This also applies with $n=1$ but may be only a semi-convergent series in that case.
See \cite[Cor.\@ 1]{burnolirwin}.

Applying~\eqref{eq:intlemk0} and
\eqref{eq:intlemkpos} to the Stieltjes kernel gives
\begin{equation}
  U_0(n)
  =
  \frac{1}{n}
  +
  \sum_{\substack{0\leq a<b\\ a\neq d}}
  U_0(bn+a),
  \label{eq:U0raise}
\end{equation}
and, for $k\geq 1$,
\begin{equation}
  U_k(n)
  =
  U_{k-1}(bn+d)
  +
  \sum_{\substack{0\leq a<b\\ a\neq d}}
  U_k(bn+a).
  \label{eq:Ukraise}
\end{equation}
See \cite[\S2, Eqs.\@ (4)--(5)]{burnollargebirwin}.  This allows to reduce the
evaluation of any $U_k(n)$ to finitely many other Stieltjes transform values,
with indices $j\leq k$, at arbitrarily large integers, with also perhaps some
terms $N^{-1}$ coming from Equation~\eqref{eq:U0raise}.  This is the
``level-raising'' method which is the basis of the \burnolgitlab{} algorithm
for the numerical evaluation of $I(b,d,k)$.  It allows small-argument
Stieltjes functions to be moved to arbitrarily large arguments (up to the
price of an exponentially increasing number of terms as a function of the
number of raising steps, in general).

\section{Modal expansion of the moments}

Equations~\eqref{eq:umrec0} and~\eqref{eq:umreck} allow us to compute the moments
inductively, starting from $u_{0;0} = b$.  In particular $u_{k;0}= b$ for every $k\geq1$ follows.  And, at the next step, the equations are 
\begin{align}
u_{0;1} &= \frac b2 -(d+\frac12)\frac b{b^2 - b +1}\;,\\
u_{k;1} &= \frac{b(b^2-b)}{2(b^2 - b +1)} + \frac{u_{k-1;1}}{b^2 - b + 1}\quad (k\geq1)\,,
\end{align}
and the solution (\cite[Prop.\@ 2]{burnollargebirwin}) is, for every $k\geq0$,
\begin{equation}
  \label{eq:uk1}
  u_{k;1} = \frac b2 - \frac{b(2d+1)}{2(b^2-b+1)^{k+1}}\;.
\end{equation}
This formula was one of the key facts which originally motivated the author to
search for an identity such as the one from Theorem~\ref{thm:part1main}.  Keeping from
the already known results only the idea of decoupling the dependence on $k$
from the one on $(b,d)$, let us achieve it here for the moments. First, we
observe that $\frac b2$ is the first moment of $b\Leb$.  And modifying
Equation~\eqref{eq:umreck}, $k\geq1$, under the Ansatz $u_{k;m}= \theta_m$,
one finds the linear recurrence
\begin{equation}
  (b^{m+1} - b +1) \theta_m =
  \sum_{j=1}^m\binom{m}{j}(\sum_{a=0}^{b-1} a^j)\theta_{m-j} + \theta_m\,,
\end{equation}
which we prefer to write in the following form (which still has $\theta_m$ on both sides):
\begin{equation}
\label{eq:thetam}
  b^{m+1}\theta_m =
  \sum_{j=0}^m\binom{m}{j}(\sum_{a=0}^{b-1} a^j)\theta_{m-j}\,.
\end{equation}
One checks easily that the sequence $(1/(m+1))_{m\geq0}$ is a solution.  This
is the sequence of the moments of $\Leb$ and Equation~\eqref{eq:thetam} is a
consequence of the auto-similarity of $\Leb$ under rescaling by a factor $b$.  So
the shifted sequences $u_{k;m}'= u_{k;m} - b \theta_m$, $k\geq0$, also satisfy
the system~\eqref{eq:umreck}, with the additional property
$u_{k;0}'=0$ for every $k\geq1$ and
$u_{k;1}'=-\frac{b(2d+1)}{2(b^2-b+1)^{k+1}}$ for every $k\geq0$.  Let
\[
  D_2=b^2-b+1.
\]
It is natural to consider the equations verified by the quantities
$D_2^{k+1}u_{k;m}'$ and to then search for a stationary solution
which would be candidate for the large-$k$ behavior.

Looking back at Equation~\eqref{eq:umrec0}, we see that the computation of
$u_{0;m}$ involves a division by
\[
  D_{m+1}=b^{m+1}-b+1.
\]
And for each $k\geq1$, obtaining $u_{k;m}$ will require one additional
division by $D_{m+1}$.  So let us examine what happens when we search in
general for a solution of the recurrences~\eqref{eq:umreck} verifying the
Ansatz $u_{k;m}=D_{n}^{-k-1}\xi_m^{(n)}$ for some $n\geq1$.
Substitution gives
\begin{equation}
\begin{split}
  D_{m+1}D_n^{-k-1}\xi_m^{(n)}
  ={}&
  \sum_{j=1}^m
  \binom mj
  \left(\sum_{a\ne d}a^j\right)
  D_n^{-k-1}\xi_{m-j}^{(n)}
\\
  &+
  \sum_{j=0}^m
  \binom mj d^j
  D_n^{-k}\xi_{m-j}^{(n)}.
\end{split}
\end{equation}
Multiplication by $D_n^{k+1}$, and then addition of $(b-1)\xi_m^{(n)}$ to
both sides yields
\begin{equation}\label{eq:ximn}
  b^{m+1}\xi_m^{(n)}
  =
  \sum_{j=0}^m \binom{m}{j}\Bigl(\sum_{a\ne d}a^j + D_n d^j\Bigr)\xi_{m-j}^{(n)}\,,
\end{equation}
It looks similar to the recurrence which would apply to the moments $\int_0^1
t^m\dnu(t)$ of a measure $\nu$ which reproduces itself up to a multiple on
each  interval $\Ifo{ab^{-1}}{(a+1)b^{-1}}$ but with an extra factor
$D_n=b^{n}-b+1$ for $a=d$.  This is how the measure $\nu_n$ from
Proposition~\ref{prop:farhis} behaves.  So, let us define
\begin{equation}
  \label{eq:Mnum}
  M_{n;m} = \int_{\Ifo01} t^m \dnu_n(t) = \EE_n(U^m),
\end{equation}
where $U(t)= t = b^{-1}d_1 + b^{-1}Z(t)$, with $d_1$ the first digit,
and a tail $Z$ which has the same law as $U$.  We obtain
\begin{equation}
  M_{n;m} = \sum_{j=0}^m\binom {m}{j} b^{-j}\EE_n(d_1^j)b^{-(m-j)}M_{n;m-j}\,,
\end{equation}
and, as $\EE_n(d_1^j) = \sum_{a\neq d} b^{-n}a^j + b^{-n}D_n d^j$, this becomes
\begin{equation}\label{eq:Mnmrec}
  b^{m+n}M_{n;m} = \sum_{j=0}^m\binom {m}{j} \Bigl(\sum_{a\neq d} a^j+ D_n d^j\Bigr) M_{n;m-j}.
\end{equation}
This is not identical, if $n>1$, with Equation~\eqref{eq:ximn}, which says:
\begin{equation}
  b^{m+1}\xi_m^{(n)}
  =
  \sum_{j=0}^m \binom{m}{j}\Bigl(\sum_{a\ne d}a^j + D_n d^j\Bigr)\xi_{m-j}^{(n)}\,.
\end{equation}
Let us examine more closely what the above implies about
$\xi_m^{(n)}$ for $m=0$, $1$, $2$, \dots{} The $j=0$ term on the right-hand side is
$b^{n}\xi_m^{(n)}$. So, as long as $m+1<n$, we necessarily have
$\xi_m^{(n)}=0$.  The first possibly non-zero coefficient is
$\xi_{n-1}^{(n)}$, and its choice fixes uniquely the other values.  We thus
shift indices, setting, for $m\geq0$,
\begin{equation}
  \theta_{m}^{(n)} = \xi_{m+n-1}^{(n)},
\end{equation}
and reformulate Equation~\eqref{eq:ximn} in terms of them:
\begin{equation}
  b^{n+m}\theta_m^{(n)}
  =
  \sum_{j=0}^{m} \binom{n-1+m}{j}\Bigl(\sum_{a\neq d}a^j + D_n d^j\Bigr)\theta_{m-j}^{(n)}\,.
\end{equation}
This is not the same recurrence as Equation~\eqref{eq:Mnmrec}, but, using
\begin{equation}
  \binom{n-1+m}{j}=\frac{(n-1+m)!}{(n-1+m-j)!j!},
\end{equation}
it becomes:
\begin{equation}
  b^{n+m}\frac{\theta_m^{(n)}}{(n-1+m)!}
  =
  \sum_{j=0}^{m} \frac1{j!}\Bigl(\sum_{a\neq d}a^j + D_n d^j\Bigr)
                         \frac{\theta_{m-j}^{(n)}}{(n-1+m-j)!}\,.
\end{equation}
which can be compared with
\begin{equation}
  b^{m+n}\frac{M_{n;m}}{m!} = \sum_{j=0}^m\frac1{j!}
             \Bigl(\sum_{a\neq d} a^j+ D_n d^j\Bigr) \frac{M_{n;m-j}}{(m-j)!}\;,
\end{equation}
and yields for $m\geq0$, the normalization being $1=1$ at $m=0$:
\begin{equation}\label{eq:xinM1}
  \xi_{m+n-1}^{(n)} = \theta_m^{(n)} = \frac{(n-1+m)!}{(n-1)!m!} M_{n;m}.
\end{equation}
This can be written suggestively as
\begin{equation}\label{eq:xinM2}
  m\geq 0\implies \xi_{m}^{(n)} 
  = \frac1{(n-1)!}\int_{\Ifo01}(\frac{\rd^{n-1}}{\dt^{n-1}}t^m)\dnu_n(t),
\end{equation}
which says that the solution $(\xi_m^{(n)})$ to the stationary recurrence
Equation~\eqref{eq:ximn}, normalized by the condition $\xi_{n-1}^{(n)} = 1$,
is the moment sequence of $(-1)^{n-1}\frac{1}{(n-1)!}$ times the $(n-1)$st
distributional derivative of $\nu_n$.

Let, for every $k\geq0$ and $n\geq1$:
\begin{equation}
  \bu_k = (u_{k;m})_{m\geq0} \qquad \bxi_n =(\xi_m^{(n)})_{m\geq0}
\end{equation}
One can also imagine $\bu_k$ to be the formal power series $\sum_{m=0}^\infty
u_{k;m}z^m$.  For $|z|<1$ this is even an absolutely convergent series which
computes
\begin{equation}
  \int_{\Ifo01}\frac{\dmu_k(t)}{1 - z t} = \frac{-1}{z}U_k(\frac{-1}z).
\end{equation}
Or, we could also use the moment-generating functions, which are entire
functions.  We simply work with the infinite vectors $\bu_k$ and $\bxi_n$.

There are uniquely determined constants $a_n(b,d)$, $n\geq1$, such that 
\begin{equation}\label{eq:bu0}
  \bu_0 = \sum_{n=1}^\infty a_n(b,d) D_n^{-1}\bxi_n.
\end{equation}
Indeed, as $D_1=1$, $a_1(b,d) = b$ is chosen such that the identity holds for
the $m=0$ coefficient. Then let $\bu_0' = \bu_0 - a_1(b,d)\bxi_1$, and choose
$a_2(b,d)$ to be $D_2$ times the $m=1$ coefficient of $\bu_0'$.  We define
$\bu_0'' = \bu_0 - a_1(b,d)\bxi_1 - a_2(b,d) D_2^{-1} \bxi_2$ (whose $m=0$ and
$m=1$ coefficients both vanish) and iterate ad infinitum.

Equation~\eqref{eq:Mnmrec}, together with the Bernoulli polynomial expressions
for power sums, imply that each $M_{n;m}$ belongs to $\QQ(b)[d]$.  From
Equation~\eqref{eq:xinM1}, this is also the case for each $\xi_m^{(n)}$. The
moments of $\mu_0$ are themselves, according to Equation
\eqref{eq:umrec0}, elements of $\QQ(b)[d]$.  Thus the algorithm just given may
be carried out over $\QQ(b)[d]$, and the $a_n(b,d)$ may accordingly
be regarded as elements of this ring.

We next establish:
\begin{prop}\label{prop:modalmoments}
The coefficients $a_n(b,d)\in\QQ(b)[d]$, $n\geq1$, constructed above so that
Equation~\eqref{eq:bu0} holds, are such that, for every $k\geq0$,
\begin{equation}
\bu_k = \sum_{n=1}^\infty \frac{a_n(b,d)}{D_n^{k+1}} \bxi_n.
\end{equation}
The column vector $(D_n^{-1}a_n(b,d))_{n\geq1}$  is the
solution of an infinite lower-triangular system:
\begin{equation}\label{eq:ansys}
  \begin{pmatrix}
    1 & 0 & 0 & 0 & \cdots\\
    M_{1;1} & 1 & 0 & 0 & \cdots\\
    M_{1;2} & 2M_{2;1} & 1 & 0 & \cdots\\
    M_{1;3} & 3M_{2;2} & 3M_{3;1} & 1 & \cdots\\
    \vdots & \vdots & \vdots & \vdots & \ddots
  \end{pmatrix}
  \begin{pmatrix}
    D_1^{-1}a_1(b,d)\\
    \vdots\\
    D_n^{-1}a_n(b,d)\\
    \vdots
  \end{pmatrix}
= 
  \begin{pmatrix}
    \int_{\Ifo01} t^0 \dmu_0(t)\\
    \vdots\\
    \int_{\Ifo01} t^{n-1} \dmu_0(t)\\
    \vdots
  \end{pmatrix},
\end{equation}
where $M_{n,k} = \int_0^1 t^k \dnu_n(t)$.
\end{prop}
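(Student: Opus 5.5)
The plan is an induction on $k$: show that the candidate sequences obey the same recurrence as the $\bu_k$, and that this recurrence has a unique solution. The point that keeps this a purely algebraic argument is that every coordinate of the proposed identity is a \emph{finite} sum. We saw above that $\xi_m^{(n)}=0$ whenever $m+1<n$. So for fixed $m$ only the indices $n\leq m+1$ contribute to $\sum_n a_n(b,d)D_n^{-k-1}\xi_m^{(n)}$. Hence for each $k\geq0$ the vector
\[
  \mathbf{v}_k=\sum_{n\geq1}a_n(b,d)D_n^{-k-1}\bxi_n
\]
is well defined coordinatewise, with no convergence question. By construction of the $a_n(b,d)$ through Equation~\eqref{eq:bu0}, we have $\mathbf{v}_0=\bu_0$.

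\emph{Inductive step.} The Ansatz computation preceding Equation~\eqref{eq:ximn} shows the following. For each fixed $n\geq1$, the double sequence $u_{k;m}=D_n^{-k-1}\xi_m^{(n)}$ satisfies the $k\geq1$ recurrence~\eqref{eq:umreck} exactly when $(\xi_m^{(n)})_m$ satisfies Equation~\eqref{eq:ximn}. The vectors $\bxi_n$ were defined as solutions of Equation~\eqref{eq:ximn}, and for $n=1$ (where $D_1=1$) this is the Lebesgue-moment recurrence~\eqref{eq:thetam}. Recurrence~\eqref{eq:umreck} is linear in the pair $(\bu_k,\bu_{k-1})$, and each coordinate only involves finitely many $n$. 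Therefore the sequence $(\mathbf{v}_k)_{k\geq0}$ satisfies~\eqref{eq:umreck} for every $k\geq1$.

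\emph{Uniqueness.} Given $\bu_{k-1}$, Equation~\eqref{eq:umreck} determines $u_{k;0},u_{k;1},\dots$ successively, since the coefficient $D_{m+1}=b^{m+1}-b+1$ of $u_{k;m}$ is positive. So the full family $(\bu_k)_{k\geq0}$ is uniquely determined by $\bu_0$ together with~\eqref{eq:umreck}. Induction on $k$ then gives $\mathbf{v}_k=\bu_k$ for all $k\geq0$, which is the first claim.

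\emph{The triangular system.} Take the $m$-th coordinate of the $k=0$ identity:
\[
  u_{0;m}=\sum_{n=1}^{m+1}D_n^{-1}a_n(b,d)\,\xi_m^{(n)} .
\]
By Equation~\eqref{eq:xinM1}, with $m'=m-n+1$,
\[
  \xi_m^{(n)}=\frac{m!}{(n-1)!\,(m-n+1)!}M_{n;m-n+1}=\binom{m}{n-1}M_{n;m-n+1}.
\]
So row $m$ (counting from $0$), column $n$ of the matrix is $\binom{m}{n-1}M_{n;m-n+1}$. On the diagonal $n=m+1$ this entry equals $M_{n;0}=1$, and it vanishes for $n>m+1$. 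For example, row $m=3$ reads $M_{1;3},\,3M_{2;2},\,3M_{3;1},\,1$, which matches the displayed matrix~\eqref{eq:ansys}. The right-hand side entries are the moments $u_{0;m}=\int t^m\dmu_0(t)$.

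I do not expect a real obstacle. The only points needing care are bookkeeping: the $n=1$ mode (with $D_1=1$ and $\bxi_1$ the Lebesgue moments) must fit the general Ansatz, and the $k=0$ recurrence~\eqref{eq:umrec0} never has to be checked because $k=0$ is the base case by definition.
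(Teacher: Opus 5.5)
Your proof is correct and is essentially the paper's argument: each mode $D_n^{-k-1}\xi_m^{(n)}$ satisfies the homogeneous $k\geq1$ recurrence~\eqref{eq:umreck} (by the Ansatz computation leading to~\eqref{eq:ximn}), the modal candidate agrees with $\bu_0$ at $k=0$, and triangularity in $m$ (with $D_{m+1}\neq0$) propagates the agreement to every $k$. The only difference is presentational: the paper subtracts a truncation $\sum_{n\leq N}$ and shows that the first $N$ coordinates of the difference stay zero, while you use directly that each coordinate is already a finite sum because $\xi_m^{(n)}=0$ for $n>m+1$; your explicit identification $\xi_m^{(n)}=\binom{m}{n-1}M_{n;m-n+1}$ of the matrix entries spells out what the paper calls ``by construction''.
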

\begin{proof}
  By construction, the coefficients $a_n(b,d)$ are equivalently the unique
  solution to Equation~\eqref{eq:ansys}. Let $N\geq1$ and define:
  Let $N\geq1$ and define:
  \begin{equation}
    \forall k\geq 0,\ \forall m\geq0, \quad
    v_{k;m} = u_{k;m} - \sum_{n=1}^N a_n(b,d)\frac{\xi_m^{(n)}}{D_n^{k+1}}\;.
  \end{equation}
  Then the quantities $v_{k;m}$ satisfy the same recurrences Equation
 ~\eqref{eq:umreck} as the $u_{k;m}$.  By construction $v_{0;0} = v_{0;1} =
  \dots = v_{0;N-1} = 0$.  It then follows that $v_{1;0} = v_{1;1} = \dots
  = v_{1;N-1} = 0$, then $v_{2;0} = v_{2;1} = \dots = v_{2;N-1} = 0$, and so
  on for every $k\geq0$ by induction. This completes the proof.
\end{proof}
Following the algorithm, one obtains, after some algebra, for every $k\geq0$:
\begin{equation}
u_{k;2}
=
\frac{b}{3}
-\frac{\frac12(2d+1)^2}{D_2^{k+1}}
+
\frac{bd^2-\frac{b}{3}+\frac{(2d+1)^2}{2}}{D_3^{k+1}}\;.
\end{equation}
Let us recall for comparison the quantities $c_j(b,d)$, $j\geq1$ from
Proposition~\ref{prop:asymp}.  They are the coefficients of the first row of
the inverse of an upper-triangular matrix:
\begin{equation}\label{eq:cjbdtriang}
  \begin{split}
    \bigl(c_1(b,d),c_2(b,d),c_3(b,d),\ldots\bigr)
    =
\\
      \left[\;\begin{pmatrix}
        1
        & -\EE_1(d+U)
        & \EE_1((d+U)^2)
        & -\EE_1((d+U)^3)
        & \cdots\\
        0
        & 1
        & -2\EE_2(d+U)
        & 3\EE_2((d+U)^2)
        & \cdots\\
        0
        & 0
        & 1
        & -3\EE_3(d+U)
        & \cdots\\
        0
        & 0
        & 0
        & 1
        & \cdots\\
        \vdots
        & \vdots
        & \vdots
        & \vdots
        & \ddots
      \end{pmatrix}^{-1}\;\right]
    _{i=1,j\geq1} ,
  \end{split}
\end{equation}
where $\EE_n$ means integration against $\nu_n$ and $U$ is the function $U(t)=t$.
We will prove later the following relation:
\begin{equation}
  \forall n\geq 1\quad a_n(b,d) = (-1)^{n-1}b c_n(b,d).
\end{equation}

\section{The word space return operator on measures}

Both the asymptotic expansion for Irwin sums (Proposition~\ref{prop:asymp})
and the exact representation of the measure moments (Proposition
\ref{prop:modalmoments}) suggest to introduce an extra parameter $y$, in order
to study the combined measure $\sum_{k=0}^\infty y^k \mu_k$, with the hope
that it will have poles at $y=D_n=\lambda_n^{-1}$ for $n\geq1$ (if considered
in some suitable space, as we anticipate the distributional derivatives
$\nu_n^{(n-1)}$ will prove to be the associated eigenmode).  Let us do this
on the word space $\cW$, so we let $\wM(y) = \sum_{k=0}^\infty y^k \wmu_k$,
initially for $y$, possibly complex, of modulus less than $1$.  As
$\wmu_k(\cW)=b$, this series converges in the Banach space $E$ of
complex-valued measures (we use the Borel algebra of all subsets of $\cW$, and
$E$ is the dual of the Banach space of complex-valued functions going to zero
at infinity and equipped with the sup-norm).  Note that identifying a measure
with the weights it assigns to the words, $E$ is isometric with $\ell^1(\cW)$,
but we prefer to avoid this identification which is potentially confusing from
a functorial point of view.

Recall the counting function $k_d:\cW\to \NN_0$ and the notation $\cW_k = \{g
\in \cW, k_d(g)=k\}$.  By the definition of $\wmu_k$, there holds
\begin{equation}
  \wM(y) = \sum_{g\in\cW} y^{k_d(g)}b^{-|g|}\delta_g\,.
\end{equation}
We decompose according to the first digit:
\begin{align}
  \wM(y) &=
  \delta_\emptyword + \sum_{a\in igma_b} \sum_{g\in\cW} y^{k_d(ag)}b^{-1-|g|}\delta_{ag}
\\
&=\delta_\emptyword + b^{-1}\sum_{a\neq d} \sum_{g\in\cW} y^{k_d(g)}b^{-|g|}\delta_{ag}
+ y b^{-1}\sum_{g\in\cW} y^{k_d(g)}b^{-|g|}\delta_{dg}\,.
\end{align}
Let, for any word $s$, $P_s:\cW\to\cW$ be the prefixing operation by $s$. It
induces a push-forward action on $E$, which we denote also $P_s$, no confusion
should arise (linear combinations and scalar multiplications make sense only
for the actions on measures).  In both cases we have the composition rule
$P_rP_s=P_{rs}$.  Also, let $A = \sum_{0\leq a<b} P_a$.  With this notation:
\begin{equation}
  \wM(y) = \delta_\emptyword + b^{-1}(A-P_d)\wM(y) + b^{-1}yP_d\wM(y),
\end{equation}
or, more suggestively, and using $I$ for the identity operator on $E$:
\begin{equation}
  \bigl(I - b^{-1}(A- P_d) - b^{-1} y P_d\bigr)\wM(y) = \delta_\emptyword.
\end{equation}
As each $P_a$ is isometric, the operator norm of $b^{-1}(A-P_d)$ is
bounded by $(b-1)/b$ and we can rewrite the equation, being careful about
non-commutativity, as
\begin{equation}\label{eq:foo}
  \Bigl(I - b^{-1} y \bigl(I - b^{-1}(A- P_d)\bigr)^{-1}P_d\Bigr)\wM(y) 
= \bigl(I - b^{-1}(A- P_d))^{-1}\delta_\emptyword.
\end{equation}
Define
\begin{equation}
  \label{eq:wKd}
  \wK_d = b^{-1}\bigl(I - b^{-1}(A- P_d)\bigr)^{-1}P_d.
\end{equation}
Its operator norm $\|\wK_d\|$ is bounded (using Neumann series) by $b^{-1}(1 -
b^{-1}(b-1))^{-1}=1$.  More precisely, the Neumann series gives the representation
\begin{equation}\label{eq:wKdneumann}
  \wK_d= b^{-1}\sum_{i=0}^\infty b^{-i}\sum_{\substack{a_1,\dots, a_i\\ a_j\neq d, 1\leq j\leq i}}
         P_{a_1\dots a_i d}.
\end{equation}
The sets $a_1\dots a_i d\cW$, taken over all choices of $i$ and $(a_j)\in
(\Sigma_b\setminus\{d\})^i$ are pairwise disjoint, they actually give a
partition of $\bigcup_{k>0}\cW_k = \cW - \cW_0$. So $\wK_d$ is actually an
isometry of $E$: $\|\wK_d \nu\| =\|\nu\|$ for any $\nu\in E$.

We call $\wK_d$ the \emph{return operator} in view of the decomposition over
prefixes having a single occurrence of $d$, which is terminal.  We will keep
this terminology in other contexts, although it suggests a dynamical picture
which is not what we will move to the forefront.  We will also designate the
words $a_1\dots a_i d$ as \emph{next-return} words. We let $\cG_d$ be the
\emph{language of next-return words}, i.e.\@ the set of these words.
\begin{prop}\label{prop:KdW}
  Let the \emph{return operator} be defined on the Banach space of complex measures on the word-space $\cW$ as:
\begin{equation}
\wK_d = b^{-1}\bigl(I - b^{-1}(A- P_d)\bigr)^{-1}P_d.  
\end{equation}
There holds
\begin{equation}
    \label{eq:KdWmuk}
    \wK_d^{k}\wmu_0 = \wmu_k
  \end{equation}
  for every $k\geq1$.
\end{prop}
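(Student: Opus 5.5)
It suffices to prove the one-step identity $\wK_d\wmu_{k-1}=\wmu_k$ for every $k\geq1$; iterating it from $\wmu_0$ then gives Equation~\eqref{eq:KdWmuk} by induction on $k$. The proof will be purely combinatorial, a unique factorization of words, read off from the Neumann series representation~\eqref{eq:wKdneumann}. That representation is norm-convergent, since the $i$-th term has norm at most $b^{-1}(b-1)^ib^{-i}$. Hence we may apply it termwise to the finite-mass measure $\wmu_{k-1}=\sum_{g\in\cW_{k-1}}b^{-|g|}\delta_g$, using $P_s\delta_g=\delta_{sg}$.

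Doing so gives
\begin{equation*}
  \wK_d\wmu_{k-1}=\sum_{i\geq0}\ \sum_{\substack{a_1,\dots,a_i\\ a_j\neq d}}\ \sum_{g\in\cW_{k-1}} b^{-1-i-|g|}\,\delta_{a_1\dots a_i d g}.
\end{equation*}
The weight of each Dirac mass is exactly $b^{-|a_1\dots a_i d g|}$. It then remains to show that the map $(a_1\dots a_i d,\, g)\mapsto a_1\dots a_i d g$, from $\cG_d\times\cW_{k-1}$ to $\cW$, is a bijection onto $\cW_k$.

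\emph{Image in $\cW_k$.} The word $a_1\dots a_i d g$ contains exactly one occurrence of $d$ in its prefix, plus the $k-1$ occurrences in $g$. Since $w=d$ is a single digit, occurrences are simply positions, so no overlap issue arises and the count is additive: it equals $k$.

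\emph{Bijectivity.} Any $h\in\cW_k$ with $k\geq1$ has a first occurrence of $d$. Cutting $h$ right after that occurrence writes $h=rg$ uniquely, with $r\in\cG_d$ and $g\in\cW_{k-1}$. This gives both surjectivity and injectivity.

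Since the decomposition is unique, each $\delta_h$ with $h\in\cW_k$ appears exactly once, with weight $b^{-|h|}$. Therefore $\wK_d\wmu_{k-1}=\wmu_k$.

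The only point requiring care is the interchange of the Neumann series with the (infinite) sum defining $\wmu_{k-1}$. Absolute convergence in $E\simeq\ell^1$-type norm settles it: all weights are nonnegative and the total mass is finite (equal to $b$), so regrouping is legitimate. I expect no real obstacle beyond this bookkeeping.

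Note that in the single-digit case the restriction $k\geq1$ is genuinely needed only to guarantee a first occurrence of $d$. The identity does not extend to $k=0$, which is consistent with $\wmu_0$ playing the role of the initial vector and not being an image of $\wK_d$ (indeed, the range of $\wK_d$ consists of measures supported in $\cW\setminus\cW_0$).
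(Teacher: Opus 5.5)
Your proof is correct, but it takes a different route from the paper's. The paper does not use the first-occurrence factorization at this point. It forms the generating measure $\wM(y)=\sum_{k\geq0}y^k\wmu_k$ and decomposes by the first digit to get $\bigl(I-b^{-1}(A-P_d)-b^{-1}yP_d\bigr)\wM(y)=\delta_\emptyword$. It factors this as $(I-y\wK_d)\wM(y)=\bigl(I-b^{-1}(A-P_d)\bigr)^{-1}\delta_\emptyword$ and identifies the right-hand side with $\wmu_0$ by setting $y=0$. It then inverts $I-y\wK_d$ for $|y|<1$ and extracts the coefficient of $y^k$.

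You instead use the expansion \eqref{eq:wKdneumann}, i.e.\ $\wK_d=\sum_{g\in\cG_d}b^{-|g|}P_g$. You combine it with the unique factorization $\cW_k=\cG_d\cW_{k-1}$, obtained by cutting after the first $d$. Uniqueness holds because $\cG_d$ is prefix-free and single-digit occurrence counts are additive under concatenation. Your justification of the interchange of sums (norm convergence, nonnegative weights, finite mass $b$) is adequate.

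What your route buys: it is more elementary and explains directly why $\wK_d$ deserves the name ``return operator''. It yields the one-step renewal identity $\wmu_k=\wK_d\wmu_{k-1}$ without generating functions or coefficient extraction in a Banach space. It is the $p=1$ instance of the language factorization $\cW_k=\cL\cG^{k-1}\cR$ of Part~2, with $\cL=\cG=\cG_d$ and $\cR=\cW_0$.

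What the paper's route buys: it derives, rather than verifies, the closed formula $b^{-1}\bigl(I-b^{-1}(A-P_d)\bigr)^{-1}P_d$ from the digit-by-digit recursion. It needs only this defining formula, not its expansion over $\cG_d$. This algebraic elimination is also the template that in Part~2 produces $\wK_w=\wD_w^{-1}\wE_w$ as a finite combination of prefix operators once the overlap operator $C_w$ enters.
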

\begin{proof}
  Setting $y=0$, we deduce that the right-hand side of Equation~\eqref{eq:foo}
  is $\wmu_0$.  Multipliying on both sides by $(I - y\wK_d)^{-1}$ we get
  \begin{equation}
    \wM(y) = (I - y \wK_d)^{-1}\wmu_0.
  \end{equation}
  After expanding in a Neumann series, we identify, by unicity of coefficients
  of convergent power series in a Banach space, the coefficients of $y^k$, and
  this gives Equation~\eqref{eq:KdWmuk} (it would have been possible to make the
  whole argument purely at the formal level, which would have avoided some
  arguably extraneous considerations of topological vector spaces throughout).
\end{proof}

An obstruction to the hoped-for spectral picture is immediately visible: the
bounded linear operator $\wK_d:E\to E$ is not compact.  Indeed, being an
isometry, it maps the unit ball of $E$ isometrically onto the unit ball of its
range. As this range is infinite-dimensional, the image of the unit ball is
not relatively compact.  The same applies to all iterates.

Even worse, we can not even realize the first pole in this picture. Indeed,
suppose that, as $y\to1^-$, there is some limit $\nu$ in $E$ for
$(1-y)\wM(y)$.  For every word $g\in\cW$, we would have $\nu(\{g\}) =
\lim_{y\to1^-}(1-y)y^{k_d(g)}b^{-|g|} =0$, hence actually $\nu=0$.  On the
other hand, $(1-y)\wM(y)(\cW) = (1-y)\sum_{k=0}^\infty y^k\wmu_k(\cW) =b $ so
$(1-y)\wM(y)$ cannot converge in $E$ as $y\to1^-$.

\section{The return operator on the unit interval}

We anticipate that the residue at the pole should actually be realizable as
$(-b)$ times the Lebesgue measure $\Leb$ on $\Ifo01$ (or $\Iff01$)
We need to make sense of the prefix operators $P_s$ in this
context.  The push-forward from the word space to $\Ifo 01$ is done by the map
$x: g\mapsto n(g)/b^{|g|}$.  For any given word $s$, we have
\begin{equation}
  x(sg) = x(s) + b^{-|s|} x(g),
\end{equation}
which is an affine map from $\Ifo01$ (or $\Iff01$) to itself. As we shall not
return to the word space, we keep our earlier notation $P_s$, now for the
push-forward action of this affine map on Borel measures on the (closed) unit
interval. As for the affine map itself we denote it $\phi_s$.  So, we again
let $A = \sum_{a\in\Sigma_b} P_a$.  The operator $I - b^{-1}(A - P_d)$ is
invertible on the space of complex Borel measures on $\Iff01$, which we again
denote $E$ (it is equipped with the total variation norm).  We define the
return operator $K_d$ as expected from Equations~\eqref{eq:wKd} and
\eqref{eq:wKdneumann}:
\begin{equation}
  \label{eq:Kd}
   K_d=  b^{-1}\bigl(I - b^{-1}(A- P_d)\bigr)^{-1}P_d
      = \sum_{g\in \cG_d} b^{-|g|} P_{g}.
\end{equation}
and the ``renewal identity'' $\mu_{k+1} = K_d\mu_k$ holds for every $k\geq0$.

We observe that $\Leb$ is a fixed point for $K_d$, so we do have an
eigenvector with eigenvalue $\lambda_1=1$.  But $K_d$ is not a compact
operator. Indeed, for any $x\in \Ioo01$, the restriction of $K_d(\delta_x)$ to
the interval $\Ioo{b^{-1}d}{b^{-1}(d+1)}$ is $b^{-1}P_d(\delta_x) =
b^{-1}\delta_{b^{-1}(d+x)}$. So the total variation of $K_d(\delta_{x_1}) -
K_d(\delta_{x_2})$, $0<x_1<x_2<1$, is at least $2b^{-1}$ and $K_d$ is not a
compact operator.

Working on $\Ifo01$ is enough for discussing many aspects relative to the
measures $\mu_k$, and the measures with no atoms $\nu_n$ from
Proposition~\ref{prop:asymp}. But already with Equation~\eqref{eq:xinM2} we
have seen some distributional derivatives, and it is more convenient to work
on the segment $\Iff01$ in this context.  So we consider that $K_d$ acts on
the complex Borel measures on $\Iff01$.

We now prove that it does not have any eigenvalue $\lambda\neq1$.  Suppose to
the contrary that $\nu$ is an eigenvector of $K_d$ with eigenvalue
$\lambda\neq1$ and consider its moments $u_m(\nu)$, $m\geq0$.  Let $m$ be the
smallest non-negative integer such that $u_{m}(\nu)\neq0$.  For any prefix
operator $P_s$, one has for this specific $m$:
\begin{equation}
  u_m(P_s\nu) = \int_{\Iff01}\phi_s(x)^m\dnu(x) = b^{-m|s|}u_m(\nu).
\end{equation}
Thus,
$
  u_m(K_d\nu) = u_m(\nu)\sum_{g\in\cG_d} b^{-|g|} b^{-m|g|}
$.
Now,
\begin{equation}\label{eq:lmmass}
  \sum_{g\in\cG_d} b^{-|g|} b^{-m|g|}=
  \sum_{i=0}^\infty b^{-(m+1)(i+1)}(b-1)^i = \frac{b^{-m-1}}{1 - b^{-m-1}(b-1)} = 
  \lambda_{m+1}\,.
\end{equation}
So $\lambda u_m(\nu) = u_m(K_d\nu) = \lambda_{m+1}u_m(\nu)$ and
$\lambda=\lambda_{m+1}$.  As $\lambda\neq1$, $m>0$, and $\lambda\leq \lambda_2
= (b^2 - b +1)^{-1}$. Also, we note that $\lambda>0$.

For an affine map $\phi_g(x) = x(g) + b^{-|g|}x$, one has $\phi_g(1) = 1$
if and only if $x(g) = 1 - b^{-|g|}$, which happens exactly when $g$ contains
only the digit $b-1$.  So, if $d\neq b-1$, this cannot happen for $g$ a
next-return word, and consequently $K_d\nu(\{1\}) = 0$, hence, also
$\nu(\{1\})=0$, as $\lambda\neq0$.  If $d=b-1$, the sole next-return word
with $\phi_g(1)=1$ is $g=d$, and $1$ is the only antecedent, so $K_d\nu(\{1\})
= b^{-1} \nu(\{1\})$.  As $\lambda\leq \lambda_2 < b^{-1}$, we again conclude
that $\nu(\{1\})=0$.

The total variation of $K_d\nu$ is at least the sum of its total variations on
the cylinders $\Cyl_g$, $g\in \cG_d$ (as they are pairwise disjoint).  The
restriction of $K_d\nu$ to $\Cyl_g$ is that of $b^{-|g|}P_g(\nu)$: this uses
that no mass at the left end-point $x(g)$ can come from the image under some
other affine map $\phi_{g'}$, $g'\neq g$, of $\Iff01$, $g'\in \cG_d$, as
$\nu(\{1\})=0$. So its total variation there is
$b^{-|g|}|\nu|(\Ifo01)$. Summing over $g\in \cG_d$, we get $\|K_d\nu\|\geq
|\nu|(\Ifo01)$.

As $\nu(\{1\})=0$, one has $\|\nu\| = |\nu|(\Ifo01)$. So $\|K_d\nu\|\geq
\|\nu\|$ which is a contradiction with $\lambda=\lambda_{m+1}$, $m>0$.

\section{Eigenpolynomials and eigendistributions}

So, we need to enlarge the setting, and, from Equation~\eqref{eq:xinM2}, the
space of distributions on the segment $\Iff01$ is a candidate framework.
Distributions are dual to smooth functions. But it will turn out that the good
setting for progress is in analytic functions.  Having spent quite some time
with measures, we start with continuous functions, though.  We first transpose
each prefix operator $P_s$ into an operator $\sP_s$ acting on (continuous)
functions:
\begin{equation}
  \sP_s(f) = f\circ \phi_s\,.
\end{equation}
We then have for any complex measure $\nu$, and continuous function $f$,
$P_s(\nu)(f) = \nu(f\circ \phi_s) = \nu(\sP_s(f))$.  We only have to pay
attention that transposition reverses the ordering, so the definition for the
action on continuous functions on $\Iff01$ is, with $\sA = \sum_{a\in
  \Sigma_b} \sP_a$:
\begin{equation}\label{eq:sKd}
  \sK_d = b^{-1}\sP_d \bigl(\sI - b^{-1}(\sA- \sP_d)\bigr)^{-1}
    = \sum_{g\in \cG_d} b^{-|g|} \sP_{g}.
\end{equation}
We use $\sI$ as notation for the identity operator here.

Now, $\sK_d$ sends polynomial functions to polynomial functions.  More
precisely, it acts in an upper-triangular manner for the monomial basis of
$\CC[X]$.  And the diagonal element at position $(m,m)$, $m\geq0$ is
$\sum_{g\in\cG_d} b^{-|g|} b^{-m|g|}$ which, from Equation~\eqref{eq:lmmass},
is $\lambda_{m+1}$.  As the $\lambda_n$, $n\geq1$, are distinct, the action of
$\sK_d$ is diagonalizable on every $\CC[X]_{m}$, and there is for each
$m\geq0$ a unique monic eigenpolynomial $\cB_m$ of degree $m$: $\sK_d(\cB_m) =
\lambda_{m+1} \cB_m$. Of course $\cB_m$ actually depends on $(b,d)$ also.

We have probability measures $\nu_n$, $n\geq1$, at our disposal.  But we saw
that for $n>1$, it cannot be an eigenvector of the return operator (on
measures) $K_d$.  Let us check more closely to see what goes wrong.  Recall
that $\nu_n$ is the law of $Y = \sum_{j=1}^\infty d_j/b^j$ where the $d_j$ are
independent random digits which are identically distributed with $\PP(d_i =a) =
b^{-n}$ if $a\neq d$ and $D_n b^{-n}$ if $d_i=d$.  So $Y = b^{-1}(d_1 + Z)$
where $Z\sim Y$ and is independent of the first digit $d_1$.  We realized
these random variables on the interval $\Ifo01$, and thus always $0\leq Y <1$.
We now use conditional probabilities to evaluate, for any Borel set
$F\subset\Iff01$, the value of $\nu_n(F)$:
\begin{align}
  \nu_n(F) &= \PP(Y \in F) = \sum_{a\in\Sigma_b}\PP(d_1=a)\PP(b^{-1}(a+Z) \in F)
\\
&=\sum_{a\in\Sigma_b}\PP(d_1=a)\PP(\phi_a(Z) \in F)
\\
&=\sum_{a\in\Sigma_b}\PP(d_1=a)P_a(\nu_n)(F)
\\
&=b^{-n}(A - P_d)(\nu_n)(F) + D_n b^{-n} P_d(\nu_n)(F).
\end{align}
The conclusion is
\begin{equation}
  (I - b^{-n}(A - P_d))\nu_n = D_n b^{-n} P_d\nu_n\,,
\end{equation}
which we rewrite as
\begin{equation}\label{eq:nunsimilarity}
  b^{-n} P_d \nu_n = \lambda_n (I - b^{-n}(A - P_d))\nu_n\,.
\end{equation}
But the eigenvector equation for eigenvalue $\lambda_n$ is, from the
definition of the return operator $K_d$:
\begin{equation}
  b^{-1}P_d \sigma = \lambda_n (I - b^{-1}(A-P_d))\sigma.
\end{equation}
If only we had an operator $M$ which did $MP_d = b P_dM$, so also $MA=bAM$,
applying it $(n-1)$ times on Equation~\eqref{eq:nunsimilarity}, we would
conclude that $M^{n-1}\nu_n$ verifies the eigenvector equation (we still will
have to check that it is not zero) for the natural extension of $K_d$ as an
operator on distributions.  We do have such an operator $M$ on distributions:
it is the derivative. To check this last statement, we transpose to functions
and have to verify that $-f'\circ \phi_d = -b (f\circ \phi_d)'$ for any smooth
function $f$.  This is correct.

We have thus proven that $\nu_n^{(n-1)}$ is an eigendistribution for $K_d$
with eigenvalue $\lambda_n$.  In view of Equation~\eqref{eq:xinM2}, setting
\begin{equation}
  \label{eq:sigman}
  \sigma_n = \frac{(-1)^{n-1}}{(n-1)!}\nu_n^{(n-1)}\,,
\end{equation}
gives indeed a non-zero distribution.

Recall from Equation~\eqref{eq:xinM2} that $\sigma_n(t^{n-1})=1$ and,
naturally, $\sigma_n(t^j)=0$ for $j<n-1$. So $\sigma_n(\cB_{n-1})=1$.  And, on
the other hand, $\sigma_n(\cB_m)$ is certainly zero whenever $n\neq m+1$ due to
the eigenvector properties.

So $(\sigma_n)_{n\geq1}$ is the orthogonal (in the sense of duality) system to
$(\cB_{n-1})_{n\geq1}$.  As distributions with their compact support on
$\Iff01$ are completely determined by their moments, we conclude that we have
found all eigenspaces of $K_d$ acting on the space of distributions.  This
proves in a new way that $1$ is the sole eigenvalue of $K_d$ on measures: if
$\sigma_n$, $n>1$, were a measure, integrating $n-1$ times $\sigma_n$ in the
open interval $\Ioo01$ would give a bounded function, and $\nu_n$ would have a
density with respect to the Lebesgue measure.  But $\nu_n$ and $\Leb$ are
mutually singular due to distinct almost sure frequencies of digits.

\section{Concentration phenomenon and Riesz bases}

Under $\nu_n$, the radix digits $d_j$ are independent, with probability
$b^{-n}$ for each value different from $d$, whereas the probability of the
digit $d$, which is $1-(b-1)b^{-n}$, becomes exponentially larger in comparison when
$n$ increases. Thus, except for $n=1$, the most probable digits in a real number from the unit interval under the law $\nu_n$ are precisely
those of
\begin{equation}
x_d=\frac{d}{b-1}=0.ddd\ldots{}_b.
\end{equation}
This suggests to compare $\nu_n$ for large $n$ with the Dirac point mass
$\delta_{x_d}$, and to quantify in some way the difference to show it is
exponentially small.  For this, we want to bound the effect of $\nu_n$ as a
linear form on the monomial basis centered at $x_d$.  What we shall actually
examine are the off-diagonal elements of the lower-triangular matrix giving
the pairing between the eigendistributions $\sigma_n$ and the monic monomials
centered at $x_d$. By their definition, and Equation~\eqref{eq:xinM2},
\begin{equation}
\sigma_n\bigl((x-x_d)^{n+q-1}\bigr)
=
\binom{n+q-1}{q}
\int_0^1(x-x_d)^q\,d\nu_n(x),
\qquad q\geq0,
\end{equation}
so basically we are studying the matrix of the shifted moments
$\int_{0}^1 (x-x_d)^q\dnu_n$, but with binomial weights, and with
each column displaced down to obtain a lower-triangular shape.  Observe
that the values for $q=0$ are equal to $1$, and that $q$ indices the
sub-diagonals.

With $X_n$ a random variable having as law $\nu_n$, the absolute value of the
$n$th entry on the $q$-th subdiagonal is bounded by
$\binom{n+q-1}{q}\EE(|X_n-x_d|^q)$.  It has turned out to be simpler not to
estimate these moments separately, but to gather in one quantity all those for
a given $q$. In the next Part, where the single digit $d$ is replaced by a
block $w$, we will give individual bounds for each moment and confirm the
alluded-to exponential concentration phenomenon. But for now, let
\begin{equation}
S_q=
\sum_{n\geq1}\binom{n+q-1}{q}\EE\bigl(|X_n-x_d|^q\bigr).
\end{equation}
We shall show below that $S_q<\infty$ for each $q>0$.

The maximal possible distance of a digit from $d$ is $\max(d,b-1-d)$. Expressed
via $x_d$ this is $(b-1)R_d$ where
\begin{equation}
  R_d = \max(x_d, 1 - x_d).
\end{equation}
So for any real number $x=\sum_{j\geq 1}b^{-j}d_j$ we have the bound
\begin{equation}
  |x - x_d|\leq R_d \sum_{j\geq 1, d_j\neq d}\frac{b-1}{b^j}\;.
\end{equation}
Define the independent Bernoulli variables
$\eta_{n,j}=\Un_{\{d_j\ne d\}}$, which have success probability
$\PP(\eta_{n,j}=1)=(b-1)b^{-n}$, independent of $j$, and put
\begin{equation}\label{eq:Wn}
W_n=\sum_{j\geq1}\frac{b-1}{b^j}\eta_{n,j},
\end{equation}
so that $0\leq W_n\leq 1$.
We obtain simply
\begin{equation}
|X_n-x_d| = \left|\sum_{j\geq1} b^{-j}d_j - x_d\right| \leq R_dW_n,
\end{equation}
and, consequently,
\begin{equation}
\EE\bigl(|X_n-x_d|^q\bigr)\leq R_d^q \,\EE\bigl(W_n^q\bigr).
\end{equation}
We define the generating function (not knowing yet if $S_q<\infty$):
\begin{equation}
  F(z) = \sum_{q\geq1} S_q z^q,
\end{equation}
and consider, for $0\leq z < R_d^{-1}$, the non-negative double sum, doing the
interchange of the summations and using the binomial series (this is why we
take $z<R_d^{-1}$) and monotone convergence:
\begin{equation}\label{eq:Fz}
 \sum_{q\geq1} \sum_{n\geq1} z^q \binom{n+q-1}{q}\EE\bigl(|X_n-x_d|^q\bigr)
\leq \sum_{n\geq1} \left(\EE\bigl(\frac1{(1-z R_d W_n)^{n}}\bigr)-1\right).
\end{equation}
Write $\rho=zR_d\in \Ifo01$.

We show that the series on the right-hand side is finite. Write $W_{n,j}$ for
the partial sum in Equation~\eqref{eq:Wn} up to $j$ (so $W_{n,j}\leq 1 -
b^{-j}$, and $W_{n,0}=0$).  On the event $\eta_{n,j}=1$, there holds
\begin{equation}
  W_{n,j} = W_{n,j-1} + \frac{b-1}{b^j},
\end{equation}
hence
\begin{equation}
  \frac{1 - \rho W_{n,j}}{1 - \rho W_{n,j-1}} = 
  1 - \frac{\rho(b-1)b^{-j}}{1 - \rho W_{n,j-1}}
  \geq 1 - \frac{\rho(b-1)b^{-j}}{1 - \rho + \rho b^{1-j}}
  =\frac{1-\rho+\rho b^{-j}}{1-\rho+\rho b^{1-j}}\;.
\end{equation}
On the event $\eta_{n,j}=0$, the ratio is $1$, so for all $j\geq1$:
\begin{equation}\label{eq:rj}
  \frac{1 - \rho W_{n,j}}{1 - \rho W_{n,j-1}} \geq r_j^{\eta_{n,j}}\,,
  \qquad r_j = \frac{1-\rho+\rho b^{-j}}{1-\rho+\rho b^{1-j}}\;.
\end{equation}
This gives a lower bound by an absolutely convergent random product:
\begin{equation}
  1 - \rho W_n \geq \prod_{j=1}^\infty r_j^{\eta_{n,j}}\,.
\end{equation}
Observe now that $\EE(r_j^{-n \eta_{n,j}})= (b-1)b^{-n}r_j^{-n} + 1 - (b-1)b^{-n}$.
So, for $0\leq \rho < 1$ and $r_j$ as defined by Equation~\eqref{eq:rj} for $j\geq1$,
independence gives:
\begin{equation}\label{eq:Wnrj}
  \EE\bigl(\frac1{(1-\rho W_n)^{n}}\bigr)
\leq \prod_{j\geq 1}\Bigl(1 + (b-1)b^{-n}(r_j^{-n}-1)\Bigr).
\end{equation}
Some algebra leads to a simple result when summing over $n$ the
$j$-th term arising in the infinite products:
\begin{equation}
  \sum_{n=1}^\infty (b-1)b^{-n}(r_j^{-n}-1) = 
  \frac{b-1}{br_j-1}-\frac{b-1}{b-1} = \frac{\rho b^{1-j}}{1 - \rho}\;.
\end{equation}
Thus,
\begin{equation}
  \sum_{j=1}^\infty\sum_{n=1}^\infty (b-1)b^{-n}(r_j^{-n}-1) 
= \frac{b\rho}{(b-1)(1-\rho)}<\infty.
\end{equation}
This implies that $\sum_{j=1}^\infty(b-1)b^{-n}(r_j^{-n}-1) =
o_{n\to\infty}(1)$, hence, from Equation~\eqref{eq:Wnrj}
$\EE\bigl(\frac1{(1-\rho W_n)^{n}}\bigr) = 1 +
O(\sum_{j=1}^\infty(b-1)b^{-n}(r_j^{-n}-1))$, and finally from Equation
\eqref{eq:Fz}:
\begin{equation}\label{eq:finite}
  0\leq zR_d<1 \implies F(z)<\infty.
\end{equation}
This radius of convergence $R_d^{-1}$ for $F(z)$ is sharp. The $n=1$ term
alone gives, since $\nu_1$ is Lebesgue measure,
\begin{equation}
S_q
\geq
\int_0^1|x-x_d|^q\,dx
=
\frac{x_d^{q+1}+(1-x_d)^{q+1}}{q+1}.
\end{equation}
Consequently $ \limsup_{q\to\infty}S_q^{1/q}=R_d$.

Let, for $R>R_d$,  $\cH_R$ be the Hardy space $H^2(D(x_d,R))$.  Then,
\begin{equation}
e_m(z)=\frac{(z-x_d)^m}{R^m},
\end{equation}
are, for $m\geq0$, the vectors of its standard orthonormal basis.  On this
basis the functional $R^{n-1}\sigma_n$ has value $1$ at $e_{n-1}$, vanishes on
$e_0,\ldots,e_{n-2}$, and its coefficient at $e_{n+q-1}$ is
\begin{equation}
\binom{n+q-1}{q}\frac1{R^q} \int_0^1(x-x_d)^q\,d\nu_n(x).
\end{equation}
From Equation~\eqref{eq:finite} the sum of the absolute values of all its
off-diagonal coefficients is finite.
\begin{prop}\label{prop:riesz}
  In the Hardy space $\cH_R=H^2(D(x_d,R))$ for any $R>R_d=\max(x_d,1-x_d)$,
  with $x_d = d/(b-1)$, the normalized monic eigenpolynomials $\cB_m/R^m$,
  $m\geq0$, of the return operator $\sK_d$ are a Riesz basis, with dual family
  the $R^{n-1}\sigma_n$, $n\geq1$.
\end{prop}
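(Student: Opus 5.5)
The plan is to apply Proposition~\ref{prop:rieszhilbert} in $\cH_R$, with orthonormal basis $e_m=(z-x_d)^m/R^m$, to the family of functionals $f_{n-1}=R^{n-1}\sigma_n$, $n\geq1$, viewed as vectors of $\cH_R$ via the Hilbert space pairing. Then $(\cB_m/R^m)$ is recovered as the biorthogonal system.

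\emph{Step 1: $\sigma_n$ is a bounded functional on $\cH_R$.} Each $\sigma_n$ is a compactly supported distribution on $\Iff01$, and a priori it is defined only on polynomials. Set
\begin{equation*}
  f_{n-1}=\sum_{m} \sigma_n(e_m)R^{n-1}\,e_m^{*},
\end{equation*}
i.e.\ the vector whose $m$-th coordinate is $R^{n-1}\sigma_n(e_m)$. By the computation preceding the statement, this coordinate is $1$ for $m=n-1$, $0$ for $m<n-1$, and
\begin{equation*}
  \binom{n+q-1}{q}R^{-q}\int_0^1(x-x_d)^q\,\dnu_n(x)
\end{equation*}
for $m=n-1+q$, $q\geq1$. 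By Equation~\eqref{eq:finite}, applied with $z=1/R<R_d^{-1}$, we have $\sum_{q\geq1}\sum_n |\cdot|\leq F(1/R)<\infty$. So each coordinate sequence is in $\ell^1\subset\ell^2$, and $f_{n-1}\in\cH_R$. Its pairing with any polynomial agrees with $R^{n-1}\sigma_n$, and by density of polynomials it extends $\sigma_n$ continuously. (The same bound shows the extension equals integration of $(n-1)$st derivatives against $\nu_n$ for $h\in\cH_R$, since $D(x_d,R)\supset\Iff01$. This is not needed, but it confirms consistency.)

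\emph{Step 2: triangular summable perturbation.} Write $f_{n-1}=e_{n-1}+u_{n-1}$. The matrix $(\scalp{e_i}{u_j})$ is strictly lower triangular, with the nonzero entries at $i=j+q$, $q\geq1$. The sum of the moduli of all its entries is $F(1/R)<\infty$. Proposition~\ref{prop:rieszhilbert} then says that $(f_m)$ is a Riesz basis, and that its biorthogonal system $(f_m^*)$ is a Riesz basis with $\sum\|f_m^*-e_m\|<\infty$.

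\emph{Step 3: identify the biorthogonal system with $\cB_m/R^m$.} We know $\sigma_n(\cB_{m})=\delta_{n,m+1}$, so $\scalp{f_{n-1}}{\cB_m/R^m}=R^{n-1-m}\delta_{n-1,m}=\delta_{n-1,m}$. Hence $\cB_m/R^m$ is biorthogonal to $(f_n)$. Because $(f_n)$ is complete (being a Riesz basis), the biorthogonal family is unique, and therefore $f_m^*=\cB_m/R^m$.

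The expected main obstacle is purely one of bookkeeping in Step 1. One must be careful that the pairing convention (conjugate-linearity in the first slot) matches real coefficients, which it does since all moments are real. One must also check that the $\ell^1$ bound from Equation~\eqref{eq:finite} applies with $z=1/R$ exactly, which requires $R>R_d$ strictly, as assumed.
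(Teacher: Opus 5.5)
Your proposal is correct and follows the paper's proof. You use Equation~\eqref{eq:finite} at $z=1/R$ to get summability of the off-diagonal entries of the lower-triangular matrix $\bigl(R^{n-1}\sigma_n(e_m)\bigr)$, apply Proposition~\ref{prop:rieszhilbert}, and identify $(\cB_m/R^m)$ as the biorthogonal system. The differences are cosmetic: you move the functionals into $\cH_R$ via the Riesz representation, which is harmless since all coefficients are real, and you spell out uniqueness of the biorthogonal family by completeness, whereas the paper works directly in $\cH_R'$.
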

\begin{proof}
  The summability of the off-diagonal coefficients, in view of Proposition
  \ref{prop:rieszhilbert}, implies that the system
  $(R^{n-1}\sigma_n)_{n\geq1}$  is a Riesz basis in the dual of
  $\cH^2_R$.  The normaized eigenpolynomials $(\cB_m/R^m)_{m\geq0}$ are the vectors of the
  dual system.
\end{proof}
Here, we have tacitly extended the scope of the return operator $\sK_d$,
originally defined by Equation~\eqref{eq:sKd} on continuous functions on the
segment $\Iff01$ (which is a compact subset of the open disk $D(x_d,R)$), to
the full Hardy space. It can indeed be shown a priori from the formula
defining it (note that one can find a single compact subset of $D(x_d,R)$
containing the images of the disk under all the affine maps involved)
that it is bounded and compact. Compactness can
also be seen as a corollary to the above Proposition.

As we indicated earlier, regarding the eigendistributions $\sigma_n$, the
monic polynomials $\cB_m$ turn out to be relatively exponentially close
to the monomials $(x-x_d)^m$ for the Hardy space $\cH_R$ norm.  This will be
proven later, in the more general context where the single digit $d$ is replaced with
a word $w$.

\section{Proof of the main Theorem for \texorpdfstring{$k\geq1$}{k>=1}}

We now prove Theorem~\ref{thm:part1main} for $k\geq1$. Recall first Equation
\eqref{eq:IU} which says, for $k\geq1$, $I(b,d,k) = \sum_{\substack{1\leq
    a<b\\a\neq d}}U_k(a) + \mathbf 1_{\{d>0\}}U_{k-1}(d)$.  It uses the
Stieltjes functions $U_k$, defined as $
U_k(n)=\int_{\Ifo01}\frac{\dmu_k(x)}{n+x} $.  We are going to expand the
kernels $1/(n+x)$ in terms of the eigenpolynomials $\cB_m$.  Let
$R>R_d=\max(x_d,1-x_d)$, where $x_d=d/(b-1)$, and suppose that $n$ is large
enough for $n+x_d>R$ to be true.  Then $x\mapsto(n+x)^{-1}$ belongs to
$\cH_R=H^2(D(x_d,R))$. Since the dual functionals to the normalized
eigenpolynomials $\cB_{j-1}/R^{j-1}$ are the $R^{j-1}\sigma_j$,
Proposition~\ref{prop:riesz}, together with
\begin{equation}\label{eq:sigmaStieltjesProof}
  \sigma_j\Bigl(\frac1{n+x}\Bigr)=(-1)^{j-1} \int_0^1\frac{\dnu_j(x)}{(n+x)^j},
\end{equation}
implies
\begin{equation}\label{eq:StieltjesRieszProof}
  \frac1{n+x}
  =
  \sum_{j=1}^{\infty}
  (-1)^{j-1}R^{j-1}\int_0^1\frac{\dnu_j(x)}{(n+x)^j}\frac{\cB_{j-1}(x)}{R^{j-1}},
\end{equation}
with convergence in $\cH_R$. And the sequence
$((-1)^{j-1}R^{j-1}\int_0^1\frac{\dnu_j(x)}{(n+x)^j})$ belongs to
$\ell^2(\NN)$. It is actually bounded termwise in absolute value
by $(R^{j-1}n^{-j})$ so is  even summable already if $n>R$.  

Recall the modal representation of the moments of the measure $\mu_k$: from
Proposition~\ref{prop:modalmoments}, together with Equation~\eqref{eq:xinM2},
and with Equation~\eqref{eq:sigman} defining $\sigma_n$,
\begin{equation}
m\geq0\implies  \mu_k(t^m) = \sum_{n=1}^{m+1}\lambda_n^{k+1}a_n(b,d)\sigma_n(t^m).
\end{equation}
The biorthogonality relation $\sigma_n(\cB_{j-1})=\delta_{nj}$ gives then
directly for every $j\geq1$ and $k\geq0$:
\begin{equation}\label{eq:Bmomentaj}
  \mu_k(\frac{\cB_{j-1}}{R^{j-1}}) = \frac{\lambda_j^{k+1} a_j(b,d)}{R^{j-1}}.
\end{equation}
Integrating Equation~\eqref{eq:StieltjesRieszProof} against $\mu_k$ leads to:
\begin{equation}\label{eq:UkmodalRiesz}
  U_k(n)
  =
  \sum_{j=1}^{\infty}
  (-1)^{j-1}\frac{a_j(b,d)\lambda_j^{k+1}}{R^{j-1}}R^{j-1}\int_0^1\frac{\dnu_j(x)}{(n+x)^j}\;.
\end{equation}
This scalar series is absolutely convergent. Indeed, integration against
$\mu_k$ is a bounded linear functional on $\cH_R$, so the quantities on the
right-hand side in Equation~\eqref{eq:Bmomentaj} form a sequence in
$\ell^2(\NN)$, and the right-hand side above is an $\ell^2$ scalar product
between two square-summable sequences.

Suppose first that $d>0$. Then $x_d>0$ and we can choose $R$ such that $R<|x_d -
(-1)|=1+x_d$ and $R>R_d=\max(x_d,1-x_d)$. Then the modal expansion of $U_k(n)$
applies to every $n\geq1$.  Thus, Equation~\eqref{eq:UkmodalRiesz} applies
simultaneously to every Stieltjes function occurring in the formula for
$I(b,d,k)$. Let us denote for shortening notation:
\begin{equation}
  A_j(n) = \int_0^1\frac{\dnu_j(x)}{(n+x)^j}\;.
\end{equation}
We obtain (keeping in mind that $A_j(n)$ depends on course also on $(b,d)$):
\begin{equation}\label{eq:IpositiveModal}
  I(b,d,k)
  =
  \sum_{j=1}^{\infty}
  (-1)^{j-1}a_j(b,d)\lambda_j^{k}
  \Bigl(
    A_j(d)
    +
    \lambda_j\sum_{\substack{1\leq a<b\\a\neq d}}A_j(a)
  \Bigr).
\end{equation}
Recall from Proposition~\ref{prop:farhis} the quantities:
\begin{equation}
  F_j(b,d) = \int_{b^{-1}}^1 \frac{\dnu_j(x)}{x^j}\;.
\end{equation}
Decomposing along the subintervals of length $b^{-1}$ and using the
auto-similarity rules for $\nu_j$ leads to
\begin{equation}
  F_j(b,d)
  =
  \lambda_j^{-1}A_j(d)
  +
  \sum_{\substack{1\leq a<b\\a\neq d}}A_j(a).
\end{equation}
Hence, we get the absolutely convergent scalar series
\begin{equation}\label{eq:Iaj}
  I(b,d,k)
  =
  \sum_{j=1}^{\infty}
  (-1)^{j-1}a_j(b,d)F_j(b,d)\lambda_j^{k+1},
\end{equation}
which completes the proof of Theorem~\ref{thm:part1main} for $k\geq 1$ and $d>0$.

Consider now $d=0$. So $x_d=0$ and $R_d=1$.  The starting formula (originally from \cite{burnolirwin}) now reads:
\begin{equation}
  I(b,0,k)=\sum_{a=1}^{b-1}U_k(a).
\end{equation}
Choosing $1<R<2$, we can not expand immediately the kernel $1/(1+x)$ as it does not belong to the Hardy space $\cH_R$. But as $k\geq1$,
Equation~\eqref{eq:Ukraise}, with $n=1$ and $d=0$, gives
\begin{equation}
  U_k(1)
  =
  U_{k-1}(b)
  +
  \sum_{a=1}^{b-1}U_k(b+a).
\end{equation}
Consequently,
\begin{equation}
  I(b,0,k)
  =
  U_{k-1}(b)
  +
  \sum_{a=2}^{b-1}U_k(a)
  +
  \sum_{a=1}^{b-1}U_k(b+a).
\end{equation}
The Stieltjes functions $U_{k-1}$ and $U_k$ are now used with integer arguments greater than one. We may therefore apply Equation~\eqref{eq:UkmodalRiesz} to every term. This gives
\begin{equation}\label{eq:IzeroModal}
  I(b,0,k)
  =
  \sum_{j=1}^{\infty}
  (-1)^{j-1}a_j(b,0)\lambda_j^{k}
  \Bigl(
    A_j(b)
    +\lambda_j\sum_{a=2}^{b-1}A_j(a)
    +\lambda_j\sum_{a=1}^{b-1}A_j(b+a)
  \Bigr).
\end{equation}
For $d=0$, the first-digit auto-similarity of $\nu_j$ gives
\begin{equation}
  A_j(1)
  =
  \lambda_j^{-1}A_j(b)
  +
  \sum_{a=1}^{b-1}A_j(b+a).
\end{equation}
Hence the quantity in parentheses in Equation~\eqref{eq:IzeroModal} is
\begin{equation}\label{eq:FjPhiZero}
  \sum_{a=1}^{b-1}A_j(a)
  =
  F_j(b,0).
\end{equation}
Thus, Equation~\eqref{eq:Iaj} holds also for $d=0$ (with absolute convergence).

By uniqueness of the Poincaré asymptotic expansion obtained in
Proposition~\ref{prop:asymp}, comparison with Equation~\eqref{eq:asymp}
gives
\begin{equation}\label{eq:cjbdajbd}
  c_j(b,d)
  =
  \frac{(-1)^{j-1}}{b}a_j(b,d),
  \qquad j\geq1,
\end{equation}
which is the relation announced after
Proposition~\ref{prop:modalmoments}.

Finally, substituting the above into
Equation~\eqref{eq:Iaj} yields
\begin{equation}
  b^{-1}I(b,d,k)
  =
  \sum_{j=1}^{\infty}
  c_j(b,d)F_j(b,d)\lambda_j^{k+1},
  \qquad k\geq1,
\end{equation}
with absolute convergence. This completes the proof of Theorem
\ref{thm:part1main} for $k\geq1$.

\section{Proof of the main Theorem for \texorpdfstring{$k=0$}{k=0}}

It remains to prove the two statements of Theorem~\ref{thm:part1main} concerning
$k=0$. Suppose first that $d>0$.  As in the preceding section, choose $R$ such that
$R_d<R<1+x_d$.
Every Stieltjes kernel $x\mapsto(n+x)^{-1}$, $n\geq1$, belongs then to
$\cH_R=H^2(D(x_d,R))$.  Taking $k=0$ in
Equation~\eqref{eq:UkmodalRiesz}, and using
$
  c_j(b,d)=(-1)^{j-1}b^{-1}a_j(b,d)
$,
gives, with the notation $A_j(n)$ of the preceding section,
\begin{equation}\label{eq:U0modal}
  U_0(n)
  =
  b\sum_{j=1}^{\infty}c_j(b,d)\lambda_jA_j(n),
  \qquad n\geq1,
\end{equation}
with absolute convergence.

At $k=0$, Equation~\eqref{eq:IU} reads
$
  I(b,d,0)
  =
  \sum_{\substack{1\leq a<b\\a\neq d}}U_0(a)
$.
Substitution of Equation~\eqref{eq:U0modal} and regrouping therefore give
\begin{equation}
  b^{-1}I(b,d,0)
  =
  \sum_{j=1}^{\infty}
  c_j(b,d)\lambda_j
  \sum_{\substack{1\leq a<b\\a\neq d}}A_j(a),
\end{equation}
again with absolute convergence.
For $a\neq d$, the first-digit auto-similarity of $\nu_j$ gives
\begin{equation}
  \int_{[a/b,(a+1)/b)}
  \frac{d\nu_j(x)}{x^j}
  =
  A_j(a).
\end{equation}
Consequently, from the definition in Equation~\eqref{eq:Fs*},
\begin{equation}
  F_j^*(b,d)
  =
  \sum_{\substack{1\leq a<b\\a\neq d}}A_j(a).
\end{equation}
We have thus proved $ b^{-1}I(b,d,0) = \sum_{j=1}^{\infty}
c_j(b,d)F_j^*(b,d)\lambda_j $, with absolute convergence, which is
Equation~\eqref{eq:exact*} from Theorem~\ref{thm:part1main}.

We turn to $d=0$.  Here $x_d=0$, and we choose arbitrarily $R$ in the interval $\Ioo12$.
For every $n\geq2$, the kernel $(n+x)^{-1}$ belongs to
$\cH_R=H^2(D(0,R))$, and Equation~\eqref{eq:U0modal} gives
\begin{equation}
  U_0(n)
  =
  b\sum_{j=1}^{\infty}c_j(b,0)\lambda_jA_j(n),
  \qquad n\geq2,
\end{equation}
with absolute convergence.  The only difficulty is therefore with $U_0(1)$.

Equation~\eqref{eq:U0raise}, with $d=0$ and $n=1$, gives
\begin{equation}\label{eq:U01raise}
  U_0(1)
  =
  1+\sum_{a=1}^{b-1}U_0(b+a).
\end{equation}
All the Stieltjes functions on the right-hand side are covered by the preceding
modal expansion.  It remains to express the constant $1$ in the same modal
form.

On the Banach space of complex Borel measures on $\Iff01$, the prefix
operators are isometries, and the Neumann series for $(I - b^{-1}\sum_{1\leq a
  <b}P_a)^{-1}$ gives
\[
  \Bigl(I-\frac1b\sum_{a=1}^{b-1}P_a\Bigr)^{-1}\delta_0 = \mu_0\,.
\]
Put $D_0 = I - b^{-1}\sum_{1\leq a <b}P_a$ on measures, and $\sD_0 = \sI -
b^{-1}\sum_{1\leq a <b}\sP_a$ on continuous functions on $\Iff01$.  By
transposition, for every continuous function $f$ on $\Iff01$,
\begin{equation}\label{eq:mu0D0}
  \int_{\Iff01} f(x)\,d\mu_0(x)
  =
  (\sD_0^{-1}f)(0).
\end{equation}
By Equation~\eqref{eq:sKd}, where all operators are acting on
$\cC_\CC(\Iff01,\|\cdot\|_\infty)$,
$
  \sK_0=b^{-1}\sP_0\sD_0^{-1}
$.
Since $(\sP_0h)(0)=h(0)$ for every continuous $h$, and
$\sK_0\cB_{j-1}=\lambda_j\cB_{j-1}$, Equation~\eqref{eq:mu0D0} gives
\begin{equation}
  \lambda_j\cB_{j-1}(0)
  =
  (\sK_0\cB_{j-1})(0)
  =
  \frac1b(\sD_0^{-1}\cB_{j-1})(0)
  =
  \frac1b\int_{\Iff01}\cB_{j-1}(x)\,\dmu_0(x).
\end{equation}

Taking $k=0$ in Equation~\eqref{eq:Bmomentaj} shows that the integral on the
right is $\lambda_j a_j(b,0)$.  We therefore obtain
\begin{equation}\label{eq:Bzero}
  \cB_{j-1}(0)
  =
  \frac{a_j(b,0)}{b}
  =
  (-1)^{j-1}c_j(b,0).
\end{equation}
We now apply the Riesz basis expansion to the kernel $(b+x)^{-1}$:
\begin{equation}
  \frac1{b+x}
  =
  \sum_{j=1}^{\infty}
  (-1)^{j-1}A_j(b)\cB_{j-1}(x)
  \qquad\text{in }\cH_R.
\end{equation}
Evaluation at $x=0$ (which of course is a continuous linear form on the Hardy space) and Equation~\eqref{eq:Bzero} give
\begin{equation}\label{eq:oneModal}
  1
  =
  b\sum_{j=1}^{\infty}c_j(b,0)A_j(b).
\end{equation}
This series is absolutely convergent, by the same argument of $\ell^2$-pairing
as in the previous section.
It remains to recombine these contributions.  The first-digit auto-similarity
of $\nu_j$, now with $d=0$, gives
\begin{equation}\label{eq:Ajone}
  A_j(1)
  =
  \lambda_j^{-1}A_j(b)
  +
  \sum_{a=1}^{b-1}A_j(b+a).
\end{equation}
Using Equation~\eqref{eq:oneModal} and the modal expansions of the
$U_0(b+a)$ in Equation~\eqref{eq:U01raise}, we regroup
terms mode by mode.  Equation~\eqref{eq:Ajone} then gives
\begin{equation}\label{eq:U01modal}
  U_0(1)
  =
  b\sum_{j=1}^{\infty}
  c_j(b,0)\lambda_jA_j(1).
\end{equation}
Thus $U_0(1)$ has exactly the same modal formula as the $U_0(n)$, $n\geq2$,
although its kernel itself does not belong to $\cH_R$.
Finally, Equation~\eqref{eq:IU} gives
$
  I(b,0,0)
  =
  \sum_{a=1}^{b-1}U_0(a)
$.
For $a\geq2$ we use Equation~\eqref{eq:U0modal}, and for $a=1$ we use
Equation~\eqref{eq:U01modal}.  Hence
\begin{equation}
  b^{-1}I(b,0,0)
  =
  \sum_{j=1}^{\infty}
  c_j(b,0)\lambda_j
  \sum_{a=1}^{b-1}A_j(a),
\end{equation}
with absolute convergence.  The first-digit decomposition of $\nu_j$ gives
$
  F_j(b,0)
  =
  \sum_{a=1}^{b-1}A_j(a)
$.
We conclude that
\begin{equation}
  b^{-1}I(b,0,0)
  =
  \sum_{j=1}^{\infty}
  c_j(b,0)F_j(b,0)\lambda_j,
\end{equation}
with absolute convergence.  This is Equation~\eqref{eq:exact} for $k=d=0$,
and completes the proof of Theorem~\ref{thm:part1main} in all cases.

\part{Stochastic radix expansions over
  languages and the modal expansion of block-Irwin sums}

\section{Introduction}

We now study in general $I(b,w,k)$ where $w$ is a (possibly) multi-digit
(non-empty) word $w_1\dots w_p$.  A major difference with the earlier $p=1$
framework is that we stop reasoning digit per digit: this could probably be
done using vector-valued objects, corresponding to the $p$ states of the
\emph{prefix automaton} (which we will describe later), but a core lesson of
\cite{burnolone42} is that it is always possible to reduce to a purely scalar context.
It was already known from this work that the recurrence relations for the
measures $\mu_k$ led to the introduction of some other measures
(this was also extended to $p=3$ by the author at that time, but
was not prepared for publication).  This
is best discovered using the $y$-combination
$\wM(y)=\sum_{k\geq0}y^k\wmu_k$ on the word space $\cW$, which also leads to
the discovery of the suitable return operator $\wK_w$ there.  The stochastic
picture will then be developed from the core \emph{languages} called
\emph{left}, \emph{bridge} (or \emph{renewal}) and \emph{tail}, and only later
transported to the unit interval.  The $k=0$ case is separate from $k\geq1$,
as it involves another language, the \emph{occurrence-free} one.

As is to be expected, the border structure of the word $w$ is very important,
and the Guibas--Odlyzko auto-correlation polynomial \cite{guibasodlyzko1981b}
will be present, and will even take a more powerful shape as a suitable
operator $C_w$ on measures on the word space.  It enters essentially in the
algebraic derivation of the return operator $\wK_w$ on such measures.
Moving to the unit interval, this provides the model for defining a return
operator $K_w$ on all Borel complex measures on $\Iff01$, and even on
distributions, which is where the eigenvectors are found.  Its transpose $\sK_w$ on
functions has eigenpolynomials $\cB_m$, which will again serve as an expansion
basis for the Stieltjes kernels $1/(n+x)$.  We do a finer study of the
deviation of the eigendistributions from the derivatives of the Dirac mass at
the ``most probable'' point $x_w\in[0,1]$. This deviation is shown to be
exponentially small as the modal index goes to infinity. The same holds for
the relative distance, also computed in a suitable Hardy space, of the
eigenpolynomials $\cB_m$ from the monomial basis centered at $x_w$.

\section{Measures, overlap and return operators, languages}

We work in the Banach space $E$ of finite complex measures on the word space
$\cW$. For a word $s$, let $P_s:\cW\to\cW$ be prefixing by $s$, and denote
also by $P_s:E\to E$ the induced linear isometry on measures. Thus
$P_rP_s=P_{rs}$.  The \emph{prefix algebra} is the algebra of bounded
operators $T$ of the form $T=\sum_{g\in \cW} c(g)P_g$ for some weights
$c(g)\in \CC$ which are summable.  The additive decomposition of such a $T$ over the
atomic prefixing operators $P_g$ is unique.  Any element $T$ of the
prefix algebra is uniquely determined by its action on $\delta_\emptyword$.
Multiplication corresponds to the $\ell^1$-convolution algebra generated by
the rule $P_rP_s = P_{rs}$.  For $T = \sum_{g\in \cW} c(g)P_g$, with
$c(\emptyword)=1$ and $\sum_{g\neq\emptyword}|c(g)|<1$, $T$ is invertible by the
Neumann series.

Recall that $k_w:\cW\to\NN_{0}$ is the occurrence count function. In
\cite[Lemma~5]{burnolblocks}, a simple a priori argument establishes
for every $k\geq0$
\begin{equation}
  \sum_{k_w(g)\leq k} b^{-|g|}\leq p(k+1)b^p\,.
\end{equation}
So for $|y|<1$, 
\begin{equation}
  \wM(y)
  =
  \sum_{Y\in\cW}
  y^{k_w(Y)}b^{-|Y|}\delta_Y
\end{equation}
is a complex measure on $\cW$, which is a finite positive measure for $0<y <
1$, and
\begin{equation}
  \wM(y) = 
  \sum_{k\geq0}y^k\wmu_k,
  \qquad
  \wmu_k
  =
  \sum_{k_w(Y)=k}b^{-|Y|}\delta_Y,
\qquad \wmu_k(\cW)<\infty.
\end{equation}
Deleting the first digit changes the occurrence count only if the word begins
with $w$. For such a word $X=wY$, the count occurring after deleting the first
digit is $k_w(wY)-1$. This leads to the modified counting function
\begin{equation}\label{eq:kwplus}
  k_w^+(Y)=k_w(wY)-1.
\end{equation}
Consider the words $g$ with a given value $k$ of $k_w^+(g)$, hence a given
value $k+1$ of $k_w(wg)$.  Their total mass (where each $g$ is counted with
mass $b^{-|g|}$) is thus bounded above by $b^p$ times $\wmu_{k+1}(\cW)$.  So,
from domination by $b^p(\wM(y)-1)/y$, for $0<y<1$, we can define a finite
measure for any $|y|<1$ as
\begin{equation}\label{eq:etaHat}
  \wH(y)
  =
  \sum_{Y\in\cW}
  y^{k_w^+(Y)}b^{-|Y|}\delta_Y
  =
  \sum_{k\geq0}y^k\weta_k,
  \qquad
  \weta_k
  =
  \sum_{k_w^+(Y)=k}b^{-|Y|}\delta_Y .
\end{equation}

We decompose $\wM(y)$ according to the first digit.  Following
\cite{burnolone42}, let $\tau(aY)=Y$ for $a\in\Sigma_b$ be called the pruning
action on $\cW\setminus\{\emptyword\}$. Pruning a word reduces the occurrence
count only if the word has $w$ as prefix:
\begin{equation}
  k_w(X)-k_w(\tau X)
  =
  \begin{cases}
    1,&\text{if $X$ begins with $w$},\\
    0,&\text{otherwise}.
  \end{cases}
\end{equation}

We let
\begin{equation}
A=\sum_{a\in\Sigma_b}P_a\,.
\end{equation}
Acting with $A$ creates non-empty words, and the $y$-weight of $\delta_X$ in
the image will be $y^{k_w(Y)}$ for $X=aY$, $Y=\tau(X)$:
\begin{equation}
  b^{-1}A\wM(y)
  =
  \sum_{\substack{X\in\cW\\X\neq\emptyword}}
  y^{k_w(\tau X)}b^{-|X|}\delta_X .
\end{equation}
In $\wM(y)-b^{-1}A\wM(y)$, the Dirac masses $\delta_X$ for non-empty words $X$
not having $w$ as prefix cancel out.  If, on the other hand, $X=wZ$, the
respective coefficients of $\delta_X$ in $\wM(y)$ and $b^{-1}A\wM(y)$ are
\begin{equation}
  b^{-|wZ|}y^{k_w(wZ)}
  \qquad\hbox{and}\qquad
  b^{-|wZ|}y^{k_w(wZ)-1}.
\end{equation}
Their difference is therefore, using the notation from
Equation~\eqref{eq:kwplus},
\begin{equation}
  (y-1)b^{-|wZ|}y^{k_w^+(Z)}.
\end{equation}
Thus
\begin{equation}\label{eq:prune}
  \bigl(I-b^{-1}A\bigr)\wM(y) = \delta_\emptyword - 
  (1-y)b^{-p}P_w\wH(y).
\end{equation}

We next compare $k_w^+$ with the original counting function $k_w$. 

Their
difference consists exactly of the occurrences of $w$ which cross the
boundary, whose location is represented by the $|$ symbol, in the word $w\,|\,Y$.

Let $q$, $0<q<p$, be an overlap period of the word $w$ (cf.\@
\cite{guibasodlyzko1981b}). If $c_q$ is the suffix of $w$ of length $q>0$,
and $r_q$ the prefix of $w$ of length $q$, then
\begin{equation}
  wc_q=r_qw,
\end{equation}
as both represent the ``gluing'' of $w$ with itself along the overlap.
Let $\cC_w^+$ be the set of such ``complement'' words $c_q$. The set is empty
if and only if $w$ does not have borders.

Several members of $\cC_w^+$ may be prefixes of the same $Y$. When they are
ordered by length, the corresponding values of $k_w^+$
decrease successively by one (compare with the proof in \cite{burnolblocks} of the
Guibas--Odlyzko formula for the generating function of the occurrence-free
language cardinalities per word length; see also \cite[\S6.4]{odlyzko1995} for the binary case). Hence, word by word,
\begin{equation}\label{eq:bordertel}
  y^{k_w(Y)}
  =
  y^{k_w^+(Y)}
  +
  (1-y)
  \sum_{\substack{c\in\cC_w^+\\Y=cZ}}
  y^{k_w^+(Z)} .
\end{equation}
Let the \emph{overlap operator} 
be defined as
\begin{equation}\label{eq:complop}
  C_w
  =
  \sum_{c\in\cC_w^+}b^{-|c|}P_c .
\end{equation}
It is thus the zero operator if and only if $w$ has no border.
Summing Equation~\eqref{eq:bordertel} over the word space yields the
relation
\begin{equation}\label{eq:MfromH}
    \wM(y)
    = \wH(y) + (1-y) C_w\wH(y).
\end{equation}

We eliminate $\wM(y)$ from the system of the two Equations
\eqref{eq:prune} and \eqref{eq:MfromH}. Defining the bounded linear operators
$\wD_w$, $\wE_w$ acting $E$ by
\begin{equation}\label{eq:wDwEw}
  \wD_w
  =
  \bigl(I-b^{-1}A\bigr)(I+C_w)+b^{-p}P_w,
  \qquad
  \wE_w
  =
  \bigl(I-b^{-1}A\bigr)C_w+b^{-p}P_w,
\end{equation}
the elimination leads to
\begin{equation}\label{eq:DH}
  (\wD_w-y\wE_w)\wH(y)
  =
  \delta_\emptyword .
\end{equation}
Extracting coefficients of $y$ gives
\begin{equation}\label{eq:etaDE}
  \wD_w\weta_0=\delta_\emptyword,
  \qquad
  \wD_w\weta_k
  =
  \wE_w\weta_{k-1}
  \quad(k\geq1).
\end{equation}

We define the \emph{tail} language
\begin{equation}
  \cR
  =
  \{r\in\cW:k_w^+(r)=0\},
\end{equation}
whose words create no new occurrence after a given occurrence of $w$.  So,
\begin{equation}
  \weta_0 = \sum_{r\in \cR}b^{-|r|}\delta_r\,.
\end{equation}

As $\wD_w$ belongs to the prefix algebra, comparison with Equation
\eqref{eq:etaDE} shows that it is invertible and that
\begin{equation}\label{eq:Dinv}
  \wD_w^{-1}
  =
  \sum_{r\in\cR}b^{-|r|}P_r .
\end{equation}
It was important for this argument that we knew already the finiteness of
$\sum_{r\in\cR}b^{-|r|}$ (as we knew already $\weta_0(\cW)<\infty$).

Define the
\emph{next-return}, or \emph{renewal}, language as
\begin{equation}
  \cG
  =
  \{g\neq\emptyword:
  k_w^+(g)=1,\;
  k_w^+(g[:-1])=0\},
\end{equation}
where $g[:-1]$, for any non-empty word $g$, denotes $g$ with its final digit deleted. 
Thus the first new occurrence is completed exactly at the final digit of
$g$. We also define the \emph{left} (or \emph{initial}) language
\begin{equation}
  \cL
  =
  \{\ell\neq\emptyword:
  k_w(\ell)=1,\;
  k_w(\ell^-)=0\},
\end{equation}
and the \emph{occurrence-free} language
\begin{equation}
  \cF
  =
  \{f\in\cW:k_w(f)=0\}.
\end{equation}
We then have unique decompositions, for every $k\geq0$,
\begin{equation}\label{eq:etafact}
  \{Y\in\cW:k_w^+(Y)=k\}
  =
  \cG^k\cR,
\end{equation}
and, for $k\geq1$,
\begin{equation}\label{eq:Wfact}
  \cW_k
  =
  \cL\cG^{k-1}\cR.
\end{equation}
The occurrence-free case has instead the decomposition
\begin{equation}\label{eq:Ffact}
  \cF
  =
  \bigsqcup_{c\in\{\emptyword\}\cup\cC_w^+}
  c\cR,
\end{equation}
and $\cW$ is partitioned into $\cF$ and the $\cL\cG^{k-1}\cR$ for $k\geq1$.

Taking $k=1$ in Equation~\eqref{eq:etafact} gives
\begin{equation}
  \weta_1
  =
  \sum_{\substack{g\in\cG\\r\in\cR}}
  b^{-|g|-|r|}\delta_{gr}.
\end{equation}
This shows in passing, knowing $\weta_1(\cW)<\infty$, that $\sum_{g\in
  \cG}b^{-|g|}$ is finite.  It also implies $\sum_{r\in \cR} b^{-|r|}<\infty$,
which we already knew.  As $\cL$ is the reversal of the $\cR$ associated with
the reversed $w$, postfixed elementwise by $w$, it too has a finite total
mass.  We will return to these key facts later.

Define the \emph{return operator} $\wK_w$ as
\begin{equation}\label{eq:wKw}
  \wK_w
  =
  \sum_{g\in\cG}b^{-|g|}P_g .
\end{equation}
So $\weta_1=\wK_w\weta_0$.

On the other hand, Equation~\eqref{eq:etaDE} for $k=1$ gives
$\wD_w\weta_1=\wE_w\weta_0$.  Since
$\weta_1=\wK_w\weta_0$ and
$\weta_0=\wD_w^{-1}\delta_\emptyword$, this reads
\begin{equation}
  \wD_w\wK_w\wD_w^{-1}\delta_\emptyword
  =
  \wE_w\wD_w^{-1}\delta_\emptyword .
\end{equation}
Hence,
$\wD_w\wK_w\wD_w^{-1}=\wE_w\wD_w^{-1}$, and therefore, in the sub-algebra of prefix operators as bounded operators on $E$:
\begin{equation}\label{eq:wKwDwEw}
  \wK_w=\wD_w^{-1}\wE_w.
\end{equation}

Using now repeatedly Equation~\eqref{eq:etaDE}, we obtain the core renewal identity:
\begin{equation}\label{eq:Kwetak}
  \weta_k
  =
  \wK_w^k\weta_0 .
\end{equation}
In the $p=1$ case, $\weta_k=\wmu_k$ for every $k\geq0$. For $p>1$, the
``correct'' measures $\weta_k$ first arose, with other notation and only for
$p=2$, $k\leq 1$, in \cite{burnolone42}.

Thus the evolution of the scalar family $(\weta_k)_{k\geq0}$ is exactly the
next-return decomposition of words \emph{when one regards every word in $\cW$
  as starting after an initial, deleted, occurrence of $w$}.

\section{Interlude: generating functions and a prefix automaton}

We consider here the ordinary generating functions attached to the languages
introduced in the previous section. They connect our constructions with work
of Guibas--Odlyzko \cite{guibasodlyzko1981b}. We shall then describe a
finite-state, digit-by-digit automaton which is attached to the same
combinatorics. It is closely related to the prefix-suffix bookkeeping
underlying the Knuth--Morris--Pratt algorithm \cite{knuth1977KMP}. We will not
need it in the sequel, for reasons given in the concluding paragraph of this section.

Put
\begin{equation}
 F(t)=\sum_{g\in\cF}t^{|g|},\quad
 L(t)=\sum_{g\in\cL}t^{|g|},\quad
 G(t)=\sum_{g\in\cG}t^{|g|},\quad
 R(t)=\sum_{g\in\cR}t^{|g|},
\end{equation}
and, for $k\geq0$,
\begin{equation}
 Z_k(t)=\sum_{k_w(X)=k}t^{|X|},
 \qquad
 N_k(t)=\sum_{k_w^+(Y)=k}t^{|Y|}.
\end{equation}
Thus $Z_0(t)=F(t)$.

From the factorizations of the preceding sections, we get
\begin{equation}
 N_k(t)=G(t)^kR(t),
 \qquad k\geq0,
\end{equation}
and
\begin{equation}
 Z_k(t)=L(t)G(t)^{k-1}R(t),
 \qquad k\geq1.
\end{equation}
Let also
\begin{equation}\label{eq:Ct}
 C(t)
 =
 1+\sum_{c\in\cC_w^+}t^{|c|}.
\end{equation}
This is precisely the Guibas--Odlyzko auto-correlation polynomial from
\cite{guibasodlyzko1981b}. Its non-zero coefficients are all equal to $1$.

There is a direct scalar specialization of the prefix algebra which recovers
the generating functions above. For every word $s$, substitute
\begin{equation}
 P_s\longmapsto (bt)^{|s|}.
\end{equation}
A term $b^{-|s|}P_s$ is thereby sent to
$t^{|s|}$. In particular,
\begin{equation}
 b^{-1}A\longmapsto bt,
 \qquad
 C_w\longmapsto C(t)-1,
 \qquad
 b^{-p}P_w\longmapsto t^p\,.
\end{equation}

In view of Equation \eqref{eq:wDwEw}, which defines the operators $\wD_w$ and
$\wE_w$, it is convenient to set
\begin{equation}\label{eq:Dt}
 D(t)=(1-bt)C(t)+t^p
\end{equation}
and
\begin{equation}\label{eq:Et}
 E(t)=(1-bt)(C(t)-1)+t^p\,.
\end{equation}
The identities obtained in the (non-commutative!) prefix algebra
then give
\begin{equation}\label{eq:RtGt}
 R(t)=\frac1{D(t)}\qquad
 G(t)=\frac{E(t)}{D(t)}\;.
\end{equation}
The occurrence-free language partition
$
 \cF
 =
 \bigsqcup_{c\in\{\emptyword\}\cup\cC_w^+}c\cR
$
gives in the same way
\begin{equation}
 F(t)=C(t)R(t)=\frac{C(t)}{D(t)}.
\end{equation}

It remains to identify $L(t)$. Let $w^{\mathrm{rev}}$ denote the reversal of
$w$. Reversal preserves the lengths of the borders, hence the overlap periods,
and therefore
\begin{equation}
 C_{w^{\mathrm{rev}}}(t)=C_w(t)=C(t).
\end{equation}
The description of $\cL$ given above says that its elements are
obtained by reversing a tail word associated with $w^{\mathrm{rev}}$, and then
adjoining the terminal occurrence $w$. Reversal does not change the length
and adjoining $w$ adds $p$ digits. Hence
\begin{equation}
 L(t)=t^pR_{w^{\mathrm{rev}}}(t) = \frac{t^p}{D_{w^{\mathrm{rev}}}(t)} = \frac{t^p}{D(t)}\;.
\end{equation}
We have thus obtained the four basic scalar generating functions
\begin{equation}
 F(t)=\frac{C(t)}{D(t)},
 \qquad
 L(t)=\frac{t^p}{D(t)},
 \qquad
 G(t)=\frac{E(t)}{D(t)},
 \qquad
 R(t)=\frac1{D(t)}.
\end{equation}
Consequently,
\begin{equation}
 N_k(t)
 =
 \frac{E(t)^k}{D(t)^{k+1}},
 \qquad k\geq0,
\end{equation}
while
\begin{equation}
 Z_0(t)=\frac{C(t)}{D(t)},
 \qquad
 Z_k(t)
 =
 \frac{t^pE(t)^{k-1}}{D(t)^{k+1}},
 \quad k\geq1.
\end{equation}
The formula for $Z_0$ is a result of Guibas--Odlyzko \cite{guibasodlyzko1981b}
(see also \cite{odlyzko1995} for the occurrence-free language in a $b=2$
context).  The formulas for $Z_k$, $k\geq1$, were already obtained in
\cite{burnolblocks}, presumably they are to be found in the earlier literature
(perhaps also the other generating functions above, but we could not locate
a reference).

Since
$
 D(b^{-1})=b^{-p}
$
and
$
 E(b^{-1})=b^{-p}
$,
one obtains
\begin{equation}
 G(b^{-1})=1,
 \qquad
 L(b^{-1})=1,
 \qquad
 R(b^{-1})=b^p,
 \qquad
 F(b^{-1})=b^pC(b^{-1}).
\end{equation}
Thus
\begin{equation}\label{eq:masses}
 \sum_{g\in\cG}b^{-|g|}=1, \quad \sum_{g\in\cL}b^{-|g|}=1, \quad \sum_{g\in\cR}b^{-|g|}=b^p,
\quad \sum_{g\in\cF}b^{-|g|}
 =
 b^pC(b^{-1}).
\end{equation}
Likewise,
\begin{equation}
 N_k(b^{-1})=b^p,
 \ (k\geq0),\quad
 Z_k(b^{-1})=b^p,
 \ (k\geq1),\quad Z_0(b^{-1})=b^pC(b^{-1}).
\end{equation}

We now describe a digit-by-digit finite-state construction which provides a
determinantal interpretation for these formulas. We shall call it the
\emph{prefix automaton} associated with $w$ (and $b$). It is tacit in the bookkeeping
underlying the Knuth--Morris--Pratt pattern-matching algorithm
\cite{knuth1977KMP}.  Its states are the integers from $0$ to $p-1$.  The
automaton is in state $i$ if, among the proper prefixes of $w$, the longest
one which is a suffix of the digits read so far has length $i$. In the special
case of $p=1$, the automaton remains permanently in its unique state.  For
$p>1$, if the first read digit is not $w_1$, the automaton remains in state
$0$ after the first digit, else it jumps to state $1$.

For $0\leq i<p$ and $a\in\Sigma_b$, let $q(i,a)$ denote the state to which the
automaton goes after reading the digit $a$ when it is in state $i$. We also
put $ c(i,a) = \delta_{(p-1,w_p)}(i,a)$.  Let $h$ be the length of the longest
border of $w$, or $0$ if $w$ has no border.
Immediately after an occurrence of $w$ has been completed, the automaton is in
state $h$.  The sole occurrence-completing transition is from state $p-1$ to
state $h$, and happens exactly when reading $a=w_p$ while in state $p-1$. In
general there may be multiple transitions from a state $i$ to a state $j$ (but
only if $j\leq i$).  If $h>0$ there is a unique manner to transition from
state $p-1$ to state $h$.

Define the $p\times p$ matrices $B_0$, $B_1$, $B_y$ by
\begin{align}
 (B_0)_{ij}
 &=
 \#\{a\in\Sigma_b:
 q(i,a)=j,\ c(i,a)=0\}\\
(B_1)_{ij}
 &=
 \#\{a\in\Sigma_b:
 q(i,a)=j,\ c(i,a)=1\}\\
B_y &= B_0 + y B_1\,.
\end{align}
Note that we are counting ways to jump from a row index state to a column
index state.  The matrix $B_1$ is $e_{p-1}e_h^T$, where $e_i$ is the $i$-th
coordinate column vector.

The exponents of the variable $t$ record the number of digits and those of $y$
the number of completed occurrences. If ${\bf 1}$ denotes the column vector
all of whose entries are $1$, summing
over all possible lengths gives
\begin{equation}
 \sum_{k\geq0}Z_k(t)y^k
 =
 e_0^T(I-tB_y)^{-1}{\bf 1}.
\end{equation}
Starting instead just after a distinguished occurrence gives
\begin{equation}
 \sum_{k\geq0}N_k(t)y^k
 =
 e_h^T(I-tB_y)^{-1}{\bf 1}.
\end{equation}
The four language generating functions themselves can also be read directly
from the finite-state system:
\begin{align}
 F(t)
 &=
 e_0^T(I-tB_0)^{-1}{\bf 1},
\\
 R(t)
 &=
 e_h^T(I-tB_0)^{-1}{\bf 1},
\\
 L(t)
 &=
 t\,e_0^T(I-tB_0)^{-1}e_{p-1},
\\
\shortintertext{and}
 G(t)
 &=
 t\,e_h^T(I-tB_0)^{-1}e_{p-1}.
\end{align}
These formulas are to be read with a Neumann series in mind, as transition
counts from the initial state (i.e.\@ on the left) to the final vector (on the
right). The first two expressions allow arbitrary continuation as long as no
new occurrence is completed. The last two stop precisely on the digit which
completes the first occurrence, with a factor of $t$ accounting for that last
digit.

The determinant $Q(t) = \det(I - tB_0)$ has a simple formula.
From
\begin{equation}
 F(t)=e_0^T(I-tB_0)^{-1}\mathbf{1}
\end{equation}
we may write $F(t)=H(t)/Q(t)$ for some polynomial $H(t)$. Since 
$F(t)=\frac{C(t)}{D(t)}$
we obtain
$
 H(t)D(t)=Q(t)C(t)
$.
Now Equation \eqref{eq:Dt} implies that the polynomials
$C$ and $D$ are relatively prime. Hence $D$ divides
$Q$. But $D$ has degree $p$, while $Q$ has degree at most $p$, and
both have constant term $1$. Therefore $Q=D$ and 
$
 \det(I-tB_0)=D(t)
$.

We can now compute more generally $\det(I - t B_y)$ for any $y$.
Recall the
general formula for rank-one perturbations (which can be seen as a corollary
to the well-known $\det(I_a+ MN) = \det(I_b + NM)$ for rectangular matrices of
sizes $a\times b$ and $b\times a$):
\begin{equation}
\det(A+uv^T)=\det(A)\bigl(1+v^TA^{-1}u\bigr)\,.
\end{equation}
Take here $A= I - t B_0$, $u =-yt\,e_{p-1}$, $v=e_h$. Then
\begin{align}
I-tB_y&=I-tB_0-yt\,e_{p-1}e_h^T\\
\det(I-tB_y)
&=
\det(I-tB_0)
\bigl(1-yt\,e_h^T(I-tB_0)^{-1}e_{p-1}\bigr)
\\
&=
 D(t)\bigl(1-yG(t)\bigr)
\\
&
 =D(t)-yE(t).
\end{align}

We conclude with some remarks. This automaton construction would make it
possible in the sequel to decompose a real number one digit at a time. The
apparent price is that every quantity becomes indexed by the current
matched-prefix state: scalar quantities are replaced by vectors with $p$
components, and scalar relations become vectorial and involve matrices. It 
turned out that the use of the measures $\weta_k$ and of the return operator
$\wK_w$ acting on scalar measures gives formulas for Irwin sums
$I(b,w,k)$ without matrices.  This does not exclude the possibility that the
prefix automaton idea could prove relevant in this context.

\section{Probability space and random variables}

Recall Equation \eqref{eq:masses} giving the respective masses of the languages $\cL$,
$\cG$, $\cR$, and $\cF$.  Regarding the latter, it has the disjoint decomposition
\begin{equation}
  \cF= \cW_0=\bigsqcup_{c\in\cC_w}c\cR,\qquad
  \cC_w=\{\emptyword\}\cup\cC_w^+\,.
\end{equation}
We equip
\begin{equation}
  \cC_w\times\cL\times\cG^{\NN}\times\cR
\end{equation}
with the product probability for which the coordinate random words
$C_0$, $L$, $G_1$, $G_2$, \dots, $R$ are independent and
\begin{equation}
  \begin{gathered}
    \PP(C_0=c)=\frac{b^{-|c|}}{C(b^{-1})},\\
    \PP(L=l)=b^{-|l|},\qquad
    \PP(G_j=g)=b^{-|g|},\qquad
    \PP(R=r)=b^{-p-|r|}\,.
  \end{gathered}
\end{equation}
Define now random variables with values in $\Ifo01$:
\begin{equation}
  X_0=x(C_0R),\qquad
  X_k=x(LG_1\cdots G_{k-1}R)\quad(k\geq1)\,.
\end{equation}
Their respective laws are proportional to the
measures $\mu_k$, $k\geq0$, from \cite{burnolblocks}.  More precisely
\begin{equation}
  \Law(X_0)
  =
  \frac{b^{-p}}{C(b^{-1})}\mu_0,\qquad
  \Law(X_k)=b^{-p}\mu_k\quad(k\geq1)\,.
\end{equation}
The Irwin sums acquire these probabilistic representations
\begin{equation}\label{eq:EEIw}
  I(b,w,k)
  =
  \begin{cases}
    b^pC(b^{-1})
    \EE\bigl(\Un_{\{X_0\geq b^{-1}\}}X_0^{-1}\bigr),
      & k=0,\\[1mm]
    b^p
    \EE\bigl(\Un_{\{X_k\geq b^{-1}\}}X_k^{-1}\bigr),
      & k\geq1\,,
  \end{cases}
\end{equation}
which provide a way to study quantitatively the large-$k$ behavior, by comparison with
the similar expectations, but using the random variable
\begin{equation}\label{eq:Xinf}
  X_\infty=x(LG_1G_2\cdots)\,.
\end{equation}
It has values in $\Iff01$ and law $\Leb$.
Indeed, an infinite sequence of independent uniform radix-$b$ digits can
almost surely be cut at its first occurrence of $w$, and then at its
successive occurrences; the successive blocks have exactly the laws assigned
above to $L,G_1,G_2,\ldots$.

For $k\geq1$, $X_k$ and $X_\infty$ have the common prefix
$LG_1\cdots G_{k-1}$. Since $|L|\geq p$ and every $G_j$ is non-empty, this
prefix has length at least $p+k-1$, and consequently
$
  |X_k-X_\infty|\leq b^{-p-k+1}
$.
Thus $X_k(\omega)\to X_\infty(\omega)$ for every $\omega$ in the underlying
probability space and dominated convergence leads to, on $\Iff01$,
$
  \mu_k\Rightarrow b^p\Leb
$.
This convergence is a result from \cite{burnolblocks}, here obtained with
hardly any new computations beyond those involved in discovering the language
masses.

For every $k\geq1$, define truncations at the stopping times defined by the
occurrence locations,
\begin{equation}
  Y_k=x(LG_1\cdots G_{k-1}),\qquad
  \tau_k=|LG_1\cdots G_{k-1}|,
\end{equation}
and the matching tail variables
\begin{equation}
  Z_k=x(G_kG_{k+1}\cdots).
\end{equation}
Then
\begin{equation}
  \begin{aligned}
    X_k &=Y_k+b^{-\tau_k}x(R)\\
    X_\infty&=Y_k+b^{-\tau_k}Z_k\,.
  \end{aligned}
\end{equation}
The random variable $Z_k$ is independent of $(Y_k,\tau_k)$ and has the same
law as $x(G_1G_2\cdots)$. Moreover, for $k\geq1$,
\begin{equation}
  \{X_k\geq b^{-1}\}
  =
  \{Y_k\geq b^{-1}\}
  =
  \{X_\infty\geq b^{-1}\}.
\end{equation}
We have on this event
\begin{equation}
  0\leq
  \frac{b^{-\tau_k}x(R)}{Y_k}< b^{1-p},\qquad
  0\leq
  \frac{b^{-\tau_k}Z_k}{Y_k}\leq b^{1-p}\,.
\end{equation}
and, assuming here $p>1$ (the $p=1$ situation was given a detailed study in
Part~1 in a slightly different context), we can expand in uniformly convergent
geometric series:
\begin{equation}
  X_k^{-1}
  =
  \sum_{j\geq0}
  (-1)^jb^{-j\tau_k}x(R)^jY_k^{-j-1},\qquad
  X_\infty^{-1}
  =
  \sum_{j\geq0}
  (-1)^jb^{-j\tau_k}Z_k^jY_k^{-j-1}\,.
\end{equation}

In the term with inverse power $s=j+1$, a characteristic $b^{-(s-1)\tau_k}$
factor arises, as in the $p=1$ earlier treatment. If
\begin{equation}
  LG_1\cdots G_{k-1}=lg_1\cdots g_{k-1},
\end{equation}
multiplying the original probability weight by this factor replaces
\begin{equation}
  b^{-|l|}\prod_{i=1}^{k-1}b^{-|g_i|}
  \quad\textrm{by}\quad
  b^{-s|l|}\prod_{i=1}^{k-1}b^{-s|g_i|}\,.
\end{equation}
So, the initial block is weighted by $b^{-s|l|}$ and each repeated
``next-return'' block by $b^{-s|g|}$. This leads, regarding the return
language $\cG$, to the numbers
\begin{equation}\label{eq:lambdasw}
  \lambda_s(w)
  =
  \sum_{g\in\cG}b^{-s|g|}\qquad(s\geq1)
\end{equation}
and to defining exponentially tilted probabilities (see, e.g.,
\cite[\S1b, Eq.~(1.2)]{asmussenGlynn2007}):
\begin{equation}\label{eq:tiltedlaw}
  \PP_s(G=g)
  =
  \frac{b^{-s|g|}}{\lambda_s(w)}
  \qquad(g\in\cG)\,.
\end{equation}
We skip the $p=1$ discovery path, with its triangular system obtained from
higher inverse powers, and continue directly with the study of the role of
Equations~\eqref{eq:lambdasw} and~\eqref{eq:tiltedlaw}.

For $s\geq1$, let $G_1,G_2,\ldots$ be independent next-return words (we drop
the $s$ from their notation) with the probability
law~\eqref{eq:tiltedlaw}, and let $\nu_s$ be the probability measure on
$[0,1]$ which is the law of
\begin{equation}\label{eq:nusw}
  x(G_1G_2G_3\cdots)\,.
\end{equation}
With the same argument as above with $X_\infty$
(but now cutting the radix expansions of real numbers on the premise that
there was an immediately preceding occurrence of $w$), one has here
$\nu_1=\Leb$.  Conditioning on the first word $g\in\cG$ gives, for every
bounded Borel function $f$,
\begin{equation}\label{eq:sintlem}
  \lambda_s(w)\int_0^1 f(x)\,d\nu_s(x)
  =
  \sum_{g\in\cG}b^{-s|g|}
  \int_0^1
  f\bigl(x(g)+b^{-|g|}x\bigr)\,d\nu_s(x)\,.
\end{equation}
Define directly on the unit interval the return operator $\sK_w$, acting (in a
first incarnation), on bounded continuous functions, and in particular on
polynomial functions, by
\begin{equation}\label{eq:Kwf}
  (\sK_wf)(x)
  =
  \sum_{g\in\cG}b^{-|g|}
  f\bigl(x(g)+b^{-|g|}x\bigr)\,.
\end{equation}
Assuming $f$ to be $m$ times continuously differentiable, termwise derivation
is allowed and gives
\begin{equation}
  (\sK_wf)^{(m)}(x)
  =
  \sum_{g\in\cG}b^{-(m+1)|g|}
  f^{(m)}\bigl(x(g)+b^{-|g|}x\bigr)\,.
\end{equation}
Applying Equation~\eqref{eq:sintlem} with $s=m+1$ yields
\begin{equation}\label{eq:eigendistw}
  \frac1{m!}\int_0^1(\sK_wf)^{(m)}(x)\,d\nu_{m+1}(x)
  =
  \lambda_{m+1}(w)
  \frac1{m!}\int_0^1f^{(m)}(x)\,d\nu_{m+1}(x)\,.
\end{equation}
Equivalently, $D^m\nu_{m+1}$ is an eigendistribution, with eigenvalue
$\lambda_{m+1}(w)$, for the transpose $K_w$ of $\sK_w$. This $K_w$ is related
to $\wK_w$ via the intertwining $ x_*\wK_w = K_w x_*$, with $x_*$ denoting
push-forward for measures under $x:\cW\to\Iff01$.

The same modal numbers occur
on the function side. Indeed,
\begin{equation}
  \sK_w(x^m)
  =
  \sum_{j=0}^m\binom mjx^j
  \sum_{g\in\cG}
  b^{-(j+1)|g|}x(g)^{m-j}\,,
\end{equation}
so
\begin{equation}
  \sK_w(x^m)
  =
  \lambda_{m+1}(w)x^m
  +\text{a polynomial of degree at most }m-1\,.
\end{equation}
Thus the return operator $\sK_w$, acting on the polynomials of degree
at most $m$, is triangular, with diagonal
$\lambda_1(w),\ldots,\lambda_{m+1}(w)$. Since these numbers are distinct,
there is a unique monic polynomial $\cB_m$ of degree $m$ satisfying
\begin{equation}\label{eq:eigenpolw}
  \sK_w(\cB_m)
  =
  \lambda_{m+1}(w)\cB_m\,.
\end{equation}
Equations~\eqref{eq:eigendistw}
and~\eqref{eq:eigenpolw}, together with the fact that $\cB_n$ is monic, give
\begin{equation}\label{eq:biorthw}
  \frac1{m!}
  \int_0^1
  \cB_n^{(m)}(x)\,d\nu_{m+1}(x)
  =
  \delta_{mn}\,.
\end{equation}

Finally, let $\eta_k$ be the push-forward to $[0,1]$ of the measures
$\widehat\eta_k$ defined earlier. The identity already proved on word measures
becomes, after transposition and for every continuous function on $\Iff01$:
\begin{equation}\label{eq:etaw}
  \int_0^1f(x)\,d\eta_{k+1}(x)
  =
  \int_0^1(\sK_wf)(x)\,d\eta_k(x)\,.
\end{equation}
Applying this to the eigenpolynomials gives, for every $m,k\geq0$,
\begin{equation}\label{eq:etakBmw}
  \int_0^1\cB_m(x)\,d\eta_k(x)
  =
  \lambda_{m+1}(w)^k
  \int_0^1\cB_m(x)\,d\eta_0(x)\,.
\end{equation}

\section{Concentration phenomenon: moment estimates}

Let $g_w$ be the shortest word of $\cG$. It is indeed unique: if $w$ is
unbordered, it is $w$ itself, else it is the complement of the longest
border. Let $\tau_w=|g_w|$ be its length.

Fix $n\geq1$. Let $G_1,G_2,\ldots$ be independent $\cG$-valued random
words with the probability law defined
by Equation~\eqref{eq:tiltedlaw} for $s=n$, and let
\begin{equation}
  X_n = x(G_1G_2G_3\dots),
\end{equation}
so that $X_n$ has law $\nu_n$.
In a sense, which we are going to make quantitative,
the most probable value for $X_n$ (in the sense of probability concentration)
is $x(g_wg_wg_wg_w\cdots)$, because $g_w$ is
the most probable word in $\cG$ (which is prefix-free) for each one of the $G_i$'s.

Thus, let $x_w=x(g_wg_wg_w\cdots)$ and define
\begin{equation}
  M^*_{n;q} = \EE\bigl(|X_n - x_w|^q\bigr).
\end{equation}
For $n=1$, $\nu_1$ is Lebesgue measure on $[0,1]$, and
therefore, for $q\geq0$,
\begin{equation}
  M^*_{1;q}
  =
  \int_0^1 |x-x_w|^q\,dx
  =
  \frac{x_w^{q+1}+(1-x_w)^{q+1}}{q+1}.
\end{equation}
We establish for $n\geq2$:
\begin{prop}\label{prop:Mnqmaj}
Define $R_w= \max(x_w,1-x_w)$. For $n\geq2$ and $q\geq1$:
  \begin{equation}\label{eq:Mnqmaj}
    M^*_{n;q}
    \leq
    R_w^q
    \Bigl(
      1+
      \frac{1}{(1-b^{-n})(b^{\tau_w}-1)}
    \Bigr)
    \sum_{r\geq1}
    b^{-(n-1)r}(1-b^{-r})^q\,.
\end{equation}
\end{prop}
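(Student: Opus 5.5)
The plan is to decompose $X_n$ according to how long its block sequence agrees with the periodic sequence $g_wg_wg_w\cdots$ defining $x_w$. Let $J\geq0$ be the number of initial words equal to $g_w$, so that $G_1=\dots=G_J=g_w$ and $G_{J+1}\neq g_w$; this is almost surely finite, since $\PP_n(G=g_w)<1$. Because $\cG$ is prefix-free, we can write
\[
X_n=x(g_w^J)+b^{-J\tau_w}X',\qquad x_w=x(g_w^J)+b^{-J\tau_w}x_w ,
\]
where $X'=x(G_{J+1}G_{J+2}\cdots)$. Hence $|X_n-x_w|=b^{-J\tau_w}|X'-x_w|$. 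Under $\PP_n$, $J$ is geometric with ratio $\pi_n=b^{-n\tau_w}/\lambda_n(w)$, and $X'$ is independent of $J$, with first word conditioned to lie in $\cG\setminus\{g_w\}$.

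\textbf{Step 1: the factor in front.} I would bound $b^{-qJ\tau_w}\leq1$, so that the $J$-sum is just the total probability. The point is that each time a $g_w$ is prefixed, the surplus of mass is compensated by this tilting. I expect the factor $1+\frac{1}{(1-b^{-n})(b^{\tau_w}-1)}$ to come out of this geometric sum combined with the normalisation $\lambda_n(w)^{-1}$. For this I need a lower bound $\lambda_n(w)\geq b^{-n\tau_w}$ together with an upper bound on the mass of $\cG$ at lengths $r>\tau_w$.

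\textbf{Step 2: the first non-$g_w$ word.} I would condition on $r=|G_{J+1}|$, which satisfies $r>\tau_w$ because $g_w$ is the unique shortest word of $\cG$. The number of words of $\cG$ of length $r$ is at most $b^{r}$; more sharply, it is controlled by the generating function $G(t)=E(t)/D(t)$ with $G(b^{-1})=1$. This gives a tilted probability of order $b^{-(n-1)r}$ for the event $\{|G_{J+1}|=r\}$, up to the constant of Step~1.

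\textbf{Step 3: the distance on that event.} Here I need the deviation bound $|X'-x_w|\leq R_w(1-b^{-r})$. Both $X'$ and $x_w$ lie in $\Iff01$, so the distance is trivially at most $R_w=\max(x_w,1-x_w)$. The improvement should come from a cylinder argument: once the length $r$ of the first differing word is fixed, the point $X'$ lies in some cylinder of width $b^{-r}$, and the distance from $x_w$ to the far end of such a cylinder is at most $R_w-b^{-r}R_w$. Raising to the power $q$, multiplying by the probability from Step~2, and summing over $r\geq1$ then gives \eqref{eq:Mnqmaj}.

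I expect Step~3 to be the main obstacle: obtaining the geometric factor $(1-b^{-r})$ uniformly in $w$ needs a careful look at where $x_w$ sits relative to the cylinders $\Cyl_g$ for $g\in\cG$. Should this fail, my fallback is to keep the weaker bound $R_w^q$ in place of $R_w^q(1-b^{-r})^q$ and to let the scaling factor $b^{-qJ\tau_w}$ supply the missing decay instead.
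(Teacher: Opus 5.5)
Your proposal has a genuine gap, and it starts in Steps~1--2. The word $G_{J+1}$ is conditioned to differ from $g_w$, so its law on $\cG\setminus\{g_w\}$ is $\PP_n(G_1=g)/(1-\pi_n)$, where $1-\pi_n=\PP_n(G_1\neq g_w)\leq (b^{n-1}-1)^{-1}$. The probabilities of the events $\{|G_{J+1}|=r\}$ therefore sum to $1$; they are not of order $b^{-(n-1)r}$. After bounding $b^{-qJ\tau_w}\leq1$ you are left with $\EE|X'-x_w|^q$, which need not be small. Take $b=2$, $w=1$: then $\cG=\{0^i1\}$, $g_w=1$ and $x_w=R_w=1$. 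Since $G_{J+1}=0^i1$ with $i\geq1$, we get $X'\leq\frac12$, so $\EE|X'-x_w|^q\geq 2^{-q}$ for every $n$. The right-hand side of \eqref{eq:Mnqmaj}, by contrast, is $O(b^{-(n-1)})$. The smallness lies in the unconditioned probabilities $\PP_n(r(G_1)=r)\leq b^{-(n-1)r}$, where $r(g)=|g|-\tau_w$. To use them you must keep the contraction, which gives $M^*_{n;q}=\EE\bigl(|X_n-x_w|^q\Un_{\{G_1\neq g_w\}}\bigr)/(1-\pi_nb^{-q\tau_w})$. Its prefactor is at most $1+\frac{1}{b^{\tau_w}-1}$. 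If you replace $b^{-q\tau_w}$ by $1$ it becomes $(1-\pi_n)^{-1}$, which is exponentially large.

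Even after this repair, Step~3 cannot give the stated factor. One can prove $|X'-x_w|\leq R_w(1-b^{-|G_{J+1}|})$, with the full length in the exponent. This uses that $G_{J+1}$ ends in $g_w$, that $x_w\in\overline{\Cyl_{g_w}}$, and that $b^{\tau_w}\geq2$. What fails is the same bound with $r(G_{J+1})$, which is the index carrying the weight $b^{-(n-1)r}$. In the example above, take $G_{J+1}=01$ and $G_{J+2}=0^m1$ with $m$ large: then $|X'-x_w|\approx\frac34>\frac12=R_w(1-b^{-r(G_{J+1})})$. Your route therefore yields at best
\[
M^*_{n;q}\leq R_w^q\Bigl(1+\frac{1}{b^{\tau_w}-1}\Bigr)\sum_{r\geq1}b^{-(n-1)r}\bigl(1-b^{-r-\tau_w}\bigr)^q .
\]
This is weaker than \eqref{eq:Mnqmaj}: for large $q$ its sum is about $b^{\tau_w(n-1)}$ times the stated one. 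Your fallback is worse, since $R_w^q(1-\pi_n)/(1-\pi_nb^{-q\tau_w})$ does not decay in $q$ at all, and the later column sums over $q$ rely on that decay.

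The missing idea is the paper's. Bound only the displacement of a single point, $|\phi_{G_1}(x_w)-x_w|\leq R_w(1-b^{-r(G_1)})$, and telescope to get $|X_n-x_w|\leq R_wY_n$. Here $Y_n=1-b^{-r(G_1)}+b^{-r(G_1)-\tau_w}Z_n$ with $Z_n\sim Y_n$ independent of $G_1$. Then absorb the contracted tail $Z_n$ by convexity together with the small first moment $\EE(Y_n)$. It is this tail term, not the run of leading $g_w$'s, that produces the prefactor $1+\frac{1}{(1-b^{-n})(b^{\tau_w}-1)}$.
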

For $n$ going to infinity and $q$ fixed, Equation \eqref{eq:Mnqmaj} implies
exponential decrease of $M^*_{n;q}$. We will show in the next section, that it
does so in a way which allows us to obtain exponential decrease for some column
and row sums of a triangular matrix related to how the
$K_w$-eigendistributions differ from the successive (normalized) derivatives
of the Dirac point mass at $x_w$.
\begin{proof}[Proof of Proposition~\ref{prop:Mnqmaj}]
The number $x_w$ is the fixed point of the affine map
$\phi_{g_w}(x) = x(g_w) + b^{-\tau_w}x$:
\begin{equation}\label{eq:xw}
  x_w
  =
  x(g_w)+b^{-\tau_w}x_w
  =
  \frac{x(g_w)}{1-b^{-\tau_w}}
  =
  \frac{n(g_w)}{b^{\tau_w}-1}.
\end{equation}
Every $g\in\cG$ ends in $g_w$. Indeed, $wg$ ends in $w$, and the length of 
$g$ is at least the one of $g_w$, so trimming the added $w$ the conclusion
follows. Set 
\begin{equation}
  g=g'g_w, \qquad r(g) = |g'| = |g| - |g_w|.
\end{equation}
Thus $r(g)\geq1$ for $g\in \cG\setminus\{g_w\}$.
We compute:
\begin{equation}
  \phi_g(x_w)
  =
  \phi_{g'}\bigl(\phi_{g_w}(x_w)\bigr)
  =
  \phi_{g'}(x_w) = x(g') + b^{-r(g)}x_w.
\end{equation}
As $0\leq x(s) \leq 1 - b^{-|s|}$ for any word $s$, we deduce
\begin{equation}
  b^{-r(g)}x_w
  \leq
  \phi_{g}(x_w)
  \leq
  1-b^{-r(g)}+b^{-r(g)}x_w.
\end{equation}
The distance of the two endpoints from $x_w$, which is in the same interval,
are respectively $x_w(1-b^{-r(g)})$ and $ (1-x_w)(1-b^{-r(g)})$, and we obtain
\begin{equation}
  | \phi_g(x_w) - x_w |\leq R_w(1 -b^{-r(g)}).
\end{equation}
Defining
$
  d(g)=\phi_g(x_w)-x_w,
$
we have obtained
\begin{equation}\label{eq:dgwbound}
   |d(g)| \leq R_w(1 - b^{-r(g)}).
\end{equation}
Combining in a telescopic manner
\begin{equation}
  \begin{aligned}
    x_w+d(G_1)&= x(G_1) + b^{-|G_1|}x_w \\
    x_w+d(G_2)&= x(G_2) + b^{-|G_2|}x_w \\
    \dots &= \dots\\
    x_w+d(G_i)&= x(G_i) + b^{-|G_i|}x_w,
  \end{aligned}
\end{equation}
we get
\begin{equation}
    x_w + \sum_{1\leq j \leq i} b^{-\sum_{q<j}|G_q|} d(G_j) 
     = b^{-|G_1|-\dots - |G_i|}x_w + 
    \sum_{1\leq j \leq i} b^{-\sum_{q<j}|G_q|}x(G_j).
\end{equation}
Letting $i\to\infty$,
\begin{equation}
  x_w
+  \sum_{j\geq1}
  d(G_j)b^{-|G_1|-\cdots-|G_{j-1}|}
  = X_n,
\end{equation}
and consequently, using immediately Equation \eqref{eq:dgwbound},
\begin{equation}
  |X_n - x_w| \leq R_w 
     \sum_{j\geq1} (1 - b^{-r(G_j)})b^{-|G_1|-\cdots-|G_{j-1}|}\,.
\end{equation}
The series is pointwise convergent everywhere.  It represents a random
real number $Y_n$  whose radix expansion contains stretches of $(b-1)$'s and
between them $\tau_w$ $0$'s.  It verifies a simple auto-similarity affine rule:
\begin{equation}
  Y_n = 1 - b^{-r(G_1)} + b^{-r(G_1) - \tau_w}Z_n, \qquad Z_n\sim Y_n\,.
\end{equation}
The random variable $Z_n$ is independent of $G_1$,
has the same law as $Y_n$, and takes its values in $[0,1]$. For $r\geq0$ and
$0\leq t\leq1$, convexity gives
\begin{equation}
  \begin{aligned}
    \bigl(1-b^{-r}+b^{-r-\tau_w}t\bigr)^q
    &\leq
    (1-b^{-r})^q
    \\
    &\quad+
    \frac{b^{1-\tau_w}t}{b-1}
    \Bigl(
      (1-b^{-(r+1)})^q-(1-b^{-r})^q
    \Bigr).
  \end{aligned}
\end{equation}
So, applying this and using the independence of
$Z_n$ and $G_1$:
\begin{equation}\label{eq:Ynbounda}
  \begin{aligned}
    \EE(Y_n^q)
    &\leq
    \EE\bigl((1-b^{-r(G_1)})^q\bigr)
    \\
    &\quad+
    \frac{b^{1-\tau_w}}{b-1}\EE(Y_n)
    \EE\left(
      (1-b^{-(r(G_1)+1)})^q-(1-b^{-r(G_1)})^q
    \right).
  \end{aligned}
\end{equation}
There are at most $b^r$ words $g\in\mathcal G$ such that $r(g)=r$, while
$g_w$ alone contributes $b^{-n\tau_w}$ to $\lambda_n(w)$. Hence, for
$r\geq1$,
$
  \PP\bigl(r(G_1)=r\bigr)
  \leq
  \frac{b^r b^{-n(r+\tau_w)}}{\lambda_n(w)}
  \leq
  b^{-(n-1)r}\,,
$
and
\begin{equation}\label{eq:Ynboundb}
  \EE\bigl((1-b^{-r(G_1)})^q\bigr)
  \leq
  \sum_{r\geq1}
  b^{-(n-1)r}(1-b^{-r})^q\,.
\end{equation}
For the second expectation in Equation \eqref{eq:Ynbounda}, using the
previous inequality, and only the trivial bound $1$ for the probability that
$r(G_1)=0$, we obtain
\begin{equation}\label{eq:Ynboundc}
  \begin{aligned}
    &\EE\left(
      (1-b^{-(r(G_1)+1)})^q-(1-b^{-r(G_1)})^q
    \right)
    \\
    &\quad\leq
    (1-b^{-1})^q
    +
    \sum_{r\geq1}b^{-(n-1)r}
    \Bigl(
      (1-b^{-(r+1)})^q-(1-b^{-r})^q
    \Bigr)
    \\
    &\quad=
    (b^{n-1}-1)
    \sum_{r\geq1}b^{-(n-1)r}(1-b^{-r})^q.
  \end{aligned}
\end{equation}
It remains to estimate the first moment of $Y_n$. Taking expectations in
the (stochastic) affine auto-similarity gives
\begin{equation}
  \EE(Y_n)
  =
  \frac{\EE(1-b^{-r(G_1)})}
  {1-b^{-\tau_w}\EE(b^{-r(G_1)})}.
\end{equation}
By Equation~\eqref{eq:Ynboundb} with $q=1$,
\begin{equation}
    \EE(1-b^{-r(G_1)})
    \leq
    \sum_{r\geq1}b^{-(n-1)r}(1-b^{-r})
    =
    \frac{1}{b^{n-1}-1}-\frac{1}{b^n-1}.
\end{equation}
So
\begin{equation}
  \EE(Y_n)
  \leq
  \frac{1}{1-b^{-\tau_w}}
  \left(
    \frac{1}{b^{n-1}-1}-\frac{1}{b^n-1}
  \right).
\end{equation}
Consequently,
\begin{equation}\label{eq:Ynboundd}
  \frac{b^{1-\tau_w}}{b-1}
  (b^{n-1}-1)\EE(Y_n)
  \leq
  \frac{1}{(1-b^{-n})(b^{\tau_w}-1)}.
\end{equation}
We obtain finally from Equations~\eqref{eq:Ynbounda}, \eqref{eq:Ynboundb},
\eqref{eq:Ynboundc} and \eqref{eq:Ynboundd}
\begin{equation}
  \EE(Y_n^q)
  \leq
  \left(
    1+
    \frac{1}{(1-b^{-n})(b^{\tau_w}-1)}
  \right)
  \sum_{r\geq1}
  b^{-(n-1)r}(1-b^{-r})^q.
\end{equation}
From $|X_n-x_w|\leq R_wY_n$, we get Equation~\eqref{eq:Mnqmaj}.
\end{proof}

Using $n\geq2$, $\tau_w\geq1$, $b\geq2$, we can bound the prefactor by
$\frac73$. And using a comparison with an integral on $\Iff01$, as in \cite[Proof of
Prop.\@ 2.9]{burnolzeta}, but now for an upper, not a lower bound, we obtain
a slightly weaker bound for the moments, which will prove useful in the next
section: 
\begin{prop}\label{prop:Mnqmaj2} For $n\geq2$,
  $q\geq1$,
\begin{equation}\label{eq:Mnqmaj2}
  \EE(|X_n - x_w|^q)\leq \frac73
  \frac{(n-1)R_w^q}{b^{n-1}-1}\int_0^1 x^{n-2}(1 -b^{-1}x)^q\dx.
\end{equation}
\end{prop}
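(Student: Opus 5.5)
Starting from Proposition~\ref{prop:Mnqmaj}, I bound the prefactor by $\frac73$ and reduce the claim to
\[
  \sum_{r\geq1} b^{-(n-1)r}(1-b^{-r})^q \leq \frac{n-1}{b^{n-1}-1}\int_0^1 x^{n-2}(1-b^{-1}x)^q\,\dx .
\]
For the prefactor: with $n\geq2$, $\tau_w\geq1$, $b\geq2$ we have $(1-b^{-n})(b^{\tau_w}-1)\geq \frac34$, so the prefactor is at most $1+\frac43=\frac73$.

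For the sum, substitute $x=b^{-t}$, under which $\dx = -\log(b)\, b^{-t}\dt$. The integral becomes
\[
  \int_0^1 x^{n-2}(1-b^{-1}x)^q\,\dx=\log(b)\int_0^\infty b^{-(n-1)t}\bigl(1-b^{-1-t}\bigr)^q\dt .
\]
The function $t\mapsto b^{-(n-1)t}(1-b^{-1-t})^q$ has a monotone-comparison property on each interval $[r-1,r]$, $r\geq1$. For $t\in[r-1,r]$ one has $1-b^{-1-t}\geq 1-b^{-r}$, so the factor $(1-b^{-r})^q$ is dominated by $(1-b^{-1-t})^q$ on that interval. This is why the shift by $b^{-1}$ appears in the statement.

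It follows that
\[
  b^{-(n-1)r}(1-b^{-r})^q \leq \frac{b^{-(n-1)r}}{\int_{r-1}^r b^{-(n-1)t}\dt}\int_{r-1}^r b^{-(n-1)t}(1-b^{-1-t})^q\dt .
\]
The ratio in front is the same constant for every $r$: since $\int_{r-1}^r b^{-(n-1)t}\dt = b^{-(n-1)r}\,\frac{b^{n-1}-1}{(n-1)\log b}$, the ratio equals $\frac{(n-1)\log b}{b^{n-1}-1}$. Summing over $r\geq1$ and undoing the substitution then gives exactly the reduced inequality above, and multiplying by $\frac73 R_w^q$ yields the statement.

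The only real care point is that the weight $b^{-(n-1)t}$ decreases in $t$ while the comparison must be made uniformly across each whole interval. I handle this by comparing only the $(\cdot)^q$ factor pointwise and then integrating the exact exponential weight, rather than bounding it by its value at an endpoint. I expect no further obstacle; the rest is bookkeeping of the constant $\frac{(n-1)\log b}{b^{n-1}-1}$ against the $\log(b)$ produced by the change of variable, which cancel.
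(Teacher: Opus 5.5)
Your proof is correct and is essentially the paper's argument: the paper writes $(b^{n-1}-1)b^{-(n-1)r}=\int_{b^{-r}}^{b^{1-r}}(n-1)x^{n-2}\,\dx$ and uses $(1-b^{-r})^q\leq(1-b^{-1}x)^q$ on $[b^{-r},b^{1-r}]$, which is exactly your comparison on $t\in[r-1,r]$ written back in the variable $x=b^{-t}$. Your bound of the prefactor by $\frac73$ via $(1-b^{-n})(b^{\tau_w}-1)\geq\frac34$ is also the same as in the paper.
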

\begin{proof}
  We use $(b^{n-1}-1)b^{-(n-1)r} = \int_{b^{-r}}^{b^{1-r}}(n-1)x^{n-2}\dx$ and
  then the fact that $(1-b^{-r})^q\leq (1-b^{-1}x)^q$ for $b^{-r}\leq x \leq
  b^{1-r}$, $r\geq1$.
\end{proof}

\section{Exponentially small off-diagonal row and column sums}

We choose some $R>R_w=\max(x_w,1-x_w)$ and set $\rho=R_w/R<1$.  We work in the
Hardy space $\cH_R =H^2\bigl(D(x_w,R)\bigr)$ and its dual $\cH_R'$.  By
Fréchet-Riesz, this dual is canonically conjugate-linear isomorphic with
$\cH_R$, but we will not use this identification.  Scalar products are denoted
$\scalp{\,{\cdot}\!}{\!{\cdot}\,}$ and are complex linear in the second entry.

The standard orthonormal basis of $\cH_R$ is given by the polynomial functions
\begin{equation}
  e_m(z)=\frac{(z-x_w)^m}{R^m}\;,
  \qquad m\geq0.
\end{equation}
For $m\geq0$, we let $\ve_m\in\cH_R'$ be defined by
\begin{equation}
  \ve_m(f) = \frac{R^{m}}{m!} f^{(m)}(x_w),
\end{equation}
so $(\ve_m)_{m\geq0}$ is the canonical orthonormal basis of
$\cH_R'$.  It is the dual system to $(e_m)_{m\geq0}$.

Recall that $\sigma_n$, $n\geq1$, is defined to be the distribution
$(-1)^{n-1}\nu_n^{(n-1)}/(n-1)!$, so, on test functions,
\begin{equation}
   \sigma_n(f) = \frac{1}{(n-1)!}  \int_0^1 f^{(n-1)}(x)\,d\nu_n(x).
\end{equation}
This linear functional is bounded for the $\cH_R$ norm (as $\Iff01$ is a
compact subset of $D(x_w,R)$), and we also view $\sigma_n$ as an element of
the dual $\cH_R'$.  The above formula remains valid as is for $f\in\cH_R$.

Consider now the infinite matrix $M^{(R)}=(m_{p,n}^{(R)})_{p\geq0,n\geq1}$ with
\begin{equation}
  m_{p,n}^{(R)} = \scalp{\ve_p}{R^{n-1}\sigma_n}_{\cH_R'} = R^{n-1}\sigma_n(e_p).
\end{equation}
We thus have $m_{pn}=0$ for $p<n-1$ and, for $p\geq n-1$,
\begin{equation}
  m_{p,n}^{(R)} = R^{n-1-p}\binom{p}{n-1}\int_0^1(x-x_w)^{p-n+1}\dnu_n(x).
\end{equation}
The matrix $M$ is lower triangular and its diagonal (given by the condition
$p=n-1$ on the row and column indices) has only $1$'s.

Write $q = p-n+1\geq1$. It indices the sub-diagonals. From Proposition
\ref{prop:Mnqmaj2}, for every $q\geq1$ and every $n\geq2$:
\begin{equation}\label{eq:corebound}
  |m_{q-1+n,n}^{(R)}|\leq \frac73\frac{(q+1)_{n-1}\rho^q}{(b^{n-1}-1)(n-2)!}
                    \int_0^1 x^{n-2}(1 -b^{-1}x)^q\dx.
\end{equation}
Keeping $n\geq2$ fixed, we make the summation over $q\geq1$:
\begin{align}
  \sum_{q=1}^\infty |m_{q-1+n,n}^{(R)}| &\leq
  \frac{7(n-1)}{3(b^{n-1}-1)}\int_0^1 x^{n-2} \left(\frac{1}{\bigl(1 - \rho(1
      - b^{-1}x)\bigr)^n} - 1\right)\dx
  \\
  &=\frac{7}{3(b^{n-1}-1)}\left(
    \int_1^\infty\frac{(n-1)\du}{\bigl((1-\rho)u+b^{-1}\rho\bigr)^n}
    -1\right)
  \\
  &=\frac{7}{3(b^{n-1}-1)}\left(\frac1{(1-\rho)(1-\rho +
      b^{-1}\rho)^{n-1}}-1\right)
  \\
  &\leq\frac{7b}{3(b-1)(1-\rho)}\frac1{((1-\rho)b + \rho)^{n-1}}\,.
\end{align}
We used $(b^{n-1}-1)^{-1}\leq b^{-(n-2)}/(b-1)$ in the last step.
This is $O(\theta_1^n)$ with $\theta_1 = \bigl((1-\rho)b + \rho\bigr)^{-1}<1$.
So the $\ell^1$ norms of the columns (strictly below the diagonal) are
exponentially decreasing for $n\geq2$. 

For $n=1$, $\nu_1=\Leb$ and we have the exact formula
\begin{equation}
  m_{p,1}^{(R)} = R^{-p}\int_0^1(x-x_w)^{p}\dx
                = \frac{(1-x_w)^{p+1} +(-1)^p x_w^{p+1}}{(p+1)R^p}
\;,
\end{equation}
so
\begin{equation}\label{eq:mp1bound}
  |m_{p,1}^{(R)}|\leq \frac{2R\rho^{p+1}}{p+1}\;,
\end{equation}
which is a summable sequence.  So the off-diagonal matrix elements of $M^{(R)}$
are summable.

We now estimate the row sums majorants $|m_{p,1}^{(R)}| + h_p^{(R)}$,  with
\begin{equation}
  h_p^{(R)}= \sum_{2\leq n \leq p} |m_{p,n}^{(R)}|,
\end{equation}
for $p\geq1$.  From the core bound Equation \eqref{eq:corebound}, and using
again $(b^{n-1}-1)^{-1}\leq b^{-(n-2)}/(b-1)$, there holds,
\begin{align}
  h_p^{(R)} &\leq \frac{7}{3(b-1)}\int_0^1 p\Bigl(\frac xb + \rho - \rho\frac xb\Bigr)^{p-1}\dx
\\
&\leq\frac{7b}{3(b-1)(1-\rho)}
 \left(\bigl((1-\rho)b^{-1} + \rho\bigr)^p - \rho^{p}\right).
\end{align}
So, combining with the bound for $|m_{p,1}^{(R)}|$:
\begin{equation}
  |m_{p,1}^{(R)}| + h_p^{(R)} = O(\theta_2^p), \qquad \theta_2 = (1-\rho)b^{-1} + \rho<1.
\end{equation}

Using Proposition~\ref{prop:rieszhilbert}, the summability of the off-diagonal
coefficients implies that $(R^{n-1}\sigma_n)_{n\geq1}$ is a Riesz basis in
$\cH_R'$, and that the dual system of normalized eigenpolynomials
$(\cB_m/R^m)_{m\geq0}$ is a Riesz basis in $\cH_R$.  More precisely, following
the proof of Proposition~\ref{prop:rieszhilbert}, let $S$ be the bounded
invertible linear map on $\cH_R'$ which sends $\ve_{n-1}$, $n\geq1$, to
$R^{n-1}\sigma_n$. Denote by $S^T$ its transposed action on $\cH_R$ (so
$\lambda(S^Tv) = (S(\lambda))(v)$ for every $\lambda\in \cH_R'$ and $v\in
\cH_R$). Then $R^{-m}\cB_m = (S^{-1})^Te_m $.  It follows that
\begin{equation}
  R^{-m}\cB_m-e_m = - (S^{-1})^T(S^T-I)e_m
\end{equation}
and therefore
\begin{equation}
  \begin{split}
    \|R^{-m}\cB_m-e_m\|_{\cH_R}
    &\leq
    \|S^{-1}\|\Bigl(
    \sum_{n=1}^{m}
    |R^{n-1}\sigma_n(e_m)|^2
    \Bigr)^{1/2}
\\
    &\leq   \|S^{-1}\|
    \sum_{n=1}^{m}
    |R^{n-1}\sigma_n(e_m)|.
  \end{split}
\end{equation}
This is $O(\theta_2^m)$, and 
we have established
\begin{prop}\label{prop:wBmRiesz}
  For any $R>R_w$, the normalized eigenpolynomials $(R^{-m}\cB_m)_{m\geq0}$
  are exponentially close in $\cH_R$-norm, as $m\to\infty$, to the normalized
  monomials $(e_m)$ giving the canonical orthonormal basis.  They are a Riesz
  basis of $\cH_R$.
\end{prop}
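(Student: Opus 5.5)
Proposition~\ref{prop:wBmRiesz} is essentially the text just preceding it, so the plan is to organize that argument into a short proof.

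\textbf{Step 1: the off-diagonal entries of $M^{(R)}$ are summable.} Fix $R>R_w$ and put $\rho=R_w/R<1$. Consider the lower-triangular matrix $M^{(R)}$, whose entries are the coordinates of $R^{n-1}\sigma_n$ in the orthonormal basis $(\ve_p)$ of $\cH_R'$. Its diagonal entries are $1$, because $\sigma_n((x-x_w)^{n-1})=1$. For the entries strictly below the diagonal, use the moment bound of Proposition~\ref{prop:Mnqmaj2} in the form \eqref{eq:corebound}.
\begin{itemize}
\item Summing over $q$ with $n\ge2$ fixed gives column sums of size $O(\theta_1^{n})$.
\item The first column is handled by the explicit Lebesgue formula \eqref{eq:mp1bound}.
\end{itemize}
Hence the off-diagonal entries of $M^{(R)}$ are absolutely summable.

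\textbf{Step 2: Riesz basis property.} Apply Proposition~\ref{prop:rieszhilbert} in the Hilbert space $\cH_R'$, with $e_n\leftrightarrow\ve_{n-1}$ and $f_n=R^{n-1}\sigma_n$. This shows that $(R^{n-1}\sigma_n)$ is a Riesz basis of $\cH_R'$. It also gives a bounded invertible $S$ on $\cH_R'$ with $S\ve_{n-1}=R^{n-1}\sigma_n$.

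Next identify the biorthogonal system. The biorthogonality \eqref{eq:biorthw} says that $R^{n-1}\sigma_n(R^{-m}\cB_m)=\delta_{n-1,m}$. Since $\cH_R$ is reflexive, the dual system of a Riesz basis of $\cH_R'$ is a Riesz basis of $\cH_R$. Concretely, $R^{-m}\cB_m=(S^{-1})^Te_m$, and $(S^{-1})^T$ is bounded and invertible on $\cH_R$. This proves the Riesz basis claim.

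\textbf{Step 3: exponential closeness.} Write
\[
R^{-m}\cB_m-e_m=-(S^{-1})^T(S^T-I)e_m .
\]
The vector $(S^T-I)e_m$ has coordinates $R^{n-1}\sigma_n(e_m)$ for $n\le m$, and zero otherwise. So its norm is at most the off-diagonal row-$m$ sum $|m^{(R)}_{m,1}|+h_m^{(R)}$.

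Bound this row sum by integrating \eqref{eq:corebound} against the binomial weights. With $(b^{n-1}-1)^{-1}\le b^{-(n-2)}/(b-1)$, the sum over $n$ of $\binom{m}{n-1}$-type factors collapses to $\int_0^1 m(\rho+(1-\rho)x/b)^{m-1}\,dx$. This yields $O(\theta_2^m)$ with $\theta_2=\rho+(1-\rho)b^{-1}<1$. Multiplying by $\|S^{-1}\|$ gives $\|R^{-m}\cB_m-e_m\|_{\cH_R}=O(\theta_2^m)$.

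\textbf{Main obstacle.} The delicate step is the row-sum estimate. The binomial coefficient $\binom{p}{n-1}$, combined with the factor $(n-1)/(b^{n-1}-1)$ and the integral representation from Proposition~\ref{prop:Mnqmaj2}, must be resummed over $n$ into a single geometric expression, while tracking the $(q+1)_{n-1}/(n-2)!$ normalization carefully. If that resummation fails, the fallback is a cruder approach: split the row at $n\approx p/2$. For large $n$, use the factor $b^{-n}$. For small $n$, use the factor $\rho^{q}$ together with the bound $\binom{p}{n-1}\le 2^p$, after shrinking to an intermediate radius. That version still gives an exponential rate, though with a worse $\theta$.
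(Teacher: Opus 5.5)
Your proposal is correct and follows the paper's own argument essentially step for step. The steps are the same: summability of the off-diagonal entries of $M^{(R)}$ from \eqref{eq:corebound} and \eqref{eq:mp1bound}; Proposition~\ref{prop:rieszhilbert} applied in $\cH_R'$, with $R^{-m}\cB_m=(S^{-1})^Te_m$; and the bound $\|R^{-m}\cB_m-e_m\|\le\|S^{-1}\|\bigl(|m^{(R)}_{m,1}|+h^{(R)}_m\bigr)=O(\theta_2^m)$.

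The resummation you flag as the main obstacle goes through exactly as you wrote it. For $p=q+n-1$ one has $(q+1)_{n-1}/(n-2)!=p\binom{p-1}{n-2}$. After using $(b^{n-1}-1)^{-1}\le b^{-(n-2)}/(b-1)$ and adding the missing nonnegative $q=0$ term, the binomial theorem gives $\int_0^1 p\bigl(\rho+(1-\rho)x/b\bigr)^{p-1}\,dx$. The fallback is therefore unnecessary, and as sketched its crude bounds would not give decay in general: for instance $2^p\rho^{p/2}$ decays only if $\rho<1/4$.
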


\section{Main Theorem: the modal expansion for \texorpdfstring{$k\geq1$}{k at least 1}}
\label{sec:part2main}

Recall $\cW_k = \{g \in \cW, k_w(g)=k\}$ and its language factorization $
\cW_k = \cL\cG^{k-1}\cR$, which applies for $k\geq1$.  The middle term $\cG$
is directly attached to the return operator ($\wK_w$ on finite measures on the
word space, $K_w$ on the measures and distributions supported on $\Iff01$,
$\sK_w$ on functions in suitable spaces, in particular in Hardy spaces
$H^2(D(x_w,R))$, $R>R_w$).  We now associate to $\cL$ an ``initial state'',
and to $\cR$ a ``final'' one, represented by a linear functional. This will
lead us directly and with surprising ease to a modal expansion
for block Irwin sums $I(b,w,k)$, $k\geq1$.

Recall the notation $n(g)$ (sometimes also $\overline g$) for the integer
given by a word $g$ (with leading digits on the left).  Also, $g_1$ below is
the leading digit of $g$. Attach to the left language $\cL$ the following
``initial state'', represented by a complex analytic function of the variable
$z$:
\begin{equation}\label{eq:fLdef}
  f_{\cL}(z)
  =
  \sum_{\substack{g\in\cL\\ g_1>0}}
  \frac1{n(g)+z}.
\end{equation}
As $\sum_{g\in\cL, g_1>0} n(g)^{-1}<\infty$ follows immediately from comparing
with $\sum_{g\in\cL}b^{-|g|} = 1 < \infty$ (see Equation \eqref{eq:masses}),
it is immediate that the series defining $f_{\cL}$ converges uniformly on any
compact subset of the complex plane avoiding the set $\{-n(g),
g\in\cL, g_1>0\}$, and more precisely that it defines a meromorphic function with simple
poles at these locations.

Define the positive integer $n_0 = \min_{\substack{g\in\cL\\g_1>0}} n(g) $.
Fortunately, the key inequality below holds, which says that the closest pole
to $x_w$ is outside the smallest closed disk centered at $x_w$ and containing
the unit interval:
\begin{equation}\label{eq:mLradius}
  n_0+x_w>R_w.
\end{equation}
If $x_w>0$, this follows at once from $n_0\geq1$ and $R_w\leq1$. Consider the
case $x_w=0$, which means $w=0^p$; any element of $\cL$ with non-zero first
digit must have length at least $p+1$, and the integer represented by it is at
least $b^p$. And $b^p>1=R_w$, so Equation \eqref{eq:mLradius} holds.

Take now $R\in \Ioo{R_w}{n_0+x_w}$, which is possible by \eqref{eq:mLradius}.
By the Riesz basis theorem Proposition~\ref{prop:wBmRiesz}, and the
biorthogonality between the eigenpolynomials and eigendistributions of the
return operators $\sK_w$ and $K_w$, respectively, we can expand $f_{\cL}$ in
the Hardy space $\cH_R = H^2(D(x_w,R))$ as a norm convergent series
\begin{equation}
  f_{\cL}
  =
  \sum_{j=1}^{\infty}
  \bigl(\int_0^1
    \frac{f_{\cL}^{(j-1)}(x)}{(j-1)!}\dnu_j(x)\bigr)
  \cB_{j-1}.
\end{equation}
Of course, $R$ has disappeared here when pairing the normalizations: we are
always handling the same analytic function, independent of $R$.  Define, for
$j\geq1$
\begin{equation}\label{eq:frakLj}
  \fL_j(b,w)
  =
  \frac{1}{(j-1)!}
  \int_0^1
  f_{\cL}^{(j-1)}(x)\,d\nu_j(x),
\end{equation}
so the modal expansion of the ``initial state'' reads
\begin{equation}\label{eq:fLmodal}
  f_{\cL}
  =
  \sum_{j=1}^{\infty}
  \fL_j(b,w)\cB_{j-1}.
\end{equation}

Let $k\geq1$. Recall that, by its definition and $\{g\in\cW, k_w^+(g)=k-1\} =
\cG^{k-1}\cR$, $\eta_{k-1}$ is the discrete measure
\begin{equation}
  \sum_{s\in\cG^{k-1}\cR}
  b^{-|s|}\delta_{x(s)}.
\end{equation}
Combining with the definition of $f_{\cL}$,
\begin{equation}
  \int_0^1 f_{\cL}(x)\,d\eta_{k-1}(x)
  =
  \sum_{\substack{g\in\cL\\g_1>0}}
  \sum_{s\in\cG^{k-1}\cR}
  \frac{b^{-|s|}}{n(g)+x(s)}
  =
  \sum_{\substack{g\in\cL,\,
                    s\in\cG^{k-1}\cR\\
                    g_1>0}}
  \frac1{n(gs)}\;,
\end{equation}
which is exactly $I(b,w,k)$, from the language decomposition $\cW_k = \cL\cG^{k-1}\cR$.

Integration against the finite measure $\eta_{k-1}$, supported on $[0,1]$, is
a bounded linear functional on this Hardy space $\cH_R$. We may therefore
integrate Equation~\eqref{eq:fLmodal}. From Equation~\eqref{eq:etakBmw},
\begin{equation}
  \int_0^1
  \cB_{j-1}(x)\,d\eta_{k-1}(x)
  =
  \lambda_j(w)^{k-1}
  \int_0^1
  \cB_{j-1}(x)\,d\eta_0(x).
\end{equation}
Define now
\begin{equation}\label{eq:frakRj}
  \fR_j(b,w)
  =
  \int_0^1
  \cB_{j-1}(x)\deta_0(x).
\end{equation}
It follows that
\begin{equation}
  I(b,w,k)
  =
  \sum_{j=1}^{\infty}
  \fL_j(b,w)
  \lambda_j(w)^{k-1}
  \fR_j(b,w),
\end{equation}
with absolute convergence from Cauchy--Schwarz.

We identify the first mode $j=1$ in the above series. As $\cB_0=1$,
$\lambda_1(w)=1$, and $\eta_0$ has total mass $b^p$, $\fR_1(b,w)=b^p$.  Also
$\nu_1$ is Lebesgue measure, and therefore
\begin{equation}
  \fL_1(b,w)
  =
  \sum_{\substack{g\in\cL\\g_1>0}}
  \int_0^1\frac{dx}{n(g)+x}.
\end{equation}
The term indexed by $g$ is the integral of $dt/t$ over the cylinder
$\Cyl_g$. These cylinders partition $[b^{-1},1)$ up to a set of Lebesgue
measure zero. Hence $\fL_1(b,w) = \log b$.

We have achieved what was the main initial objective of this paper.
We use temporarily the notation $\lambda_j(b,w)$, and not the abridged one $\lambda_j(w)$.
\begin{theo}[The modal expansion of block-Irwin sums]\label{thm:part2main}
For every $k\geq1$,
\begin{equation}
  I(b,w,k)
  =
  \sum_{j=1}^{\infty}
  \fL_j(b,w)\,
  \lambda_j(b,w)^{k-1}\,
  \fR_j(b,w).
\end{equation}
The series is absolutely convergent.  The quantities
\begin{equation}
  1 = \lambda_1(b,w)>\lambda_2(b,w)>\dots>\lambda_j(b,w)>\dots \to_{j\to\infty}0,
\end{equation}
are $\lambda_j(b,w)=G(b^{-j})$, $j\geq1$, with $G$ the generating function
for the cardinality counts per length of the return language $\cG$.  The
first term is $b^p\log(b)$.
\end{theo}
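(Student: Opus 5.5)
Most of the work is already done in the discussion preceding the statement. The plan is to assemble it and to check the two claims not yet fully justified there: the description and ordering of the eigenvalues $\lambda_j(b,w)=G(b^{-j})$, and absolute convergence.

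First, the expansion itself. Fix $R$ with $R_w<R<n_0+x_w$, which is possible by \eqref{eq:mLradius}. Then $f_{\cL}$ is holomorphic on a neighbourhood of $\overline{D(x_w,R)}$, since its poles $-n(g)$ all lie at distance at least $n_0+x_w$ from $x_w$; hence $f_{\cL}\in\cH_R$. By Proposition~\ref{prop:wBmRiesz}, $(R^{-m}\cB_m)$ is a Riesz basis with dual family $(R^{n-1}\sigma_n)$, so we get \eqref{eq:fLmodal} with norm convergence. The coefficient sequence $(R^{j-1}\fL_j)_j$ lies in $\ell^2$, because a Riesz basis has a bounded coefficient map.

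Integration against $\eta_{k-1}$ is a bounded functional on $\cH_R$: $\eta_{k-1}$ is a finite positive measure on $\Iff01\subset D(x_w,R)$, and point evaluations there are uniformly bounded. Applying it to \eqref{eq:fLmodal} and using \eqref{eq:etakBmw} gives the series. The identification $\int f_{\cL}\,d\eta_{k-1}=I(b,w,k)$ follows from the unique factorization $\cW_k=\cL\cG^{k-1}\cR$ in \eqref{eq:Wfact} together with $n(gs)=b^{|s|}(n(g)+x(s))$. Absolute convergence holds because the terms are products of $R^{j-1}\fL_j$ with $R^{-(j-1)}\eta_{k-1}(\cB_{j-1})$. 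The second factor is the image of the bounded functional $\eta_{k-1}$ on the Riesz basis $R^{-(j-1)}\cB_{j-1}$, hence is in $\ell^2$. Cauchy--Schwarz then finishes. The first term $b^p\log b$ was already computed above.

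Second, the eigenvalues. By \eqref{eq:lambdasw}, $\lambda_j(w)=\sum_{g\in\cG}b^{-j|g|}=G(b^{-j})$. From \eqref{eq:masses}, $\lambda_1=G(b^{-1})=1$. Every $g\in\cG$ is non-empty, so $|g|\geq1$, and each term $b^{-j|g|}$ strictly decreases in $j$; hence the sequence is strictly decreasing and positive. It tends to $0$ by dominated convergence: the terms are dominated by the summable family $b^{-|g|}$ and each tends to $0$. Rationality follows since $G=E/D$ with integer polynomials $E$ and $D$ from \eqref{eq:RtGt}, and these depend only on $b$, $p$ and the border lengths through $C(t)$. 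For unbordered $w$, $C=1$ gives $G(t)=t^p/(1-bt+t^p)$, so $\lambda_j=(b^{pj}-b^{pj-j+1}+1)^{-1}$, consistent with Theorem~\ref{thm:intromain}.

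The step I expect to require the most care is the choice of $R$ when $x_w=0$, i.e.\ $w=0^p$. There one must check that the nearest pole $-n_0$ with $n_0\geq b^p$ indeed lies outside $D(0,R)$ for some $R>1$; this is exactly \eqref{eq:mLradius}, already established. A second point needing care is that the Riesz property yields $\ell^2$ coefficients on both sides simultaneously, with the same $R$. Since both pairings use the same normalization $R^{\pm(j-1)}$, the factors of $R$ cancel, and the resulting scalar series is independent of $R$.
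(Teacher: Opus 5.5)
Your proposal is correct and follows the paper's own route. You expand $f_{\cL}$ in the Riesz basis $(R^{-m}\cB_m)_{m\geq0}$ of $\cH_R$ for $R_w<R<n_0+x_w$, pair with the bounded functional given by $\eta_{k-1}$ using $\cW_k=\cL\cG^{k-1}\cR$ and Equation~\eqref{eq:etakBmw}, and obtain absolute convergence by Cauchy--Schwarz on the two $\ell^2$ coefficient sequences. Your check that $\lambda_j(b,w)=G(b^{-j})$ is positive, strictly decreasing (every $g\in\cG$ being non-empty) and tends to $0$ makes explicit what the paper takes as evident from Equations~\eqref{eq:lambdasw} and~\eqref{eq:masses}.
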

Admittedly, this is so far a rather theoretical result because we have not
said how we actually compute the coefficients $\fL_j(b,w)$ and $\fR_j(b,w)$
from their definitions Equations~\eqref{eq:frakLj} and \eqref{eq:frakRj}.  For
$\fR_j(b,w)$, this may be conceived as a matter of computing the eigenpolynomials and
knowing the moments of $\eta_0$. Both tasks ultimately reduce to triangular
systems for moment values.  For $\fL_j(b,w)$, we can start with writing it as
\begin{equation}
  \label{eq:fLSj}
  \fL_j(b,w) = (-1)^{j-1}
   \sum_{\substack{g\in\cL\\ g_1>0}}\int_0^1\frac{\dnu_j(x)}{(n(g) + x)^{j}}\;.
\end{equation}
and proceed from there,
as we will examine later.

There is also an indirect numerical way, which we have tested successfully,
to evaluate numerically the products
$\fL_j(b,w)\fR_j(b,w)$.  Indeed, the eigenvalues $\lambda_j(w)$ are
explicitly known from the overlap periods of the word $w$.  Assume that we
have some other means to evaluate directly finitely many values
\begin{equation}
  I(b,w,1),\ I(b,w,2),\ldots.
\end{equation}
Then we deduce numerical approximations of $ \fL_j(b,w)\fR_j(b,w)$.  For
example, get some multi-precision values of $I(b,w,k)$ for $1\leq k\leq 6$,
solve by a Vandermonde system, keep only the first four, check to which
fixed-point precision they work to give the value for $k=5$, and then consider
that for all higher $k$'s the same fixed-point precision will be achieved.
This works well in practice.

But for this, we do need some other means to compute numerically the block-Irwin values.
This is the topic of the next section, which expands upon \cite{burnolirwin,burnolone42}.

\section{Stieltjes functions and level-raising}

For $k\geq0$, define the Stieltjes function of $\eta_k$ by
\begin{equation}\label{eq:Vkz}
  V_k(z)
  =
  \int_{[0,1)}
  \frac{\deta_k(x)}{z+x},
  \qquad
  z\in\CC\setminus[-1,0].
\end{equation}
If $p=1$ this is the same as $U_k(z)$ from Equation~\eqref{eq:Ukz}. For $p>1$,
the core building blocks are the $\eta_k$ not the $\mu_k$.  We will mainly use
$V_k(z)$ for $z=n$ a positive integer.  Since $\eta_k([0,1))=b^p$,
$0<V_k(n)\leq\frac{b^p}{n}$. We also set
\begin{equation}
  V_{-1}(z)=0,
  \qquad
  \Delta V_k(z)=V_k(z)-V_{k-1}(z),
  \qquad k\geq0.
\end{equation}
For $|y|<1$, let $\rM_y$ and $\rH_y$ denote the push-forwards to $[0,1)$
of $\wM(y)$ and $\wH(y)$ respectively, under $x:\cW\to\Ifo01$. So, 
\begin{equation}
  \label{eq:MyHy}
  \rM_y=\sum_{k\geq0}y^k\mu_k,\qquad
  \rH_y=\sum_{k\geq0}y^k\eta_k\,.
\end{equation}
Define
\begin{equation}
  S_y(z)
  =
  \int_{[0,1)}
  \frac{\drH_y(x)}{z+x}
  =
  \sum_{k\geq0}y^kV_k(z).
\end{equation}

Let us express $I(b,w,k)$ in terms of the
functions $V_k$. Put
\begin{equation}
  \cI(y)=\sum_{k\geq0}I(b,w,k)y^k.
\end{equation}
Recall that for $s$ a word we have defined an operator $P_s$ acting on Borel
measures on $\Iff01$ via the push-forward under the associated affine map
$\phi_s(t) = x(s)+b^{-|s|}t$. Writing $A=\sum_{a=0}^{b-1}P_a$, we have from
Equation \eqref{eq:prune}
\begin{equation}\label{eq:pruneon01}
  \rM_y
  =
  \delta_0
  +b^{-1}A\rM_y
  -(1-y)b^{-p}P_w\rH_y.
\end{equation}
Let us integrate against the bounded Borel function
$x^{-1}\Un_{\Ifo{b^{-1}}{1}}$. We compute for $a\in\Sigma_b$, $a>0$, 
\begin{equation}
  \begin{split}
    \int_{\Ifo{b^{-1}}{1}}
    \frac{d\bigl(P_a\rM_y\bigr)(x)}{x}
    &=
    \int_{\Ifo{b^{-1}a}{b^{-1}(a+1)}}
    \frac{d\bigl(P_a\rM_y\bigr)(x)}{x}
\\    &=
    \int_{\Ifo{0}{1}}
    \frac{\drM_y(t)}{b^{-1}a + b^{-1}t}
    =
    \int_{[0,1)}
    \frac{b\drM_y(x)}{a+x}.
  \end{split}
\end{equation}
For $a=0$, the integral vanishes because the function is zero on the support
of $P_a\rM_y$.  The term prefixed by $w$ in Equation \eqref{eq:pruneon01}
contributes to the integral against $x^{-1}\Un_{\Ifo{b^{-1}}{1}}$
only when $w_1\neq0$; in that case, the computation is
\begin{equation}
  \begin{split}
    \int_{\Ifo{b^{-1}}{1}}
    \frac{d\bigl(P_w\rH_y\bigr)(x)}{x}
    &=
    \int_{\Cyl_w}
    \frac{d\bigl(P_w\rH_y\bigr)(x)}{x}
=    \int_{\Ifo{0}{1}}
    \frac{\drH_y(t)}{b^{-p}n(w) + b^{-p}t}
\\    &=
    \int_{\Ifo{0}{1}}
    \frac{b^p\drH_y(x)}{n(w)+x} = b^p S_y(n(w)).
  \end{split}
\end{equation}
Recall that we sometimes also use the notation $\overline g$ for $n(g)\in\NN$, $g\in\cW$.
We have therefore obtained
\begin{equation}\label{eq:irwinstiel}
  \cI(y)
  =
  \sum_{a=1}^{b-1}
  \int_{[0,1)}
  \frac{d\rM_y(x)}{a+x}
  -
  (1-y)\Un_{\{w_1\neq0\}}S_y(\overline w).
\end{equation}
From Equation~\eqref{eq:MfromH} on the word space we get after push-forward to
the unit interval
\begin{equation}\label{eq:MfromH01}
  \rM_y
  =
  \rH_y
  +(1-y)
  \sum_{c\in\cC_w^+}
  b^{-|c|}P_c\rH_y.
\end{equation}
Here, $\cC_w^+$ is the set of the complements of borders of
$w$.  Integrating against $x\mapsto 1/(n+x)$, where $n$ is a positive integer, we get
\begin{equation}
  \int_{\Ifo{0}{1}}
  \frac{\drM_y(x)}{n+x}
  =
  S_y(n)
  +(1-y)
  \sum_{c\in\cC_w^+}
  S_y\bigl(b^{|c|}n+\overline c\bigr).
\end{equation}
Taking $n=a$, $1\leq a<b$, and observing that $
  b^{|c|}a+\overline c=\overline{ac}$, we obtain:
\begin{equation}\label{eq:Iystiel}
  \cI(y)
  ={}
  \sum_{a=1}^{b-1}S_y(a)
  +(1-y)
  \sum_{c\in\cC_w^+}\sum_{a=1}^{b-1}
  S_y(\overline{ac})
  -
  (1-y)\Un_{\{w_1\neq0\}}S_y(\overline w).
\end{equation}
Extracting the coefficient of $y^k$ proves the following proposition:
\begin{prop}\label{prop:Iwkstiel}
For $b>1$ an integer, $w$ a non-empty word, with first digit $w_1$, $k$ a non-negative
integer, there holds
\begin{equation}\label{eq:Iwkstiel}
  I(b,w,k)
  =
  \sum_{a=1}^{b-1}V_k(a)
  +
  \sum_{c\in\cC_w^+}\sum_{a=1}^{b-1}
  \Delta V_k(\overline{ac})
  -
  \Un_{\{w_1\neq0\}}
  \Delta V_k(\overline w),
\end{equation}
where $V_k$ is the Stieltjes function associated with $\eta_k$ (and $V_{-1}$
is defined to be the zero function), $\Delta V_k = V_k - V_{k-1}$, and
$\cC_w^+$ is the set of complements of borders of $w$.
\end{prop}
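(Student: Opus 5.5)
The plan is to derive the identity from the generating-function version, Equation~\eqref{eq:Iystiel}, by extracting the coefficient of $y^k$ on both sides; the real content is in justifying Equation~\eqref{eq:Iystiel} itself. So I would retrace its derivation with care about convergence. For $|y|<1$ the measures $\rM_y$ and $\rH_y$ are finite complex measures on $\Ifo01$; they are push-forwards of $\wM(y)$ and $\wH(y)$, whose finiteness comes from the a priori mass bound of \cite[Lemma~5]{burnolblocks} and from the domination argument given before Equation~\eqref{eq:etaHat}. Push-forward under $x:\cW\to\Ifo01$ intertwines $P_s$ on the word space with $P_s$ on the interval. Hence Equations~\eqref{eq:prune} and~\eqref{eq:MfromH} transport to Equations~\eqref{eq:pruneon01} and~\eqref{eq:MfromH01}.

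First, I will integrate Equation~\eqref{eq:pruneon01} against the bounded Borel function $x^{-1}\Un_{\Ifo{b^{-1}}1}$.
\begin{itemize}
\item The left side gives $\cI(y)=\sum_k y^k I(b,w,k)$, because $\int_{\Ifo{b^{-1}}1}x^{-1}\dmu_k=I(b,w,k)$ and $I(b,w,k)\le b\,\mu_k(\Cyl)\le b\cdot b^p(k+1)p$ grows at most polynomially, so the series converges.
\item The term $\delta_0$ contributes nothing.
\item Each $P_a\rM_y$ with $a\ge1$ contributes $b\int \drM_y/(a+x)$, by the change of variables already displayed. The $b^{-1}$ in front cancels this $b$, giving $\sum_{a=1}^{b-1}\int\drM_y/(a+x)$.
\item The $P_w$ term contributes $b^pS_y(\overline w)$ when $w_1\ne0$, and vanishes when $w_1=0$ since then $P_w\rH_y$ is carried by $\Ifo0{b^{-1}}$. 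Together with the prefactor $b^{-p}$ this gives Equation~\eqref{eq:irwinstiel}.
\end{itemize}

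Second, I will integrate Equation~\eqref{eq:MfromH01} against $1/(a+x)$. The identity $\int\frac{d(P_c\rH_y)}{a+x}=\int\frac{\drH_y(t)}{a+x(c)+b^{-|c|}t}=b^{|c|}S_y(\overline{ac})$, using $b^{|c|}a+\overline c=\overline{ac}$, cancels the factor $b^{-|c|}$. This yields Equation~\eqref{eq:Iystiel}.

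Third, I will expand in powers of $y$. Since $S_y(n)=\sum_k y^kV_k(n)$ with $|V_k(n)|\le b^p/n$, each side is a power series converging on $|y|<1$. The coefficient of $y^k$ in $(1-y)S_y(n)$ is $V_k(n)-V_{k-1}(n)=\Delta V_k(n)$, with the convention $V_{-1}=0$. Uniqueness of power series coefficients then gives Equation~\eqref{eq:Iwkstiel}.

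The only step needing genuine care is the interchange of the integral with the $y$-series. This is justified because, for $|y|<1$, the series $\sum y^k\mu_k$ and $\sum y^k\eta_k$ converge in total variation: their masses are $O(k+1)$ and $b^p$ respectively. The integrands are bounded, by $b$ and by $1/n$, so integration is a bounded linear functional and commutes with norm-convergent sums. If one prefers, the whole argument can instead be run at the formal level coefficientwise, using Equation~\eqref{eq:etaDE}. I expect no real obstacle beyond this bookkeeping.
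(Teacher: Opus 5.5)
Your proposal is correct and follows the paper's own route. You integrate Equation~\eqref{eq:pruneon01} against $x^{-1}\Un_{\Ifo{b^{-1}}{1}}$ to obtain Equation~\eqref{eq:irwinstiel}, rewrite each $\int\drM_y/(a+x)$ through Equation~\eqref{eq:MfromH01} to reach Equation~\eqref{eq:Iystiel}, and extract the coefficient of $y^k$. Your justification of the interchange (total-variation convergence of the $y$-series for $|y|<1$ against bounded integrands) just makes explicit a step the paper leaves tacit.
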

Still following the framework originating in
\cite{burnolkempner,burnolirwin,burnolone42}, we next derive the level-raising
identity (the term \emph{level} originates in the convergent power series
provided in the quoted works, which can be triggered after an arbitrary number 
of iterations of the procedure we describe here; at the level of
Stieltjes functions, the integer arguments become larger, but the indices
either remain identical or decrease by one unit). Combining
Equations~\eqref{eq:pruneon01} and \eqref{eq:MfromH01} gives
\begin{equation}
  \begin{split}
    \rH_y - b^{-1} A\rH_y 
    &+ 
    (1-y)\sum_{c\in\cC_w^+}b^{-|c|}P_c
    \rH_y
\\
    &-(1-y)b^{-1}A\sum_{c\in\cC_w^+}b^{-|c|}P_c
    \rH_y
    +(1-y)b^{-p}P_w\rH_y
    =
    \delta_0\,.
  \end{split}
\end{equation}
We evaluate this against the Stieltjes kernel $(n+x)^{-1}$, where $n$
is a positive integer. The first two terms give
$S_y(n)-\sum_{a=0}^{b-1}S_y(bn+a)$.  The border term gives $(1-y)
\sum_{c\in\cC_w^+} S_y\bigl(b^{|c|}n+\overline c\bigr)$, whereas prefixing
once more by a digit gives $(1-y) \sum_{a=0}^{b-1}\sum_{c\in\cC_w^+}
S_y\bigl(b^{|c|+1}n+\overline{ac}\bigr)$.  Finally the $w$-term gives $
(1-y)S_y\bigl(b^pn+\overline w\bigr) $, and the Dirac mass at the empty word
becomes $1/n$. Moving all terms other than $S_y(n)$ to the other side of the
equation we get
\begin{equation}
\begin{split}
  S_y(n) = \frac1n
  +
  \sum_{a=0}^{b-1}S_y(bn+a)
  &-
  (1-y)
  \sum_{c\in\cC_w^+}
  S_y\bigl(b^{|c|}n+\overline c\bigr)
\\
  &+
  (1-y)
  \sum_{a=0}^{b-1}\sum_{c\in\cC_w^+}
  S_y\bigl(b^{|c|+1}n+\overline{ac}\bigr)
\\
  &-
  (1-y)S_y\bigl(b^pn+\overline w\bigr).
  \end{split}
\end{equation}
Coefficient extraction now gives the next Proposition:
\begin{prop}[Level raising]\label{prop:levelraising}
  For every $k\geq0$:
\begin{equation}\label{eq:levelraising}
  \begin{split}
  V_k(n) = \delta_{0,k}\frac1n
  +
  \sum_{a=0}^{b-1}V_k(bn+a)
  &-
  \sum_{c\in\cC_w^+}
  \Delta V_k\bigl(b^{|c|}n+\overline c\bigr)
  \\
  &+
  \sum_{a=0}^{b-1}\sum_{c\in\cC_w^+}
  \Delta V_k\bigl(b^{|c|+1}n+\overline{ac}\bigr)
  \\
  &-
  \Delta V_k\bigl(b^pn+\overline w\bigr).
  \end{split}
\end{equation}
\end{prop}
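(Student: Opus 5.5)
The identity \eqref{eq:levelraising} is already established at the level of generating functions in the display just preceding the statement. The remaining work is to justify that display rigorously and then to extract coefficients of $y^k$. I would therefore proceed in three steps.

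First, I would check the measure identity. Equation~\eqref{eq:pruneon01} is the push-forward under $x:\cW\to\Ifo01$ of Equation~\eqref{eq:prune}, and Equation~\eqref{eq:MfromH01} is the push-forward of Equation~\eqref{eq:MfromH}. Both are identities between finite complex measures for $|y|<1$, and push-forward intertwines each prefix operator on $\cW$ with the corresponding $P_s$ on $\Iff01$. Substituting \eqref{eq:MfromH01} into \eqref{eq:pruneon01} eliminates $\rM_y$. This gives the displayed relation
\[
(I-b^{-1}A)(I+(1-y)C)\rH_y+(1-y)b^{-p}P_w\rH_y=\delta_0,
\]
where $C=\sum_{c\in\cC_w^+}b^{-|c|}P_c$. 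This is just the push-forward of Equation~\eqref{eq:DH} rewritten in expanded form.

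Second, I would integrate against the kernel $f_n(x)=(n+x)^{-1}$, which is bounded and continuous on $\Iff01$ for $n\geq1$, so every integral is legitimate. The key computation is
\[
\int f_n\,d(P_s\nu)=\int\frac{d\nu(t)}{n+x(s)+b^{-|s|}t}=b^{|s|}\int\frac{d\nu(t)}{b^{|s|}n+\overline s+t}.
\]
With the weight $b^{-|s|}$ the factor $b^{|s|}$ cancels, giving $S_y(b^{|s|}n+\overline s)$ when $\nu=\rH_y$. For the composed terms $b^{-1}P_a b^{-|c|}P_c=b^{-|ac|}P_{ac}$, and $b^{|c|+1}n+\overline{ac}$ is exactly the argument that appears. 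The Dirac mass at $0$ contributes $1/n$. Rearranging the terms gives the $S_y$ identity.

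Third, I would extract coefficients. Since $S_y(z)=\sum_k y^kV_k(z)$ converges absolutely for $|y|<1$ (as $0<V_k(n)\le b^p/n$), each side is a convergent power series in $y$. The coefficient of $y^k$ in $(1-y)S_y(z)$ is $V_k(z)-V_{k-1}(z)=\Delta V_k(z)$, using the convention $V_{-1}=0$, and $1/n$ contributes only at $k=0$, giving $\delta_{0,k}/n$. Uniqueness of power series coefficients then yields \eqref{eq:levelraising}.

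The only delicate point, and it is mild, is justifying the interchange of the infinite sum over $k$ with integration and with the operators. One could worry that the $y$-series of measures converges only weakly. It actually converges in total variation, because $\eta_k$ has mass $b^p$ for every $k$ and $|y|<1$, and the prefix operators are isometries. Bounded linear functionals such as $\nu\mapsto\int f_n\,d\nu$ therefore commute with the summation. Alternatively, the whole argument can be run formally in $y$, as remarked after Proposition~\ref{prop:KdW}.
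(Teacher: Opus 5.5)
Your proposal is correct and follows the paper's own argument: eliminate $\rM_y$ between the pushed-forward identities \eqref{eq:pruneon01} and \eqref{eq:MfromH01}, pair the resulting measure identity with the kernel $(n+x)^{-1}$ using $b^{-|s|}\int (n+x)^{-1}\,d(P_s\nu)(x)=\int (b^{|s|}n+\overline{s}+t)^{-1}\,d\nu(t)$, rearrange, and extract the coefficient of $y^k$. Your observation that $\rH_y=\sum_k y^k\eta_k$ converges in total variation for $|y|<1$ (each $\eta_k$ has mass $b^p$) makes explicit a justification the paper leaves unstated.
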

This combinatorial identity is the fully general extension of those from
\cite{burnolirwin}, \cite{burnollargebirwin} ($p=1$), and \cite{burnolone42}
($p=2$, $k\leq1$).

Using the identity, $V_k(n)$ is expressed as a finite linear combination of
Stieltjes values whose arguments are all at least equal to $bn$.  Hence the
expression \emph{raising the level}: those numbers have at least one
more digit than $n$ in radix $b$.  In the case $k=0$, and only then, there is
also a constant term $n^{-1}$. One application uses ``occurrence indices'' $k$
and $k-1$, repeated applications will use indices from $k$ down to possibly
$0$, and applying once more to a term at occurrence index $0$ inserts in the
expanded expression inverses of certain positive integers.

We can numerically compute the various $V_j(N)$, $0\leq j \leq k$, using the
geometric expansion of the kernel $(N+x)^{-1}$ in powers of $x/N$ and moment
values. As one iteration already guarantees all involved $N$'s are at least
$b$, we get geometric convergence. This was already the case without any
level-raising for the quantities involved in Proposition~\ref{prop:Iwkstiel},
\emph{except} for the kernel $1/(1+x)$, if present. In conclusion,
Propositions~\ref{prop:Iwkstiel} and \ref{prop:levelraising} allow one to
compute numerically to arbitrary precision the block Irwin sums from the
knowledge of the moments of the measures $\eta_j$, $j\leq k$. These moments
obey a triangular system of recurrences in their orders and in the index $j$.
This is the topic of the next section.

\section{Finite triangular moment recurrences}

Write
\begin{equation}\label{eq:vkm}
  v_{k;m}
  =
  \int_0^1x^m\,\deta_k(x),
  \qquad k,m\geq0.
\end{equation}
Equation~\eqref{eq:etaDE} gave us on the word space the relation for $k\geq1$
between $\weta_{k}$ and $\weta_{k-1}$, together with a defining equation for
$\weta_0$.  It uses operators $\wD_w$ and $\wE_w$ (on finite measures on the
word space) which are finite linear combinations of prefixing operators, as
given in Equation~\eqref{eq:wDwEw}.  As we have defined an action of the
prefixing operators on the Banach space of Borel measures on $\Iff01$, we can
define operators $D_w$, $E_w$, on this space expressed in terms of prefixing
operators there.  Recall the notation $A = \sum_{a\in\Sigma_b}P_a$, and that
for any word $s$, $P_s$ on Borel measures is the push-forward under the affine
map $\phi_s(t) = x(s) + b^{-|s|}t$,
\begin{align}
\label{eq:Dw01}
  D_w
  &=
  \bigl(I-b^{-1}A\bigr)(I+C_w)+b^{-p}P_w,
  \\
\label{eq:Ew01}
  E_w
  &=
  \bigl(I-b^{-1}A\bigr)C_w+b^{-p}P_w,
\\
D_w\eta_0&=\delta_0,
\\
  D_w\eta_k
  &=
  E_w\eta_{k-1}
\end{align}
It proves convenient to use some $|y|<1$ and to work, as in the preceding section,
with $\rH_y = \sum_{k=0}^\infty y^k \eta_k$.  The above is simply
\begin{equation}\label{eq:fooDwEwy}
  (D_w - y E_w)\rH_y = \delta_0\,.
\end{equation}
Define generally
for $\nu$ a complex
Borel measure on $\Iff01$, and $m\geq0$, 
\begin{equation}
  q_m(\nu) = \int_{\Iff01}x^m \dnu(x),
\end{equation}
and set in particular
\begin{equation}\label{eq:foovm}
  \sv_m = \sum_{k\geq0} y^k v_{k;m} = \int_{\Iff01}x^m \drH_y(x) = q_m(\rH_y).
\end{equation}
For any finite word $s$, and Borel measure $\nu$:
\begin{equation}\label{eq:fooPs}
  \begin{split}
    q_m(P_s\nu)
    &=
    \int_0^1x^m\,d(P_s\nu)(x)
    = \int_0^1 (x(s) + b^{-|s|} t)^m\dnu(t)
\\   &    =
    \sum_{i=0}^m\binom{m}{i}b^{-m|s|}n(s)^{m-i}q_i(\nu)
  \end{split}
\end{equation}
Expanding Equations~\eqref{eq:Dw01} and \eqref{eq:Ew01} to express $D_w$ and
$E_w$ as finite linear combinations of prefixing operators, we are led to
define the following quantities for $m\geq0$, $0\leq i \leq m$, where we set
$t_m = b^{-m-1}$ and use $0^0=1$:
\begin{gather}\label{eq:dmrblock}
    d_{m,i}
    =
    \delta_{mi}
    \begin{aligned}[t]
      &-t_m\sum_{a=0}^{b-1}a^{m-i}
      +\sum_{c\in\cC_w^+}
      t_m^{|c|}n(c)^{m-i}
      \\&-
      \sum_{\substack{a\in \Sigma_b\\ c\in\cC_w^+}}
      t_m^{|c|+1}n(ac)^{m-i}
      +t_m^pn(w)^{m-i}\,,
    \end{aligned}
    \\\label{eq:emrblock}
  e_{m,i}
  =
  \sum_{c\in\cC_w^+}
  t_m^{|c|}n(c)^{m-i}
  -
  \sum_{\substack{a\in \Sigma_b\\ c\in\cC_w^+}}
  t_m^{|c|+1}n(ac)^{m-i}
  +t_m^pn(w)^{m-i}\,.
\end{gather}

With this notation, Equation~\eqref{eq:fooDwEwy} gives, using
Equation~\eqref{eq:fooPs} and the $q_m$ functional:
\begin{equation}
  \sum_{i=0}^m \binom{m}{i}\bigl(d_{m,i}-y e_{m,i})\sv_i = \delta_{0m}\,.
\end{equation}
Recall from Equations~\eqref{eq:Dt} and \eqref{eq:Et} the polynomials $D
=(1-bt)C+t^p$ and $E =(1-bt)(C-1)+t^p$, where $C$ is the Guibas-Odlyzko
auto-correlation polynomial defined by Equation~\eqref{eq:Ct}.
There holds, using also Equation~\eqref{eq:RtGt},
\begin{equation}\label{eq:demdiag}
  d_{m,m}
  =
  D(t_m),
  \qquad
  e_{m,m}
  =
  E(t_m),
  \qquad
  \frac{e_{m,m}}{d_{m,m}}
  =
  \lambda_{m+1}(w).
\end{equation}
In particular $d_{m,m}\neq0$.   Indeed,
$D(t_0)=b^{-p}$, and for $m\geq1$ we have $0<t_m<b^{-1}$ and
$D(t_m)=(1-bt_m)C(t_m)+t_m^p>0$.

Define the infinite-dimensional lower-triangular matrices
$\brD$ and $\brE$ by their coefficients
\begin{equation}\label{eq:brDEentries}
  \brD_{m,i}
  =
  \binom{m}{i} d_{m,i},
  \qquad
  \brE_{m,i}
  =
  \binom{m}{i} e_{m,i},
  \qquad
  0\leq i\leq m.
\end{equation}
Represent $\sv_m$ as the row $(v_{0;m},v_{1;m},\ldots)$, and assemble these
rows, indexed by $m\geq0$, from top to bottom in a matrix $\brV$.
Define also the infinite-dimensional matrix $\brY$ such that
$\brY_{ab} = \delta_{a+1,b}$, so that it is upper-triangular with $1$'s only
on the first superdiagonal.  We obtain the matrix equation
\begin{equation}
  \brD\brV - \brE\brV\brY = (\delta_{0m}\delta_{0k})_{m\geq0,k\geq0}\,.
\end{equation}
The lower-triangular matrix $\brD$ with non-zero diagonal coefficients is
invertible.  Define
\begin{equation}\label{eq:brKDE}
  \brK=\brD^{-1}\brE.
\end{equation}
The matrix $\brK$ is again lower-triangular, and Equation~\eqref{eq:demdiag}
gives
\begin{equation}\label{eq:brKdiag}
  \brK_{m,m}
  =
  \frac{E(t_m)}{D(t_m)}
  =
  \lambda_{m+1}(w).
\end{equation}

If $v_k=(v_{k;0},v_{k;1},\ldots)^{\sT}$ denotes the $k$-th column of
$\brV$, the matrix equation above says
\begin{equation}\label{eq:vkbrK}
  \brD v_0
  =
  \begin{pmatrix}
    1\\0\\0\\\vdots
  \end{pmatrix},
  \qquad
  v_k=\brK v_{k-1}\quad(k\geq1).
\end{equation}
Thus
\begin{equation}\label{eq:vkdirect}
  v_k=\brK^k v_0.
\end{equation}
This is of course simply $\eta_k = K_w^{k}(\eta_0)$ at the level of the
moments; see Equation~\eqref{eq:Kwetak} for the corresponding identity on the
word space. We have followed the above presentation so that it becomes
actually possible to make a numerical implementation computing exactly the
quantities involved.

\begin{prop}[Recurrence formulas for the moments]
  The moments $(v_{k;m})$, $k\geq0$, $m\geq0$, of the measures $\eta_k$, can be
  computed via the following recurrences, which use the coefficients
  explicitly given by Equations~\eqref{eq:dmrblock} and \eqref{eq:emrblock}.
  For $k=0$ and
  $m\geq0$:
\begin{equation}\label{eq:v0mtriang}
  d_{m,m}v_{0;m}
  =
  \delta_{0m}
  -
  \sum_{r=1}^{m}
  \binom{m}{r}d_{m,m-r}v_{0;m-r}.
\end{equation}
For $k\geq1$,
\begin{equation}\label{eq:vkmmtriang}
  d_{m,m}v_{k;m}
  =
    -
  \sum_{r=1}^{m}
  \binom{m}{r}
    d_{m,m-r}v_{k;m-r}
+
  \sum_{r=0}^{m}
  \binom{m}{r}
    e_{m,m-r}v_{k-1;m-r}\,.
\end{equation}
One has $d_{m,m}=D(b^{-m-1})$, with $D$ the polynomial from Equation~\eqref{eq:Dt}. 
In particular $v_{0;0} = 1/D(b^{-1}) = b^p$, and, as $d_{0,0}=e_{0,0}=b^{-p}$,
$v_{k;0} = b^p$ for every $k\geq1$.
\end{prop}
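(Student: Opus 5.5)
The plan is to extract the recurrences coefficientwise from the matrix identities already established. The starting point is Equation~\eqref{eq:fooDwEwy}, $(D_w-yE_w)\rH_y=\delta_0$. I will apply the moment functional $q_m$ to both sides. Since $D_w$ and $E_w$ are finite linear combinations of prefix operators, Equation~\eqref{eq:fooPs} turns each $q_m(P_s\nu)$ into a triangular combination of the lower moments of $\nu$. Before using this, I will recheck the exponents in \eqref{eq:fooPs}. The correct factor is $b^{-|s|m}n(s)^{m-i}\,b^{-|s| i}\cdot b^{|s|(m-i)}\cdots$, that is, $x(s)^{m-i}b^{-|s|i}=n(s)^{m-i}b^{-|s|m}$. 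Once the normalization $b^{-|s|}$ attached to each $P_s$ in $D_w,E_w$ is included, every term carries exactly $t_m^{|s|}=b^{-(m+1)|s|}$.

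Next I expand $D_w=(I-b^{-1}A)(I+C_w)+b^{-p}P_w$ as a sum of prefix operators: $I$, $-b^{-1}P_a$, $b^{-|c|}P_c$, $-b^{-1-|c|}P_{ac}$, and $b^{-p}P_w$. Matching these term by term gives exactly $d_{m,i}$ of \eqref{eq:dmrblock}, and similarly $e_{m,i}$ of \eqref{eq:emrblock}. The term $a^{m-i}$ arises as $n(a)^{m-i}$, and the convention $0^0=1$ handles the digit $0$ and the empty word, where $n=0$. This yields
\[
\sum_{i=0}^m\binom mi(d_{m,i}-ye_{m,i})\sv_i=\delta_{0m}.
\]
Since $\sv_i=\sum_k y^kv_{k;i}$, with absolute convergence for $|y|<1$ because the masses equal $b^p$, I extract the coefficient of $y^k$. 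For $k=0$ this gives $\sum_i\binom mi d_{m,i}v_{0;i}=\delta_{0m}$. For $k\geq1$ it gives $\sum_i\binom mi d_{m,i}v_{k;i}=\sum_i\binom mi e_{m,i}v_{k-1;i}$. Isolating the diagonal term $i=m$ and reindexing with $i=m-r$ produces \eqref{eq:v0mtriang} and \eqref{eq:vkmmtriang}.

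For the closing claims, I evaluate the diagonal entries by setting $i=m$. All powers $n(\cdot)^0$ equal $1$, so $d_{m,m}=1-bt_m+\sum_c t_m^{|c|}-b\sum_c t_m^{|c|+1}+t_m^p=(1-bt_m)C(t_m)+t_m^p=D(t_m)$, and the same computation gives $e_{m,m}=E(t_m)$. Nonvanishing of $d_{m,m}$ was already shown after \eqref{eq:demdiag}, so these are genuine recurrences. At $m=0$ we have $t_0=b^{-1}$, so $D(b^{-1})=E(b^{-1})=b^{-p}$. This gives $v_{0;0}=b^p$, and then $b^{-p}v_{k;0}=b^{-p}v_{k-1;0}$ gives $v_{k;0}=b^p$ by induction.

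The main obstacle is the bookkeeping of the weights in the expansion of $D_w$, in particular confirming that the factor $b^{-|s|}$ combines with $b^{-m|s|}$ to give precisely $t_m^{|s|}$ in every group of terms, including the $ac$ terms with their extra factor $b^{-1}$. The coefficient extraction itself is routine.
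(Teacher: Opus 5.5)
Your proposal is correct and is essentially the paper's argument: the paper also applies $q_m$ to \eqref{eq:fooDwEwy} via \eqref{eq:fooPs} to obtain $\sum_{i=0}^m\binom{m}{i}(d_{m,i}-ye_{m,i})\sv_i=\delta_{0m}$, repackages this as the matrix relations \eqref{eq:vkbrK}, and reads off the rows, identifying $d_{m,m}=D(t_m)$ and $e_{m,m}=E(t_m)$ exactly as you do. Your first expression for the factor in \eqref{eq:fooPs} is garbled, but the identity you actually use, $x(s)^{m-i}b^{-|s|i}=n(s)^{m-i}b^{-|s|m}$, is correct and agrees with the paper's formula.
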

\begin{proof}
  The case $k=0$ follows from the first relation in Equation~\eqref{eq:vkbrK}.
  For $k\geq1$ one uses the equality $\brD v_k=\brE v_{k-1}$ which is
  another way to express the second relation in that equation.  It is also
  possible to use $v_k = \brK v_{k-1}$, this gives an
  alternative linear recurrence, once
  the matrix coefficients of $\brK$ are known, a topic we turn to below.
\end{proof}
The above numerical scheme is the
generalization to the most general word $w$, of the
recurrences on moments given in \cite{burnolirwin} for $p=1$ and in
\cite{burnolone42} for $p=2$ but only $k\leq1$.

Equation~\eqref{eq:vkdirect} is an embodiment of the return operator idea,
which allows to better understand the dependence on $k$.  The matrix
coefficients of $\brK$ have an interesting interpretation.
For $j\geq1$ and $m\geq0$, consider the quantities
\begin{equation}\label{eq:Lambdajm}
  \Lambda_j^m(w)
  =
  \sum_{g\in\cG}b^{-j|g|}x(g)^m.
\end{equation}
We hope that the superscript $m$ will not create confusion with powers, it is
to be understood as tensor notation.  For $m=0$, we recover the eigenvalues:
\begin{equation}\label{eq:Lambdaj0}
  \Lambda_j^0(w)
  =
  \sum_{g\in\cG}b^{-j|g|}
  =
  \lambda_j(w).
\end{equation}
Thus $\Lambda_j^m(w)$, $m\geq1$, may be viewed as
arithmetically weighted variants of the purely combinatorial quantities
$\lambda_j(w)$. We now relate these values to the matrix elements of the return operator 
$\sK_w$ in the monomial basis of $\CC[X]$. Indeed, there holds
\begin{equation}\label{eq:Kwxmmom}
  \sK_w(x^m)
  =
  \sum_{g\in\cG}b^{-|g|}
  \bigl(x(g)+b^{-|g|}x\bigr)^m
  =
  \sum_{i=0}^m
  \binom{m}{i}
  \Lambda_{i+1}^{m-i}(w)x^i.
\end{equation}
Consequently its matrix in the monomial basis $(1,x,x^2,\ldots)$ is upper
triangular. Its $(i,m)$-entry, $m\geq0$, $0\leq i\leq m$, is
$\binom{m}{i}\Lambda_{i+1}^{m-i}(w)$.

On the other hand, $\brK=\brD^{-1}\brE$ is the transpose of this matrix,
acting on moment columns.
It follows from Equation~\eqref{eq:Kwxmmom} that
\begin{equation}\label{eq:brKLambda}
  \brK_{m,i}
  =
  \binom{m}{i}\Lambda_{i+1}^{m-i}(w),
  \qquad 0\leq i\leq m.
\end{equation}
Hence
\begin{equation}\label{eq:DELambda}
  \bigl(\brD^{-1}\brE\bigr)_{m,i}
  =
  \binom{m}{i}\Lambda_{i+1}^{m-i}(w).
\end{equation}
Thus the quantities $\Lambda_j^m(w)$, as well as the moments of the distributions
$\eta_k$, can be computed via a finite algebraic process involving the
quantities from Equations~\eqref{eq:dmrblock} and \eqref{eq:emrblock}.

It is possible to be a bit more explicit here than simply saying to invert the
triangular matrix $\brD$: we now show that the identity $\brD\brK=\brE$ yields a triangular
recurrence for the quantities $\Lambda_j^m(w)$, for \emph{each fixed $j\geq1$}.  The
initial value at $m=0$ is the known $\lambda_j(w)$. Indeed, taking the
$(m+j-1,j-1)$ entry of $\brE=\brD\brK$, and using
Equation~\eqref{eq:brKLambda}, we obtain
\begin{equation}
  \sum_{q=0}^m
  \binom{m+j-1}{q+j-1}d_{m+j-1,q+j-1}
  \binom{q+j-1}{j-1}\Lambda_j^q(w)
  =
  \binom{m+j-1}{j-1}e_{m+j-1,j-1}.
\end{equation}
Using
\[
  \binom{m+j-1}{q+j-1}\binom{q+j-1}{j-1}
  =
  \binom{m+j-1}{j-1}\binom{m}{q},
\]
this becomes
\begin{equation}\label{eq:Lambdatriang}
  \sum_{q=0}^m
  \binom{m}{q}
  d_{m+j-1,q+j-1}\Lambda_j^q(w)
  =
  e_{m+j-1,j-1}.
\end{equation}
Since
$d_{m+j-1,m+j-1}=D(t_{m+j-1}) =D(b^{-m-j})$, we get
\begin{equation}\label{eq:Lambdatriangrec}
  D(b^{-m-j})\Lambda_j^m(w)
  =
  e_{m+j-1,j-1}
  -
  \sum_{r=1}^{m}
  \binom{m}{r}
  d_{m+j-1,m+j-r-1}\Lambda_j^{m-r}(w).
\end{equation}
Thus, for each fixed $j\geq1$, Equation~\eqref{eq:Lambdatriangrec} computes
successively
$\Lambda_j^0(w)$, 
  $\Lambda_j^1(w)$,
  $\Lambda_j^2(w)$, \dots.
It recovers the $m=0$ value to be
$
  \Lambda_j^0(w)
  =
  \frac{E(t_{j-1})}{D(t_{j-1})}
  =
  \lambda_j(w)
$,
in agreement with Equation~\eqref{eq:Lambdaj0}.

\section{Effective computation of the modal coefficients}

We explain how the modal coefficients $\fL_j(b,w)$ and $\fR_j(b,w)$ can be
computed effectively.  There is a clear difference between the two: the latter
one is a rational number, and it can be computed exactly by a finite process;
the former one can be approximated numerically arbitrarily well, but does not
appear to be rationally related to $(b,w)$.

We start with
\begin{equation}
    \fR_j(b,w)
  =
  \int_0^1\cB_{j-1}(x)\,d\eta_0(x).
\end{equation}
Recall that $\sigma_j=(-1)^{j-1}\nu_j^{(j-1)}/(j-1)!$ and that, 
on the polynomials of degree at most $m$, the functionals
$\sigma_1,\ldots,\sigma_{m+1}$ form the dual basis to
$\cB_0,\ldots,\cB_m$.
We use this to eliminate the eigenpolynomials from the
computation of $\fR_j(b,w)$.  For $0\leq i\leq m$,
\begin{equation}
  \sigma_{i+1}(x^m)
  =
  \binom{m}{i}
  \int_0^1x^{m-i}\dnu_{i+1}(x)
  =
  \binom{m}{i} M_{i+1;m-i}.
\end{equation}
We used above the same notation as in Part~1:
\begin{equation}
  M_{j;m}
  =
  \int_0^1 x^m\dnu_j(x),
  \qquad j\geq1,\quad m\geq0.
\end{equation}
Thus $M_{j;0}=1$.
So,
\begin{equation}
  x^m
  =
  \sum_{i=0}^m
  \binom{m}{i}
  M_{i+1;m-i}\cB_i(x).
\end{equation}
Integrating against $\eta_0$, and recalling the notation 
$v_{0;m}=\int_0^1x^m\,d\eta_0(x)$,
we obtain
\begin{equation}
  v_{0;m}
  =
  \sum_{i=0}^m
  \binom{m}{i}
  M_{i+1;m-i}\fR_{i+1}(b,w).
\end{equation}
The coefficient of $\fR_{m+1}(b,w)$ in the right-hand side is $M_{m+1;0}=1$. So:
\begin{prop}[Computation of the right modal coefficients]%
  The right modal coefficients $\fR_{m+1}(b,w)$, $m\geq0$, can be computed,
  once the moments $M_{j;i}$ (with $j+i=m+1$) of the
  probability measures $\nu_j$, $j\geq1$, are known, via the following
  recurrence, which also uses the moments $v_{0;m}$ of $\eta_0$, already given
  successively by Equation~\eqref{eq:v0mtriang}:
\begin{equation}\label{eq:Rrec}
  \fR_{m+1}(b,w)
  =
  v_{0;m}
  -
  \sum_{i=1}^m
  \binom{m}{i}
  M_{m-i+1;i}\fR_{m-i+1}(b,w),
  \qquad m\geq1.
\end{equation}
The initial value is $\fR_1(b,w)=v_{0;0}=b^p$.
\end{prop}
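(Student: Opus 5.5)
The proof is a finite triangular inversion, essentially already carried out in the text preceding the statement. I would organize it as follows.

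First, expand each monomial $x^m$ in the eigenpolynomial basis $\cB_0,\dots,\cB_m$. The functionals $\sigma_1,\dots,\sigma_{m+1}$ are dual to $\cB_0,\dots,\cB_m$ on polynomials of degree at most $m$. This follows from the biorthogonality relation \eqref{eq:biorthw} together with the vanishing of $\sigma_n$ on polynomials of degree below $n-1$. The coefficient of $\cB_i$ in $x^m$ is therefore $\sigma_{i+1}(x^m)$. Computing $\frac{1}{i!}\int_0^1 (x^m)^{(i)}\dnu_{i+1}$ directly gives $\binom{m}{i}M_{i+1;m-i}$, hence
\begin{equation*}
x^m=\sum_{i=0}^m\binom mi M_{i+1;m-i}\cB_i(x).
\end{equation*}

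Second, integrate this finite identity against the finite positive measure $\eta_0$. No convergence issues arise. By the definition \eqref{eq:frakRj} of $\fR_j(b,w)$ and the definition of $v_{0;m}$,
\begin{equation*}
v_{0;m}=\sum_{i=0}^m\binom mi M_{i+1;m-i}\fR_{i+1}(b,w).
\end{equation*}
The $i=m$ term has coefficient $M_{m+1;0}=1$, because $\nu_{m+1}$ is a probability measure.

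Third, solve for $\fR_{m+1}(b,w)$ and reindex the remaining sum by $i\mapsto m-i$. The terms with old index $i$ from $0$ to $m-1$ become new index $i$ from $1$ to $m$, and $\binom{m}{m-i}=\binom mi$. This yields exactly \eqref{eq:Rrec}. The moments involved are the $M_{m-i+1;i}$, whose indices sum to $m+1$, as the statement indicates.

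The initial value follows from $\cB_0=1$ and $\eta_0$ having total mass $v_{0;0}=b^p$, which is given by the moment recurrence proposition. The $v_{0;m}$ themselves come from \eqref{eq:v0mtriang}, and the $M_{j;i}$ are computable from the self-similarity \eqref{eq:sintlem} with $f(x)=x^i$. That relation gives $\lambda_j M_{j;i}=\sum_{r}\binom ir\Lambda_j^{i-r}M_{j;r}$, and its diagonal coefficient $\lambda_j-\Lambda_{j+i}^0=\lambda_j-\lambda_{j+i}\neq 0$ for $i\geq 1$. So these moments are also obtained by a triangular recurrence, which is the only point needing a remark beyond the statement's hypothesis that the $M_{j;i}$ are known.

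I expect no real obstacle. The only care needed is the index bookkeeping in the reindexing step and checking that the duality is used on polynomials only, where it holds exactly.
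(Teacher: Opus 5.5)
Your proof is correct and is exactly the paper's argument: expand $x^m=\sum_{i=0}^m\binom mi M_{i+1;m-i}\cB_i$ via the dual functionals $\sigma_{i+1}$, integrate against $\eta_0$, use $M_{m+1;0}=1$, and reindex. There is one slip in your side remark on computing the moments. Applying \eqref{eq:sintlem} to $x^i$ gives $\lambda_j M_{j;i}=\sum_r\binom ir\Lambda_{j+r}^{i-r}M_{j;r}$, because the factor $b^{-r|g|}$ shifts the subscript from $j$ to $j+r$, and this corrected form is what yields your diagonal coefficient $\lambda_j-\lambda_{j+i}$. The paper obtains these moments differently, from the finite coefficients $d_{m,i}$, $e_{m,i}$ in Proposition~\ref{prop:nujmoments}.
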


We need to compute the moments
$M_{j;m}$.  We derive for them a self-contained recurrence,
using the matrices of the preceding section.
The column of moments of the eigendistribution
$\sigma_j$ has zero entries in degrees $0,\ldots,j-2$, while in degree
$j+m-1$ its entry is
\begin{equation}
  \sigma_j(x^{j+m-1})
  =
  \binom{j+m-1}{j-1}M_{j;m}.
\end{equation}
Since $\brK=\brD^{-1}\brE$ is the transpose, on moment
columns, of $\sK_w$ acting linearly on $\CC[X]$, this column is
an eigenvector of $\brK$ with eigenvalue $\lambda_j(w)$. Equivalently,
\begin{equation}
  \brE\,\sigma_j
  =
  \lambda_j(w)\brD\,\sigma_j.
\end{equation}
Using
Equation~\eqref{eq:brDEentries} together with the vanishing of the first
$j-1$ entries, and taking the row of index $m+j-1$, gives
\begin{equation}
  0
  =
  \sum_{i=0}^m
  \binom{j+m-1}{j+i-1}
  \bigl(
    e_{j+m-1,j+i-1}
    -
    \lambda_j(w)d_{j+m-1,j+i-1}
  \bigr)
  \binom{j+i-1}{j-1}M_{j;i}.
\end{equation}
After simplification of the binomial coefficients, we obtain
\begin{equation}
  \sum_{i=0}^m
  \binom{m}{i}
  \bigl(
    e_{j+m-1,j+i-1}
    -
    \lambda_j(w)d_{j+m-1,j+i-1}
  \bigr)
  M_{j;i}
  =
  0.
\end{equation}
For $i=m$, Equation~\eqref{eq:demdiag} gives
  \begin{equation}
    e_{j+m-1,j+m-1}
    -
    \lambda_j(w)d_{j+m-1,j+m-1}
    =
    D(b^{-j-m})
    \bigl(\lambda_{j+m}(w)-\lambda_j(w)\bigr).
  \end{equation}
In conclusion:
\begin{prop}[Moments of the exponentially tilted measures]%
\label{prop:nujmoments}
  For every $j\geq1$, the moments $M_{j;m}$, $m\geq0$,
   of the measure $\nu_j$, can
  be computed for $m\geq1$ via the following recurrence with
  the initialization $M_{j;0}=1$:
\begin{equation}\label{eq:Mrec}
  \begin{split}
    D(b^{-j-m})
    \bigl(\lambda_j(w)-\lambda_{j+m}(w)\bigr)M_{j;m}
\\    =
    \sum_{i=1}^m
    \binom{m}{i}
    \bigl(
    e_{j+m-1,j+m-i-1}
    -
    \lambda_j(w)d_{j+m-1,j+m-i-1}
    \bigr)
    M_{j;m-i}.
  \end{split}
\end{equation}
\end{prop}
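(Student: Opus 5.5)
The statement is essentially a rearrangement of the eigenvector relation for the column of moments of $\sigma_j$, so the plan is to make that relation rigorous and then isolate the diagonal term. First I establish that the moment column $s^{(j)}=(\sigma_j(x^n))_{n\geq0}$ satisfies $\brK s^{(j)}=\lambda_j(w)s^{(j)}$. By Equation~\eqref{eq:Kwxmmom} and Equation~\eqref{eq:brKLambda}, for any $n\geq0$ we have $\sK_w(x^n)=\sum_i\brK_{n,i}x^i$. Applying $\sigma_j$ and using the eigendistribution identity~\eqref{eq:eigendistw}, valid for polynomials, gives $\lambda_j(w)\sigma_j(x^n)=\sigma_j(\sK_w x^n)=\sum_i\brK_{n,i}\sigma_j(x^i)$, which is the claim. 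Since $\brK=\brD^{-1}\brE$ with $\brD$ lower triangular and invertible, multiplying by $\brD$ gives $\brE s^{(j)}=\lambda_j(w)\brD s^{(j)}$. All sums involved are finite, because the matrices are lower triangular.

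Next I insert the explicit entries. I take $s^{(j)}_{n}=0$ for $n<j-1$ and $s^{(j)}_{j+i-1}=\binom{j+i-1}{j-1}M_{j;i}$; both follow from Equation~\eqref{eq:xinM2} and its Part~2 analogue, i.e.\ differentiating $x^n$ $j-1$ times and integrating against $\nu_j$. I then read off row $n=j+m-1$ of $(\brE-\lambda_j\brD)s^{(j)}=0$ with the entries from Equation~\eqref{eq:brDEentries}. The binomial identity $\binom{j+m-1}{j+i-1}\binom{j+i-1}{j-1}=\binom{j+m-1}{j-1}\binom{m}{i}$ lets me cancel the common nonzero factor $\binom{j+m-1}{j-1}$. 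This yields $\sum_{i=0}^m\binom mi\bigl(e_{j+m-1,j+i-1}-\lambda_j d_{j+m-1,j+i-1}\bigr)M_{j;i}=0$.

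Finally I separate the $i=m$ term. By Equation~\eqref{eq:demdiag}, its coefficient is $E(b^{-j-m})-\lambda_jD(b^{-j-m})=D(b^{-j-m})(\lambda_{j+m}-\lambda_j)$. Moving it to the other side and reindexing $i\to m-i$ gives exactly Equation~\eqref{eq:Mrec}, with $M_{j;0}=\nu_j(\Iff01)=1$ as initialization. For this to be a genuine recurrence the leading coefficient must be nonzero. This holds because $D(b^{-j-m})>0$, as shown after Equation~\eqref{eq:demdiag}, and because the eigenvalues are strictly decreasing, so $\lambda_j\neq\lambda_{j+m}$ for $m\geq1$. The strict decrease holds because $\lambda_s=\sum_{g\in\cG}b^{-s|g|}$ with every $|g|\geq1$.

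I expect the only delicate point to be the first step. There I must justify that the transposition between $\sK_w$ acting on polynomials and $\brK$ acting on moment columns has the right orientation, namely that $\brK_{n,i}$ is the coefficient of $x^i$ in $\sK_w x^n$. I also need $\sigma_j$ to be applied to $\sK_w x^n$ legitimately; this is fine because $\sK_w x^n$ is a polynomial and~\eqref{eq:eigendistw} applies to smooth $f$. Everything after that is bookkeeping.
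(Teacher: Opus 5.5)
Your proof is correct and follows the paper's argument step for step. The paper makes the same three moves: the moment column of $\sigma_j$ is an eigenvector of $\brK=\brD^{-1}\brE$, hence $\brE\sigma_j=\lambda_j(w)\brD\sigma_j$; it reads off row $j+m-1$ and simplifies with $\binom{j+m-1}{j+i-1}\binom{j+i-1}{j-1}=\binom{j+m-1}{j-1}\binom{m}{i}$; and it isolates the diagonal term via \eqref{eq:demdiag}. Your derivation of the eigenvector property from \eqref{eq:eigendistw} and \eqref{eq:brKLambda} simply makes explicit what the paper states in one line.
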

\begin{proof}
Equation~\eqref{eq:Mrec} uses the quantities $d_{m,i}$ and
$e_{m,i}$ from Equations~\eqref{eq:dmrblock} and \eqref{eq:emrblock},
and the polynomial $D$ from Equation~\eqref{eq:Dt}.
We saw already $D(b^{-j-m})>0$, and
$\lambda_j(w)>\lambda_{j+m}(w)$ for $m\geq1$.
\end{proof}

We turn to the left modal coefficient.  As in Part~1, put for $j,n\geq1$
\begin{equation}\label{eq:Ajn}
  A_j(n)
  =
  \int_0^1\frac{d\nu_j(x)}{(n+x)^j}.
\end{equation}
These quantities can be numerically approximated to arbitrary precision once
one has evaluated the moments of $\nu_j$.  Indeed, suppose first that $n>1$,
then, uniformly for $0\leq x\leq1$,
\begin{equation}
  \frac1{(n+x)^j}
  =
  \sum_{q=0}^{\infty}
  (-1)^q
  \binom{j+q-1}{q}
  \frac{x^q}{n^{j+q}},
\end{equation}
and hence
\begin{equation}
  A_j(n)
  =
  \sum_{q=0}^{\infty}
  (-1)^q
  \binom{j+q-1}{q}
  \frac{M_{j;q}}{n^{j+q}},
  \qquad n>1.
\end{equation}
This series has the simple property, already encountered in Part~1 whenever
using Newton's binomial series, that its partial sums are alternately upper
and lower bounds for the full sum: $S_{2N+1}\leq A_j(n)\leq S_{2N}$ (here
$S_{2N}$ has $2N+1$ terms).  This is because Newton's series has the property
pointwise and we are integrating against a positive measure.  In particular,
for any $N$,
\begin{equation}
  \left|A_j(n)-S_N\right|
  \leq
  \left|S_{N+1}-S_N\right|
  =
  \binom{j+N}{N+1}
  \frac{M_{j;N+1}}{n^{j+N+1}}.
\end{equation}

We explain how to boost the speed of convergence, up to the price of having to
handle more series.  This is the same ``level-raising'' technique previously
encountered in the section on the Stieltjes transforms of the $\eta_k$
measures.  This process allows in particular to obtain for $n=1$ also
geometrically convergent representations.

For a finite word $s$, define the linear map on $C^{j-1}([0,1])$
\begin{equation}
  (\sT_s f)(x)
  =
  b^{-|s|}
  f\bigl(x(s)+b^{-|s|}x\bigr).
\end{equation}
Let
$\sC_w
  =
  \sum_{c\in\cC_w^+}\sT_c$,
$  \sA
  =
  \sum_{a=0}^{b-1}\sT_a$.
Transposing the finite measure-side maps $D_w,E_w$ gives the linear maps
on $C^{j-1}([0,1])$
\begin{equation}
  \sD_w
  =
  (I+\sC_w)(I-\sA)+\sT_w,
  \qquad
  \sE_w
  =
  \sC_w(I-\sA)+\sT_w.
\end{equation}
The order of the factors is reversed by transposition.  Since on finite
measures on $[0,1]$ the return operator is $D_w^{-1}E_w$, its transposed action
on functions is
\begin{equation}
  \sK_w=\sE_w\sD_w^{-1}.
\end{equation}
The modal relation
$\sigma_j(\sK_w f)=\lambda_j(w)\sigma_j(f)$ is therefore equivalently
\begin{equation}
  \sigma_j(\sE_w f)
  =
  \lambda_j(w)\sigma_j(\sD_w f).
\end{equation}
Expanding this identity gives
\begin{equation}
  \sigma_j(f)
  =
  \sigma_j(\sA f)
  +
  \frac{1-\lambda_j(w)}{\lambda_j(w)}
  \sigma_j\bigl(
    \sC_w(I-\sA)f+\sT_wf
  \bigr).
\end{equation}

Apply it to
\[
  f_n(x)=\frac1{n+x}.
\]
For every finite word $s$,
\begin{equation}
  \sT_sf_n(x)
  =
  \frac1{b^{|s|}n+n(s)+x},
\end{equation}
and therefore
\begin{equation}
  \sigma_j(\sT_sf_n)
  =
  (-1)^{j-1}
  A_j\bigl(b^{|s|}n+n(s)\bigr).
\end{equation}
We also have $
  \sT_c\sT_a=\sT_{ac}$.
It follows that, for every $j\geq1$ and every positive integer $n$,
\begin{equation}\label{eq:Ajnlevelraising}
  \begin{split}
  A_j(n)
  ={}&
  \sum_{a=0}^{b-1}A_j(bn+a)
  \\
  &+
  \frac{1-\lambda_j(w)}{\lambda_j(w)}
  \sum_{c\in\cC_w^+}
  A_j\bigl(b^{|c|}n+n(c)\bigr)
  \\
  &-
  \frac{1-\lambda_j(w)}{\lambda_j(w)}
  \sum_{a\in\Sigma_b, \,c\in\cC_w^+}
  A_j\bigl(b^{|c|+1}n+n(ac)\bigr)
  \\
  &+
  \frac{1-\lambda_j(w)}{\lambda_j(w)}
  A_j\bigl(b^pn+n(w)\bigr).
  \end{split}
\end{equation}
Every argument on the right-hand side is at least $bn$. For $j=1$,
$\lambda_1(w)=1$, and the identity reduces to
$A_1(n)=\sum_{a=0}^{b-1}A_1(bn+a)$.

We now express $\fL_j(b,w)$ by finitely many of these
(higher) Stieltjes values. In the following few lines, identify a language $\cA$ with the
corresponding element
$
  \sum_{g\in\cA}b^{-|g|}P_g
$
of the prefix algebra.
Extracting the coefficient of $y$ in Equation~\eqref{eq:MfromH} gives
\begin{equation}
  \wmu_1
  =
  \weta_1+C_w(\weta_1-\weta_0).
\end{equation}
The language factorizations established earlier give
  $\wmu_1=\cL\cR$,
  $\weta_1=\cG\cR$,
  $\weta_0=\cR$.
Substitution yields
\begin{equation}
  \cL\cR
  =
  \bigl(\cG+C_w(\cG-I)\bigr)\cR.
\end{equation}
The prefix element associated with $\cR$ is invertible: it is
$\wD_w^{-1}$.  Cancelling it on the right gives
\begin{equation}
  \cL
  =
  \cG+C_w(\cG-I).
\end{equation}
This is a signed identity in the prefix algebra.

We now retain only those words of $\cL$ whose first digit is non-zero. Let
$\Sigma_b^+=\{1,\ldots,b-1\}$, $\cL_+ = \{g\in\cL:g_1>0\}$.

If $w_1=0$,
prefixing a word of $\cL$ by a digit in $\Sigma_b^+$ cannot create a new
occurrence of $w$ starting at the first position. Hence
$\cL_+=\Sigma_b^+\cL$.

Suppose now that $w_1\neq0$.  A word in $\Sigma_b^+\cL$ fails to belong to
$\cL_+$ precisely when the newly prefixed digit creates an occurrence of $w$
at the first position, which means exactly that it is of the shape $wg$, $g\in
\cG$ (by definition, $g$ belongs to $\cG$ if and only if $wg$ has exactly two
occurrences of $w$, one as prefix, the other as suffix).  Therefore
$\Sigma_b^+\cL \setminus \cL_+ = w\cG$. The only element of $\cL_+$ not in
$\Sigma_b^+\cL$ is $w$, because it is the sole possible element of length at
most $p$.  So $\cL_+ = (\Sigma_b^+\cL \setminus w\cG)\cup\{w\}$, and in the
prefix algebra, we get if $w_1\neq0$:
\begin{equation}
  \cL_+
  =
  \Sigma_b^+\cL-w(\cG-I),
  \qquad w_1\neq0.
\end{equation}
Both cases are thus expressed simultaneously as
\begin{equation}
  \cL_+ 
  =
  \Sigma_b^+\cG
  +
  \bigl(
    \Sigma_b^+C_w
    -
    \mathbf1_{\{w_1\neq0\}}w
  \bigr)
  (\cG-I).
\end{equation}

To pass from this prefix identity to Stieltjes functions, associate with a
signed prefix element
\[
  Q=\sum_s q_s b^{-|s|}P_s,
\]
supported on words having a non-zero first digit, the function
\begin{equation}
  \Phi(Q)(z)
  =
  \sum_s\frac{q_s}{n(s)+z}.
\end{equation}
Recall that by definition the weights of $Q$ are supposed to be summable,
$\sum_s |q_s|b^{-|s|}<\infty$, so the above makes sense as a meromorphic
function in the entire complex plane, with simple poles.  For every such $Q$,
\begin{equation}
  \Phi(Q\cG)=\sK_w\Phi(Q),
\end{equation}
because
\[
  \sT_g\frac1{n(s)+z}
  =
  \frac1{n(sg)+z}.
\]
Put
\begin{equation}
  f_{\Sigma_b^+}
  =
  \sum_{a=1}^{b-1}\frac1{a+z}
\end{equation}
and
\begin{equation}
  h_w
  =
  \sum_{c\in\cC_w^+}\sum_{a=1}^{b-1}
  \frac1{n(ac)+z}
  -
  \mathbf1_{\{w_1\neq0\}}\frac1{n(w)+z}.
\end{equation}
We obtain from the signed representation of $\cL_+$, $f_{\cL} =
\sK_wf_{\Sigma_b^+}+(\sK_w-I)h_w$, or
\begin{equation}
  f_{\cL}+h_w
  =
  \sK_w(f_{\Sigma_b^+}+h_w).
\end{equation}
We express, introducing a new variable, this last relation through the finite
combination of prefix operators $\sD_w$ and $\sE_w$.  Set
\begin{equation}
  u=\sD_w^{-1}(f_{\Sigma_b^+}+h_w).
\end{equation}
Since $\sK_w=\sE_w\sD_w^{-1}$, we obtain the finite system
\begin{equation}
  \sD_wu=f_{\Sigma_b^+}+h_w,
  \qquad
  \sE_wu=f_{\cL}+h_w.
\end{equation}
Applying $\sigma_j$ and using
$
  \sigma_j(\sE_wu)
  =
  \lambda_j(w)\sigma_j(\sD_wu)
$
gives
\begin{equation}
  \sigma_j(f_{\cL})
  =
  \lambda_j(w)\sigma_j(f_{\Sigma_b^+})
  +
  \bigl(\lambda_j(w)-1\bigr)\sigma_j(h_w).
\end{equation}
Now
\[
  \sigma_j\left(\frac1{n+z}\right)
  =
  (-1)^{j-1}A_j(n),
\]
and Equation~\eqref{eq:frakLj} identifies
$\sigma_j(f_{\cL})$ with $\fL_j(b,w)$. We have therefore
proved
\begin{prop}[Computation of the left modal coefficients]
  The left modal coefficients $\fL_j(b,w)$ can be expressed as
  the following finite linear combinations of the 
  coefficients $A_j(n)$, for various positive integers $n$:
\begin{equation}\label{eq:Lcoef}
  \begin{split}
  \fL_j(b,w)
  =
  (-1)^{j-1}\Bigg(
  &\lambda_j(w)
  \sum_{a=1}^{b-1}A_j(a)
  \\
  &+
  \bigl(\lambda_j(w)-1\bigr)
  \sum_{c\in\cC_w^+}\sum_{a=1}^{b-1}
  A_j(n(ac))
  \\
  &-
  \bigl(\lambda_j(w)-1\bigr)
  \mathbf1_{\{w_1\neq0\}}A_j(n(w))
  \Bigg).
  \end{split}
\end{equation}
\end{prop}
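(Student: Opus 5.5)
The plan is to reduce $\fL_j(b,w)=\sigma_j(f_{\cL})$, with $f_{\cL}$ the generally infinite sum of Stieltjes kernels over $\cL_+$, to $\sigma_j$ applied to finitely many kernels. The idea is that $\cL_+$ is, up to finitely many corrections, a return-language image of a finite set. Then the eigenfunctional property $\sigma_j\circ\sK_w=\lambda_j(w)\sigma_j$ absorbs the infinite part. The identification $\sigma_j(f_{\cL})=\fL_j(b,w)$ is just Equation~\eqref{eq:frakLj} together with the definition of $\sigma_j$. Also $\sigma_j\bigl((n+z)^{-1}\bigr)=(-1)^{j-1}A_j(n)$ follows from differentiating $j-1$ times.

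\textbf{Combinatorial step.} First I would establish in the prefix algebra the identity $\cL=\cG+C_w(\cG-I)$. I would obtain it by extracting the $y^1$ coefficient of Equation~\eqref{eq:MfromH}, then using the factorizations $\wmu_1=\cL\cR$, $\weta_1=\cG\cR$, $\weta_0=\cR$, and finally cancelling the invertible element $\cR=\wD_w^{-1}$ on the right.

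Next I would restrict to words with non-zero leading digit.
\begin{itemize}
\item If $w_1=0$, prefixing by a digit of $\Sigma_b^+$ creates no new occurrence, so $\cL_+=\Sigma_b^+\cL$.
\item If $w_1\neq0$, the words of $\Sigma_b^+\cL$ that are not in $\cL_+$ are exactly those of $w\cG$. The only missing word is $w$ itself, so $\cL_+=\Sigma_b^+\cL-w(\cG-I)$.
\end{itemize}
Substituting the identity for $\cL$ gives the unified formula $\cL_+=\Sigma_b^+\cG+(\Sigma_b^+C_w-\Un_{w_1\neq0}w)(\cG-I)$. This is the step I expect to need the most care: the bijection $\Sigma_b^+\cL\setminus\cL_+\leftrightarrow w\cG$ must be checked precisely. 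In particular, prefixing a digit can create at most one new occurrence (at position $1$), and the remainder after $w$ must then be a next-return word.

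\textbf{Analytic step.} I would map signed prefix elements $Q$, supported on words with non-zero first digit, to meromorphic functions $\Phi(Q)=\sum_s q_s/(n(s)+z)$. Since $\sT_g\,(n(s)+z)^{-1}=(n(sg)+z)^{-1}$, this map satisfies $\Phi(Q\cG)=\sK_w\Phi(Q)$.

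Applying $\Phi$ to the identity for $\cL_+$ gives $f_{\cL}+h_w=\sK_w(f_{\Sigma_b^+}+h_w)$, where $h_w$ is a finite sum of kernels. All the kernels involved have poles at negative integers $\le -1$, hence outside $D(x_w,R)$ for a suitable $R>R_w$ (apart from the $x_w=0$ subtlety already handled before Equation~\eqref{eq:mLradius}). Alternatively, working with $\sigma_j$ on $C^{j-1}([0,1])$ avoids any Hardy space issue, since $\sigma_j$ only involves derivatives on $[0,1]$.

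Then I apply $\sigma_j$ and use $\sigma_j(\sK_w F)=\lambda_j(w)\sigma_j(F)$. That eigenrelation comes from Equation~\eqref{eq:eigendistw}, which holds for $C^{j-1}$ functions and so applies to each kernel. This yields $\sigma_j(f_{\cL})=\lambda_j(w)\sigma_j(f_{\Sigma_b^+})+(\lambda_j(w)-1)\sigma_j(h_w)$.

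Finally I expand each $\sigma_j$ of a kernel as $(-1)^{j-1}A_j(n)$, which gives exactly Equation~\eqref{eq:Lcoef}. The only analytic point to verify is that $\sigma_j$ commutes with the infinite sum defining $f_{\cL}$ and with $\sK_w$. This follows from uniform convergence of the derivatives on $[0,1]$, using summability of $\sum_{g\in\cL}b^{-|g|}$ and of $\sum_{g\in\cG}b^{-|g|}$.
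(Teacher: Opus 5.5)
Your proposal is correct and follows the paper's proof essentially step for step. It uses the same identity $\cL=\cG+C_w(\cG-I)$ obtained by cancelling $\cR=\wD_w^{-1}$, the same description of $\cL_+$, the same intertwining $\Phi(Q\cG)=\sK_w\Phi(Q)$, and the same application of $\sigma_j$. The only difference is cosmetic: you use $\sigma_j\circ\sK_w=\lambda_j(w)\sigma_j$ directly on $C^{j-1}$ functions, whereas the paper passes through $u=\sD_w^{-1}(f_{\Sigma_b^+}+h_w)$ and $\sigma_j(\sE_wu)=\lambda_j(w)\sigma_j(\sD_wu)$, which amounts to the same thing.

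One small correction: your Hardy-space aside fails for $w=0^p$. There the kernel $1/(1+z)$ in $f_{\Sigma_b^+}$ has its pole inside $D(0,R)$ for every $R>1$, and the inequality $n_0+x_w>R_w$ only concerns the poles of $f_{\cL}$. So the $C^{j-1}([0,1])$ route you offer as the alternative is the one to keep, and it is also the one the paper uses.
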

The $A_j(n)$ are defined in Equation~\eqref{eq:Ajn}, and, as previously
explained, they are numerically reducible to geometrically convergent series
involving the moments $M_{j;m}$ (see Proposition~\ref{prop:nujmoments} for the
latter), $m\geq0$, of the probability measures $\nu_j$, either directly when
$n>1$, or indirectly via Equation~\eqref{eq:Ajnlevelraising} which allows to
use larger $n$'s giving better numerical convergence. Regarding $A_j(1)$, we
could expand with center at $\frac12$, but this looks inelegant. And anyhow,
the ``level-raising'' transformation Equation~\eqref{eq:Ajnlevelraising}
is recommended for every $n<b$.

We have thus reduced both modal coefficients to triangular recurrences of
moment data.  The moments $v_{0;m}$ are obtained from
Equation~\eqref{eq:v0mtriang}, the moments $M_{j;m}$ from
Equation~\eqref{eq:Mrec}, and the right coefficients directly from the unit
triangular recurrence~\eqref{eq:Rrec}, without needing to compute the
eigenpolynomials.  Of course, the latter can also be computed if one is
interested in them, and then the numerical scheme can be reorganized to use
that information.

\section{What happens for \texorpdfstring{$k=0$}{k=0}}

Our main Theorem~\ref{thm:part2main} is for $k\geq1$.  It is intimately related
to the language factorization $\cW_k = \cL\cG^{k-1}\cR$ which does not apply
for $k=0$.  In fact $\cW_0=\cF$ is the occurrence-free language. Nevertheless
one may wonder what happens if we formally set $k=0$ in the modal expansion,
thus considering
\begin{equation}
  S = \sum_{j\geq1}
  \fL_j(b,w)\lambda_j(w)^{-1}\fR_j(b,w).
\end{equation}
Here is the answer.

If $w$ has a border (so $p>1$, necessarily), let $v$ be the
longest one.  Then $S$ is absolutely convergent and converges to
$I(b,w,0)-I(b,v,0)$.

Suppose now that $w$ is unbordered.
If $w_1=0$, then $S$ is absolutely convergent and converges to $I(b,w,0)$.

Finally, if $w$ is unbordered and $w_1\neq 0$ then the terms of $S$ do not
tend to zero, but to $x_w^{-1}$.  An alternative absolutely convergent modal
formula for $I(b,w,0)$ is available.  Establishing the preceding divergence,
as well as the modified and convergent formula, requires a somewhat long
argument which we have decided not to include here for reasons of the size
already attained by this paper.  The case with $p=1$ is simpler and was
handled fully in Part~1.

\part{The modal expansion is block-count directed Euler--Maclaurin }

\section{Euler--Maclaurin as a diffused Taylor expansion}

Euler--Maclaurin is often presented as a means to compare $S_N = \sum_{n=0}^N
f(n)$ with $I_N=\int_0^N f(t)\dt$ as $N\to \infty$, which often leads to a
valid asymptotic expansion:
\begin{equation}
  \begin{split}
    S_N = I_N + \frac{f(0)+f(N)}{2} &+ b_2\frac{f'(N)-f'(0)}{2}
    \\
    &+ b_4\frac{f'''(N)-f'''(0)}{4!} \\&+ \dots
    \\
    &+ b_{2J}\frac{f^{(2J-1)}(N)-f^{(2J-1)}(0)}{(2J)!} + R_{2J+1}(N).
  \end{split}
\end{equation}
Here $b_2=\frac16$, $b_4=-\frac1{30}$, \dots, are the Bernoulli numbers. In the
early literature the remainder is not really written out.  But we know that it
involves the Bernoulli \emph{polynomials} $B_j(t)$, $j\geq0$, which can be
defined by
\begin{equation}
  \frac{t\e^{xt}}{e^t-1} = \sum_{j=0}^\infty \frac{B_j(x)}{j!}t^j\,,
\end{equation}
and
\begin{equation}
  \begin{split}
    R_{2J+1}(N) &= \frac{-1}{(2J)!}\int_0^Nf^{(2J)}(x) B_{2J}(\{x\})\dx
\\
    &= \frac{1}{(2J+1)!}\int_0^Nf^{(2J+1)}(x)B_{2J+1}(\{x\})\dx. 
  \end{split}
\end{equation}
Here $\{x\} = x- \lfloor x\rfloor$ is the fractional part function.

The formula above is not really what we want to talk about.  Let us simply take
$N=1$. Then, the second manner to introduce a reader to Euler--Maclaurin is to
relate it to quadrature formulas, as a way to measure here the difference
between $\int_0^1f(x)\dx$ and the trapezoidal rule $(f(0) + f(1))/2$:
\begin{equation}
  \begin{split}
    \int_0^1 f(t)\dt = \frac{f(0)+f(1)}{2} &- b_2\frac{f'(1)-f'(0)}{2}
    \\
    &- b_4\frac{f'''(1)-f'''(0)}{4!}
    \\&- \dots
    \\
    &- b_{2J}\frac{f^{(2J-1)}(1)-f^{(2J-1)}(0)}{(2J)!} - r_{2J+1},
  \end{split}
\end{equation}
with a Bernoulli polynomial allowing to express the remainder
\begin{equation}
  r_{2J+1} = \frac{1}{(2J+1)!}\int_0^1f^{(2J+1)}(x) B_{2J+1}(x)\dx. 
\end{equation}
This still isn't the way we want to look at this.

Observe that the various terms have the shape
$-(j!)^{-1}B_j(0)\int_0^1f^{(j)}(x)\dx$, and as the odd-number indexed $B_j(x)$
polynomials verify $B_j(0)=0$ ($j>1$, odd), we can generally speaking think of
each term as being $-(j!)^{-1}B_j(0)L(f^{(j)})$ where $L$ is integration against
Lebesgue measure. And, using $B_1(x)= x-\frac12$, $b_1=B_1(0)=-\frac12$, we
can manipulate the first term to be $f(0)- B_1(0)L(f')$, so that the formula
now looks:
\begin{equation}
  L(f) = f(0) - \sum_{j=1}^{2J} \frac{B_j(0)}{j!}L(f^{(j)}) - r_{2J+1}\,.
\end{equation}
This formula really is asking us to move the $L$'s to
all be on the same side, so it becomes
\begin{equation}
  f(0) = \sum_{j=0}^{2J} \frac{B_j(0)}{j!}L(f^{(j)}) +  r_{2J+1}\,,
\end{equation}
where it is convenient that $B_0(x)=1=b_0$.

But now, why focus at $x=0$ only? The linear form $L=\Leb$ is quite agnostic
and handles all points the same. Perhaps the ``correct'' formula is rather a
function expansion on the full interval $\Iff01$.
\begin{equation}\label{eq:eumac1}
  f = \sum_{j=0}^{n} L(f^{(j)})\frac{B_j}{j!} +  r_{n+1}(f)\,.
\end{equation}
Observe that suddenly odd indices are equally important and that we
surreptitiously allow both parities for the number of terms.
 
Recall the Taylor formula with remainder
\begin{equation}
  f(x) = \sum_{j=0}^{n} f^{(j)}(a)\frac{(x-a)^j}{j!} 
    +  \int_a^x\frac{(x-t)^n}{n!}f^{(n+1)}(t)\dt.
\end{equation}
Equation~\eqref{eq:eumac1}, once actually established, would be very similar,
with Bernoulli polynomials (which are
monic, and verify $B_{n+1}'=(n+1)B_n$) in the place of the monic monomials,
and with $\Leb$ replacing the Dirac point mass at $a$.

The question about Equation~\eqref{eq:eumac1} now becomes another one: what is
the function $r_{n+1}(f)$? Whatever this functional $r_{n+1}$ is, it will have to have
the property to send polynomials of degree at most $n$ to the zero function.
Indeed the formula is certainly true with $r_{n+1}(f)=0$ for $f$ any polynomial
$P$ of degree at most $n$. This is because any such polynomial can be written uniquely
\begin{equation}\label{eq:PB}
  P = c_0 B_0 + \dots + c_n \frac{B_n}{n!}\,.
\end{equation}
Then, take the derivative $j$ times, using $B'_k = kB_{k-1}$, and apply the
linear form $L= \Leb$.  As is well-known, Bernoulli polynomials $B_j$,
$j\geq1$, verify $L(B_j)=0$. Even better, they are the unique sequence of
monic polynomials with this property and the rule $B'_k = kB_{k-1}$ (an Appell
sequence).  Similarly, the monomials $T_{a,j}$ are the unique sequence of monic
polynomials with the Appell property and the vanishing condition for $j>0$,
under the linear form $\delta_a$.

Going back to Equation \eqref{eq:PB}, we deduce from the Appell and vanishing
properties that necessarily $c_j = L(P^{(j)})$.  This tells us that the
remainder functional $r_{n+1}$ vanishes on all polynomials of degrees up to $n$.
This suggests that it should be expressed as some integral of a kernel
$k_n(x,t)$ against $f^{(n+1)}$. This is, at any rate, exactly what happens for
the Taylor expansion at $a$.  We now confirm that it does work for $\Leb$
similarly as it does for $\delta_a$.

\begin{prop}[Euler--Maclaurin as diffused Taylor]
  Let $L_{\Leb}$ be the linear form $f\mapsto \int_0^1 f(t)\dt$.  Let
  $\phi_j(t) = \frac{B_j(\{t\})}{j!}$ for $j\geq1$.  Let $n\geq1$ and $f\in
  C^{n}(\Iff01)$. Then, for every $x\in \Iff01$,
\begin{align}\label{eq:eumactay}
  f(x)
  &=
  \sum_{j=0}^{n}
  L_{\Leb}(f^{(j)})\frac{B_j(x)}{j!}
  +
  r_{n+1}(f)(x),
\\\shortintertext{with}\label{eq:rn0}
  r_{n+1}(f)(x) &= - \int_0^1 \phi_{n}(x-t)f^{(n)}(t)\dt.
\end{align}
If $f\in C^{n+1}(\Iff01)$, there holds
\begin{equation}\label{eq:rn1}
  r_{n+1}(f)(x) = \int_0^1 \bigl(\phi_{n+1}(x) - \phi_{n+1}(x-t)\bigr)f^{(n+1)}(t)\dt.
\end{equation}
\end{prop}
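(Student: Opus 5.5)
Two things must be checked: the identity for $n\geq1$ with remainder \eqref{eq:rn0}, and the passage from \eqref{eq:rn0} to \eqref{eq:rn1}. The whole argument rests on the periodic Bernoulli functions $\phi_j(t)=B_j(\{t\})/j!$ and their standard properties. First, $\phi_j'=\phi_{j-1}$ in the distributional sense on $\mathbb{R}$, and pointwise away from the integers. Second, $\phi_j$ is continuous for $j\geq2$, since $B_j(0)=B_j(1)$. Third, $\phi_1$ is the sawtooth $\{t\}-\frac12$, which has jump $-1$ at each integer. Fourth, $\int_0^1\phi_j=0$ for $j\geq1$. We argue by induction on $n$.

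\emph{Base case $n=1$.} Fix $x\in\Iff01$ and split $\int_0^1\phi_1(x-t)f'(t)\dt$ at $t=x$. For $t<x$ we have $\{x-t\}=x-t$, and for $t>x$ we have $\{x-t\}=x-t+1$. Integrating by parts on each piece, the boundary terms at $t=x^-$ and $t=x^+$ give $\phi_1(0^+)f(x)-\phi_1(0^-)f(x)=-\frac12 f(x)-\frac12 f(x)=-f(x)$; here the jump of the sawtooth produces $f(x)$ itself. The endpoint terms at $t=0$ and $t=1$ combine to $-(x-\frac12)\bigl(f(1)-f(0)\bigr)$. The remaining integral is $\int_0^1 f(t)\,\dt$, since $\frac{\rd}{\dt}\phi_1(x-t)=-1$ off the jump. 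Collecting terms,
\[
-\int_0^1\phi_1(x-t)f'(t)\dt=f(x)-L_{\Leb}(f)-B_1(x)L_{\Leb}(f').
\]
This is \eqref{eq:eumactay} with $n=1$.

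\emph{Inductive step, showing \eqref{eq:rn1}.} Let $f\in C^{n+1}$. Since $\phi_{n+1}$ is continuous and $\frac{\rd}{\dt}\phi_{n+1}(x-t)=-\phi_n(x-t)$ except at the single point $t=x$, a single integration by parts gives
\[
-\int_0^1\phi_n(x-t)f^{(n)}(t)\dt=\Bigl[\phi_{n+1}(x-t)f^{(n)}(t)\Bigr]_0^1-\int_0^1\phi_{n+1}(x-t)f^{(n+1)}(t)\dt.
\]
For $n=1$ the continuity of $\phi_2$ lets us do this across $t=x$ without a boundary contribution. By periodicity $\phi_{n+1}(x-1)=\phi_{n+1}(x)$. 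For $x\in\Iff01$ we have $\phi_{n+1}(x)=B_{n+1}(x)/(n+1)!$; at $x=1$ this uses $B_{n+1}(1)=B_{n+1}(0)$ when $n+1\geq2$. So the bracket equals $\frac{B_{n+1}(x)}{(n+1)!}L_{\Leb}(f^{(n+1)})$. Subtracting this from both sides gives exactly \eqref{eq:rn1}: we can write $\frac{B_{n+1}(x)}{(n+1)!}L_{\Leb}(f^{(n+1)})$ as $\int_0^1\phi_{n+1}(x)f^{(n+1)}(t)\dt$.

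\emph{Conclusion of the induction.} The identity just obtained shows that the remainder of order $n+1$ in form \eqref{eq:rn0} equals the $(n+1)$-st Bernoulli term plus the remainder of order $n+2$ in form \eqref{eq:rn0}. Hence \eqref{eq:eumactay} for $n$ implies it for $n+1$, for functions in $C^{n+1}$.

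The main point of care is the base case: the jump of $\phi_1$ at $0$ is exactly what yields the term $f(x)$. A secondary point is the endpoint $x=1$, where $\{x\}=0$. There the formula still holds, because $B_j(1)=B_j(0)$ for $j\geq2$, while for $j=1$ one checks directly that the split integration by parts at $x=1$ gives the same result, with the term $\phi_1(0^-)$ supplied by the interval endpoint. Everything else is routine bookkeeping of boundary terms.
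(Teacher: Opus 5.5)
Your proof is correct and follows the paper's own route. The base case $n=1$ splits at $t=x$, where the jump of $\phi_1$ produces $f(x)$; a single integration by parts, using the continuity and periodicity of $\phi_{n+1}$, then gives \eqref{eq:rn1}, i.e.\ $r_{n+1}(f)=L_{\Leb}(f^{(n+1)})\phi_{n+1}(x)+r_{n+2}(f)$, which drives the induction.

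There is one cosmetic sign slip in the base case, and your final identity is unaffected. The jump term $-f(x)$ and the leftover $\int_0^1 f$ are computed for $+\int_0^1\phi_1(x-t)f'(t)\,\dt$. The endpoint value $-(x-\frac12)(f(1)-f(0))$ you quote, however, belongs to the negated integral; for the un-negated one it is $+(x-\frac12)(f(1)-f(0))$.
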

\begin{proof}
  Remark first that the periodic functions $\phi_j$ are continuous on the real
  line when $j\geq2$, and also for $j=1$ but with the exception of the jump
  discontinuities at the integers.  Take $n=1$. From Equation~\eqref{eq:rn0}
  \begin{align}
    r_2(f)(x) &= -\int_0^x(x-t-\frac12)f'(t)\dt - \int_x^1(x -t + \frac12)f'(t)\dt
\\
&= -(x-\frac12)\int_0^1f'(t)\dt + f(x) - \int_0^1 f(t)\dt,
  \end{align}
  so Equation~\eqref{eq:eumactay} holds for $n=1$ and $f$ of class $C^1$. In
  the computation above the difference between $\phi_1(1)$ and $B_1(1)$ does
  not matter, because $\phi_1$ is integrated, not evaluated, and the factor
  $x-\frac12=B_1(x)$ is correct also for $x=1$ or $x=0$.

  Suppose now that $f$ is of class $C^2$.  Then $\phi_2$ is a continuous
  function, which has a continuous derivative $\phi_1$ at every non-integer
  point, and has left and right derivatives at the integer points.  So we can
  integrate by parts:
  \begin{equation}
    -\int_0^1 \phi_1(x-t)f'(t)\dt = \Bigl[\phi_2(x-t)f'(t)\Bigr]_0^1
             - \int_0^1\phi_2(x-t)f''(t)\dt,
  \end{equation}
   which gives
   \begin{align}
     r_2(f)(x) &= \phi_2(x)\bigl(f'(1)-f'(0)\bigr)  - \int_0^1\phi_2(x-t)f''(t)\dt
\\
&=\int_0^1\bigl(\phi_2(x)- \phi_2(x-t))f''(t)\dt,
   \end{align}
   and this is Equation \eqref{eq:rn1} for $n=1$.

   More generally, the analogous computation shows that for every $n\geq1$, the
   quantity $r_{n+1}(f)$ from Equation~\eqref{eq:rn0} verifies, if $f\in
   C^{n+1}(\Iff01)$, also Equation~\eqref{eq:rn1}.  Suppose now that
   Equation~\eqref{eq:eumactay} holds for $n$ and that $f$ is of class
   $C^{n+1}$.  Equation~\eqref{eq:rn1} says
   \begin{equation}
     r_{n+1}(f) = L_{\Leb}(f^{(n+1)})\phi_{n+1}(x) + r_{n+2}(f),
   \end{equation}
   which establishes Equation~\eqref{eq:eumactay} for $n+1$, hence by induction
   on $n$, for every $n\geq1$.
\end{proof}
\begin{rema}
  Let $f$ be continuous on $\Iff01$.  Define $\phi_0$ to be the locally finite
  Borel measure which is the distributional derivative of $\phi_1$, i.e.\@
  $\phi_0 = 1 - \sum_{n\in\ZZ}\delta_n$.  Interpret the right-hand side of
  Equation~\eqref{eq:rn0} with $n=0$ as the pairing of $-\phi_0(x-t)$ with
  $\Un_{0\leq t < 1}f(t)$. Assuming moreover $0\leq x < 1$, we get $r_1(f)(x) =
  -L_{\Leb}(f)+f(x)$, so $f(x) = L_{\Leb}(f) + r_1(f)(x)$ and
  Equation~\eqref{eq:eumactay} holds.  Further, if $f$ is $C^1$, the value of
  $-\int_0^1\phi_1(x-t)f'(t)\dt$ was computed in the proof above to be
  $-B_1(x)L_{\Leb}(f')+f(x)-L_{\Leb}(f)$, so
  \begin{equation}
    \int_0^1\bigl(B_1(x) - \phi_1(x-t)\bigr)f'(t)\dt = f(x)-L_{\Leb}(f) = r_1(f)(x),
  \end{equation}
  which establishes, for $0\leq x < 1$, the validity of
  Equation~\eqref{eq:rn1} also for $n=0$.
\end{rema}

\section{General stationary and non-stationary expansions}

It is well-known that the core idea behind the Bernoulli polynomials is to
take at each step the primitive which has a vanishing integral on $\Iff01$.
On the other hand the Taylor monomials are obviously obtained by taking at
each step the primitive which vanishes under the Dirac point mass at $a$.  It
is now almost immediate to put the two procedures under the same roof.

In the next proposition, $L$ is not supposed to be bounded with respect to the
sup-norm of functions.
\begin{prop}
  Let $L$ be a linear form on $C(\Iff01)$ such that $L(1)=1$, and define the
  linear map $J:L^1(\Iff01)\to AC(\Iff01)$ by
  \begin{equation}
    Jh=h^{(-1)}-L\bigl(h^{(-1)}\bigr),
  \end{equation}
  where $h^{(-1)}$ is the function $x\mapsto \int_0^x h(t)\dt$ (or any other
  one differing by a constant).
  Then $J^j(1)$ is a polynomial function of degree $j$, and, defining
  \begin{equation}
    \label{eq:Pj}
    P_j=j!J^j(1),
  \end{equation}
  $(P_j)_{j\geq0}$ is the unique sequence of monic polynomials such that
  \begin{equation}\label{eq:appell}
    P_0=1,\qquad
    P_j'=jP_{j-1},\qquad
    L(P_j)=0\quad(j\geq1).
  \end{equation}
  Let $n\geq1$ and $f\in C^n(\Iff01)$. Then
  \begin{gather}\label{eq:gentml}
    f(x) = \sum_{j=0}^{n} L(f^{(j)})\frac{P_j(x)}{j!} +
    r_{n+1}(f)(x),
   \\\shortintertext{where}
\label{eq:genrn0}
    r_{n+1}(f)
    =
    J^n\bigl(f^{(n)}-L(f^{(n)})\bigr).
  \end{gather}
  If moreover $f\in C^{n+1}(\Iff01)$, then
  \begin{equation}\label{eq:genrn1}
    r_{n+1}(f) = J^{n+1}(f^{(n+1)}).
  \end{equation}
\end{prop}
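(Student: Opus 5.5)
The plan is to first record the two basic properties of $J$ and then obtain everything else by induction on $n$, copying the structure of the Bernoulli/Lebesgue case treated just before.

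\textbf{Properties of $J$.} For $h\in L^1(\Iff01)$, the function $Jh$ is absolutely continuous with $(Jh)'=h$ almost everywhere. Since $L(1)=1$, we also have $L(Jh)=L(h^{(-1)})-L(h^{(-1)})=0$. The choice of primitive does not matter: changing $h^{(-1)}$ by a constant $c$ changes $L(h^{(-1)})$ by $cL(1)=c$, and the two changes cancel. In addition, $J(c)$ for a constant $c$, and more generally $J$ applied to a polynomial of degree $j-1$, gives a polynomial of degree $j$. Hence $J^j(1)$ is a polynomial of degree $j$, with leading coefficient $1/j!$ by induction, so $P_j=j!J^j(1)$ is monic.

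\textbf{The sequence $(P_j)$.} For $j\geq1$ we get $P_j'=j!\,J^{j-1}(1)=jP_{j-1}$ and $L(P_j)=j!\,L(J(J^{j-1}1))=0$, so \eqref{eq:appell} holds. For uniqueness, let $(Q_j)$ be another monic sequence satisfying \eqref{eq:appell}, and suppose $Q_{j-1}=P_{j-1}$. Then $Q_j'=P_j'$, so $Q_j-P_j$ is a constant $c$, and applying $L$ gives $c=cL(1)=L(Q_j)-L(P_j)=0$.

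\textbf{The expansion.} The key identity is that, for $g$ absolutely continuous, $J(g')=g-L(g)$. Indeed $g$ itself is a primitive of $g'$, and $J$ does not depend on the choice of primitive.

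\emph{Base case.} Take $n=1$ and $f\in C^1$. Then
$$J\bigl(f'-L(f')\bigr)=J(f')-L(f')\,J(1)=f-L(f)-L(f')P_1,$$
which is exactly \eqref{eq:gentml} with remainder \eqref{eq:genrn0} for $n=1$.

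\emph{From \eqref{eq:genrn0} to \eqref{eq:genrn1}.} Assume $f\in C^{n+1}$. Then
$$f^{(n)}-L(f^{(n)})=J(f^{(n+1)}),$$
so $r_{n+1}(f)=J^n\bigl(J f^{(n+1)}\bigr)=J^{n+1}(f^{(n+1)})$, which is \eqref{eq:genrn1}.

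\emph{Induction step.} Starting from \eqref{eq:genrn1}, split
$$J^{n+1}(f^{(n+1)})=J^{n+1}\bigl(f^{(n+1)}-L(f^{(n+1)})\bigr)+L(f^{(n+1)})\,J^{n+1}(1).$$
The first term is $r_{n+2}(f)$ and the second is $L(f^{(n+1)})\,P_{n+1}/(n+1)!$. This gives \eqref{eq:gentml} at order $n+1$ for $f\in C^{n+1}$.

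\textbf{Main obstacle.} The only delicate point is the bookkeeping of regularity: the statement at order $n+1$ is needed under the hypothesis $f\in C^{n+1}$, whereas the induction hypothesis at order $n$ requires only $C^n$. Since $C^{n+1}\subset C^n$, the induction hypothesis applies to such $f$. We also need $L$ to be applied only to continuous functions: $f^{(j)}$ for $j\leq n$ and the $J$-images are continuous, so this holds even though $J$ acts on $L^1$. No boundedness of $L$ is used anywhere; only linearity and $L(1)=1$ enter.
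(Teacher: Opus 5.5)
Your proof is correct and takes essentially the paper's route. Both arguments rest on the identity $g=L(g)+J(g')$ for absolutely continuous $g$: the paper iterates it as the telescoping operator identity $I=\Un L+JD$, $J^kD^k=J^k\Un LD^k+J^{k+1}D^{k+1}$, while you iterate it as an explicit induction on $n$, and the final splitting of $J^n(f^{(n)})$ that gives the $C^n$ form of the remainder is the same in both.
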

\begin{proof}
  That the $(P_n)$ are polynomials, monic, and the unique solution to
  Equation~\eqref{eq:appell} is obvious: the point is that among the
  primitives of a given continuous $f$, the unique one in the null-space of $L$
  is $J(f)$.

  The existence and uniqueness of the $(P_n)$ sequence having
  been established, let now $g\in C^1(\Iff01)$. Then $g-L(g)$ is in
  the null-space of $L$ and has derivative $g'$. So:
  \begin{equation}\label{eq:Jid}
    g=L(g)+J(g').
  \end{equation}
  Let $D$ be the operator of differentiation, we rewrite this as
  \begin{equation}
    I =\Un L + JD,
  \end{equation}
  where $\Un L$ is the rank-one operator $g\mapsto L(g)\Un$ (in
  Equation~\eqref{eq:genrn0} from the proposition statement, the slightly
  pedantic $\Un$ is omitted). Observe that the composite $JD$ does act on
  $C^j(\Iff01)$ for any $j\geq1$.  On $C^n(\Iff01)$, composing on the right
  with $D$ and on the left with $J$ we obtain in succession
    \begin{align}
    JD &=J\Un LD + J^2D^2\\
    J^2D^2 &= J^2\Un LD^2 + J^3D^3\\\notag
    &\dots\\
    J^{n-1}D^{n-1} &= J^{n-1}\Un LD^{n-1} + J^nD^n,
  \end{align}
  so, when acting on functions of $C^n(\Iff01)$,
  \begin{equation}\label{eq:gen}
    I = \Un L + P_1 LD + \frac12 P_2 LD^2 + \dots + \frac1{(n-1)!}P_{n-1}LD^{n-1} 
       + J^nD^n\,.
  \end{equation}
  This, for $n+1$, and applied to $f\in C^{n+1}(\Iff01)$, establishes
  Equations~\eqref{eq:gentml} and \eqref{eq:genrn1}. Supposing only $f\in
  C^n(\Iff01)$, there holds
  \begin{equation}
    J^n(f^{(n)}) = L(f^{(n)})\frac{P_n}{n!} + J^n\bigl(f^{(n)}-L(f^{(n)})\Un\bigr),
  \end{equation}
  so Equation~\eqref{eq:gen}
  proves for $f$ of class $C^n$ Equations~\eqref{eq:gentml} and\eqref{eq:genrn0}.
\end{proof}

  For $L=\delta_a$, one has
  \begin{equation}
    J(h)(x)=\int_a^x h(t)\dt,
    \qquad
    P_j(x)=(x-a)^j,
  \end{equation}
  and the proposition gives Taylor's expansion. For $L=L_{\Leb}$, the $P_j$'s
  are the classical Bernoulli polynomials and the expansion is the one from
  the previous section, with however a more obfuscated form of the remainder.

  We now examine more closely the remainder, and assume from here on that $L$ is
  a bounded linear form on $C([0,1])$, so by Riesz theorem,
  $L(h)=\int_{\Iff01} h(t)\dmu(t)$ for some complex Borel measure $\mu$.

  As $\mu$ may have atoms we have to be careful below when constructing the
  kernels $k_n(x,t)$ which involve (multiple) integrations against
  $\mu$. We thus choose carefully the indicator functions helping to express the kernel as a multiple integral. Define, for $a,b,t\in\Iff01$,
\begin{equation}
  \chi_{a,b}(t)=
  \begin{cases}
    \Un_{(a,b]}(t),&a\leq b,\\
    -\Un_{(b,a]}(t),&b<a.
  \end{cases}
\end{equation}
Thus, in particular,
\begin{equation}\label{eq:intabh}
  \int_a^b h(t)\dt = \int_0^1\chi_{a,b}(t)h(t)\dt.
\end{equation}

\begin{prop}
  Let $\mu$ be a
  complex Borel measure on $\Iff01$ and $L$ the associated bounded
  linear form.  Assume $L(1)=1$.
  Define for $(x,t)\in \Iff01^2$,
  \begin{equation}
    k_1(x,t)
    =
    \int_{0\leq y\leq 1}\chi_{y,x}(t)\,d\mu(y)
=
\begin{cases}
  \mu(\Ifo{0}{t})& 0\leq t\leq x
\\
  \mu(\Ifo{0}{t})-1& x<t\leq 1
\end{cases}
  \end{equation}
  and for $n\geq2$,
  \begin{equation}\label{eq:genkn}
    \begin{split}
      k_n(x,t)
      ={}&
      \int_{\Iff01^n}
      \int_{\Iff01^{n-1}}
      \chi_{y_1,x}(t_1)
      \chi_{y_2,t_1}(t_2)\cdots
      \chi_{y_{n-1},t_{n-2}}(t_{n-1})
      \chi_{y_n,t_{n-1}}(t)
      \\
      &
      \dt_1\cdots \dt_{n-1}\,
      \dmu(y_1)\cdots \dmu(y_n).
    \end{split}
  \end{equation}
  The polynomial functions $P_n$, $n\geq1$, from the previous proposition are
\begin{equation}
  P_n(x) = n!\int_0^1k_n(x,t)\dt.
\end{equation}
  If $f\in C^n(\Iff01)$, the remainder in Equation~\eqref{eq:gentml} has the integral form
  \begin{equation}
    r_{n+1}(f)(x)
    =
    \int_0^1 k_n(x,t)
    \bigl(f^{(n)}(t)-L(f^{(n)})\bigr)\dt.
  \end{equation}
  If $f\in C^{n+1}(\Iff01)$, it also has the form
  \begin{equation}
    r_{n+1}(f)(x)
    =
    \int_0^1k_{n+1}(x,t)f^{(n+1)}(t)\dt.
  \end{equation}
\end{prop}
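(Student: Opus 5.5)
The plan is to make the operator $J$ of the previous proposition into an integral operator with explicit kernel, and then to iterate it.

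\emph{Step 1: kernel of $J$.} For $h\in L^1(\Iff01)$, I would take $h^{(-1)}(x)=\int_0^x h$, so that $Jh(x)=\int_0^x h-\int_{\Iff01}\bigl(\int_0^y h\bigr)\dmu(y)$. Using $L(1)=1$, i.e.\@ $\mu(\Iff01)=1$, this becomes $Jh(x)=\int_{\Iff01}\bigl(\int_y^x h(t)\dt\bigr)\dmu(y)$. By Equation~\eqref{eq:intabh} the inner integral is $\int_0^1\chi_{y,x}(t)h(t)\dt$. The function $(y,t)\mapsto\chi_{y,x}(t)h(t)$ is bounded by $|h(t)|$, hence integrable for $|\mu|\otimes\Leb$, so Fubini gives
\[
Jh(x)=\int_0^1 k_1(x,t)h(t)\dt .
\]
The closed form of $k_1$ is a direct check. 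For $t\le x$ one has $\chi_{y,x}(t)=1$ exactly when $y<t$, because of the half-open convention $(y,x]$; this gives $\mu(\Ifo0t)$. For $t>x$ one has $\chi_{y,x}(t)=-1$ exactly when $y\ge t$, which gives $-\mu(\Iff t1)=\mu(\Ifo0t)-1$. This bookkeeping is where the convention for $\chi$ matters, since $\mu$ may have atoms.

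\emph{Step 2: iteration.} I would show by induction that $J^n h(x)=\int_0^1 k_n(x,t)h(t)\dt$ with $k_n$ as in Equation~\eqref{eq:genkn}. Writing $J^{n}h=J(J^{n-1}h)$ directly inserts the kernel of $J$ on the outside, which does not match the nesting of the formula. So I would instead use $J^n h=J^{n-1}(Jh)$. The variable $t_{n-1}$ is integrated against the inner $J$, and the outer variables $t_1,\dots,t_{n-2},y_1,\dots,y_{n-1}$ carry the kernel $k_{n-1}(x,t_{n-1})$. The chain $\chi_{y_1,x}(t_1)\chi_{y_2,t_1}(t_2)\cdots\chi_{y_n,t_{n-1}}(t)$ then matches exactly: the last factor comes from $J$ applied to $h$, evaluated at the point $t_{n-1}$. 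Each step again requires Fubini, which is justified because the integrand is bounded by $|h(t)|$ against the finite product measure $|\mu|^{\otimes n}\otimes\Leb^{n-1}$. I expect this exchange-of-order bookkeeping to be the main (mild) obstacle.

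\emph{Step 3: conclusions.} With the kernel representation in hand, the remaining claims are substitutions into the previous proposition:
\begin{itemize}
\item $P_n=n!\,J^n(1)=n!\int_0^1 k_n(x,t)\dt$.
\item For $f\in C^n(\Iff01)$, Equation~\eqref{eq:genrn0} gives $r_{n+1}(f)=J^n\bigl(f^{(n)}-L(f^{(n)})\bigr)$, and this equals $\int_0^1 k_n(x,t)\bigl(f^{(n)}(t)-L(f^{(n)})\bigr)\dt$.
\item For $f\in C^{n+1}(\Iff01)$, Equation~\eqref{eq:genrn1} gives $r_{n+1}(f)=J^{n+1}(f^{(n+1)})=\int_0^1 k_{n+1}(x,t)f^{(n+1)}(t)\dt$.
\end{itemize}

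One point needs a check: the previous proposition allows any choice of $h^{(-1)}$ up to an additive constant. This is harmless, because $J$ subtracts the $L$-value and $L(1)=1$, so the result does not depend on the constant. Choosing the base point $0$, as in Step 1, is therefore legitimate.
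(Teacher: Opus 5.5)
Your proof is correct and follows the paper's argument: Fubini identifies $k_1$ as the kernel of $J$, induction gives $J^n h(x)=\int_0^1 k_n(x,t)h(t)\,dt$, and the three claims follow by substitution into Equations~\eqref{eq:Pj}, \eqref{eq:genrn0} and \eqref{eq:genrn1}. The only difference is in the induction step: you peel off the innermost pair $(y_n,t_{n-1})$, while the paper peels off the outermost pair $(y_1,t_1)$ via $k_{n+1}(x,t)=\int_0^1 k_1(x,s)k_n(s,t)\,ds$, i.e.\ $J^{n+1}=J\circ J^n$; so your remark that the outer grouping ``does not match the nesting'' is mistaken, though harmless, since Fubini validates both groupings.
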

\begin{proof}
  Let $h\in L^1(\Iff01)$.  Since $\mu(\Iff01)=L(1)=1$, we have
  \begin{align}
    J(h)(x)
    &=
    \int_{\Iff01}
    \bigl(h^{(-1)}(x)-h^{(-1)}(y)\bigr)\dmu(y)
    \notag\\
    &=
    \int_{\Iff01}\int_y^x h(t)\dt\,\dmu(y).
  \end{align}
  Using Equation~\eqref{eq:intabh} and Fubini therefore gives
  \begin{equation}\label{eq:Jk1}
    J(h)(x)
    =
    \int_0^1
    \left(
      \int_{0\leq y\leq 1}\chi_{y,x}(t)\dmu(y)
    \right)
    h(t)\dt
    =
    \int_0^1k_1(x,t)h(t)\dt.
  \end{equation}
  Verifying the explicit expression for $k_1$, as given in the statement, is
  left to the reader.
  Fubini applied to Equation~\eqref{eq:genkn} gives, for every $n\geq1$,
  \begin{equation}\label{eq:krec}
    k_{n+1}(x,t)
    =
    \int_0^1 k_1(x,s)k_n(s,t)\,ds.
  \end{equation}
  Indeed, this amounts simply to separating in the defining multiple
  integral the variables $y_1$ and $t_1$ from all the subsequent variables.

  Equations~\eqref{eq:Jk1} and \eqref{eq:krec}, followed by another
  application of Fubini, give by induction
  \begin{equation}\label{eq:Jkn}
    J^n(h)(x)
    =
    \int_0^1k_n(x,t)h(t)\dt,
    \qquad n\geq1.
  \end{equation}
  All these applications of Fubini are justified by the finiteness of the
  total variation of $\mu$ and the integrability of $h$.

  Taking $h=1$ in Equation~\eqref{eq:Jkn} and using
  Equation~\eqref{eq:Pj} gives
  \begin{equation}
    P_n(x)
    =
    n!\int_0^1k_n(x,t)\dt.
  \end{equation}

  Finally, applying Equation~\eqref{eq:Jkn} to the right-hand side of
  Equation~\eqref{eq:genrn0} gives
  \begin{equation}
    r_{n+1}(f)(x)
    =
    \int_0^1
    k_n(x,t)
    \bigl(f^{(n)}(t)-L(f^{(n)})\bigr)\dt.
  \end{equation}
  If $f\in C^{n+1}(\Iff01)$, applying it instead to
  Equation~\eqref{eq:genrn1} gives
  \begin{equation}
    r_{n+1}(f)(x)
    =
    \int_0^1k_{n+1}(x,t)f^{(n+1)}(t)\dt.
  \end{equation}
\end{proof}

A generalization presents itself: in place of coefficients $L(f)$, $L(f')$,
$L(f'')$, \dots, we can search a polynomial basis $(P_n/n!)$ such that the
expansion of $f$ involves coefficients $L_1(f)$, $L_2(f')$, \dots,
$L_{n+1}(f^{(n)})$, $n\geq0$.  As the next proposition explains, this indeed
is an extension of the earlier ``stationary'' scheme, but there is a subtlety:
it is \emph{not} the case that the next polynomial $P_n$ will be determined
from being in the null-space of $L_n$; it will rather always, when
$n\geq1$, be in the null-space of $L_1$.  See also comments below.
\begin{prop}\label{prop:nonstatTML}
  Let $(L_j)_{j\geq1}$ be linear forms on $C(\Iff01)$ such that
  $L_j(1)=1$.  For $j\geq1$, define the linear map
  $J_j:L^1(\Iff01)\to AC(\Iff01)$ by
  \begin{equation}\label{eq:nJ}
    J_jh=h^{(-1)}-L_j\bigl(h^{(-1)}\bigr).
  \end{equation}
  Define the monic polynomials
  \begin{equation}\label{eq:nB}
    \mathcal B_0=1,\qquad
    \mathcal B_j=j!J_1J_2\cdots J_j(1),\qquad j\geq1.
  \end{equation}
  Let $n\geq1$ and $f\in C^n(\Iff01)$. Then
  \begin{equation}\label{eq:nexp}
    f(x)
    =
    \sum_{j=0}^n
    L_{j+1}(f^{(j)})\frac{\mathcal B_j(x)}{j!}
    +
    r_{n+1}(f)(x),
  \end{equation}
  where
  \begin{equation}\label{eq:nonstatrn0}
    r_{n+1}(f)
    =
    J_1J_2\cdots J_n
    \bigl(f^{(n)}-L_{n+1}(f^{(n)})\bigr).
  \end{equation}
  If moreover $f\in C^{n+1}(\Iff01)$, then
  \begin{equation}\label{eq:nonstatrn1}
    r_{n+1}(f)
    =
    J_1J_2\cdots J_{n+1}(f^{(n+1)}).
  \end{equation}

  Suppose now that all the $L_j$ are bounded, and let $\nu_j$ be the complex
  Borel measure representing $L_j$, so that
  \begin{equation}
    L_j(h)=\int_{\Iff01}h(t)\,d\nu_j(t),
    \qquad
    \nu_j(\Iff01)=1.
  \end{equation}
  Put
  \begin{equation}
    \kappa_j(x,t)
    =
    \int_{0\leq y\leq1}\chi_{y,x}(t)\,d\nu_j(y)
    =
    \begin{cases}
      \nu_j(\Ifo{0}{t}),&0\leq t\leq x,\\
      \nu_j(\Ifo{0}{t})-1,&x<t\leq1.
    \end{cases}
  \end{equation}
  Define $k_1=\kappa_1$ and, for $n\geq1$,
  \begin{equation}\label{eq:nonstatnk}
    k_{n+1}(x,t)
    =
    \int_0^1k_n(x,s)\kappa_{n+1}(s,t)\,ds,
  \end{equation}
  equivalently $k_{n} = \kappa_1\star\dots\star\kappa_n$ for $n\geq1$,
  with $\star$ the composition of integral kernels.
  Then
  \begin{equation}\label{eq:nonstatnkernel}
    J_1J_2\cdots J_n(h)(x)
    =
    \int_0^1k_n(x,t)h(t)\dt,
    \qquad n\geq1,
  \end{equation}
  and consequently
  \begin{equation}
    \mathcal B_n(x)
    =
    n!\int_0^1k_n(x,t)\dt.
  \end{equation}
  In this case Equation~\eqref{eq:nexp} takes the form
  \begin{equation}\label{eq:nonstatTML}
    f(x)
    =
    \sum_{j=0}^n
    \Bigl(\int_{\Iff01}f^{(j)}(t)\,d\nu_{j+1}(t)\Bigr)
    \frac{\mathcal B_j(x)}{j!}
    +
    r_{n+1}(f)(x).
  \end{equation}
  For $f\in C^n(\Iff01)$, its remainder is
  \begin{equation}\label{eq:nonstatkrn0}
    r_{n+1}(f)(x)
    =
    \int_0^1
    k_n(x,t)
    \left(
      f^{(n)}(t)
      -
      \int_{\Iff01}f^{(n)}(s)\,d\nu_{n+1}(s)
    \right)\dt,
  \end{equation}
  and, if $f\in C^{n+1}(\Iff01)$,
  \begin{equation}\label{eq:nonstatkrn1}
    r_{n+1}(f)(x)
    =
    \int_0^1k_{n+1}(x,t)f^{(n+1)}(t)\dt.
  \end{equation}
\end{prop}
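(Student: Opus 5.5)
The plan is to copy the proof of the stationary proposition, replacing the single operator $J$ by the sequence $J_1,J_2,\dots$, and to replace $P_j$ by $\mathcal B_j$.

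First, for each $j$ and each $g\in C^1(\Iff01)$, the function $g-L_j(g)$ is a primitive of $g'$ lying in the null-space of $L_j$. Hence
\begin{equation*}
  g=L_j(g)+J_j(g'),
\end{equation*}
that is, $I=\Un L_j+J_jD$ on $C^1$. I will iterate this with increasing indices. On $C^n$, start from $I=\Un L_1+J_1D$. Then insert $D=\Un L_2D+J_2D^2$, which is valid because $D$ maps $C^n$ into $C^{n-1}$ and $J_2D$ makes sense there. Continuing,
\begin{equation*}
  J_1\cdots J_{j}D^{j}=J_1\cdots J_j\Un\,L_{j+1}D^{j}+J_1\cdots J_{j+1}D^{j+1}.
\end{equation*}
Since $J_1\cdots J_j(\Un)=\mathcal B_j/j!$ by the definition~\eqref{eq:nB}, telescoping gives, on $C^{n+1}$,
\begin{equation*}
  I=\sum_{j=0}^{n}\frac{\mathcal B_j}{j!}L_{j+1}D^j+J_1\cdots J_{n+1}D^{n+1}.
\end{equation*}
This is \eqref{eq:nexp} together with \eqref{eq:nonstatrn1}. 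For $f$ only of class $C^n$, I stop at $J_1\cdots J_nD^n$ and write $f^{(n)}=L_{n+1}(f^{(n)})\Un+\bigl(f^{(n)}-L_{n+1}(f^{(n)})\bigr)$. Linearity of $J_1\cdots J_n$ then splits off the $j=n$ term and leaves \eqref{eq:nonstatrn0}.

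I also need that $\mathcal B_j$ is a monic polynomial of degree $j$. This follows by induction: $J_j$ maps a polynomial of degree $m$ with leading coefficient $c$ to a polynomial of degree $m+1$ with leading coefficient $c/(m+1)$. I would add the remark that $L_1(\mathcal B_j)=0$ for $j\geq1$, because the outer operator is $J_1$; this explains the subtlety mentioned before the statement.

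For the kernel part, assume the $L_j$ are given by measures $\nu_j$ of mass $1$. Since $\nu_j(\Iff01)=1$,
\begin{equation*}
  J_jh(x)=\int\!\!\int_y^x h\,\dt\,\dnu_j(y),
\end{equation*}
and by \eqref{eq:intabh} and Fubini this equals $\int_0^1\kappa_j(x,t)h(t)\dt$, exactly as in Equation~\eqref{eq:Jk1}. The explicit piecewise form of $\kappa_j$ is read off from $\chi_{y,x}(t)$ as before. Then induction on $n$, with Fubini, gives \eqref{eq:nonstatnkernel}. Indeed,
\begin{equation*}
  J_1\cdots J_{n+1}h=J_1\cdots J_n(J_{n+1}h)=\int k_n(x,s)\int\kappa_{n+1}(s,t)h(t)\,\dt\,ds,
\end{equation*}
and swapping the integrals yields the kernel \eqref{eq:nonstatnk}. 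Fubini is justified because $|\kappa_j|\leq\|\nu_j\|$, so all kernels are bounded and $h\in L^1$. Taking $h=1$ gives the formula for $\mathcal B_n$. Substituting \eqref{eq:nonstatnkernel} into \eqref{eq:nonstatrn0} and \eqref{eq:nonstatrn1} gives \eqref{eq:nonstatkrn0} and \eqref{eq:nonstatkrn1}, and \eqref{eq:nonstatTML} is just \eqref{eq:nexp} with $L_{j+1}$ written as an integral.

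The only delicate point I foresee is keeping the operator order right: the composition is $J_1\cdots J_n$, not $J_n\cdots J_1$. I will verify it in the first telescoping step, where $J_1$ is outermost because it came from the first decomposition. Everything else is routine.
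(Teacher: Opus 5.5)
Your proposal is correct and is exactly the adaptation of the stationary proof that the paper intends; the paper itself omits the proof as a straightforward adaptation. Iterating $I=\Un L_{j+1}+J_{j+1}D$ on $D^j f$ and composing on the left with $J_1\cdots J_j$ gives the expansion with both remainder forms, and the kernel identities follow from $J_j h=\int_0^1\kappa_j(\cdot,t)h(t)\dt$ and Fubini with bounded kernels, with your ordering $J_1\cdots J_n(J_{n+1}h)$ correctly matching $k_{n+1}=k_n\star\kappa_{n+1}$.
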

\begin{proof}
  The proof is omitted because it is a straightforward adaptation of the one
  we gave in the case where all linear forms are the same.
\end{proof}
We call this a \emph{non-stationary} Taylor--Euler--Maclaurin expansion.

We draw the reader's attention to the fact that in this non-stationary
case, the polynomials $(\mathcal B_n)_{n\geq0}$ do \emph{not} form an Appell
sequence in general.  Indeed, we have for $n\geq2$
\begin{equation}\label{eq:footired}
  \cB_{n}'
  =
  n!\,J_2J_3\cdots J_{n}(1), 
\end{equation}
while
\begin{equation}
  \cB_{n-1}
  =
  (n-1)!\,J_1J_2\cdots J_{n-1}(1). 
\end{equation}

In the special case where each measure $\nu_j$ is a Dirac point mass
$\delta_{\alpha_j}$, the monic polynomials $\cB_j$ defined above form
what is called in the literature a Goncharov sequence.  Indeed, the polynomial
$\cB_n$, $n\geq1$, verifies the conditions
\begin{equation}
  \cB_n(\alpha_1)
  = \cB_n'(\alpha_2)
  = \dots
  = \cB_n^{(n-1)}(\alpha_n)
  = 0.
\end{equation}
For example $\cB_{n}(\alpha_1)=0$ by construction, as it is in the range of
$J_1$, and $\cB_{n}'(\alpha_2)=0$ from Equation~\eqref{eq:footired}, which
shows that $\cB_n'$ is in the range of $J_2$, and so on.  In the literature
such Goncharov polynomials are also defined with the $\alpha_j$'s being
complex numbers.  Formulas for the remainders in this case are well-known
facts going back to the origins of the topic itself,
\cite[pp.\@ 10--11]{gontcharoff1930}.

\section{The block-directed Euler--Maclaurin expansion}

We revisit now the apparatus we developed in Part~2 to establish the modal
expansion Theorem~\ref{thm:part2main} for block-Irwin harmonic sums $I(b,w,k)$,
$k\geq1$. Recall that we associated with the non-empty word $w$ in radix $b$ a
sequence of (exponentially tilted) probability measures $(\nu_n)_{n\geq1}$ on
the unit interval, with $\nu_1 = \Leb$.  We defined a certain operator $\sK_w$
acting on functions, in particular on polynomials, and obtained a sequence
$(\cB_m)_{m\geq0}$ of monic eigenpolynomials.
\begin{prop}
  The monic eigenpolynomials $(\cB_m)_{m\geq0}$ of $\sK_w$ are the ones
  associated in Proposition~\ref{prop:nonstatTML} with the sequence
  $(\nu_n)_{n\geq1}$. Thus, with the notation of that proposition,
  \begin{equation}\label{eq:foo2}
    \frac{\cB_m}{m!}=J_1J_2\cdots J_m(1),
    \qquad m\geq1,
  \end{equation}
  and, for $f\in C^n(\Iff01)$,
  \begin{equation}
    f(x)
    =
    \sum_{m=0}^n
    \bigl(\int_0^1 f^{(m)}(t)\dnu_{m+1}(t)\bigr)
    \frac{\cB_m(x)}{m!}
    +
    r_{n+1}(f)(x),
  \end{equation}
  with the remainder given by Equation~\eqref{eq:nonstatkrn0} of
  Proposition~\ref{prop:nonstatTML}, or also by
  Equation~\eqref{eq:nonstatkrn1} if $f$ of class $C^{n+1}$.
\end{prop}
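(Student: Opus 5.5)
The plan is to show that the polynomials $\widetilde{\cB}_m = m!\,J_1\cdots J_m(1)$ produced by Proposition~\ref{prop:nonstatTML} from the sequence $(\nu_n)$ coincide with the eigenpolynomials $\cB_m$. Once this identification is made, the expansion and the remainder formulas are exactly those of Proposition~\ref{prop:nonstatTML}, so nothing further needs proof. For the identification I would not compute $J_1\cdots J_m(1)$ directly. Instead I would compare characterizations. The $\cB_m$, $m\geq0$, are characterized as monic polynomials of degree $m$ satisfying the biorthogonality \eqref{eq:biorthw}:
\[
\frac1{j!}\int_0^1\cB_m^{(j)}\,d\nu_{j+1}=\delta_{jm},\qquad 0\le j\le m.
\]
This is a unit-triangular linear system in the coefficients of $\cB_m$ (it is the statement that $\sigma_1,\dots,\sigma_{m+1}$ are the dual basis to $\cB_0,\dots,\cB_m$ on $\CC[X]_m$), so it has exactly one monic solution. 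It therefore suffices to check that $\widetilde\cB_m$ satisfies the same conditions.

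The first step is to verify the conditions for $\widetilde\cB_m$. For $0\le j<m$ one has
\[
\widetilde\cB_m^{(j)} = \frac{m!}{(m-j)!}\,(m-j)!\,J_{j+1}\cdots J_m(1)\cdot\frac{1}{1},
\]
that is, $\widetilde\cB_m^{(j)}$ is a constant multiple of $J_{j+1}J_{j+2}\cdots J_m(1)$. This follows from $D J_i = \mathrm{id}$, which holds because $J_i h$ is a primitive of $h$; differentiating $j$ times peels off $J_1,\dots,J_j$, as in Equation~\eqref{eq:footired}. The result lies in the range of $J_{j+1}$, and every element of that range is killed by $L_{j+1}=\int\cdot\,d\nu_{j+1}$. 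Hence $\int\widetilde\cB_m^{(j)}d\nu_{j+1}=0$ for $j<m$. For $j=m$, $\widetilde\cB_m^{(m)}=m!$ and $\nu_{m+1}$ is a probability measure, which gives the value $1$ after dividing by $m!$. Monicity comes from the same computation, since each $J_i$ raises the degree by one with leading coefficient divided by the new degree.

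The second step is to conclude $\widetilde\cB_m=\cB_m$ by the uniqueness noted above. Alternatively, one could apply the expansion \eqref{eq:nonstatTML} of Proposition~\ref{prop:nonstatTML}, with $n=m$, to the polynomial $f=\cB_m$. The remainder vanishes because $f^{(m+1)}=0$. Biorthogonality then leaves only the $j=m$ term, whose coefficient is $\int \cB_m^{(m)}d\nu_{m+1}=m!$, giving $\cB_m = \widetilde\cB_m$ directly. This second route avoids even stating the uniqueness and is the one I would write.

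I expect no real obstacle. The only point needing care is the applicability of Proposition~\ref{prop:nonstatTML}. That proposition requires $L_j(1)=1$ and boundedness, and both hold because each $\nu_j$ is a probability measure on $\Iff01$. One also has to check that \eqref{eq:biorthw} is available for all pairs, including $j>m$, where it holds trivially because the derivative vanishes.
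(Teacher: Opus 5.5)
Your first route is the paper's proof: $\widetilde{\cB}_m^{(j)}$ is a constant multiple of $J_{j+1}\cdots J_m(1)$, so it lies in the range of $J_{j+1}$ and is annihilated by $\nu_{j+1}$; the unit-triangular system then forces $\widetilde{\cB}_m=\cB_m$, and the expansion is just Proposition~\ref{prop:nonstatTML}. The variant you prefer is also correct: expand $\cB_m$ itself with $n=m$, so the remainder $J_1\cdots J_{m+1}(0)$ vanishes and \eqref{eq:biorthw} leaves only the $j=m$ term; this simply replaces the explicit uniqueness argument by the reproducing identity that the expansion already encodes.
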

\begin{proof}
  The polynomial $P_m$ defined by the right-hand side of Equation~\eqref{eq:foo2}
  is monic of degree $m$ and satisfies
  \begin{equation}
    \int_0^1 P_m^{(j)}(x)\dnu_{j+1}(x)=0,
    \qquad 0\leq j<m.
  \end{equation}
  These are precisely the orthogonality conditions already established in
  Part~2 for $(\cB_m)$ and $(\nu_j^{(j-1)})$, $m\neq j-1$, and they determine
  a monic polynomial of degree $m$ uniquely.  The second assertion is the
  application of the general non-stationary expansion.
\end{proof}

Recall that we associated to $w$ the word $g_w$ which is: $w$ if $w$ is unbordered,
else the complement in $w$ of its longest border.  And we then defined 
$x_w = x(0.g_wg_wg_w\dots)\in \Iff01$.
\begin{theo}[Block-directed Euler--Maclaurin--Taylor expansion]
  Let $f$ be analytic at $x_w$, and suppose that the radius of convergence $r$
  of the Taylor series at $x_w$ verifies $r>R_w = \max(x_w,1-x_w)$, i.e.\@
  that $\Iff01\subset D(x_w,r)$.  Then
  \begin{equation}\label{eq:fTML}
   \forall x\in D(x_w,r)\quad f(x)
    =
    \sum_{m=0}^\infty
    \Bigl(\int_0^1 f^{(m)}(t)\dnu_{m+1}(t)\Bigr)
    \frac{\cB_m(x)}{m!},
  \end{equation}
  with uniform geometric convergence on the compact subsets of $D(x_w,r)$.
\end{theo}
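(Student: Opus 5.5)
The plan is to reduce the statement to the Riesz basis property of Proposition~\ref{prop:wBmRiesz}, applied in a Hardy space $\cH_R=H^2(D(x_w,R))$ with $R$ chosen strictly between $R_w$ and $r$. Fix a compact $K\subset D(x_w,r)$. Choose $R$ with $\max(R_w,\sup_{z\in K}|z-x_w|)<R<r$; this is possible since $K$ is compact. Then choose $R'$ with $R<R'<r$. Since $f$ is holomorphic on $D(x_w,r)$, it belongs to $\cH_{R'}$, and a fortiori to $\cH_R$.

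In $\cH_R$, the normalized eigenpolynomials $R^{-m}\cB_m$ form a Riesz basis with dual family $R^{m}\sigma_{m+1}$. Hence
\[
f=\sum_{m\geq0}\sigma_{m+1}(f)\,\cB_m ,
\]
with norm convergence in $\cH_R$. Here
\[
\sigma_{m+1}(f)=\frac1{m!}\int_0^1 f^{(m)}\,d\nu_{m+1},
\]
which is exactly the coefficient appearing in Equation~\eqref{eq:fTML}. Point evaluation at any $z\in D(x_w,R)$ is a bounded functional on $\cH_R$, uniformly bounded for $z$ in the compact set $K$. So norm convergence already yields uniform convergence on $K$, and this holds for every $x\in D(x_w,r)$.

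To obtain \emph{geometric} convergence we need finer estimates. Write the general term, uniformly in $z\in K$, as
\[
\bigl|\sigma_{m+1}(f)\cB_m(z)\bigr|\le |R^m\sigma_{m+1}(f)|\cdot \|R^{-m}\cB_m\|_{\cH_R}\cdot C_K .
\]
The factor $\|R^{-m}\cB_m\|_{\cH_R}$ is bounded, since the family is $O(\theta_2^m)$-close to $e_m$. However, evaluation at $z$ of $e_m$ is only $|z-x_w|^m/R^m\le \kappa^m$ with $\kappa<1$ on $K$, and we would like to exploit this too.

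The cleaner route is to estimate the coefficient itself geometrically. Use the expansion
\[
R^m\sigma_{m+1}=\ve_m+\sum_{p>m} m^{(R)}_{p,m+1}\,\ve_p ,
\]
and note that $\ve_p(f)=R^p f^{(p)}(x_w)/p!=O((R/R')^p)$ by Cauchy's estimates on $D(x_w,R')$. Together with the column sums of $M^{(R)}$ (bounded, indeed summable), this gives
\[
|R^m\sigma_{m+1}(f)|=O\bigl((R/R')^m\bigr).
\]
Combined with the bounded sup-norm of $R^{-m}\cB_m$ on $K$, this yields a geometric bound $O((R/R')^m)$ for the terms, uniformly on $K$.

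The main obstacle I expect is precisely this last bookkeeping step. One needs a uniform sup-norm bound for $R^{-m}\cB_m$ on $K$, obtained from the $\cH_R$-norm bound via the Cauchy–Schwarz estimate for evaluation on $D(x_w,R)$. One also needs the tail bound $\sum_{p>m}|m^{(R)}_{p,m+1}|(R/R')^p\le C(R/R')^m$, which follows since the entries of each column are bounded by an absolute constant, as a consequence of the column $\ell^1$ bounds derived after Equation~\eqref{eq:corebound} and the exact $n=1$ formula~\eqref{eq:mp1bound}.

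Finally, the identification of the sum with $f(x)$ pointwise follows from the norm convergence established above, so no separate remainder analysis via Proposition~\ref{prop:nonstatTML} is needed. Alternatively, that proposition shows the partial sums coincide with the non-stationary expansion, and their remainders $r_{n+1}(f)$ therefore tend to zero.
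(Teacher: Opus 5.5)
Your argument is correct, and its skeleton is the paper's. You expand $f$ in the Riesz basis $(R^{-m}\cB_m)_{m\geq0}$ of $\cH_R$, you note that norm convergence gives uniform convergence on $K\subset D(x_w,R)$, and you get the geometric rate by playing two radii against each other. The two proofs differ only in which side carries the geometric gain and in how that gain is proved.

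The paper keeps the coefficients $R^m\sigma_{m+1}(f)$ merely square-summable at radius $R$. It takes the gain from the polynomials, by invoking the Riesz property a second time at a smaller radius $R_2$ with $R_w<R_2<R$ and $K\subset D(x_w,R_2)$. This yields $\sup_K|\cB_m|\leq CR_2^m$ and terms of size $o((R_2/R)^m)$.

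You keep the polynomials at $O(R^m)$ on $K$ and take the gain from the coefficients, $|R^m\sigma_{m+1}(f)|=O((R/R')^m)$. You prove this by hand, combining the Cauchy estimates $|\ve_p(f)|=O((R/R')^p)$ with the uniform bound on the entries of $M^{(R)}$ that follows from the column $\ell^1$ estimates and \eqref{eq:mp1bound}. This bookkeeping is valid, but it is not needed. Since $R'>R_w$ as well, $(R'^m\sigma_{m+1})_{m\geq0}$ is a Riesz basis of $\cH_{R'}'$ and $f\in\cH_{R'}$. Hence $R'^m\sigma_{m+1}(f)$ is bounded, and $R^m\sigma_{m+1}(f)=(R/R')^m\,R'^m\sigma_{m+1}(f)$ gives your estimate directly. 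With this shortcut, your proof is the paper's argument with the pair of radii $(R_2,R)$ renamed $(R,R')$.

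What your explicit route buys is a concrete picture: the concentration of $\nu_{m+1}$ near $x_w$ transfers the Taylor-coefficient decay of $f$ at $x_w$ to the modal coefficients. The paper's route needs nothing beyond the abstract statement of Proposition~\ref{prop:wBmRiesz}. Both give the same rate, essentially $\max(R_w,\sup_K|z-x_w|)/r$.
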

\begin{proof}
  Let $K$ be a compact subset of $D(x_w,r)$, and choose $R\in \Ioo{R_w}{r}$
  such that $K\subset D(x_w, R)$.  As $R>R_w$, the normalized polynomials
  $(R^{-m}\cB_m)_{m\geq0}$, by Proposition~\ref{prop:wBmRiesz}, are a Riesz
  basis of $H^2(D(x_w,R))$, and the dual system is
  $(R^{m}(-1)^{m}(m!)^{-1}\nu_{m+1}^{(m)})_{m\geq0}$. So the series on the
  right-hand side in Equation~\eqref{eq:fTML} converges in $H^2(D(x_w,R))$ to
  $f$. Hence, it converges also in the sup-norm on $K$, and in particular
  pointwise.  Call $(a_m)_{m\geq0}\in \ell^2$ the coefficient sequence of $f$
  in the Riesz basis $(R^{-m}\cB_m)_{m\geq0}$. Choose $R_w<R_2<R$ such that
  $D(x_w,R_2)$ still contains the compact set $K$.  The Riesz basis property
  at radius $R_2$ and the continuous embedding of $H^2(D(x_w,R_2))$ into
  $C_{\CC}(K)_{\|\cdot\|_\infty}$ give, for some constant $C$,
  \begin{equation}
    R_2^{-m}\|\cB_m\|_\infty\leq C.
  \end{equation}
  Therefore, $|a_m|\|R^{-m}\cB_m\|_\infty = o(\rho^m)$, $\rho=R_2/R<1$.  This
  concludes the proof and the paper.
\end{proof}

\singlespacing

\bigskip
\noindent
  Université de Lille,
  Faculté des Sciences et technologies,
  Département de mathématiques,
  Cité Scientifique,
  F-59655 Villeneuve d'Ascq cedex,
  France.
\newline
\strut \texttt{jean-francois.burnol@univ-lille.fr}

\end{document}